\definecolor{Gray}{gray}{0.9}
\newcolumntype{g}{>{\columncolor{Gray}}c}
\newcolumntype{M}{V{4cm}} %
\newcolumntype{U}{V{2.5cm}} %
\newcolumntype{T}{V{2cm}} %
\definecolor{col1}{RGB}{254,97,0}
\definecolor{col2}{RGB}{100,143,255}
\definecolor{col3}{RGB}{120, 94, 240}
\definecolor{col4}{RGB}{220, 38, 127}
\definecolor{col5}{RGB}{255, 176, 0}
\tikzset{pbrace/.style={decorate, decoration = {brace}}}
\tikzset{snakeit/.style={decorate, decoration={snake, amplitude=.2mm,segment length=1mm}}}
\tikzset{ext/.style={circle, draw,inner sep=1pt}, int/.style={circle,draw,fill,inner sep=1pt},nil/.style={inner sep=1pt}}
\tikzset{cy/.style={circle,draw,fill,inner sep=2pt},scy/.style={circle,draw,inner sep=2pt},scyx/.style={draw,cross out,inner sep=2pt},scyt/.style={draw,regular polygon,regular polygon sides=3,inner sep=0.95pt}}
\tikzset{exte/.style={circle, draw,inner sep=3pt},inte/.style={circle,draw,fill,inner sep=3pt}}
\tikzset{diagram/.style={matrix of math nodes, row sep=3em, column sep=2.5em, text height=1.5ex, text depth=0.25ex}}
\tikzset{diagram2/.style={matrix of math nodes, row sep=0.5em, column sep=0.5em, text height=1.5ex, text depth=0.25ex}}
\tikzset{rowcolsep/.style={column sep=.2cm, row sep=.1cm}}
\tikzset{
  crossed/.style={
    decoration={markings,mark=at position .5 with {\arrow{|}}},
    postaction={decorate},
    shorten >=0.4pt}}
\tikzset{every picture/.style={baseline=-.65ex} }
\tikzset{every loop/.style={draw}}
  \tikzset{->-/.style={decoration={
    markings,
    mark=at position .5 with {\arrow{>}}},postaction={decorate}}}
  \tikzset{-<-/.style={decoration={
      markings,
      mark=at position .5 with {\arrow{<}}},postaction={decorate}}}
  \tikzset{-><-/.style={decoration={
    markings,
    mark=at position .2 with {\arrow{>}},
    mark=at position .9 with {\arrow{<}}},postaction={decorate}}}
\newcommand{\Ass}{\mathsf{Assoc}}
\newcommand{\Ger}{\mathsf{Ger}}
\newcommand{\Com}{\mathsf{Com}}
\newcommand{\Lie}{\mathsf{Lie}}
\newcommand{\BV}{\mathsf{BV}}
\newcommand{\tBV}{\widetilde{\BV}}
\newcommand{\Grav}{\mathsf{Grav}}
\newcommand{\HyCom}{\mathsf{HyCom}}
\newcommand{\Out}{\mathsf{Out}}
\newcommand{\M}{\mathcal{M}}
\newcommand{\MM}{\overline{\mathcal{M}}}
\newcommand{\Feyn}{\mathrm{Feyn}}
\newcommand{\AFeyn}{\mathrm{AFeyn}}
\newcommand{\gr}{\mathrm{gr}}
\newcommand{\Hbdy}{\mathcal H \mathsf{bdy}}
\newcommand{\cP}{\mathcal{P}}
\newcommand{\cM}{\mathcal{M}}
\newcommand{\cC}{\mathcal{C}}
\newcommand{\tGrav}{\widetilde{\Grav}}
\newcommand{\Z}{\mathbb{Z}}
\newcommand{\Q}{\mathbb{Q}}
\newcommand{\R}{\mathbb{R}}
\newcommand{\bs}{\mathbf{s}}
\newcommand{\Dc}{\mathbf{\Delta^*}}
\newcommand{\gst}{\mathrm{star}}
\newcommand{\Det}{\mathrm{Det}}
\newcommand{\LMod}{\mathbb L \mathrm{Mod}}
\newcommand{\HBdy}{\Hbdy}
\newcommand{\tBVt}{\tBV^!_{\geq 2}}
\newcommand{\LD}{\mathsf{LD}}
\newcommand{\sgn}{\mathrm{sgn}}
\newcommand{\Hy}{\mathsf{HyC}}
\newcommand{\vspan}{\mathrm{span}}
\DeclareMathOperator{\coker}{\mathrm{coker}}
\newcommand{\DD}{\mathfrak{D}}
\newcommand{\kk}{\mathfrak{k}}
\newcommand{\COp}{\mathsf{C}}
\newcommand{\Free}{\mathrm{Free}}
\newcommand{\mF}{\mathcal{F}}
\newcommand{\MFree}{\mathbb M\Free}
\newcommand{\HMod}{\mathrm{HMod}}
\newcommand{\Mod}{\mathrm{Mod}}
\newcommand{\Bij}{\mathrm{Bij}}
\newcommand{\rt}{\blacklozenge}
\newcommand{\feyncom}{\operatorname{F}}
\newcommand{\ch}{\mathrm{ch}}
\newcommand{\Ch}{\mathrm{Ch}}
\newcommand{\Sym}{\mathrm{Sym}}
\newcommand{\SW}{\mathbb W}
\newcommand{\gm}{\mathbf{m}}
\newtheorem{thm}{Theorem}[section]
\newtheorem{prop}[thm]{Proposition}
\newtheorem{cor}[thm]{Corollary}
\newtheorem{lemma}[thm]{Lemma}
\theoremstyle{definition}
\theoremstyle{remark}
\newtheorem{remark}[thm]{Remark}
\newtheorem{example}[thm]{Example}
\newtheorem{convention}[thm]{Convention}
\title{Weight 2 cohomology of graph complexes of cyclic operads and the handlebody group}
\author{Michael Borinsky}
\address{
\parbox{\linewidth}{
Michael Borinsky\\
Institute for Theoretical Studies, ETH Z\"urich \\
Clausiusstrasse 47, 8006 Zurich, Switzerland}
}
\email{michael.borinsky@eth-its.ethz.ch}
\author{Benjamin Brück}
\address{
\parbox{\linewidth}{
Benjamin Brück\\
Department for Mathematical Logic and Foundational Research, University of M\"unster\\
Einsteinstraße 62,
48149 Münster, Germany}
}
\email{benjamin.brueck@uni-muenster.de}
\author{Thomas Willwacher}
\address{
\parbox{\linewidth}{
Thomas Willwacher\\
Department of Mathematics, ETH Z\"urich \\
R\"amistrasse 101,
8092 Zurich, Switzerland}
}
\email{thomas.willwacher@math.ethz.ch}
\begin{document}

\begin{abstract}
We compute the weight 2 cohomology of the Feynman transforms of the cyclic (co)operads $\mathsf{BV}$ and $\mathsf{HyCom}$, and the top$-2$ weight cohomology of the Feynman transforms of $D\mathsf{BV}$ and $\mathsf{Grav}$.  Using a result of Giansiracusa, we compute, in particular, the top$-2$ weight cohomology of the handlebody group.  We compare the result to the top$-2$ weight cohomology of the moduli space of curves $\mathcal{M}_{g,n}$, recently computed by Payne and the last-named author.  We also provide another proof of a recent result of Hainaut--Petersen identifying the top weight cohomology of the handlebody group with the Kontsevich graph cohomology.
\end{abstract}

\maketitle

\tableofcontents

\section{Introduction}

To every cyclic operad $\cP$ one may associate a graph complex $\Feyn(\cP)$ via the Feynman transform of Getzler-Kapranov \cite{GK}.
Elements of $\Feyn(\cP)$ can be seen as linear combinations of graphs whose vertices are decorated by elements of $\cP$.
Arguably the most common and prominent cyclic operads are the following:
\begin{itemize}
\item The commutative operad $\Com$ and its Koszul dual operad $\Lie$.
\item The associative operad $\Ass$, which is Koszul self-dual.
\item The Batalin-Vilkovisky operad $\BV$ and its dg dual operad $D\BV^*$. (See Section \ref{sec:dg dual def} below for our definition of $D(-)$.)
\item The hypercommutative operad $\HyCom=H_\bullet(\MM_{0,-})$ along with its dg dual operad, the gravity operad $\Grav$ \cite{Getzler0}.
\end{itemize}
For each of these classical operads $\cP$ we may ask two natural questions:
\medskip

  Question 1: What does the cohomology of $\Feyn(\cP)$ compute?

  \smallskip

Question 2: What is the cohomology of $\Feyn(\cP)$?

\medskip

Regarding the first question, satisfying answers have been found for 4 of the 7 cyclic operads above, see Table~\ref{tab:overview}.
These show that in many cases, $\Feyn(\cP)$ can be used to compute the cohomology of objects in algebraic geometry, topology and group theory and hence give motivation for studying Question 2.
Of particular interest for this paper will be the following result of Giansiracusa.\footnote{Only the first of the two isomorphisms has been stated in \cite{Giansiracusa}. But the second follows relatively easily, see Appendix \ref{sec:giansiracusa} for a brief discussion.}

\begin{thm}[Giansiracusa \cite{Giansiracusa}] \label{thm:giansiracusa}
    Let $V_{g,n}^m$ be a genus $g$ handlebody with $m$ distinct marked disks and $n$ marked points on the boundary. Let $\HMod_{g,n}^m:=\pi_0 \mathrm{Diff}(V_{g,n}^m)$ be the handlebody group.
    Then 
    \[
    H^\bullet(\HMod_{g,0}^m) \cong H^\bullet(\Feyn_\kk(D\BV^*)) (\!(g,m)\!)  
    \]
    as long as $(g,m)\neq (1,0)$, where $\Feyn_\kk(-)$ denotes the Feynman transform of 1-shifted cyclic operads. Furthermore, 
    \[
    H^\bullet(\HMod_{g,n}^0) \cong H^\bullet(\AFeyn_\kk(D\BV^*)) (\!(g,n)\!)  
    \]
    with $\AFeyn_\kk$ the amputated Feynman transform (see Section \ref{sec:amputated} below), 
    as long as $(g,n)\neq (1,0),(0,2)$.
\end{thm}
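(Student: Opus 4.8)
The plan is to take the first isomorphism as given, since it is exactly Giansiracusa's theorem \cite{Giansiracusa}, and to deduce the second from it by comparing marked disks with marked points on the boundary surface $\partial V_g=\Sigma_g$. The geometric point is that a marked disk records a point of $\Sigma_g$ together with a framing of the tangent plane there, whereas a marked point forgets this framing. Passing to the first jet at each marked point, the derivative action identifies the difference as an $SO(2)\simeq GL_2^+(\R)$ worth of data per marking; working in Giansiracusa's framework (where the relevant components of the handlebody diffeomorphism groups are contractible, so that $B\mathrm{Diff}\simeq B\HMod$) this yields a homotopy fibre sequence
\[
B\HMod_{g,0}^{n}\longrightarrow B\HMod_{g,n}^{0}\longrightarrow (BSO(2))^{n}=(\mathbb{CP}^\infty)^{n},
\]
one circle per marking, given by $f\mapsto (D_{p_i}f)_i$.

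On the algebraic side I would match this fibre sequence to the relation between the amputated and the ordinary Feynman transform. Recall that $\BV$ is the homology of the framed little $2$-disks operad, so that each external leg of $\Feyn_\kk(D\BV^*)$ carries a framing circle; under Giansiracusa's identification this framing is precisely the data distinguishing a marked disk from its centre point. The amputated transform of Section~\ref{sec:amputated} is designed so that external legs are treated as unframed points, and I would verify that rerunning Giansiracusa's construction with marked points in place of marked disks yields exactly $\AFeyn_\kk(D\BV^*)((g,n))$ on the chain level. The base cohomology $H^\bullet((\mathbb{CP}^\infty)^n)=\Q[u_1,\dots,u_n]$ of the fibre sequence above, with each $u_i$ the Euler class of the $i$-th framing circle, is then what the amputation encodes on external legs; the associated Serre spectral sequence translates the geometric comparison into the algebraic one.

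Combining the two sides gives the second isomorphism. The hardest part will be the precise matching: one must check that the framing-forgetting fibre sequence corresponds to amputation not merely on graded vector spaces but compatibly with all differentials and internal degrees, identifying in particular the Euler classes $u_i$ with the generators that amputation introduces on the external legs. Finally, the extra excluded case $(g,n)=(0,2)$ has to be handled separately: two marked points on the boundary $2$-sphere of the genus-zero handlebody carry an unexpected circle of symmetries that the two rigid marked disks of the first isomorphism do not, so that both $B\HMod_{0,2}^0$ and the corresponding piece of the amputated complex degenerate there, in parallel with the familiar unstable exclusions for $\mathcal{M}_{g,n}$.
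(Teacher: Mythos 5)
Your geometric half is essentially correct and, in fact, equivalent to what the paper does. The fibre sequence $B\HMod_{g,0}^{n}\to B\HMod_{g,n}^{0}\to (BS^1)^{n}$ you propose is precisely the delooping of the statement that $B\HMod_{g,n}^{0}$ is the homotopy quotient of $B\HMod_{g,0}^{n}$ by $n$ framing circles; the paper obtains the same statement in Lemma \ref{lem:hbdy S1 quotient}, via Hensel's central extension $1\to\Z^n\to\HMod_{g,0}^{n}\to\HMod_{g,n}^{0}\to 1$ generated by Dehn twists about the marked disks, and phrases it operadically as $B\HMod_{g,n}^{0}\simeq \Hbdy(\!(g,n)\!)/\!/\,\Hbdy(\!(0,2)\!)^{n}$ with $\Hbdy(\!(0,2)\!)\simeq S^1$ acting by composition.

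The genuine gap is the algebraic half, which is the actual substance of the deduction. Taking only the cohomological statement $H^\bullet(\HMod_{g,0}^m)\cong H^\bullet(\Feyn_\kk(D\BV^*))(\!(g,m)\!)$ as input for the fibre is not enough: your plan to ``verify that rerunning Giansiracusa's construction with marked points in place of marked disks yields exactly $\AFeyn_\kk(D\BV^*)$ on the chain level'' is a restatement of the theorem, not an argument, and the Serre spectral sequence of your fibration gives neither a degeneration statement nor any identification of its $E_\infty$-page with $H^\bullet(\AFeyn_\kk(D\BV^*))$. What is needed is a chain-level operadic model of the homotopy quotient, and this is what the paper supplies: (i) Giansiracusa's Theorem A, that $\Hbdy$ is the homotopy modular closure $\LMod_!\Hbdy_0$ of its genus-zero part, itself equivalent to the framed little disks operad -- a statement strictly stronger than the first isomorphism you assume; (ii) formality, to replace $\Hbdy_0$ by $\BV$; (iii) Ward's theorem identifying the homotopy modular closure with the Feynman transform of the dg dual, giving the quasi-free model $\Feyn(D\BV)$; and (iv) the observation that quasi-freeness (valid for $2g+n\geq 3$) lets one replace the homotopy quotient by the strict quotient by compositions with $\Feyn(D\BV)(\!(0,2)\!)$, which for $m=0$ is literally the definition of the amputated Feynman transform. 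Steps (iii)--(iv) are exactly the ``precise matching'' you defer, and without them your Euler classes $u_i$ never connect to the combinatorics of amputation. Note also that the excluded cases arise differently than you guess: $(g,n)=(1,0)$ is excluded because Theorem A fails there, and $(0,2)$ because the quasi-freeness argument requires the stability bound $2g+n\geq 3$.
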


The second question above, pertaining to the actual value of $H(\Feyn(\cP))$, is complicated, and for none of the 7 cyclic operads $\cP$ above a complete answer is known.
However, there is a long list of partial results, in particular about the ``commutative'' graph cohomology $H(\Feyn(\Com))$, see \cite{AroneTurchin, Willwachergrt, KWZ1} and references therein.
The aim of this article is to study the second question for $\cP=D\BV^*$ and the closely related cases $\cP = \BV$, $\HyCom$ and $\Grav$.

\begin{table}[H]
\begin{tabular}{|c|p{0.8\textwidth}|}
\hline
\bf (Co)Operad $\cP$ & \bf $\Feyn(\cP)$ computes... \\
\hline
$\Com$ & Various objects, in particular the top-weight cohomology of $\M_{g,n}$ \cite{CGP1, CGP2} and homotopy groups of embedding spaces \cite{AroneTurchin, FTW}. \\
\hline
$\Lie$ & Cohomology of outer automorphism groups of free groups and the related groups $\Gamma_{g,n}$ \cite{CHKV}. \\
\hline
$\Ass$ & Cohomology of the moduli spaces of curves $\M_{g,n}$, see \cite[Theorem 9.4]{GK} (the result is due to Penner and Kontsevich).\\
\hline 
$\BV$ & \emph{Unknown} \\
\hline
$D\BV^*$ & Cohomology of the handlebody group \cite{Giansiracusa}, cf. Theorem \ref{thm:giansiracusa}\\
\hline
$\HyCom$ & \emph{Unknown}, but see Theorem \ref{thm:dbv_hycom} \\
\hline
$\Grav$ & Homotopy quotient of $\Feyn(D\BV)$ by the binary part of $H_\bullet(S^1)$
and the cohomology of the image of the gluing map $\MM_{0,n+2g} \to \MM_{g,n}$. 
\\
\hline
\end{tabular}
\caption{\label{tab:overview} Roles of Feynman transforms of cyclic operads.}
\end{table}

\subsection{Results about the Feynman transform of \texorpdfstring{$D\BV^*$}{DBV*}}
In this paper we relate parts of the cohomologies of the graph complexes $\Feyn(\cP)$ for the cases $\cP=\BV, \HyCom, \Grav$ and $D\BV^*$
to better understood objects.%
\footnote{We note that we use slightly non-standard notation and conventions for the Feynman transform. We refer to Section \ref{sec:feyn} below, also for the notation $\Feyn_{\kk}(-)$.}
To this end we use that the homological degrees on these operads
induce an additional grading on the complexes $\Feyn(\cP)$.
In particular, twice the cohomological degree of $\BV^*$ defines a grading on $\Feyn_{\kk}(D\BV^*)$ that we call the \emph{weight} grading.
This particular convention for the weight grading is convenient as it is compatible 
with the weight grading on $\HyCom$ which in turn is compatible with the mixed Hodge structure of $H^\bullet(\MM_{0,n})$. 
We denote the piece of weight $W$ by $\gr_{W}(-)$, so that 
\begin{align}\label{equ:degcomp Feyn} H^\bullet(\Feyn_{\kk}(D\BV^*)(\!(g,n)\!))&= \bigoplus_{W}\gr_{W} H^\bullet(\Feyn_{\kk}(D\BV^*)(\!(g,n)\!)) \\ \label{equ:degcomp Feyn 2} H^\bullet(\AFeyn_{\kk}(D\BV^*)(\!(g,n)\!))&= \bigoplus_{W}\gr_{W} H^\bullet(\AFeyn_{\kk}(D\BV^*)(\!(g,n)\!)). \end{align}

We first determine the values of $W$ for which the graded parts in \eqref{equ:degcomp Feyn} and \eqref{equ:degcomp Feyn 2} are non-trivial and describe the bottom and top-weight parts: 

\begin{thm}\label{thm:main}
    Let $(g,n)\neq (1,0),(1,1),(0,2)$.
\begin{enumerate}
\item The weights occurring nontrivially in \eqref{equ:degcomp Feyn} lie in the range $0\leq W\leq 6g-6+4n$, and the weights in \eqref{equ:degcomp Feyn 2} lie in the range $0\leq W\leq 6g-6+2n$.
\item The bottom weight parts in \eqref{equ:degcomp Feyn} and \eqref{equ:degcomp Feyn 2} are equal to the forested graph cohomology.
\[
    \gr_{0} H^\bullet(\Feyn_\kk(D\BV^*)(\!(g,n)\!))
    \cong \gr_{0} H^\bullet(\AFeyn_\kk(D\BV^*)(\!(g,n)\!))
    \cong H^\bullet(\Feyn_\kk(D\Com^*))(\!(g,n)\!).
\]
In particular, we have that 
\[
    \gr_{0} H^\bullet(\Feyn_\kk(D\BV^*)(\!(g,0)\!))
    \cong
    H^\bullet(\Out(F_g)).
\]
\item
The top-weight parts in \eqref{equ:degcomp Feyn} and \eqref{equ:degcomp Feyn 2} can be expressed through
the cohomology of the Kontsevich commutative graph complex
\begin{align*} \gr_{6g-6+2n} H^{6g-6+2n-k}(\AFeyn_\kk(D\BV^*)(\!(g,n)\!)) &\cong H^{-k}(\Feyn(\Com)(\!(g,n)\!)) \\
\gr_{6g-6+4n} H^{6g-6+3n -k}(\Feyn_\kk(D\BV^*)(\!(g,n)\!)) &\cong H^{-k}(\Feyn(\Com)(\!(g,n)\!))\otimes \sgn_n . \end{align*}
\end{enumerate}
\end{thm}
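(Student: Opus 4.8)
The plan is to reduce all three statements to the weight-graded structure of the cyclic cooperad $\BV^*$, exploiting the fact that the Feynman differential preserves the weight. First I would verify that the (co)composition of $\BV^*$ is homogeneous of weight zero, so that the weight descends to an honest grading on $\Feyn_\kk(D\BV^*)(\!(g,n)\!)$ and $\AFeyn_\kk(D\BV^*)(\!(g,n)\!)$ which is preserved by the differential; since the dg dual $D(-)$ is a cobar-type construction built from $\BV^*$, its weight is inherited, and one obtains $\gr_W H^\bullet(\Feyn_\kk(D\BV^*)) = H^\bullet(\Feyn_\kk(\gr_W D\BV^*))$, and likewise for $\AFeyn$. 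This reduces each graded piece to the Feynman transform of the corresponding weight-graded piece of the cooperad, and the key structural input becomes an explicit description of $\BV((k))$ in terms of its Gerstenhaber part and the boundary framing classes coming from $H_\bullet(S^1)$, from which one reads off that the weight-$0$ part is $\Com^*$ and identifies the top weight of a decoration at a vertex of valence $k$.

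For part (1) the lower bound $W\ge 0$ is immediate from the nonnegativity of the cohomological degree on $\BV^*$. For the upper bound I would bound the weight of a single decoration at a vertex of valence $k$ and then maximise the total weight $\sum_v W(v)$ over all connected genus-$g$ graphs with $n$ legs, using the Euler-characteristic relation $E - V = g-1$ between the numbers of internal edges and vertices. The maximum is attained on trivalent graphs, whose $E = 3g-3+n$ internal edges is maximal, and equals $6g-6+2n = 2\dim_{\mathbb{C}}\MM_{g,n}$; in the non-amputated case the $n$ boundary framing classes each contribute an extra weight $2$, raising the bound to $6g-6+4n$.

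For part (2) I would use $\gr_0 \BV^* = \Com^*$ together with the weight-compatibility of the dg dual to get $\gr_0 D\BV^* \cong D\Com^*$, whence $\gr_0 \Feyn_\kk(D\BV^*) \cong \Feyn_\kk(D\Com^*)$ (and the same for $\AFeyn$), which is by definition the forested graph complex; for $(g,0)$ its cohomology is $H^\bullet(\Out(F_g))$ by Conant--Hatcher--Kassabov--Vogtmann. For part (3) I would dually identify the top-weight part of $\BV^*$ at each vertex with a homologically shifted, determinant-twisted copy of $\Com$, the twist recording the orientation of the framing classes; passing through the cobar construction exhibits $\gr_{\mathrm{top}} D\BV^*$ as a shift of $\Com$, whose Feynman transform is the Kontsevich commutative graph complex $\Feyn(\Com)$. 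Matching homological degrees then yields the stated shifts, the point being that each of the $n$ boundary framing classes carries cohomological degree $1$ and weight $2$: this accounts simultaneously for the degree shift $6g-6+3n$ versus $6g-6+2n$ and for the weight $6g-6+4n$ versus $6g-6+2n$ between the $\Feyn$ and $\AFeyn$ statements, while the $n$ framing classes together contribute the sign representation $\sgn_n$.

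The main obstacle I expect is part (3): identifying the top-weight associated graded of $\BV^*$ as a cyclic cooperad isomorphic to a twist of $\Com$ with the correct homological shift and determinant sign, and then tracking this identification through the cobar construction and the Feynman transform without losing control of the orientation and degree conventions that produce $\sgn_n$. The sharp upper bound in part (1) is intertwined with this, since it rests on the exact top weight of $\BV((k))$. By contrast, part (2) is comparatively formal once the compatibility of the weight grading with $D(-)$ is in place, the substantive content there being the external identification of the forested graph cohomology with $H^\bullet(\Out(F_g))$.
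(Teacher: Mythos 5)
Your part (2) is essentially the paper's own argument and is fine: since the weight grading is non-negative and additive under (co)composition, the weight-zero part of the Feynman transform is the Feynman transform of $\gr_0 D\BV^*\cong D\Com^*$, and the identification with forested graph cohomology and $H^\bullet(\Out(F_g))$ is the external input. The trouble is with parts (1) and (3), and it starts with your foundational reduction $\gr_W H^\bullet(\Feyn_\kk(D\BV^*)) = H^\bullet(\Feyn_\kk(\gr_W D\BV^*))$: this is false for $W>0$. The weight of a decorated graph is the \emph{sum} of the weights of its vertex decorations, so the weight-$W$ part of the Feynman transform mixes all distributions of weights summing to $W$; moreover $\gr_W D\BV^*$ is not closed under composition for $W>0$, so its ``Feynman transform'' is not even defined. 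Only for $W=0$ does the reduction work, which is exactly why part (2) is the easy case.

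Concretely, this sinks your part (1): the complex $\Feyn_\kk(D\BV^*)(\!(g,n)\!)$ has \emph{unbounded} weights at chain level, because strings of bivalent vertices decorated by powers $\Dc^{\circ k}$ of the dual BV operator carry weight $2k$; so no maximization of $\sum_v W(v)$ over graphs can give the bound, and the paper must first pass to the quasi-isomorphic reduced complex $\Feyn_\kk'(D\BV^*)$ (Lemmas \ref{lem:reduced Feyn} and \ref{lem:feynp dbv}) — an issue your proposal never addresses. Your per-vertex numerology is also wrong: the top weight of $\BV^*(\!(k)\!)$ is $4k-6$, carried by a space of dimension $(k-2)!$, not a shifted copy of $\Com$; summing $4n_v-6$ over a trivalent genus-$g$ graph gives $12g-12+6n$, not $6g-6+2n$. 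The sharp bound and the appearance of $\Com$ are \emph{cohomological} facts: one must replace each vertex decoration by $H(D'\BV^*)\cong\tGrav\cong H_c^\bullet(\M_{0,\bullet})$ (Theorem \ref{thm:DCV} of Drummond-Cole--Vallette plus Getzler's results, via the formality statement of Proposition \ref{prop:DpDBV formal}), whose top weight is $2k-6$ and one-dimensional, run the spectral sequence on the number of bivalent vertices, and prove its degeneration in top weights by a purity argument (Proposition \ref{prop:E2 convergence}). Note also that in the correct accounting every graph whose vertices carry top gravity weight and whose edges are all $\Dc$-marked has weight exactly $6g-6+2n$, not just trivalent ones — your ``maximum attained on trivalent graphs'' would contradict the fact that the full complex $\Feyn(\Com)$ appears in part (3). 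Finally, even granted all this, part (3) requires identifying the surviving differential $d_1$ (contraction of edges at bivalent vertices) on the top-weight $E^1$-page with the edge-contraction differential of $\Feyn(\Com)$; in the paper this is Proposition \ref{prop:circ is comp Com}, a genuine computation with explicit representatives in the Koszul-dual model $\tBV^!$, whereas in your framework this step is invisible precisely because the flawed reduction leaves no differential to identify.
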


We can extend the above result to the part of top$-2$-weight. In the simplest case this reads as follows, for the amputated Feynman transform:
\begin{thm}\label{thm:dbv_hycom}
For $(g,n)\neq (1,0),(1,1),(0,2)$  we have that 
\[
\gr_{6g-8+2n} H^{6g-6+2n-k}(\AFeyn_\kk(D\BV^*)(\!(g,n)\!)) 
\cong 
\gr_2 H^{-k}(\Feyn(\HyCom)(\!(g,n)\!)).
\]
\end{thm}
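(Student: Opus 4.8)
The plan rests on the observation that the weight grading is additive over the vertices of the Feynman graphs and is preserved by every differential in play. Indeed, the internal differential of $D\BV^*$ is the cobar-type differential of the dg dual construction, built from the cocomposition of $\BV^*$, which preserves the $\BV^*$-degree and hence the weight; and the graph differential of the Feynman transform is assembled from the same weight-homogeneous structure maps, so it too preserves weight. Consequently the cohomology decomposes over weights as in \eqref{equ:degcomp Feyn 2}, and it suffices to identify the single weight-graded complex $\gr_{6g-8+2n}\AFeyn_\kk(D\BV^*)(\!(g,n)\!)$ with $\gr_2\Feyn(\HyCom)(\!(g,n)\!)$.

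The right-hand side is straightforward to make explicit. Since $H^\bullet(\MM_{0,m})$ is pure of Tate type and concentrated in even degrees, the weight-$0$ part of $\HyCom$ is $\Com$ and its weight-$2$ part is $H^2(\MM_{0,\bullet})$. As weight is additive over vertices, $\gr_2\Feyn(\HyCom)(\!(g,n)\!)$ is therefore the weight-$0$ Feynman complex $\Feyn(\Com)(\!(g,n)\!)$ of $\Com$-decorated graphs in which exactly one vertex is instead decorated by a class of $H^2(\MM_{0,\bullet})$. For the left-hand side I would filter the complex so as to run first the internal differential of $D\BV^*$ and then the graph differential. The first page of the resulting spectral sequence is the Feynman transform of the cohomology of the weight-graded $D\BV^*$, taken weight by weight, and the entire argument is funneled into computing this operadic cohomology in the top two weights: in top weight it reproduces $\Com$, which is exactly the input underlying Theorem~\ref{thm:main}(3), and in top$-2$ weight I claim it is isomorphic, compatibly with all cyclic structure maps, to the weight-$2$ part $H^2(\MM_{0,\bullet})$ of $\HyCom$. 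Granting this, the first page in weight $6g-8+2n$ becomes precisely the complex described above, it agrees with $\gr_2\Feyn(\HyCom)(\!(g,n)\!)$ as a complex, and after checking that the spectral sequence degenerates at the next page and matching cohomological degrees through the top-degree shift by $6g-6+2n$, the theorem follows.

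The main obstacle is the operadic comparison just quoted: proving that the top$-2$ weight cohomology of $D\BV^*$ is $H^2(\MM_{0,\bullet})$, with its cyclic cooperad structure and internal differential. This is where the genuine geometric input enters, through Getzler's computation of $H^\bullet(\MM_{0,n})$ and the precise relationship between the cyclic operad $\BV$, its $S^1$/$\Delta$-action, the gravity operad $\Grav=D\HyCom$, and $\HyCom$ itself, as recorded in Table~\ref{tab:overview}. Heuristically, lowering the weight by one step from the top corresponds to removing a single unit of $\Delta$-content, after which the surviving classes are the boundary divisors that span $H^2(\MM_{0,\bullet})$; making this precise at the level of cyclic (co)operads, and controlling the cobar differential relating the top and top$-2$ weight pieces, is the technical core.

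Finally I would dispose of the excluded unstable cases $(g,n)=(1,0),(1,1),(0,2)$ by hand, since there the weight ranges of Theorem~\ref{thm:main} and the operadic comparison can degenerate. The whole argument is the handlebody counterpart of the weight-$2$ analysis of $\MM_{g,n}$ carried out by Payne and the last-named author, and Theorem~\ref{thm:dbv_hycom} should be read against that result; I would expect the two proofs to run in parallel, with $\AFeyn_\kk(D\BV^*)$ playing the combinatorial role that the Deligne weight spectral sequence of $\MM_{g,n}$ plays in the moduli case.
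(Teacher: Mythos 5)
There is a genuine gap, and it sits exactly at what you call the ``technical core'' of your argument. Your plan needs the claim that the top$-2$ weight part of $H(D\BV^*)$, compatibly with its cyclic structure, is the weight-$2$ part of $\HyCom$. This is false. By Theorem~\ref{thm:DCV} one has $H(D'\BV^*)\cong\tGrav$, and by Example~\ref{ex:top cohom tbv} the top$-2$ weight piece of $\tGrav(\!(n)\!)$ is spanned by the classes $\bs\nabla\Omega_{ij}$ modulo a single relation, so it has dimension $\binom{n-1}{2}-1$; by contrast $\gr_2\HyCom(\!(n)\!)=H_2(\MM_{0,n})$ has dimension $2^{n-1}-\binom{n}{2}-1$. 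Already for $n=6$ these are $9$ and $16$, so no isomorphism exists, let alone a structure-compatible one. The hypercommutative operad does not appear in the paper at the level of the operadic cohomology $H(D\BV^*)$ at all: it appears one level up, as the top$-2$ weight cohomology of the \emph{Feynman transform} of gravity, $\gr_{4g-8+2n}H(\Feyn_\kk(\tGrav)(\!(g,n)\!))\cong\HyCom(\!(2g+n)\!)^2_{S_2\wr S_g}$ (Theorem~\ref{thm:Feyn grav top}, Corollary~\ref{cor:Feyngrav}), which rests on Getzler's Koszulness statement $H(\Feyn_\kk(\tGrav)(\!(0,N)\!))=\HyCom^*(\!(N)\!)$. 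For this reason your filtration is set up at the wrong level: running the internal differential of $D\BV^*$ first produces an $E^1$ page $\Feyn_\kk(H(D\BV^*))$ whose vertices carry gravity classes, and identifying \emph{that} complex with $\gr_2\Feyn(\HyCom)$ in the relevant weight would require the Koszul duality between $\tGrav$ and $\HyCom$ as an additional, unperformed step --- precisely the step your false claim was meant to replace. The paper instead first passes to the reduced complex $\AFeyn'_\kk(D\BV^*)$ (Lemma~\ref{lem:feynp dbv}) and filters by the number of bivalent $\Dc$-vertices, so that the $E^1$ page is an outer graph complex whose vertices are decorated by $H(\Feyn_\kk(D'\BV^*))$, and it is there that $\HyCom^2$-decorations legitimately enter.

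Three further points would still be missing even after repairing the above. First, the induced cyclic structure on all of $H(D\BV^*)$ is only an $\infty$-structure obtained by homotopy transfer, with higher operations that are not explicitly known; the paper proves formality only for the arity $\geq 3$ part (Proposition~\ref{prop:DpDBV formal}), so the differential on your proposed $E^1$ page is not simply the Feynman differential of a graded operad, and degeneration at $E^2$ cannot be asserted without the purity statements of Proposition~\ref{prop:E2 convergence}. Second, in weight $6g-8+2n$ the relevant $E^1$ page contains not only the $\HyCom^2$-decorated graphs but also graphs with a genus-one vertex decorated by $\Com(\!(n_v+2)\!)_{S_2}$ (the $g=1$ case in Corollary~\ref{cor:Feyngrav}); one must prove these contribute acyclically (the paper's $V_{A1}$ lemma, which is exactly where the exclusion $(g,n)\neq(1,1)$ originates), and your proposal never sees them. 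Third, identifying the $E^1$ differential with minus the operadic composition of $\HyCom$ is not a formality of weight bookkeeping; it requires the explicit cochain-level computations with representatives in $\tBVt$ carried out in Propositions~\ref{prop:circ is comp Com}--\ref{prop:Delta is psi}.
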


The analogous result for the full Feynman transform of $D\BV^*$ is slightly more complicated to state.
In Section \ref{sec:thm dbv_hycom2 proof}
below, we construct a map
\[
  \Psi_\wedge^{k}\colon
  H^k(\Feyn(\HyCom)(\!(g,n)\!)) \to \bigoplus_{j=1}^n H^{k+2}(\Feyn(\Com)(\!(g,n)\!)).
\]

The map $\Psi_\wedge^*$ is built from the dual operations to the multiplications with $\psi$-classes at the $n$ markings, identifying $\HyCom(r)=H_\bullet(\MM_{0,r})$.

\begin{thm}\label{thm:dbv_hycom2}
Let $(g,n)\neq (1,1)$ and $n\geq 1$. Then we have that 
\[
  \gr_{6g-8+4n} H^{6g-6+3n-k}(\Feyn_\kk(D\BV^*)(\!(g,n)\!)) 
\cong 
 \ker \Psi_\wedge^{-k} \oplus \coker \Psi_\wedge^{-k-1}.
\]
\end{thm}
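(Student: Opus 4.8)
The plan is to compute $\gr_{6g-8+4n}H^\bullet(\Feyn_\kk(D\BV^*)(\!(g,n)\!))$ by exhibiting the top$-2$-weight graded complex as the totalization of a two-column double complex and reading off its cohomology. First I would invoke the relationship between the full and amputated Feynman transforms (Section \ref{sec:amputated}): the full transform carries at each of the $n$ external legs an extra copy of $H_\bullet(S^1)=\Q\oplus\Q\Delta$, reflecting the framing circles of the marked disks, so that its weight equals the internal (amputated) weight plus $2$ for every leg decorated by the degree$-1$ generator $\Delta$. Since the amputated weight is bounded above by $6g-6+2n$ (Theorem \ref{thm:main}(1)), a graph contributes to weight $6g-8+4n$ only if its \emph{defect} — the number of legs not carrying $\Delta$ — equals $0$ or $1$: indeed the internal weight is forced to be $6g-8+2n+2\cdot(\text{defect})$, so defect $\geq 2$ is impossible. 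Filtering by defect therefore presents $\gr_{6g-8+4n}\Feyn_\kk(D\BV^*)(\!(g,n)\!)$ as a two-step complex, the defect-raising piece of the differential being the only component crossing columns.

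Next I would identify the two columns after taking vertical (graph/internal) cohomology. The defect$-0$ column, with all legs decorated by $\Delta$ and the internal part in amputated top$-2$ weight $6g-8+2n$, computes $\gr_2 H^\bullet(\Feyn(\HyCom)(\!(g,n)\!))$ by Theorem \ref{thm:dbv_hycom}. The defect$-1$ column is a direct sum over the $n$ choices of undecorated leg; each summand has its internal part in amputated top weight $6g-6+2n$ and so computes $H^\bullet(\Feyn(\Com)(\!(g,n)\!))$ by Theorem \ref{thm:main}(3). Careful bookkeeping of the degree shifts produced by the remaining $\Delta$'s and of the orientation lines — which is where the $\sgn_n$ twists of Theorem \ref{thm:main}(3) enter and must be checked to cancel across the two columns — shows that in total degree $6g-6+3n-k$ the $E_1$-page is precisely $H^{-k}(\Feyn(\HyCom)(\!(g,n)\!)) \to \bigoplus_{j=1}^n H^{-k+2}(\Feyn(\Com)(\!(g,n)\!))$, matching the source and target of $\Psi_\wedge^{-k}$.

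The heart of the argument, and the step I expect to be the main obstacle, is to identify this single cross-column differential $d_1$ with the map $\Psi_\wedge$. The defect-raising component of the Feynman differential trades a leg decoration $\Delta$ for internal weight by letting the framing circle act on the adjacent vertex decoration; under the dictionary $\HyCom(r)=H_\bullet(\MM_{0,r})$ this should be exactly the operation dual to multiplication by the $\psi$-class at that marking, summed over the $n$ legs. Establishing this requires unwinding the framing-circle coupling in $D\BV^*$ and matching it, with all signs, against the construction of $\Psi_\wedge$ from Section \ref{sec:thm dbv_hycom2 proof}; in particular one must check that $\Psi_\wedge$ is supported on the weight$-2$ part $\gr_2 H^\bullet(\Feyn(\HyCom))$ so that its kernel and cokernel, as written on the full cohomology, agree with those of $d_1$.

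Finally, because the double complex has only two columns, the spectral sequence degenerates at $E_2=E_\infty$, and over $\Q$ the resulting two-step filtration on the abutment splits, yielding
\[
\gr_{6g-8+4n} H^{6g-6+3n-k}(\Feyn_\kk(D\BV^*)(\!(g,n)\!)) \cong \ker \Psi_\wedge^{-k} \oplus \coker \Psi_\wedge^{-k-1}.
\]
The hypothesis $n\geq 1$ is needed for the defect$-1$ column to be nonempty — for $n=0$ one instead recovers the amputated answer $\gr_2 H(\Feyn(\HyCom))$ of Theorem \ref{thm:dbv_hycom}, since there $\Feyn=\AFeyn$ — while the exclusion of $(g,n)=(1,1)$ and the treatment of the small cases where the input Theorems \ref{thm:main} and \ref{thm:dbv_hycom} require care would be handled separately.
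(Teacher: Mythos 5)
Your skeleton matches the paper's proof in its essential structure: both realize the top$-2$-weight cohomology as that of a two-column complex whose columns are $\gr_2\Feyn(\HyCom)(\!(g,n)\!)$ and $\bigoplus_{j=1}^n\Feyn(\Com)(\!(g,n)\!)$, identify the connecting map with $\Psi_\wedge$, and conclude by splitting kernel and cokernel (the paper uses Lemma \ref{lem:simple tri}; your "two columns, so $E_2=E_\infty$ plus splitting over $\Q$" is the same thing). The organization differs genuinely, though. The paper filters $\Feyn_{\kk}'(D\BV^*)$ by the number of \emph{all} bivalent vertices, so its $E^1$-page is the explicit graph complex $\MFree_1(H(\Feyn_{\kk}(D'\BV^*)))$, decomposed as $V_{A0}\oplus V_{A1}\oplus V_B$; it must then prove $E^2$-convergence (Proposition \ref{prop:E2 convergence}, resting on the purity statement of Corollary \ref{cor:Feyngrav}), kill the genus-one piece $V_{A1}$ by hand, and identify $V_{A0}$ and $V_B$ \emph{as complexes} via Propositions \ref{prop:circ is comp HyCom} and \ref{prop:circ is comp ComCom}. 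You filter only by the leg decorations ("defect") and quote Theorem \ref{thm:main}(1),(3) and Theorem \ref{thm:dbv_hycom} as black boxes for the columns. That reorganization is legitimate — there is no circularity, since the paper's proof of Theorem \ref{thm:dbv_hycom} is an independent, easier variant that never invokes Theorem \ref{thm:dbv_hycom2} — and it absorbs the $V_{A1}$-acyclicity and the purity/convergence analysis into the cited results. Your degree bookkeeping (the $E_1$-page in total degree $6g-6+3n-k$ being $\gr_2H^{-k}(\Feyn(\HyCom))\to\bigoplus_j H^{-k+2}(\Feyn(\Com))$) is correct.

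The genuine gap is exactly the step you flag and postpone: proving that the defect-raising differential induces $\Psi_\wedge$ on cohomology. This is not a routine verification; it is where essentially all of the paper's technical content sits — the explicit cocycle representatives of Section \ref{sec:top cohomology feyn grav}, the cochain-level operations $\Delta_j$, $\circ_{i,j}$ of Section \ref{sec:operations ftgrav}, and the $\tBVt$-computations of Propositions \ref{prop:circ is comp Com}--\ref{prop:Delta is psi}, assembled in Lemma \ref{lem:d1iii is Psiwedge}. Moreover, your black-box organization makes this step harder to even set up: the connecting map of your spectral sequence must be evaluated on cocycle representatives inside $\AFeyn_{\kk}'(D\BV^*)$, but your identification of the defect-$0$ column with $\gr_2 H(\Feyn(\HyCom))$ is an abstract isomorphism imported from Theorem \ref{thm:dbv_hycom}, which is itself proved via another spectral sequence. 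To compute $d_1$ you would have to reopen that black box and exhibit representatives — at which point you are carrying out precisely the chain-level work the paper does inside its single bivalent-vertex spectral sequence, where columns and connecting map are identified simultaneously on the same page. So: correct strategy, correct bookkeeping, but the heart of the argument is deferred rather than supplied, and the deferral is not cost-free.
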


In order to obtain the above results, we introduce slightly smaller quasi-isomorphic subcomplexes
\begin{align*} \Feyn'(D\BV^*)(\!(g,n)\!) &\xhookrightarrow{\sim} \Feyn(D\BV^*)(\!(g,n)\!) & \AFeyn'(D\BV^*)(\!(g,n)\!) &\xhookrightarrow{\sim} \AFeyn(D\BV^*)(\!(g,n)\!) \end{align*}
and investigate natural spectral sequences
\begin{align*} H^\bullet(\Feyn'(H^\bullet(D\BV^*))(\!(g,n)\!)) &\Rightarrow H(\Feyn'(D\BV^*)(\!(g,n)\!)) \\
 H^\bullet(\AFeyn'(H^\bullet(D\BV^*))(\!(g,n)\!)) &\Rightarrow H(\AFeyn'(D\BV^*)(\!(g,n)\!)) . \end{align*}

\subsection{Results on \texorpdfstring{$\Feyn(\HyCom)$}{Feyn(HyCom)} and \texorpdfstring{$\Feyn(\BV)$}{Feyn(BV)}}

The above theorems allow us to express the top$-2$-weight part of the cohomology of $\Feyn_\kk(D\BV^*)$ through the weight 2 part of the Feynman transform of the hypercommutative operad.
This latter object can be simplified further as follows.

\begin{thm}\label{thm:hycom bv}
There is an isomorphism
\[
\gr_2H^\bullet(\Feyn(\HyCom^*))(\!(g,n)\!) 
\cong 
\gr_2H^\bullet(\Feyn(\BV^*))(\!(g,n)\!) 
\]
for all $(g,n)\neq (1,1)$.
\end{thm}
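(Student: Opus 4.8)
The plan is to pass to the weight-graded pieces and compare them directly. Since $\BV^*$ and $\HyCom^*$ carry no internal differential, the only differential on $\Feyn(\BV^*)$ and $\Feyn(\HyCom^*)$ is the cocomposition (edge-contraction) differential, which is additive in the weight and hence preserves it. Thus the weight grading splits each complex into a direct sum of subcomplexes, so that $\gr_2 H^\bullet(\Feyn(\cP^*)) = H^\bullet(\gr_2\Feyn(\cP^*))$, and it suffices to compare the two weight-$2$ complexes $\gr_2\Feyn(\BV^*)(\!(g,n)\!)$ and $\gr_2\Feyn(\HyCom^*)(\!(g,n)\!)$.

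Next I would give these complexes a common combinatorial description. The weight of a generator is a non-negative even integer and the weight of a decorated graph is the sum of the weights of its vertices, so a graph of total weight $2$ has all vertices decorated by $\gr_0\cP^* = \Com^*$ except for a single distinguished vertex decorated by $\gr_2\cP^*$. Hence $\gr_2\Feyn(\cP^*)$ is the commutative graph complex $\Feyn(\Com^*)$ equipped with one marked vertex carrying a $\gr_2\cP^*$-decoration; its differential contracts edges away from the marked vertex by the $\Com^*$-cooperad structure, and contracts edges incident to the marked vertex by the two weight-preserving cocompositions $\gr_2\cP^*\to \gr_2\cP^*\otimes \Com^*$ and $\gr_2\cP^*\to \Com^*\otimes\gr_2\cP^*$. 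In this way the entire complex $\gr_2\Feyn(\cP^*)$ is determined by the weight-$\le 2$ truncation of $\cP^*$, that is by $\gr_2\cP^*$ together with its structure of a cyclic infinitesimal comodule over $\Com^*$ (dually, by $\gr_2\cP$ as a cyclic infinitesimal bimodule over $\Com$).

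The core of the proof is therefore to compare these weight-$\le 2$ structures for $\cP=\BV$ and $\cP=\HyCom$, using the relationship between the two operads provided by the circle/framing (the homotopy trivialization of the $\Delta$-operator). The two are \emph{not} isomorphic on the nose: the weight-$2$ part of $\BV$ is generated over $\Com$ by the arity-$1$ operator $\Delta$ and the arity-$2$ bracket, both in homological degree $1$, whereas the weight-$2$ part of $\HyCom = H_\bullet(\MM_{0,\bullet})$ is controlled by the boundary-divisor and $\psi$-classes in $H_2(\MM_{0,r+1})$, which sit in homological degree $2$ and whose lowest generator has arity $3$. Consequently I expect the comparison to be realized not by a strict isomorphism of decorated graph complexes but by an explicit map of complexes inducing a quasi-isomorphism in weight $2$, in which a single-vertex moduli generator is matched with a bracket/product composite carrying one extra edge, the two shifts in homological degree being absorbed by this change in edge number. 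The map itself should come from the comparison of cyclic (co)operads furnished by the trivialization of the circle action, restricted to weight $\le 2$.

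The main obstacle is precisely the verification that this comparison map is a quasi-isomorphism: one must show that the discrepancy between the bracket-and-$\Delta$ presentation on the $\BV$ side and the single-vertex $H_2(\MM_{0,r+1})$-generators on the $\HyCom$ side — together with the $\Delta$-tadpole contributions and the correct dictionary between $\Delta$, the $\psi$-classes and the boundary divisors — is acyclic, while matching both the weight and the cohomological grading. I would organize this through a spectral sequence filtering the weight-$2$ complexes by the number of vertices (equivalently by the number of edges incident to the marked vertex), so that the comparison reduces to a local statement at the marked vertex about the weight-$\le 2$ infinitesimal $\Com$-bimodules $\gr_2\BV$ and $\gr_2\HyCom$. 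Finally, the case $(g,n)=(1,1)$ must be excluded and treated by hand, since there the reduction to a single marked vertex degenerates.
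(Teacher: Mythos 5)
Your setup matches the paper's: splitting off the weight-two subcomplex, describing it as a commutative graph complex with one special vertex, and recognizing that the degree discrepancy between $\gr_2\BV^*$ and $\gr_2\HyCom^*$ must be absorbed by a change in the number of edges. But your plan has a genuine gap at exactly the step you flag as the main obstacle: you never construct the comparison map, and the verification strategy you propose --- filtering by the number of vertices so that ``the comparison reduces to a local statement at the marked vertex about the weight-$\le 2$ infinitesimal $\Com$-bimodules'' --- cannot work. Vertex-wise, $\gr_2\BV^*(\!(r)\!)$ is concentrated in cohomological degree $1$, while $\gr_2\HyCom^*(\!(r)\!)\cong H^2(\MM_{0,r})$ is concentrated in degree $2$, and both carry zero differential; so there is no quasi-isomorphism (indeed no nonzero degree-preserving map at all) between them as cyclic $\Com^*$-comodules. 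This is precisely the content of Remark~\ref{rem:not a map}: any comparison between the weight-$\le 2$ truncations exists only as an $\infty$-morphism, i.e.\ it cannot act vertex-by-vertex. Consequently the eventual isomorphism mixes graphs with different vertex numbers and is incompatible with the very filtration you propose; no spectral sequence whose associated graded sees one vertex at a time can reduce the theorem to a local statement.

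What is missing, and what the paper supplies in Sections on $\Hy^*$ and \ref{sec:Phi map}, is a concrete intermediate object and an explicitly non-local map. The paper first replaces $\HyCom^*_{\le 2}$ by the combinatorial resolution $\Hy^*$, with generators $E_{ij}$ in degree $1$ and $\psi_i,\delta_A$ in degree $2$ and differential $dE_{ij}=\psi_i+\psi_j-\sum_A\delta_A$; it then exhibits a basis $E_{ij}$ of $\gr_2\BV^*$ whose $S_{r+1}$-action and $\Com^*$-coaction obey the same formulas as the $E_{ij}$ in $\Hy^*$; finally it defines $\Phi\colon \gr_2\Feyn(\Hy^*)\to\gr_2\Feyn(\BV^*)$ directly on graphs by $\delta_A\mapsto 0$, $E_{ij}\mapsto E_{ij}$, and by sending a $\psi_i$-decorated special vertex to the graph with an extra bivalent special vertex inserted on the $i$-th edge. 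That last assignment is what realizes your ``extra edge absorbs the degree shift,'' but it changes the underlying graph, and proving that $\Phi$ is a chain map, is surjective, and has acyclic kernel (the kernel being spanned by $\delta_A$-graphs and by symmetric pairs of adjacent $\psi$-decorations, which a two-step filtration matches bijectively) is where all the work lies. Without the resolution $\Hy^*$, the matching basis of $\gr_2\BV^*$, and the explicit map $\Phi$ (or an equivalent $\infty$-morphism), your argument stops exactly at the hard step.
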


We note that $\gr_2\Feyn(\BV^*)$
is a fairly simple graph complex that identically arises in the computation of homotopy groups of embedding spaces \cite{FTW2}.
In particular, for $n=0$ its cohomology may be expressed completely through the cohomology of the commutative graph complex as follows.
\begin{prop}\label{prop:FeynBV_symmetric_product}
Let $\SW_{g}^k$ be the part of total genus $g$ and degree $k$ of the symmetric product 
\[
\SW = \Sym^2\left( \bigoplus_{g\geq 2} H^\bullet\left(\Feyn(\Com^*)(\!(g,1)\!)\right)[-1] \right).
\]
Then there is an isomorphism
\[
\gr_2 H^k(\Feyn(\BV^*)(\!(g,0)\!))
\cong 
H^{k-3}(\Feyn(\Com^*)(\!(g,2)\!)_{asymm})  
\oplus 
\SW_g^{k-1},
\]
with $\Feyn(\Com^*)(\!(g,2)\!)_{asymm}$ the antisymmetric part under the $S_2$-action on the two markings.
\end{prop}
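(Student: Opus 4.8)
The plan is to make the weight-$2$ complex completely explicit and then identify it with a mapping cone of commutative graph complexes. Since $\BV$ is a homology operad (the homology of the framed little $2$-disks operad), $\BV^*$ carries no internal differential, so the differential on $\gr_2\Feyn(\BV^*)(\!(g,0)\!)$ is purely the Feynman (vertex-splitting) differential. Because the weight is twice the cohomological degree of the $\BV^*$-decorations and degrees are additive over vertices, the weight-$2$ part consists precisely of graphs in which exactly one vertex carries a degree-$1$ decoration while all others are decorated by the degree-$0$ piece $\Com^*$. Using the decomposition of $\BV(r)$ into $\Ger(r)$ tensored with the circle homologies $H_\bullet(S^1)$ at the flags, one sees that the degree-$1$ part of $\BV^*(r)$ is spanned by two types of classes: (A) a single circle class $\Delta$ placed on one flag of an otherwise commutative vertex, and (B) a single Gerstenhaber bracket. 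First I would record how the splitting differential acts on these two types, the only nontrivial interaction being governed by the $\BV$-relation $\Delta(ab)=(\Delta a)b\pm a(\Delta b)\pm[a,b]$.

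Next I would organise the complex as a short exact sequence. Splitting a vertex carrying a single bracket only ever produces a single bracket again (by the Leibniz rule on $\Ger$) and never a $\Delta$, so the bracket-graphs $C_B$ form a subcomplex, with the $\Delta$-graphs $C_A$ as the quotient; this gives $0\to C_B\to C\to C_A\to 0$ whose connecting map is exactly the bracket-term $[a,b]$ of the $\BV$-relation, i.e.\ ``turn a $\Delta$ into a bracket-edge''. On $C_B$, I would reinterpret the single antisymmetric bracket at the special vertex as two external markings; matching the Leibniz differential with the vertex-splitting differential of the two-legged commutative graph complex should yield $H^\bullet(C_B)\cong H^{\bullet-3}(\Feyn(\Com^*)(\!(g,2)\!)_{\mathrm{asymm}})$, the antisymmetry of the Koszul-shifted bracket producing the $\mathrm{asymm}$ part, and the degree of the bracket together with the two new markings accounting for the shift by $3$.

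The remaining and most delicate step is to compute $H^\bullet(C_A)$ and to see that the sequence splits. Here I would argue, by exhibiting an acyclic subcomplex of non-separating configurations, that the cohomology of the $\Delta$-graphs is concentrated on graphs in which the $\Delta$-flag lies on a separating bridge. Cutting this bridge decomposes such a graph into two connected commutative graphs, each with one marked flag, symmetric under interchanging the two sides; vanishing of the one-legged commutative graph cohomology in low complexity forces each side to have genus $\geq 2$, and the symmetry of the bridge produces the $\Sym^2$. This identifies $H^\bullet(C_A)\cong\SW_g^{\bullet-1}$. Finally I would show that the connecting homomorphism of the short exact sequence vanishes: the $\SW$-classes are supported on reducible (separating) graphs while the bracket classes are essentially one-particle-irreducible, and the connecting map, which would have to manufacture a bracket out of a bridge-$\Delta$, cancels for structural and parity reasons. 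This gives the asserted direct sum.

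The hard part will be this last step: rigorously establishing the reduction to a separating bridge (the acyclicity of the non-separating $\Delta$-configurations), pinning down the genus-$\geq 2$ stability together with the symmetrisation to $\Sym^2$ with the correct Koszul signs, and proving that the connecting map vanishes so that the answer is a genuine direct sum rather than a kernel--cokernel extension. The sign and $\kk$-twist bookkeeping controlling the shifts by $3$ and $1$, and the (anti)symmetry conventions for the bracket and for the bridge, are where most of the care is needed.
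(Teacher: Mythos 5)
Your first structural step is not available: the splitting of $\gr_2\Feyn(\BV^*)(\!(g,0)\!)$ into ``$\Delta$-graphs'' $C_A$ and ``bracket-graphs'' $C_B$ is not well defined. The monomials ``one $\Delta$ on a flag'' versus ``one bracket'' only make sense after choosing a distinguished output flag at the special vertex, and the cyclic structure of $\BV$ mixes the two types: $\tau_3[x,y]=-[x,y]-2x\wedge\Delta y$, and dually, in the notation of the paper's Section \ref{sec:BV gr2 basis}, $\tau\omega_{ii}=\omega_{ii}-2\omega_{1i}+\omega_{11}$ and $\tau\omega_{ij}=\omega_{ij}-\omega_{1i}-\omega_{1j}+\omega_{11}$. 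Since the graph complex is built from coinvariants under graph isomorphisms, which act on each decoration space $\gr_2\BV^*(\!(\gst(v))\!)$ through the full symmetric group on flags, neither span descends to the graph complex, so $C\neq C_A\oplus C_B$. Overcoming exactly this is the point of the paper's basis $E_{ij}=2\omega_{ij}-\omega_{ii}-\omega_{jj}$, $E_{0i}=-\omega_{ii}$, on which the symmetric group acts by pure relabelling, $\sigma E_{ij}=E_{\sigma(i)\sigma(j)}$; this is what permits the ``blown-up'' picture with two $\omega$-legs, from which the two summands of the statement arise according to whether the two $\omega$-legs lie in one connected component (the asymmetric two-legged commutative graph complex) or in two (the $\Sym^2$ part) --- not according to bracket versus $\Delta$.

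Second, even in a rooted setting where your decomposition would make sense, your subcomplex goes the wrong way. The differential of $\Feyn(\BV^*)$ is vertex splitting, dual to operadic composition in $\BV$, and the relation $[x,y]=\Delta(x\wedge y)-(\Delta x)\wedge y-x\wedge(\Delta y)$ says that the bracket monomial occurs in the composition of a $\Delta$-monomial with a product; dually, splitting a bracket-decorated vertex creates $\Delta$-decorated vertices. (Your appeal to the Leibniz rule governs edge contraction, i.e.\ the operad-side Feynman transform, not the cooperad-side one relevant here.) So the $\Delta$-graphs form the subcomplex and the bracket-graphs the quotient, contrary to your setup, and the pieces then no longer match your intended identifications. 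Finally, the step you yourself flag as hardest --- acyclicity of the non-separating configurations, concentration on separating bridges, and vanishing of the connecting map --- has no proof sketch at all; in the paper the corresponding reduction (killing all $\epsilon$-legs and the extra tadpole component, so that only graphs with a bivalent special vertex survive) rests on the nontrivial hairy-graph cohomology computation of \cite[Theorem 3.1]{TWspherical}, for which your proposal offers no substitute. As it stands the argument does not go through.
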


Finally, the cyclic operad $\HyCom$ is the genus zero part of the larger modular operad $H_\bullet(\MM)$ consisting of the homology of the Deligne-Mumford compactifications of the moduli spaces of curves $\MM_{g,n}$.
We may hence use the canonical projection of modular operads $H_\bullet(\MM)\to \HyCom$ to obtain the map
\begin{equation}\label{equ:hycom mm}
    \Feyn(\HyCom^*)\to \Feyn(H^\bullet(\MM)).
\end{equation}
The resulting weight 0 comparison map 
\begin{gather*} H^\bullet(\Feyn(\Com^*))(\!(g,n)\!) \cong \gr_0H^\bullet(\Feyn(\HyCom^*))(\!(g,n)\!) \\
\quad \quad \to \gr_0 H^\bullet(\Feyn(H^\bullet(\MM)))(\!(g,n)\!) \cong \gr_0H_c^\bullet(\M_{g,n}) \end{gather*}
has been shown to be an isomorphism for all $(g,n)\neq (1,1)$ in \cite{CGP1, CGP2}.
Similarly, the weight 2 part of $\Feyn(H^\bullet(\MM))$ (computing $\gr_2 H^\bullet_c(\M_{g,n})$) has been studied by Payne and the third author in \cite{PayneWillwacher}. 
In particular they express the weight 2 cohomology in terms of a graph complex close to the commutative graph complex $\Feyn(\Com^*)$. 
Our methods allow for expressing their results in a more natural form through the cohomology of $\Feyn(\HyCom^*)$, yielding the following comparison result, which is slightly more complicated than its weight 0 analogue:
\begin{prop}[{cf. also \cite[Theorems 1.1 and 1.2]{PayneWillwacher}}]
  \label{prop:HyCom MM comparison}
There is a morphism 
\[
\nabla^k_{g,n}: \gr_2 H^{k-2}(\Feyn(\HyCom^*)(\!(g-1,n)\!)) 
\to  
\gr_2 H^{k}(\Feyn(\HyCom^*)(\!(g,n)\!)) 
\]
such that 
\[
\gr_2H^k(\Feyn(H^\bullet(\MM))(\!(g,n)\!))
\cong 
\mathrm{ker}(\nabla^{k+1}_{g,n})
\oplus
\mathrm{coker}(\nabla^{k}_{g,n})
.
\]
Furthermore, for $n=0$ we have $\nabla_{g,0}^k =0$ for all $g,k$ so that 
\[
\gr_2H^k(\Feyn(H^\bullet(\MM))(\!(g,0)\!))
\cong 
\gr_2 H^{k-1}(\Feyn(\HyCom^*)(\!(g-1,0)\!)) 
\oplus 
\gr_2 H^k(\Feyn(\HyCom^*)(\!(g,0)\!)) 
.
\]
\end{prop}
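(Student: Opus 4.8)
The plan is to compare the two weight-$2$ complexes directly, exploiting the comparison map $\Feyn(\HyCom^*)\to\Feyn(H^\bullet(\MM))$ of \eqref{equ:hycom mm}. Since $\HyCom$ is concentrated in genus $0$, the image of this map is exactly the span of decorated graphs all of whose vertices have genus $0$, equivalently the locus where the total vertex genus $\sum_v g_v$ is minimal. The Feynman differential only splits a vertex (preserving $\sum_v g_v$) or opens a handle (lowering some $g_v$ by one while adding a loop), so it never raises $\sum_v g_v$; hence $\{\sum_v g_v=0\}$ is a genuine subcomplex, isomorphic in weight $2$ to $\gr_2\Feyn(\HyCom^*)(\!(g,n)\!)$, and there is a short exact sequence of weight-$2$ complexes
\[
0\to \gr_2\Feyn(\HyCom^*)(\!(g,n)\!)\to \gr_2\Feyn(H^\bullet(\MM))(\!(g,n)\!)\to R\to 0,
\]
where $R$ is the quotient spanned by graphs carrying at least one positive-genus vertex. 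The whole statement will follow from understanding $R$ and the connecting homomorphism of the associated long exact sequence.

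The heart of the argument is the identification of $R$. Here I would use the description of $H^2(\MM_{g',r})$ due to Arbarello--Cornalba: away from the low-genus exceptions it is spanned by $\kappa_1$, the $\psi_i$, and the boundary divisor classes, the latter being precisely the images of the cooperadic cocomposition and hence realized by the Feynman differential (vertex splitting and handle opening). Modulo this, every weight-$2$ decoration may be taken tautological. I would then show that $R$ is quasi-isomorphic to $\gr_2\Feyn(\HyCom^*)(\!(g-1,n)\!)$ placed in a cohomological degree shifted by one: the only homologically essential positive-genus configuration is a single genus-$1$ vertex, i.e.\ one handle, while contributions involving a vertex of genus $\ge 2$ or two or more positive-genus vertices form an acyclic subquotient and cancel. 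Concretely the quasi-isomorphism is realized by cutting the handle, which lowers the total genus by one and shifts the cohomological degree by one, so that $H^k(R)\cong\gr_2 H^{k-1}(\Feyn(\HyCom^*)(\!(g-1,n)\!))$.

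Feeding this into the long exact sequence of the short exact sequence above, the connecting homomorphism becomes a map $\gr_2 H^{k-2}(\Feyn(\HyCom^*)(\!(g-1,n)\!))\to\gr_2 H^{k}(\Feyn(\HyCom^*)(\!(g,n)\!))$, and I would define $\nabla^k_{g,n}$ to be this map; concretely it is the handle-gluing operation (the non-separating cocomposition), raising genus by one and cohomological degree by two. The long exact sequence then collapses to
\[
0\to\coker\nabla^k_{g,n}\to \gr_2 H^k(\Feyn(H^\bullet(\MM))(\!(g,n)\!))\to\ker\nabla^{k+1}_{g,n}\to 0,
\]
which, since we work over $\QQ$, splits and yields the claimed isomorphism $\gr_2 H^k\cong\ker\nabla^{k+1}\oplus\coker\nabla^k$. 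For $n=0$ I would finally show $\nabla^k_{g,0}=0$: the handle-gluing creates a non-separating node whose two preimages can be interchanged, and this $\Z/2$ acts by a sign on the edge orientation and the decoration; with no markings present there is nothing to break this symmetry, forcing the glued class to vanish. With $\nabla_{g,0}=0$ the kernel and cokernel are the full groups and the formula reduces to the stated direct sum.

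The main obstacle is the second paragraph: proving the quasi-isomorphism $R\simeq\gr_2\Feyn(\HyCom^*)(\!(g-1,n)\!)[-1]$ rigorously. This requires uniform control of $H^2(\MM_{g',r})$ including the unstable and low-genus exceptions (notably the excluded $(1,1)$), a genuine proof that the genus-$\ge 2$ and multi-handle contributions assemble into an acyclic subquotient, and careful bookkeeping of the degree shift, signs, and $S_n$-equivariance so that the connecting map is correctly identified with $\nabla$ and its $n=0$ vanishing is not an artifact of conventions.
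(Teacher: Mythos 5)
Your homological skeleton is sound: the span of graphs all of whose vertices have genus zero is indeed a subcomplex of $\gr_2\Feyn(H^\bullet(\MM))(\!(g,n)\!)$ isomorphic to $\gr_2\Feyn(\HyCom^*)(\!(g,n)\!)$, and the long exact sequence of your short exact sequence splits over $\Q$; this is essentially the same homological-algebra step as the paper's two-step filtration. But there are two genuine gaps.

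First, the quasi-isomorphism $R\simeq \gr_2\Feyn(\HyCom^*)(\!(g-1,n)\!)[-1]$, which you yourself flag as the main obstacle, is not a deferrable step --- it carries the entire content of the proposition, and your sketch of it amounts to redoing the main theorems of \cite{PayneWillwacher}. The quotient $R$ contains graphs whose positive-genus vertices carry $H^0$ as well as $H^2(\MM_{g',r})$ decorations (so $\kappa_1$, $\psi_i$ and boundary classes on vertices of genus $\geq 1$), graphs with several positive-genus vertices, and the low-genus exceptions; proving that all of this collapses onto ``a single genus-one vertex, tautologically decorated'' is exactly the lengthy spectral-sequence analysis of that paper, not a consequence of quoting the Arbarello--Cornalba presentation. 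The paper's own proof avoids this entirely: it \emph{cites} the Payne--Willwacher theorem that $\gr_2 H_c^\bullet(\M_{g,n})\cong H(X_{g,n})$ for the explicit tadpole-quotient $X_{g,n}$ of $\gr_2\Feyn(\BV^*)(\!(g,n)\!)$, after which only elementary bookkeeping remains: the decomposition of $X_{g,n}$ into its tadpole-free part $X_{g,n}'\cong Z_{g,n}$ and its one-tadpole part $X_{g,n}''\cong Z_{g-1,n}$, the quasi-isomorphism $Z_{g,n}\hookrightarrow\gr_2\Feyn(\BV^*)(\!(g,n)\!)$ of Lemma \ref{lem:Zgn qiso}, and Theorem \ref{thm:hycom bv} to translate $\BV^*$ into $\HyCom^*$. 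So your route is not wrong in outline, but as written it assumes precisely what the cited references are needed for.

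Second, your argument that $\nabla^k_{g,0}=0$ is incorrect. The connecting homomorphism is induced by $\eta^*$, pullback along the gluing map $\MM_{0,r+2}\to\MM_{1,r}$. That gluing map is invariant under exchanging the two glued points, so $\eta^*x$ is \emph{symmetric} under the swap of the two half-edges of the resulting tadpole; moreover this swap fixes the tadpole as an element of the edge set and hence acts trivially on $\Det_1(E\gamma)$. No sign appears, and in this paper's conventions tadpole graphs do not vanish (they span $X_{g,n}''$ and give the top-weight generators in the proof of Theorem \ref{thm:Feyn grav top}). Note also that your symmetry argument nowhere uses $n=0$: if it were valid it would give $\nabla_{g,n}=0$ for all $n$, a much stronger statement that is not available and that the kernel/cokernel formulation for general $n$ is designed to avoid assuming. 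The vanishing at $n=0$ is a genuine theorem, namely \cite[Theorem 1.2]{PayneWillwacher}, which the paper cites rather than reproves; a complete proof along your lines would have to supply an actual argument for it.
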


\subsection{Corollaries for the handlebody groups}

By Giansiracusa's Theorem~\ref{thm:giansiracusa} our results about the Feynman transform of $D\BV^*$ above (Theorem~\ref{thm:main} and Theorem~\ref{thm:dbv_hycom}) yield parallel results about the cohomology of the handlebody groups.

\begin{cor}\label{cor:main hbdy}
    Let $(g,n)\neq (1,0),(1,1),(0,2)$.
\begin{enumerate}
\item The bottom weight part of the cohomology of the handlebody group is equal to the forested graph cohomology.
\[
    \gr_{0} H^\bullet(\HMod_{g,0}^n)
    \cong \gr_{0} H^\bullet(\HMod_{g,n}^0)
    \cong H^\bullet(\Feyn_\kk(D\Com^*))(\!(g,n)\!).
\]
In particular, we have that 
\[
    \gr_{0} H^\bullet(\HMod_{g,0}^0)
    \cong
    H^\bullet(\Out(F_g)).
\]
\item (Hainaut--Petersen \cite{HainautPetersen})
The top-weight part is equal to the cohomology of the Kontsevich commutative graph complex
\begin{align*} \gr_{6g-6+2n} H^{6g-6+2n-k}(\HMod_{g,n}^0) &\cong H^{-k}(\Feyn(\Com)(\!(g,n)\!)) \cong \gr_{6g-6+2n} H^{6g-6+2n-k}(\M_{g,n}) \\
\gr_{6g-6+4n} H^{6g-6+3n -k}(\HMod_{g,0}^n) &\cong H^{-k}(\Feyn(\Com)(\!(g,n)\!))\otimes \sgn_n . \end{align*}
\item The top$-2$-weight part satisfies 
\[
\gr_{6g-8+2n} H^{6g-6+2n-k}(\HMod_{g,n}^0) 
\cong 
\gr_2 H^{-k}(\Feyn(\HyCom)(\!(g,n)\!)).
\]
\end{enumerate}
\end{cor}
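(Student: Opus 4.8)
The plan is to obtain Corollary \ref{cor:main hbdy} as a direct translation of Theorem \ref{thm:main} and Theorem \ref{thm:dbv_hycom} across Giansiracusa's Theorem \ref{thm:giansiracusa}, which plays the role of a dictionary between handlebody group cohomology and the cohomology of the (amputated) Feynman transform of $D\BV^*$. Concretely, Theorem \ref{thm:giansiracusa} supplies, for $(g,n)\neq(1,0)$, an isomorphism $H^\bullet(\HMod_{g,0}^n)\cong H^\bullet(\Feyn_\kk(D\BV^*)(\!(g,n)\!))$, and, for $(g,n)\neq(1,0),(0,2)$, an isomorphism $H^\bullet(\HMod_{g,n}^0)\cong H^\bullet(\AFeyn_\kk(D\BV^*)(\!(g,n)\!))$. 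The weight grading appearing in the corollary is, by definition, the grading $\gr_W$ from \eqref{equ:degcomp Feyn} and \eqref{equ:degcomp Feyn 2} pulled back along these two isomorphisms; with this convention every assertion of the corollary becomes a verbatim image of an earlier result.

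I would then go through the three parts in turn. Part (1) is the image of Theorem \ref{thm:main}(2) under both isomorphisms, the identification $\gr_0 H^\bullet(\HMod_{g,0}^0)\cong H^\bullet(\Out(F_g))$ being the special case $n=0$ already recorded there. Part (3) is the image of Theorem \ref{thm:dbv_hycom} under the amputated isomorphism for $\HMod_{g,n}^0$. Part (2) splits into two pieces. The isomorphisms with $H^{-k}(\Feyn(\Com)(\!(g,n)\!))$ are the images of the two lines of Theorem \ref{thm:main}(3): the untwisted first line passes through the amputated isomorphism to give the statement for $\HMod_{g,n}^0$, and the $\sgn_n$-twisted second line passes through the marked-disk isomorphism to give the statement for $\HMod_{g,0}^n$; this reproves the theorem of Hainaut--Petersen \cite{HainautPetersen}. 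The additional identification $\gr_{6g-6+2n} H^{6g-6+2n-k}(\M_{g,n})\cong H^{-k}(\Feyn(\Com)(\!(g,n)\!))$ is the top-weight computation of the moduli space of curves due to Chan--Galatius--Payne \cite{CGP1,CGP2}.

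The only bookkeeping required is to check that the excluded loci match: the hypothesis $(g,n)\neq(1,0),(1,1),(0,2)$ is precisely the union of Giansiracusa's exclusions $(1,0)$ and (in the amputated case) $(0,2)$ with the exclusions $(1,0),(1,1),(0,2)$ carried by Theorem \ref{thm:main} and Theorem \ref{thm:dbv_hycom}, so the two sources of restrictions are compatible. No step presents a genuine obstacle, since the corollary is formal given the cited inputs. The single conceptual point worth stating explicitly is that, in the absence of an a priori mixed Hodge structure on the handlebody groups, ``weight'' here is defined through the Feynman transform; the comparison with the genuinely Hodge-theoretic weight on $\M_{g,n}$ in Part (2) is meaningful precisely because both gradings are matched to the commutative graph complex $\Feyn(\Com)$ on the nose.
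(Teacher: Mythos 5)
Your proposal is correct and follows exactly the paper's route: the paper's proof is the one-line observation that the corollary follows immediately by combining Theorem \ref{thm:giansiracusa} with Theorems \ref{thm:main} and \ref{thm:dbv_hycom}, which is precisely the translation you carry out part by part. Your additional bookkeeping (matching the excluded cases, defining the weight grading on the handlebody side via the Feynman transform, and citing \cite{CGP1,CGP2} for the comparison with $\M_{g,n}$) is consistent with what the paper leaves implicit.
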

Using Proposition~\ref{prop:FeynBV_symmetric_product} we then obtain:
\begin{cor}\label{cor:Hbdy Com comparison}
  For $g\geq 2$ have that 
  \[
  \gr_{6g-8} H^{6g-6-k}(\HMod_{g,0}^0) \cong
  H^{3-k}(\Feyn(\Com)(\!(g,2)\!)_{asymm})  
  \oplus 
  (\SW_g^{k-1})^*,
  \]
  with $\SW_g^{k-1}$ as in Proposition~\ref{prop:FeynBV_symmetric_product}.
\end{cor}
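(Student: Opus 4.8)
The plan is to assemble the stated identity by chaining the results already established, starting on the handlebody side and working toward the commutative graph complex; nothing genuinely new is needed beyond a careful bookkeeping of duals and degrees. Since $g\geq 2$, the pair $(g,0)$ avoids all of the excluded cases $(1,0),(1,1),(0,2)$, so every result invoked below applies. First I would specialize part (3) of Corollary~\ref{cor:main hbdy} to $n=0$, which gives
\[
  \gr_{6g-8} H^{6g-6-k}(\HMod_{g,0}^0)
  \cong
  \gr_2 H^{-k}(\Feyn(\HyCom)(\!(g,0)\!)).
\]
This already identifies the top$-2$-weight cohomology of the handlebody group with the weight $2$ cohomology of $\Feyn(\HyCom)$, and everything that follows is a computation of the right-hand side.

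Next I would invoke the linear duality of the Feynman transform: for fixed $(g,n)$ the complex $\Feyn(\cP)(\!(g,n)\!)$ is degreewise finite-dimensional, and dualizing the graphwise decorations and orientation data exchanges $\Feyn(\cP)$ with $\Feyn(\cP^*)$ while negating the cohomological degree, so that $\gr_2 H^{-k}(\Feyn(\HyCom)(\!(g,0)\!)) \cong \bigl(\gr_2 H^{k}(\Feyn(\HyCom^*)(\!(g,0)\!))\bigr)^*$. Applying Theorem~\ref{thm:hycom bv} (valid since $(g,0)\neq(1,1)$) replaces $\HyCom^*$ by $\BV^*$, and then Proposition~\ref{prop:FeynBV_symmetric_product} evaluates the result:
\[
  \gr_2 H^{k}(\Feyn(\BV^*)(\!(g,0)\!))
  \cong
  H^{k-3}(\Feyn(\Com^*)(\!(g,2)\!)_{asymm}) \oplus \SW_g^{k-1}.
\]

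Dualizing this isomorphism termwise finishes the proof. On the first summand the same duality turns $\Feyn(\Com^*)$ into $\Feyn(\Com)$ and sends degree $k-3$ to $3-k$, respecting the $S_2$-antisymmetric part, so that $\bigl(H^{k-3}(\Feyn(\Com^*)(\!(g,2)\!)_{asymm})\bigr)^* \cong H^{3-k}(\Feyn(\Com)(\!(g,2)\!)_{asymm})$; the second summand is left in the form $(\SW_g^{k-1})^*$. Combining the displayed isomorphisms yields exactly the claim. I expect the only delicate point to be the degree bookkeeping: one must verify that the cohomological shift $[-1]$ built into $\SW$, together with the $-3$ and $-1$ offsets in Proposition~\ref{prop:FeynBV_symmetric_product}, compose correctly with the two applications of linear duality to produce precisely the indices $3-k$ and $k-1$, and that antisymmetrization is preserved under dualization. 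Once the duality convention $H^{-k}(\Feyn(\cP))\cong\bigl(H^{k}(\Feyn(\cP^*))\bigr)^*$ is fixed, the corollary is a formal consequence of the preceding results.
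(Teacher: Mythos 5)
Your proposal is correct and follows essentially the same route as the paper, which proves the corollary by specializing part (3) of Corollary~\ref{cor:main hbdy} to $n=0$ and then applying Theorem~\ref{thm:hycom bv} and Proposition~\ref{prop:FeynBV_symmetric_product}. The only difference is that you make explicit the linear-duality steps (passing between $\Feyn(\HyCom)$, $\Feyn(\HyCom^*)$, and $\Feyn(\Com^*)$ versus $\Feyn(\Com)$, with degree negation and weight preserved) that the paper leaves implicit; these are justified by the degreewise finite-dimensionality of the weight-graded pieces, so your bookkeeping is sound.
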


We note that Hirose \cite{Hirose} proved that $\HMod_{g,0}^0$ has vcd $4g-5$ (i.e.~that the cohomology is only supported in degrees $\leq 4g-5$).
Combining Corollary~\ref{cor:Hbdy Com comparison} with known results on the commutative graph cohomology, we obtain a lot of non-vanishing cohomology close to the vcd.

  \begin{cor}\label{cor:hbdy growth}
  The dimension of $H^{4g-5-k}(\HMod_{g,0}^0)$
  grows at least exponentially with $g$ for each 
  $k\in \{1,4,7,10,11,14\}$.
  \end{cor}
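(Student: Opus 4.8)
The plan is to feed the known exponential lower bounds for the commutative graph cohomology into the decomposition of Corollary~\ref{cor:Hbdy Com comparison}. First I would rewrite that decomposition in terms of the codegree below the vcd. Substituting the index $2g-1+k$ for the running index of Corollary~\ref{cor:Hbdy Com comparison}, so that its cohomological degree $6g-6-(2g-1+k)$ becomes $4g-5-k$, gives
\[
\gr_{6g-8} H^{4g-5-k}(\HMod_{g,0}^0) \cong H^{4-2g-k}(\Feyn(\Com)(\!(g,2)\!)_{asymm}) \oplus (\SW_g^{2g-2+k})^*.
\]
Since $H^\bullet(\HMod_{g,0}^0)=0$ above the vcd $4g-5$ by Hirose's theorem, and the right-hand side is a direct summand of $H^{4g-5-k}(\HMod_{g,0}^0)$, it suffices to exhibit at least exponential growth (in $g$) of the dimension of one of the two summands for each of the six listed values of $k$. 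I note that both exponents $4-2g-k=-(2g-4+k)$ and $2g-2+k$ grow linearly in $g$, so both summands live in a fixed codegree below the top of the respective commutative graph complex; this is exactly the regime in which exponential growth of graph cohomology is established.

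Next I would reduce the symmetric-square summand to one-legged commutative graph cohomology. By definition $\SW_g = \Sym^2\!\big(\bigoplus_{g'\geq 2} H^\bullet(\Feyn(\Com^*)(\!(g',1)\!))[-1]\big)$, so a class in $\SW_g^{2g-2+k}$ is built from a pair of classes of genera $g_1+g_2=g$ and degrees $d_1,d_2$ with $(d_1+1)+(d_2+1)=2g-2+k$. Choosing one factor in a fixed small genus (say $g_1=2$) carrying a nonzero class of the required degree, and letting the other factor range over $g_2=g-2$, the two tensor factors lie in distinct summands of the direct sum, so there is no graded-symmetrization constraint and
\[
\dim \SW_g^{2g-2+k} \;\geq\; \dim H^{d_1}\!\big(\Feyn(\Com^*)(\!(2,1)\!)\big)\cdot \dim H^{d_2}\!\big(\Feyn(\Com^*)(\!(g-2,1)\!)\big).
\]
Hence exponential growth of the one-legged graph cohomology in the linearly growing degree $d_2$ propagates to $\SW_g$ and thus to the handlebody cohomology; the same mechanism applies directly, without a symmetric square, to the $\Feyn(\Com)(\!(g,2)\!)_{asymm}$ summand.

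The arithmetic of which codegrees grow is then supplied by the known nonvanishing and growth results for the commutative graph complex: the exponential growth of top-weight cohomology of $\M_{g,n}$ (Chan--Galatius--Payne, together with the Euler-characteristic asymptotics of Borinsky--Vogtmann), and the explicit low-codegree nonvanishing of $H^\bullet(\Feyn(\Com)(\!(g,2)\!))$ and $H^\bullet(\Feyn(\Com^*)(\!(g',1)\!))$. I would match the codegrees of the established growing families against the two summands above: carrying out the substitution for the $\Feyn(\Com)(\!(g,2)\!)_{asymm}$ summand and for the symmetric-square summand, and collecting the resulting admissible values of $k$, produces the list $\{1,4,7,10,11,14\}$.

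The main obstacle is precisely this degree-matching together with the passage from Euler characteristics to individual Betti numbers. An Euler-characteristic bound only controls an alternating sum, so to force growth in a single fixed codegree one must invoke genuine nonvanishing statements and argue that the growing classes survive: they must not be cancelled by the differentials of the spectral sequences underlying Corollary~\ref{cor:Hbdy Com comparison}, and in the $\SW_g$ summand one must keep track of the $[-1]$ shift (which turns the relevant graded-symmetric square into an exterior-type contribution) to be sure the product classes are nonzero. Verifying that the known growing families land in exactly the six claimed codegrees, and that the construction does not secretly force further values, is the delicate bookkeeping; once the correct codegree is identified, the exponential lower bound itself is immediate, since a product of an exponentially growing dimension with a fixed nonzero dimension still grows exponentially in $g$.
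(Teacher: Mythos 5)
Your proposal routes all six values of $k$ through Corollary~\ref{cor:Hbdy Com comparison} (the top$-2$ weight part $\gr_{6g-8}$), but that decomposition cannot produce the case $k=1$, and the ``delicate bookkeeping'' you defer at the end is exactly where this fails. For $k=1$ your substitution asks for exponential growth of $(\SW_g^{2g-1})^*$ or of $H^{3-2g}(\Feyn(\Com)(\!(g,2)\!)_{asymm})$. In the symmetric-square summand, a product class of total degree $2g-1$ needs one-legged classes with $d_1+d_2=2g-3$ and $g_1+g_2=g$; but the exponentially growing families actually available (and used by the paper, following \cite{PayneWillwacher}) sit in degrees $2g_i$ and $2g_i+3$, plus the single sporadic class in $H^{27}(\Feyn(\Com^*)(\!(10,1)\!))$, so every achievable total degree is $2g$, $2g+3$, $2g+6$, $2g+7$ or $2g+10$ --- never $2g-3$. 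Hence the $\SW$ summand yields precisely $k\in\{4,7,10,11,14\}$ and nothing more. The other summand would require exponential growth of the antisymmetric part of $H^{2g-3}(\Feyn(\Com^*)(\!(g,2)\!))$, which is not among the known results; tellingly, the paper never uses the two-legged summand at all.

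The paper obtains $k=1$ from a source you never invoke: the \emph{top}-weight part, i.e.\ statement~(2) of Corollary~\ref{cor:main hbdy}, which for $n=0$ gives $\gr_{6g-6}H^{4g-6}(\HMod_{g,0}^0)\cong H^{-2g}(\Feyn(\Com)(\!(g,0)\!))$, whose exponential growth is the classical result on unmarked commutative graph cohomology (equivalently, top-weight cohomology of $\M_g$). So the correct structure needs \emph{both} weights: weight $6g-6$ for $k=1$ (and $4$), weight $6g-8$ via $\SW$ for $k=4,7,10,11,14$; your single-decomposition approach structurally misses the first. Two lesser remarks: your worry that classes might be ``cancelled by the differentials of the spectral sequences'' is moot, since Corollary~\ref{cor:Hbdy Com comparison} is already an isomorphism on cohomology, so a nonzero class in either summand is a nonzero class in $H^\bullet(\HMod_{g,0}^0)$; and your device of placing the two $\SW$-factors in distinct genera ($g_1=2$, $g_2=g-2$) presupposes a genus-$2$ nonvanishing statement you do not have --- the paper instead pairs two members of the exponential families, where the $[-1]$ shift turns the symmetric square into an exterior square, whose dimension still grows exponentially.
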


We would like to remark that just like the handlebody group $\HMod_{g,0}^0:=\pi_0 \mathrm{Diff}(V_{g,0}^0)$, the mapping class group of an unmarked surface $\Mod_{g,0}^0:=\pi_0 \mathrm{Diff}(\Sigma_{g,0}^0)$ has vcd $4g-5$. Church--Farb--Putman \cite{CFP12} showed that its rational cohomology $H^{\bullet}(\Mod_{g,0}^0)=H^{\bullet}(\M_g)$ vanishes in this vcd. 
It seems to be unknown whether the same is true for the handlebody group, i.e.~whether $H^{4g-5}(\HMod_{g,0}^0) = 0$.
Hence, we do not know whether the classes of the ``codimension-one'' case $k=1$ in Corollary \ref{cor:hbdy growth} are of the highest possible degree.

  Using the results of \cite{PayneWillwacher} we may compare the top$-2$-weight parts of the cohomology of the handlebody group and the mapping class group.
  \begin{cor}\label{cor:Hbdy Mg comparison new}
  For $g\geq 3$ we have that
  \[
      \gr_{6g-8} H^{k}(\M_g)
      \cong 
  \gr_{6g-8} H^k(\HMod_{g,0}^0) 
  \oplus 
  \gr_{6g-14} H^{k-5}(\HMod_{g-1,0}^0) 
  .
  \]
  \end{cor}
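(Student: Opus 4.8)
The plan is to combine Giansiracusa's identification of the handlebody cohomology with the $\HyCom$-based descriptions of the top$-2$-weight parts established earlier in the paper. First I would apply Corollary~\ref{cor:main hbdy}(3), which identifies $\gr_{6g-8} H^{k}(\HMod_{g,0}^0)$ with $\gr_2 H^{-k}(\Feyn(\HyCom)(\!(g,0)\!))$ (and dually, passing to the cohomology of the dual cooperad $\HyCom^*$). This translates the statement about the handlebody group entirely into a statement about the weight~2 cohomology of the Feynman transform of $\HyCom^*$. In parallel, I would recall that $\gr_{6g-8} H^{k}(\M_g)$ computes the top$-2$-weight compactly supported cohomology $\gr_{6g-8} H_c^{\bullet}(\M_g)$, which by \cite{PayneWillwacher} is governed by $\gr_2 H^\bullet(\Feyn(H^\bullet(\MM))(\!(g,0)\!))$.

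The central step is then to invoke Proposition~\ref{prop:HyCom MM comparison} in the unmarked case $n=0$. That proposition asserts that the connecting map $\nabla^k_{g,0}$ vanishes identically, so the kernel and cokernel contributions decouple and we obtain the clean direct-sum decomposition
\[
\gr_2 H^k(\Feyn(H^\bullet(\MM))(\!(g,0)\!))
\cong
\gr_2 H^{k-1}(\Feyn(\HyCom^*)(\!(g-1,0)\!))
\oplus
\gr_2 H^k(\Feyn(\HyCom^*)(\!(g,0)\!)).
\]
Translating each summand back through Corollary~\ref{cor:main hbdy}(3) identifies the second summand with $\gr_{6g-8} H^{k}(\HMod_{g,0}^0)$ and the first summand with the shifted handlebody cohomology $\gr_{6g-14} H^{k-5}(\HMod_{g-1,0}^0)$; the degree and weight shifts must be tracked carefully here, since passing from genus $g$ to $g-1$ drops the weight by $6$ and the relevant cohomological degree conventions differ between $\M_g$ (ordinary cohomology) and the Feynman transform (where $6g-8$ is the top$-2$ weight of $\Feyn(D\BV^*)$). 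The hypothesis $g\geq 3$ enters to ensure we avoid the excluded exceptional cases $(g,n)=(1,0),(1,1),(0,2)$ in both the handlebody and the genus-$(g-1)$ contributions.

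The main obstacle I anticipate is the careful bookkeeping of the three different grading conventions---the weight on $\Feyn(D\BV^*)$, the weight on $\Feyn(\HyCom)$, and the mixed Hodge weight on $H^\bullet(\M_g)$---together with the dualization between homology and cohomology (between $\HyCom$ and $\HyCom^*$, and between $H^\bullet(\MM)$ and $H_\bullet(\MM)$) that is implicit in matching the statements of Proposition~\ref{prop:HyCom MM comparison} and Corollary~\ref{cor:main hbdy}. In particular, one must verify that the degree shift by $5$ and the weight shift by $6$ in the $\nabla$-decomposition align precisely with the identification $\gr_{6(g-1)-8}H^{\bullet}(\HMod_{g-1,0}^0)\cong \gr_2 H^{\bullet}(\Feyn(\HyCom)(\!(g-1,0)\!))$ from Corollary~\ref{cor:main hbdy}(3) applied at genus $g-1$. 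Once these indices are reconciled, the corollary follows formally by substituting the two handlebody identifications into the vanishing-$\nabla$ decomposition of $\gr_2 H^\bullet(\Feyn(H^\bullet(\MM))(\!(g,0)\!)) \cong \gr_{6g-8} H^\bullet(\M_g)$.
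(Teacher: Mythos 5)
Your proposal is correct and follows essentially the same route as the paper: Poincaré duality identifying $\gr_2 H_c^\bullet(\M_g)$ with the dual of $\gr_{6g-8}H^{6g-6-\bullet}(\M_g)$, the vanishing of $\nabla_{g,0}$ from Proposition~\ref{prop:HyCom MM comparison} to get the direct-sum decomposition, and Corollary~\ref{cor:main hbdy}(3) applied at genus $g$ and $g-1$ to translate both summands into handlebody cohomology. The index bookkeeping you flag (the degree shift of $5$ and weight shift of $6$) works out exactly as in the paper's computation, so the outline is complete.
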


\subsection{Euler characteristics}

If an object has non-trivial Euler characteristic, then it needs to have non-trivial (co-)homology in some degree. 
Hence, computing the Euler characteristic can give a comparably accessible way of proving the existence or even quantifying the amount of (co-)homology classes.

For the handlebody group, this approach does not work on the nose because its Euler characteristic $\chi(\HMod_{g,0}^0)$ is equal to $0$ for all $g$. This was shown by Hirose in \cite{Hirose}.
Nonetheless, one can obtain non-trivial information by considering the Euler characteristic of its graded pieces:
In \cite{BV}, it was proven that the Euler characteristic $\chi(\Out(F_g)) = \sum (-1)^k \dim H^k(\Out(F_g))$ behaves as $- e^{-1/4} (g/e)^g/(g\log g)^2$ for large $g$. Hence, the total dimension of $H^\bullet(\Out(F_g))$ grows at least as fast. 
Furthermore, the Euler characteristic of $\Feyn(\Com)(\!(g,0)\!)$ also grows super exponentially \cite{MBeuler}.
By Corollary~\ref{cor:main hbdy}, these are the pieces of $H^\bullet(\HMod_{g,0}^0)$ of weight $0$ and $6g-6$, respectively. Hence, combining the above computations with Hirose's vanishing result for $\chi(\HMod_{g,0}^0)$, we obtain the following.

\begin{cor}
    The total dimension of $H^\bullet(\HMod_{g,0}^0)$ grows super exponentially. More explicitly, for any given $C >0$, the dimension of each of the subspaces $\gr_{0} H^\bullet(\HMod_{g,0}^0), \gr_{6g-6} H^\bullet(\HMod_{g,0}^0)$ and $\bigoplus_{W =2}^{6g-8} \gr_{W} H^\bullet(\HMod_{g,0}^0)$ is larger than $C^g$ for all but finitely many $g \geq 2$.
\end{cor}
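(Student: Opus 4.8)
The plan is to reduce all three growth statements to two known super-exponential Euler characteristic estimates, using Hirose's vanishing result and the additivity of the Euler characteristic along the weight grading. Throughout I use the trivial bound $\dim V\geq |\chi(V)|$, valid for any bounded complex of finite-dimensional vector spaces $V$; it converts lower bounds on $|\chi|$ into lower bounds on total dimension.

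First I would note that the weight grading refines the cohomological one, so that $\chi(\HMod_{g,0}^0)=\sum_{W}\chi(\gr_W H^\bullet(\HMod_{g,0}^0))$, where only even $0\le W\le 6g-6$ contribute. Since $\chi(\HMod_{g,0}^0)=0$ by Hirose \cite{Hirose}, isolating the two extremal weights gives
\[
\sum_{W=2}^{6g-8}\chi\big(\gr_W H^\bullet(\HMod_{g,0}^0)\big)=-\chi\big(\gr_0 H^\bullet(\HMod_{g,0}^0)\big)-\chi\big(\gr_{6g-6}H^\bullet(\HMod_{g,0}^0)\big).
\]
This is the identity tying the middle-weight range to the two endpoints.

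Next I would handle the endpoints directly. By Corollary~\ref{cor:main hbdy}(1), $\gr_0 H^\bullet(\HMod_{g,0}^0)\cong H^\bullet(\Out(F_g))$, so its Euler characteristic equals $\chi(\Out(F_g))$, which by \cite{BV} is asymptotic to $-e^{-1/4}(g/e)^g/(g\log g)^2$ and hence super-exponential in absolute value. By Corollary~\ref{cor:main hbdy}(2), together with a degree-shift bookkeeping (the shift by $6g-6$ is even, so introduces no sign), the weight $6g-6$ piece has Euler characteristic $\chi(\Feyn(\Com)(\!(g,0)\!))$, which grows super-exponentially by \cite{MBeuler}. Combined with $\dim\geq|\chi|$, this already yields the first two claimed bounds: $\dim\gr_0 H^\bullet(\HMod_{g,0}^0)>C^g$ and $\dim\gr_{6g-6}H^\bullet(\HMod_{g,0}^0)>C^g$ for all but finitely many $g$.

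Finally, for the middle range the displayed identity and $\dim\geq|\chi|$ give
\[
\dim\bigoplus_{W=2}^{6g-8}\gr_W H^\bullet(\HMod_{g,0}^0)\ \geq\ \big|\chi(\Out(F_g))+\chi(\Feyn(\Com)(\!(g,0)\!))\big|.
\]
The step I expect to be the main obstacle is to rule out catastrophic cancellation on the right, i.e.\ to show this absolute value is itself super-exponential rather than, say, polynomially bounded. I would resolve this by comparing the precise leading asymptotics from \cite{BV} and \cite{MBeuler}: the commutative graph complex contribution grows at a strictly larger super-exponential rate than $\chi(\Out(F_g))$, so the sum is asymptotic to $|\chi(\Feyn(\Com)(\!(g,0)\!))|$ and in particular super-exponential. (Were the two rates comparable, the same conclusion would follow by checking that their eventual signs agree, so that the two terms reinforce rather than cancel.) This yields the third bound and completes the proof.
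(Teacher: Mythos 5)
Your argument is the paper's own: Hirose's vanishing of $\chi(\HMod_{g,0}^0)$, additivity of the Euler characteristic over the weight grading, the identifications of the weight-$0$ and weight-$(6g-6)$ pieces from Corollary~\ref{cor:main hbdy}, the super-exponential asymptotics of \cite{BV} and \cite{MBeuler}, and the bound $\dim \geq |\chi|$. The only place where you go beyond the paper's terse proof is in explicitly flagging the possible cancellation between $\chi(\Out(F_g))$ and $\chi(\Feyn(\Com)(\!(g,0)\!))$ in the middle-weight estimate; this care is warranted, and either of your two proposed resolutions (one term strictly dominating, or the eventual signs agreeing, so the terms reinforce) is precisely the kind of input that the exact asymptotics of \cite{BV} and \cite{MBeuler} provide and that the paper implicitly relies on.
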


Euler characteristic computations are also useful to identify relations between different objects. Using a procedure laid out by Getzler and Kapranov \cite{GK}, we will compute the weight-graded Euler characteristics of all
Feynman transforms listed in Table~\ref{tab:overview} in low genus and weight for $n=0$. The results are listed in Tables~\ref{tab:DBV}-\ref{tab:HyCom}. The weight-graded Euler characteristic of $\Feyn_\kk(D\BV^*)$ is listed in Table~\ref{tab:DBV}.
By Theorem~\ref{thm:giansiracusa} this is also the weight-graded Euler characteristic of the handlebody group. For instance, by summing the absolute values of all numbers in the $g=7$ row of Table~\ref{tab:DBV} we find that $H^\bullet(\HMod_{7,0}^0)$ has dimension at least 230.

\subsection{Acknowledgements}
We are indebted to Danica Kosanovi\'c with whom MB and BB collaborated during early stages of this project.
We would like to thank Sebastian Hensel and Oscar Randal-Williams for pointers to the literature on handlebody groups and Peter Feller for helpful conversations about mapping class groups. We also thank Dan Petersen for sharing an early version of \cite{HainautPetersen} with us.
MB was supported by Dr.\ Max Rössler, the Walter Haefner Foundation and the ETH Zürich Foundation.
TW was supported by the NCCR Swissmap, funded by the Swiss National Science Foundation.

\section{Recollections and setup}

\subsection{Basic notation}
We generally work with $\Q$-vector spaces, and (co)homology is taken with rational coefficients without further notice.
All graded vector spaces $V$ are $\Z$-graded and for a homogeneous element $v\in V$ we denote by $|v|$ the degree. We denote the $k$-fold downwards degree shift by $V[k]$. For example, if $V$ is concentrated in degree 0 then $V[k]$ is concentrated in degree $-k$.
We follow cohomological conventions unless otherwise noted. In other words, the differentials on differential graded (dg) vector spaces have degree $+1$.
In particular, this convention requires that dualization reverses degrees. That is, let $V$ be a graded vector space. We denote by $V^k\subset V$ the subspace of homogeneous elements of degree $k$.
Then we define the dual graded vector space
\[
  V^* = \bigoplus_k (V^{-k})^*,
\]
and in particular $(V^*)^k=(V^{-k})^*$.
We denote the $k$-th cohomology of differential graded vector space $(V,d)$ by $H^k(V)$ or $H^k(V,d)$.
The total cohomology is denoted by 
\[
H(V) =H^\bullet(V) = \bigoplus_{k}H^k(V,d).  
\]
Note in particular that we will henceforth omit the ``$\bullet$'', as long as no ambiguity arises.
Besides the cohomological grading most of our vector spaces will be equipped with a second (``weight'') grading.
By convention, we do not negate the weight grading upon dualization, and our weights will always be non-negative. 

For $V$ a graded vector space and $A$ a finite set of cardinality $|A|=n$ we define the set-wise tensor product 
\[
V^{\otimes A} = \left(\bigoplus_{f:A\xrightarrow{\cong} \{1,\dots,n\}} 
\underbrace{V\otimes \cdots \otimes V}_{n\times} \right)_{S_n},
\]
where the symmetric group $S_n$ acts by permuting the direct summands and simultaneously the factors of $V$ in the tensor product.

\subsection{A lemma for dg vector spaces}
We will need the following basic lemma from homological algebra.

\begin{lemma}\label{lem:simple tri}
Let $(V,d)$ be a dg vector space with a decomposition of graded vector spaces $V=V_1\oplus V_2$. Suppose that the differential has a lower triangular form with respect to this decomposition,
\[
d = \begin{pmatrix}
  d_1 & 0  \\ f & d_2 
\end{pmatrix},
\] 
so that 
\[
(V,d)  =
\begin{tikzcd}[column sep = 0em]
  V_{1} \ar[bend left]{rr}{f} \ar[loop above]{}{d_1} & \oplus  & V_{2}\ar[loop above]{}{d_2}
 \end{tikzcd}.
\]
Then $d_1$ is a differential on $V_1$, $d_2$ is a differential on $V_2$ and $f:(V_1,d_1)\to (V_2[1],-d_2)$ is a morphism of dg vector spaces. Furthermore, we have that 
\begin{equation}\label{equ:simp tri}
H^k(V) \cong \ker [f]^k \oplus \coker [f]^{k-1},  
\end{equation}
with 
\[
[f]^k : H^k(V_1,d_1) \to H^{k+1}(V_2,d_2)  
\]
the induced map on cohomology.
\end{lemma}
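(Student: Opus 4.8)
The plan is to reduce the first three assertions to the block identity $d^2=0$, and the cohomology statement to the long exact sequence of a short exact sequence of complexes. First I would expand
\[
d^2 = \begin{pmatrix} d_1 & 0 \\ f & d_2 \end{pmatrix}\begin{pmatrix} d_1 & 0 \\ f & d_2 \end{pmatrix} = \begin{pmatrix} d_1^2 & 0 \\ f d_1 + d_2 f & d_2^2 \end{pmatrix}
\]
and read off from the vanishing of the blocks the three relations $d_1^2=0$, $d_2^2=0$ and $f d_1 + d_2 f = 0$. The first two say precisely that $d_1$ and $d_2$ are differentials. The third, rewritten as $f d_1 = (-d_2) f$, says that $f$ intertwines $d_1$ with the differential $-d_2$; since $d$ has degree $+1$, the off-diagonal entry $f$ raises degree by one as a map $V_1 \to V_2$, hence has degree zero as a map $V_1 \to V_2[1]$, so $f\colon (V_1,d_1) \to (V_2[1],-d_2)$ is a morphism of dg vector spaces. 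This settles the first three claims.

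For the cohomology statement I would observe that the lower-triangular shape makes $V_2$ a subcomplex of $(V,d)$, since $d(0,v_2) = (0, d_2 v_2)\in V_2$, and that the induced differential on the quotient $V/V_2 \cong V_1$ is $d_1$. This yields a short exact sequence of complexes
\[
0 \to (V_2, d_2) \xrightarrow{\iota} (V,d) \xrightarrow{\pi} (V_1, d_1) \to 0
\]
and hence its associated long exact sequence in cohomology. The key point is to identify the connecting homomorphism $\delta^k\colon H^k(V_1)\to H^{k+1}(V_2)$ with $[f]^k$: running the standard zig-zag, a cocycle $v_1$ with $d_1 v_1 = 0$ lifts to $(v_1,0)\in V$, and $d(v_1,0) = (0, f v_1)$ already lies in $V_2$ and represents $[f v_1]$, so $\delta^k = [f]^k$ up to the overall sign fixed by one's convention for the connecting map.

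Finally I would extract the decomposition from the exact strand
\[
H^{k-1}(V_1) \xrightarrow{[f]^{k-1}} H^k(V_2) \xrightarrow{\iota_*} H^k(V) \xrightarrow{\pi_*} H^k(V_1) \xrightarrow{[f]^k} H^{k+1}(V_2).
\]
Exactness at $H^k(V_2)$ lets $\iota_*$ descend to an injection $\coker [f]^{k-1}\hookrightarrow H^k(V)$, and exactness at $H^k(V)$ and $H^k(V_1)$ identifies the image of $\pi_*$ with $\ker[f]^k$, giving a short exact sequence
\[
0 \to \coker [f]^{k-1} \to H^k(V) \to \ker [f]^k \to 0,
\]
which splits because we work over the field $\Q$, yielding \eqref{equ:simp tri}. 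I expect the only genuinely delicate step to be the sign bookkeeping in matching $\delta^k$ with $[f]^k$ and with the sign $-d_2$ in the morphism $f\colon(V_1,d_1)\to(V_2[1],-d_2)$; this is harmless for the final statement, since $\ker$ and $\coker$ are insensitive to an overall sign on the map.
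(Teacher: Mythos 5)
Your proposal is correct and follows essentially the same route as the paper: both read off the relations $d_1^2=0$, $d_2^2=0$, $fd_1+d_2f=0$ from $d^2=0$, then use the short exact sequence $0\to V_2\to V\to V_1\to 0$ and its long exact cohomology sequence (with connecting map $[f]$) and split it over $\Q$. The paper's proof is just a terser version of exactly this argument, leaving the zig-zag identification of the connecting homomorphism implicit.
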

\begin{proof}
The statements on $d_1$, $d_2$ and $f$ are just a component-wise rewriting of the equation $d^2=0$.
For Equation \eqref{equ:simp tri} we consider the short exact sequence of dg vector spaces 
\[
0\to V_2\to V\to V_1\to 0.  
\]
It induces a long exact sequence on cohomology groups 
\[
\cdots \to H^{k-1}(V_1)\xrightarrow{[f]^{k-1}}
H^k(V_2) \to H^k(V)\to H^k(V_1) \xrightarrow{[f]^{k}}
H^{k+1}(V_2) \to \cdots,   
\]
from which \eqref{equ:simp tri} follows by choosing a splitting.
\end{proof}

\subsection{Operads and cyclic operads}
A symmetric sequence $\cP$ is a collection of right modules $\cP(r)$ for the symmetric groups $S_r$ for each $r=1,2,\dots$. The number $r$ is called the arity.

The category of symmetric sequences is equipped with a monoidal product (``plethysm'') $\circ$, and a (unital) operad is a monoid in this monoidal category, see \cite{LodayVallette}.
Concretely, an operad is determined by a unit element $1\in \cP(1)$ and partial composition morphisms 
\[
\circ_j : \cP(r)\otimes \cP(s) \to \cP(r+s-1)  
\]
for $j=1,\dots, r$, satisfying suitable compatibility relations.
Intuitively, one thinks of $\cP(r)$ as abstract avatar of a space of functions with $r$ inputs, and $\circ_j$ is the abstract version of composition of such functions at the $j$-th input.
A pseudo-operad \cite{MMS} is the data of a symmetric sequence $\cP$ together with the composition morphisms $\circ_j$, but without the unit element.

It is often more convenient to label symmetric sequences by sets rather than numbers. Concretely, for a set $A$ of cardinality $|A|=r$ we set 
\[
\cP(A) := \left(\bigoplus_{f: A\xrightarrow{\cong}\{1,\dots,r\}} \cP(r)  \right)_{S_r},
\]
where the direct sum is over bijections from $A$ to $\{1,\dots,r\}$ and the symmetric group acts diagonally on the set of such bijections and on $\cP(r)$.
Intuitively, if we think of $\cP(r)$ as functions with $r$ inputs, then $\cP(A)\cong \cP(r)$ is the same set of functions, but with their inputs labelled by the set $A$ instead of numbers $1,\dots,r$.
In particular, using this notation we may replace the composition morphisms $\circ_j$ by an equivalent set of morphisms 
\[
\circ_a : \cP(A) \otimes \cP(B) \to \cP(A\sqcup B\setminus\{a\})  
\]
for $A$, $B$ finite sets and $a\in A$.

Cyclic operads are operads for which the action of $S_r$ on the space $\cP(r)$ can be extended to an action of $S_{r+1}$, in a compatible manner \cite{GKcyclic}.
It is convenient to use the double-parenthesis-notation 
\[
\cP(\!(r+1)\!) := \cP(r)
\]
to remind of this extended symmetric group action, and call $\cP$ a cyclic sequence.
Similarly to above, we also use the notation $\cP(\!(A)\!)$ for $A$ a set of cardinality $r+1$.

For a cyclic operad $\cP$ it is convenient to replace the composition morphisms $\circ_j$ by an equivalent set of morphisms 
\[
  \circ_{a,b} : \cP(\!(A)\!)\otimes \cP(\!(B)\!)
  \to \cP(\!(A\sqcup B \setminus \{a,b\})\!)
\]
for $A$, $B$ finite sets with $a\in A$, $b\in B$.
A cyclic pseudo-operad is a cyclic sequence $\cP$ with the composition morphisms $\circ_{a,b}$, but without the unit element.

\subsection{Modular operads}
We define a modular sequence $\cP$ to be a collection of right $S_r$-modules 
\[
  \cP(\!(g,r)\!)
\]
for each $g\geq 0$ and $r\geq 0$ such that $2g+r\geq 2$.
We also use set-wise indexing of modular sequences, analogous to the case of symmetric sequences above, and write 
\[
  \cP(\!(g,A)\!) :=
  \left(\bigoplus_{f: A\xrightarrow{\cong}\{1,\dots,r\}} \cP(\!(g,r)\!)  \right)_{S_r} \cong   \cP(\!(g,r)\!)  
\]
for $A$ any finite set of cardinality $|A|=r$.

A modular operad is modular sequence $\cP$ together with composition morphisms 
\begin{align*} \circ_{i,j} &\colon \cP(\!(g,r)\!) \otimes \cP(\!(g',r')\!) \to \cP(\!(g+g',r+r'-2)\!) \\
\eta_{ij}&\colon \cP(\!(g,r)\!)\to \cP(\!(g+1,r-2)\!), \end{align*}
satisfying suitable compatibility relations.
For the precise formulation we refer to \cite{GK,Ward}.
From there we will also use the notion of $\DD$-modular operads, for $\DD$ a hyperoperad.

Any cyclic pseudo-operad $\cP$ is a modular operad by setting 
\[
\cP(\!(g,r)\!) := \begin{cases}
  \cP(\!(r)\!) &\text{for $g=0$} \\
  0 &\text{otherwise}
\end{cases},
\]
and defining the morphisms $\eta_{i,j}$ to be zero.

Finally, the above definition can be dualized to yield the notion of modular cooperad.
We use the following notation for the duals of the operations $\circ_{ij}$ and $\eta_{ij}$.
Let $A$ be a finite set and $A=A_1\sqcup A_2$ a decomposition of $A$ into two subsets $A_1,A_2\subset A$.
The modular cooperad $\cC$ is a modular sequence equipped with morphisms 
\begin{align*} \Delta_{h,A_1} &\colon \cC(\!(g,A)\!) \to \cC(\!(h,A_1\sqcup \{*\})\!)\otimes \cC(\!(g-h,A_2\sqcup \{*'\})\!) \\
\eta^* &\colon \cC(\!(g,A)\!) \to \cC(\!(g-1,A\sqcup \{*,*'\})\!) \end{align*}
satisfying suitable compatibility relations.

\subsection{Vector spaces of decorated graphs}
We use the Feynman transform introduced by Getzler--Kapranov \cite{GK}. A full treatment of the Feynman transform is outside of the scope of this paper, and we refer to op.\ cit.\ for that purpose.
However, we shall provide a short definition suitable for our needs, neglecting the modular operadic structures present on the Feynman transform. We will slightly deviate from the conventions of Getzler--Kapranov in one aspect.

We say that a modular graph with $n$ legs is a connected graph with $n$ legs together with the data of (i) a labeling of the legs by numbers $1,\dots,n$ and (ii) for every vertex $v$ a non-negative integer $g_v$, the genus of that vertex.
For $\gamma$ a modular graph we define its genus to be the number 
\[
g(\gamma)=\text{\#loops} + \sum_{v\in V\gamma} g_v,  
\]
and the arity of $\gamma$ to be $n(\gamma)=n$, the number of external legs.
Given any modular sequence $\cM$ and a modular graph $\gamma$ we set 
\[
  \otimes_{\gamma} \cM := \bigotimes_{v\in V\gamma}
  \cM(\!(g_v,\gst(v)\!)),
\]
where $\gst(v)$ is the set of half-edges incident at $v$.

For $\cM$ any modular sequence and $p$ an integer we define the modular sequence $\Free_p(\cM)$ such that 
\begin{equation}\label{equ:free1 def}
  \Free_p(\cM)(\!(g,n)\!)
  =\left(
  \bigoplus_{\substack{\gamma \\ g(\gamma)=g \\ n(\gamma)=n} }
   \left(\otimes_\gamma \cM\right)
  \otimes \Det_p(E\gamma)
  \right)/\sim
\end{equation}
where:
\begin{itemize}
\item The direct sum is over all modular graphs $\gamma$ of genus $g$ and arity $n$.
\item $E\gamma$ is the set of (non-leg-)edges of $\gamma$.
\item For a finite set $A$ we define the one-dimensional graded vector space $\Det_p(A):= \Q[-p]^{\otimes A}$, concentrated in cohomological degree $p|A|$.  
\item To specify the equivalence relation $\sim$ we use the following notation. A generating element of \eqref{equ:free1 def} in the $\gamma$-summand we denote by $(\gamma,m,o)$ with $m\in \otimes_\gamma \cM$ and $o\in \Det(E\gamma)$. 
Furthermore note that any isomorphism $\phi:\gamma\to \gamma'$ of modular graphs induces isomorphisms $\phi_* : \otimes_\gamma \cM\to \otimes_{\gamma'} \cM$ and 
$\phi_*: \Det_p(E\gamma)\to \Det_p(E\gamma')$.
Then the equivalence relations are generated by 
\[
(\gamma,m,o) \sim (\gamma' ,\phi_* m,\phi_* o),  
\]
for any isomorphism of modular graphs $\phi:\gamma\to \gamma'$. 
\end{itemize}
We think of a triple $\Gamma=(\gamma,m,o)$ 
as a decorated graph, with $m$ %
specifying decorations of the vertices by $\cM$.
We will sometimes abuse notation below and call a triple $\Gamma=(\gamma,m,o)$ a (decorated) graph.
The number $p$ is understood as the cohomological degree carried by an edge.

We endow $\Free_p$ with the differential 
\[
d = d_{\cM}  
\]
induced from the differential on $\cM$.
Note that
\[
  \Free(\cM) := \Free_0(\cM)
\]
can be identified with either the free modular operad generated by $\cM$, or the cofree modular cooperad cogenerated by $\cM$, hence the notation.

For later use we also introduce a technical variant of the above construction.
We say that a modular graph with marked legs is a modular graph $\gamma$ together with a subset $M\gamma\subset \{1,\dots,n\}$ of its legs, that we consider marked.
For a modular sequence $\cM$ we then define the modular sequence  
\begin{equation}\label{equ:mfree1 def}
  \MFree_p(\cM)(\!(g,n)\!)
  =\left(
  \bigoplus_{\substack{\gamma \\ g(\gamma)=g \\ n(\gamma)=n} }
 \left(\otimes_\gamma \cM\right)
  \otimes \Det_p(E\gamma\sqcup M\gamma)
  \right)/\sim,
\end{equation}
where the direct sum is now over all modular graphs with marked legs, and otherwise we are using the notation from above and the analogously defined equivalence relation.
As automorphisms preserve legs, they also preserve the set of marked legs.
Hence,
$\MFree_p(\cM)(\!(g,n)\!)$ has precisely $2^{n}$ generators for 
each generator of 
  $\Free_p(\cM)(\!(g,n)\!)$.
Again we equip $\MFree_p(\cM)(\!(g,n)\!)$ with the differential $d=d_{\cM}$ induced from the differential on $\cM$.

\subsection{Feynman transform}\label{sec:feyn}
Let $\cC$ be a modular cooperad. For example, as explained above any cyclic pseudo-cooperad is a modular cooperad.
Then we define the Feynman transform of $\cC$ as the modular sequence 
\[
  \Feyn(\cC) = \Free_1(\cC), 
\]
but with an altered differential reflecting the modular cooperad structure
\[
d = d_{\cC} + d_{s} + d_{\ell}.
\]
The term $d_{\cC}$ is induced by the differential on $\cC$. 
The term $d_s$ acts by splitting vertices, and the term $d_{\ell}$ introduces a tadpole at one vertex, i.e., an edge connecting the vertex to itself.
Concretely,
\[
d_s(\gamma, c, o)
= 
\frac 12
\sum_{v\in V\gamma}
\sum_{A\subset \gst(v)}
\sum_{h=0}^{g_v}
(-1)^{|c|}
(\gamma_{v,h,A}, \Delta^v_{h,A}c, e\wedge o),
\]
with $g_v$ the genus of vertex $v$ and $\gst(v)$ the set of incident half-edges at $v$, $\gamma_{v,h,A}$ obtained by splitting $v$
\[
  \gamma:
\begin{tikzpicture} \node[ext, label=90:{$v$}] (v) at (0,0) {$\scriptstyle g_v$}; \draw (v) edge +(-.5,0) edge +(-.5,-.5) edge +(-.5,.5) edge +(.5,0) edge +(.5,-.5) edge +(.5,.5); \end{tikzpicture}
\quad\quad\to\quad\quad
\gamma_{v,h,A}:
\begin{tikzpicture} \node[ext] (v1) at (0,0) {$\scriptstyle h$}; \node[ext] (v2) at (1.2,0) {$\scriptscriptstyle g_v-h$}; \draw (v1) edge +(-.5,0) edge +(-.5,-.5) edge +(-.5,.5) (v2) edge +(.5,0) edge +(.5,-.5) edge +(.5,.5) edge node[below] {$ e$} (v1); \draw[pbrace, thick] (-.55,-.5) -- (-.55,.5); \node at (-.9,0) {$A$}; \end{tikzpicture},
\]
$\Delta^v_{h,A}$ is the cocomposition $\Delta_{h,A}$ applied to the $v$-tensor factor in $c$, and $e\wedge o$ is the natural element of $\Det_1(E\gamma_{v,h,A})$ obtained by attaching a factor corresponding to the new edge to $o$ from the left.
Similarly, 
\[
d_\ell (\gamma,c,o)
=
\sum_{v\in V\gamma}
(-1)^{|c|}
(\gamma_{v}, \eta^{*,v}c, e\wedge o),
\]
with 
\[
  \gamma_v : \begin{tikzpicture} \node[ext, label=-90:{$v$}] (v) at (0,0) {$\scriptscriptstyle g_v-1$}; \draw (v) edge +(-.5,0) edge +(-.5,-.5) edge +(-.5,.5) edge +(.5,0) edge +(.5,-.5) edge +(.5,.5) edge [loop above] (v); \end{tikzpicture}
\]
and $\eta^{*,v}$ being the operation $\eta^*$ applied to the tensor factor corresponding to $v$.

Similarly, let $\cC$ be a 1-shifted modular cooperad, or a $\kk$-modular cooperad in the notation of Getzler--Kapranov. 
This means that the structure operations raise the cohomological degree by one.
Then we define 
\[
\Feyn_\kk(\cC) = \Free_0(\cC),
\]
to be the free modular operad generated by $\cC$, equipped with the differential $d = d_{\cC} + d_{s} + d_{\ell}$ defined analogously to above.

Next let $\cP$ be a modular operad.
Then we define the Feynman transform of $\cP$ to be the modular sequence 
\[
\Feyn(\cP):= \Free_{-1}(\cP)  
\]
with the differential 
\[
d = d_{\cP} + d_{c} + d_{t}.
\]
The term $d_{\cP}$ is induced by the differential on $\cP$. 
The term $d_c$ dual to $d_s$ acts by contracting an edge, and the term $d_{t}$ is dual to $d_\ell$ and removes a tadpole at one vertex, i.e., an edge connecting the vertex to itself.

Similarly, for $\cP$ a 1-shifted modular operad, meaning the structure operations have degree $+1$, we set 
\[
  \Feyn(\cP):= \Free_{0}(\cP) ,
\]
with the analogously defined differential $d = d_{\cP} + d_{c} + d_{t}$.

Note that we abuse the notation, using $\Feyn(-)$ to denote the Feynman transform of both modular operads and cooperads.
Note also that here we deviate from the conventions of Getzler and Kapranov \cite{GK}.
First, our modular (co)operads can have unstable operations, and second our definition of the Feynman transform does not involve a dualization.
These two deviations are connected: Allowing unstable vertices means that the set of modular graphs of fixed arity and genus is infinite, which is why one needs to be careful with dualization.

\begin{convention}
We will often apply the Feynman transform to augmented cyclic operads $\cP$ or coaugmented cooperads $\cC$.
In this case we define 
\begin{align*} \Feyn(\cP)&:= \Feyn(\overline \cP) & \Feyn(\cC)&:= \Feyn(\overline \cC) \end{align*}
as the Feynman transform of the augmentation ideal
$\overline \cP$ of $\cP$ (respectively, the coaugmentation coideal $\overline \cC$ of $\cC$), understood as a pseudo-(co)operad and hence a modular (co)operad.
We use the analogous convention also for the $1$-shifted version $\Feyn_{\kk}(-)$.

We also note that for cyclic operads $\cP$ such that the part of arity one and degree zero is one-dimensional, i.e., $\cP(1)^0\cong \Q$, we have that 
\[
  \overline \cP(r)^k =
  \begin{cases}
    \cP(r)^k & \text{if $r\neq 1$ or $k\neq 0$,} \\
    0 & \text{if $r= 1$ and $k= 0$.}
  \end{cases}
\]
Since all our cyclic operads will satisfy $\cP(1)^0\cong \Q$ this can be taken as the definition of $\overline \cP$. The same formula may also be used to define the coaugmentation coideal $\overline \cC$.
\end{convention}

\begin{remark}\label{rem:feyn minus diff}
The signs in the definition of the differential are to some degree conventional.
First, note that any dg vector space $(V,d)$ is isomorphic to $(V,-d)$, the isomorphism sending a homogeneous element $v$ to $(-1)^{|v|}v$.
Furthermore, in the definition of the differential $d$ on the Feynman transform we may change the relative sign of the terms to 
\[
\tilde d= d_{\cP}-d_c-d_t.  
\]
The isomorphism relating $d$ and $\tilde d$ in this case sends a graph $\Gamma$ with $e$ edges to $(-1)^e\Gamma$.
\end{remark}

\subsection{Feynman transform for weight-graded cooperads}

In particular, we will consider the situation of a 1-shifted-modular pseudo-cooperad $\COp$ that is equipped with an additional $\Z$-grading.
We call this grading a weight grading if the following conditions are satisfied:
\begin{itemize}
\item The grading is non-negative, and positive on $\COp(\!(0,2)\!)$.
\item The grading is (additively) compatible with the modular cooperad structure.
\item Each graded piece of fixed arity $\gr_W \COp(\!(g,n)\!)$ is finite dimensional.
\end{itemize}

We note that the conditions ensure that the Feynman transform of $\COp$ inherits the weight grading from $\COp$, and that each weight-graded piece $\gr_W\Feyn_{\kk}(\COp)(\!(g,n)\!)$ is finite dimensional as well.
We then define 
\[
I_{3} \subset \Feyn_{\kk} (\COp)(\!(0,2)\!)
\]
to be the subspace of elements of weight $\geq 3$, and we define the quotient\footnote{The number $3$ here is somewhat arbitrary, and is chosen with our applications in mind.}
\[
\Feyn_{\kk}'(\COp) := \Feyn_{\kk}(\COp) / \langle I_3\rangle
\]
by the modular operadic ideal generated by $I_3$.
Note that the differential on the Feynman transform descends to the quotient by compatibility of the weight grading with the cooperad structure.

\begin{lemma}\label{lem:reduced Feyn}
Let $\COp$ be a coaugmented weight-graded 1-shifted-modular cooperad as above. 
Suppose that $I_3$ is acyclic, i.e., $H(I_3)=0$.
Then the natural morphism of modular operads 
\begin{equation*}%
    \Feyn_{\kk}(\COp)\to \Feyn_{\kk}'(\COp)    
\end{equation*}
is a quasi-isomorphism in each genus and arity $(g,n)$ except possibly $(g,n)=(1,0)$.
\end{lemma}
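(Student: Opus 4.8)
The plan is to identify the kernel of the quotient map with the ideal $\langle I_3\rangle$ and to show this ideal is acyclic away from $(g,n)=(1,0)$. Concretely, the short exact sequence of complexes $0 \to \langle I_3\rangle \to \Feyn_\kk(\COp) \to \Feyn_\kk'(\COp)\to 0$ and its long exact cohomology sequence reduce the statement to proving $H(\langle I_3\rangle(\!(g,n)\!))=0$ for all $(g,n)\neq(1,0)$. Since all three pieces $d_{\COp},d_s,d_\ell$ of the differential preserve the total weight, and each total-weight piece is finite dimensional, I would fix the total weight $W$ throughout and work with finite-dimensional complexes.

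First I would describe $\langle I_3\rangle$ explicitly. A connected genus-$0$, arity-$2$ graph is a chain of bivalent genus-$0$ vertices, so $\Feyn_\kk(\COp)(\!(0,2)\!)$ is a bar-type complex on $\COp(\!(0,2)\!)$ and $I_3$ is its weight $\geq 3$ part. Applying the modular operad operations to $I_3$ inserts such a weight-$\geq 3$ chain along an edge or a leg of an arbitrary graph, and conversely any graph carrying a bivalent chain of total weight $\geq 3$ on some edge or leg arises this way. Hence $\langle I_3\rangle(\!(g,n)\!)$ is spanned by the decorated graphs having at least one edge or leg whose maximal bivalent-genus-$0$ chain has weight $\geq 3$.

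The heart of the argument is a spectral sequence. Call a vertex a \emph{core vertex} if it is not genus-$0$ bivalent, let $\Gamma_0$ be the core graph obtained by deleting all bivalent chains, and set $\Phi = (\#\text{core vertices}) + b_1(\Gamma_0) + W_{\mathrm{biv}}$, where $W_{\mathrm{biv}}$ is the total weight carried by bivalent vertices. One checks that the only part of the differential preserving $\Phi$ is $d_{\COp}$ together with the splitting of a bivalent vertex into two bivalent vertices, whereas splitting a new bivalent vertex off a core vertex strictly raises $W_{\mathrm{biv}}$ (the weight being positive on $\COp(\!(0,2)\!)$), splitting a core vertex into two core vertices raises the core-vertex count, and the tadpole term $d_\ell$ raises $b_1(\Gamma_0)$. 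On the associated graded the core graph is therefore frozen and the bivalent chains decouple from the core decorations, so for each fixed $\Gamma_0$ the complex becomes a tensor product over the core vertices of the decoration spaces $\COp(\!(g_v,\gst(v))\!)$ and over the edges and legs of the bar complexes $B_e = \Q \oplus \Feyn_\kk(\COp)(\!(0,2)\!)$, with the finite group $\mathrm{Aut}(\Gamma_0)$ acting through the $\Det$-twist. Since $B_e^{\geq 3}=I_3$ is acyclic, each quotient $B_e \to B_e^{\leq 2}$ is a quasi-isomorphism; by the Künneth theorem over $\Q$ so is the induced map on the tensor product, whence its kernel — exactly the associated graded of $\langle I_3\rangle$ over $\Gamma_0$ — is acyclic. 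Taking $\mathrm{Aut}(\Gamma_0)$-coinvariants is exact over $\Q$ and preserves this, and since the $\Phi$-filtration is bounded in each fixed total weight the spectral sequence converges, giving $H(\langle I_3\rangle(\!(g,n)\!))=0$ whenever every connected graph of type $(g,n)$ has a core vertex.

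It remains to determine which $(g,n)$ this covers. A connected graph with no core vertex has all vertices bivalent of genus $0$, hence is either a path with two legs, forcing $(g,n)=(0,2)$, or a cyclic necklace, forcing $(g,n)=(1,0)$. The case $(0,2)$ is immediate, since there $\langle I_3\rangle = I_3$ is acyclic by hypothesis, so the argument applies to all $(g,n)\neq(1,0)$. I expect the main obstacle to be precisely the coupling between the bivalent chains and the core-vertex decorations through $d_s$ (and between the core and newly created tadpole edges through $d_\ell$), which blocks a naive edgewise tensor decomposition; the role of the filtration by $\Phi$ is exactly to push these coupling terms to higher filtration so that the clean tensor product appears on the associated graded. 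The necklaces in $(1,0)$ are genuinely not captured by this decomposition — their cyclic symmetry replaces the bar complex $B_e$ by a necklace (cyclic) complex whose weight $\geq 3$ part need not be acyclic — which is why that case must be excluded.
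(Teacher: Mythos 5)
Your proof is correct and follows essentially the same route as the paper: reduce to acyclicity of the ideal $\langle I_3\rangle$, choose a filtration so that on the associated graded only the internal differential and the splitting of bivalent vertices survive, and conclude by the tensor-product/K\"unneth argument from the acyclicity of $I_3$ on each chain factor. The only differences are cosmetic — the paper filters by the weight on non-bivalent vertices minus their total valence rather than your $\Phi$, and your explicit discussion of the degenerate cases $(0,2)$ and $(1,0)$ spells out what the paper leaves implicit.
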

\begin{proof}
We have to check that the ideal $\langle I_3\rangle$ is acyclic.
Recall that elements of $\Feyn_{\kk}(\COp)$ are linear combinations of $\cC$-decorated graphs, possibly with bivalent vertices.
The differential has the form $d=d_{\cC}+d_{s}+d_\ell$, see above.
We say that a vertex $v$ of genus $g_v$ and valence $n_v$ is a $(g_v,n_v)$-vertex.
The ideal $\langle I_3\rangle $ is spanned by all graphs that have a consecutive chain of $(0,2)$-vertices of total weight $\geq 3$.
To show that $H(\langle I_3\rangle)=0$ it suffices to endow $\langle I_3\rangle$ with a filtration such that the associated graded complex is acyclic.
Note that convergence of the spectral sequence here is automatic by finite dimensionality for each fixed combination of weight, genus and arity $(g,n)$.

We filter $\langle I_3\rangle$ by the total weight on non-$(0,2)$-vertices, minus the total valence of non-$(0,2)$-vertices.
The differential on the associated graded complex is then $d_{\cC}+d_s'$, with $d_s'$ the part of the vertex splitting differential that only splits $(0,2)$-vertices.
Here we use in particular that the weight grading on $\COp(\!(0,2)\!)$ is positive.  
The associated graded complex is then a direct summand of a tensor product of complexes, one for each non-$(0,2)$-vertex and one for each chain of $(0,2)$-vertices. Of the latter complexes at least one factor is $I_3$, which is acyclic, hence so is the total complex. 
\end{proof}

\subsection{Cyclic (co)bar construction and dg dual (co)operads}
\label{sec:dg dual def}
For a cyclic operad or cooperad $\cP$
we introduce the 1-shifted cyclic pseudo-cooperad or pseudo-operad $D\cP$, which is given by the genus zero part of the Feynman transform of $\cP$,
\[
  D\cP(\!(n)\!) := \Feyn(\cP)(\!(0,n)\!).
\]
The operation $D(-)$ should be considered as the cyclic version of the (co)operadic (co)bar construction.
We will also call $D\cP$ the dg dual cooperad respectively operad of $\cP$, following earlier literature. However, this terminology is not optimal, and prone to confusion with the linear dual $\cP$, in our opinion.

\subsection{Amputated Feynman transform}\label{sec:amputated}

Let again $\COp$ be a 1-shifted modular cooperad with operations of arity $2$, and consider the Feynman transform $\Feyn_\kk(\COp)$. 
For $(g,r)\neq (0,2)$ let $B(\!(g,r)\!)\subset \Feyn_\kk(\COp)(\!(g,r)\!)$ be the dg subspace spanned by all elements obtained by the modular composition $\circ_{i,j}$ of an element in $\Feyn_\kk(\COp)(\!(g,r)\!)$ with an element in $\Feyn_\kk(\COp)(\!(0,2)\!)$.
If we think of elements of $ \Feyn_{\kk}(\COp)(\!(g,r)\!)$ as linear combinations of $\COp$-decorated graphs with $r$ legs, then $B(\!(g,r)\!)$ is spanned by linear combinations of decorated graphs that have at least one leg attached to a $(0,2)$-vertex, for example
\[
  \begin{tikzpicture} \node[ext] (v1) at (-.5,.5) {}; \node[ext] (v2) at (0,.5) {}; \node[ext] (v3) at (.7,.5) {}; \node[ext] (v4) at (0,-.3) {}; \node (w1) at (-1.2,.5) {$\scriptstyle 1$}; \node (w2) at (1.2,.5) {$\scriptstyle 2$}; \node (w3) at (0,-1) {$\scriptstyle 3$}; \draw (v1) edge (v2) edge (w1) (v3) edge (v2) edge (v4) edge (w2) (v4) edge (w3) edge (v2); \end{tikzpicture}
  \in B(\!(1,3)\!),
  \quad\quad\quad\quad
  \begin{tikzpicture} \node[ext] (v1) at (-.7,.5) {}; \node[ext] (v2) at (0,.5) {}; \node[ext] (v3) at (.7,.5) {}; \node[ext] (v4) at (0,-.3) {}; \node (w1) at (-1.2,.5) {$\scriptstyle 1$}; \node (w2) at (1.2,.5) {$\scriptstyle 2$}; \node (w3) at (0,-1) {$\scriptstyle 3$}; \draw (v1) edge (v2) edge (v4) edge (w1) (v3) edge (v2) edge (v4) edge (w2) (v4) edge (w3); \end{tikzpicture}
  \notin B(\!(1,3)\!).
\]
Note in particular that the $B(\!(g,r)\!)$ together do not form a modular operadic ideal. By convention, we also set $B(\!(0,2)\!):=\COp(\!(0,2)\!)$.

Then we define the amputated Feynman transform as 
\[
\AFeyn_\kk(\COp)(\!(g,r)\!) :=   \Feyn_\kk(\COp)(\!(g,r)\!)/B(\!(g,r)\!).
\]
This is not a modular operad in general.
In other words, $\AFeyn_{\kk}(\COp)(\!(g,r)\!)$ is spanned by just those (``amputated'') graphs, that do not have a leg attached to a bivalent genus zero vertex.

Likewise, we also define 
\[
  \AFeyn_{\kk}'(\COp)(\!(g,r)\!)
  := 
  \Feyn_{\kk}'(\COp)(\!(g,r)\!)/(\langle I_3\rangle(\!(g,r)\!) + B(\!(g,r)\!) ).
\]

\section{Examples of cyclic and modular (co)operads}

\subsection{Hypercommutative and gravity operad}\label{sec:hypercom_grav_basics}
The hypercommutative operad is the cyclic operad such that 
\[
  \HyCom(\!(r)\!) = H_{\bullet}(\MM_{0,r}).
\]
The operadic structure is given by the pushforward along the natural gluing operations 
\[
  \MM_{0,r} \times \MM_{0,s} \to \MM_{0,r+s-2}.
\]
We denote the linear dual cyclic cooperad by $\HyCom^*$, such that 
\[
  \HyCom^*(\!(r)\!) = H^{\bullet}(\MM_{0,r}).
\]
We also consider the cyclic operadic cobar construction, i.e., the genus zero part of the Feynman transform
\[
D\HyCom^* (\!(r)\!) = \Feyn(\HyCom^*)(\!(0,r)\!).
\]
This is a 1-shifted cyclic pseudo-operad.
Its cohomology is the (degree shifted) gravity (pseudo-)operad 
\[
\tGrav :=  H(D\HyCom^* ).
\]
E. Getzler has shown \cite{Getzler0} that $D\HyCom^*$ is formal, that is, $D\HyCom^*\simeq \tGrav$, and 
\[
\tGrav(\!(r)\!) \cong  H^{\bullet}_c(\M_{0,r}).
\]

\begin{remark}
We use different sign and degree conventions for the gravity operad than other literature. One can obtain from the one-shifted cyclic pseudo-operad $\tGrav$ the standard gravity (cyclic pseudo-) operad $\Grav$ by a degree shift, namely 
\[
\Grav(\!(r)\!) := \tGrav(\!(r)\!)[r-3]\otimes \sgn_r .
\]
In this paper we shall not use $\Grav$, only $\tGrav$.
\end{remark}

\subsection{The hypercommutative operad \texorpdfstring{$\HyCom$}{HyCom} in weights \texorpdfstring{$\leq 2$}{<=2}}
\label{sec:hycom1}
We endow the hypercommutative operad $\HyCom$ with a non-negative weight grading, equal to the homological degree, or equivalently minus the cohomological degree. That is, we have
\begin{equation*}
	\gr_k \HyCom(\!(r)\!)^l = \begin{cases}
	H_{k}(\MM_{0,r}) & \text{if $l=k$} \\
	0 & \text{else.}
	\end{cases}
\end{equation*}

Here we recall in particular an explicit combinatorial description of $\HyCom$ in weights $\leq 2$.

First, the weight 0 just equals the commutative cyclic operad.
\[
  \gr_0\HyCom = \Com.
\]
The weight 1 part vanishes, and the weight 2 part can be explicitly described, see \cite{Getzler0, ArbarelloCornalba98}. We recall here the description.
Consider first the dual object $\gr_2\HyCom^*(\!(n)\!)$.
This vector space is generated by symbols $\delta_A=\delta_{A^c}$, for $A\subset \{1,\dots,n\}$, with $2\leq |A|\leq n-2$.
For three distinct elements $i,j,k\in \{1,\dots,n\}$ we set 
\[
\psi_{i;jk} := \sum_{A\atop {i\in A \atop j,k\notin A}}\delta_A.  
\]
Then the Keel relations state that $\psi_{i;jk}$ is independent of the choice of $j,k$, i.e., 
\[
  \psi_{i;jk} = \psi_{i;j'k'}
\]
for all $j'$, $k'$ such that $i,j',k'$ are distinct.
It is sufficient to require these relations for one particular choice of $i$, but all $j,k,j',k'$.
Since $\psi_{i;jk}$ is independent of $j$, $k$ we define the $\psi$ classes 
\[
\psi_i :=  \psi_{i;jk} 
\]
for an arbitrary choice of $j,k\neq i$. Under the identification $\HyCom^*(\!(n)\!) = H^{\bullet}(\MM_{0,n})$, the classes $\psi_i$ are given by the usual $\psi$ classes in $H^2(\MM_{0,n})$, i.e.~the first Chern classes of the $n$ cotangent line bundles over $\MM_{0,n}$.

We dualize this description to $\gr_2\HyCom$.
In fact, we will break the cyclic invariance and describe the underlying non-cyclic object.
We set 
\[
  \gr_2\HyCom(n) = \gr_2\HyCom(\!(\{0,\dots,n\})\!),
\]
and more generally, for a finite set $S$,
\[
  \gr_2\HyCom(S) = \gr_2\HyCom(\!(\{0\}\sqcup S)\!).
\]
Elements of $\gr_2\HyCom(S)$ are formal linear combinations
\[
\sum_{A\subset S }
c_A \delta_A^* 
\]
such that $c_A=0$ if $|A|\leq 1$ or $A=S$ and
\begin{equation}\label{equ:Hycom condition}
 \sum_{A\atop j,k\in A} c_A  =
 \sum_{A\atop j',k'\in A} c_A  
\end{equation}
for each choice of distinct elements $j,k\in S$ and $j',k'\in S$.
Note that we break here the $S_{n+1}$-equivariance, arbitrarily singling out the input ``0''. While this might seem less appealing, it will be convenient later.

Below we shall need the following technical lemma.

\begin{lemma}\label{lem:hycom technical}
Let $\underline c=(c_A)_{A\subset \{1,\dots,n\}}$ be a collection of numbers satisfying \eqref{equ:Hycom condition}.
Then for any fixed $i,j,k\in \{1,\dots,n\}$ such that $i\neq j,k$ the following hold:
\begin{enumerate}
\item 
\[
  \sum_{A\atop {i,j \in A \atop k\in A^c}} c_A 
  = 
\sum_{A\atop {i,k \in A \atop j\in A^c}} c_A.
\] 
\item 
\[
  \sum_{A\atop {i \in A \atop j\in A^c}} c_A
  =
  \sum_{A\atop {i \in A \atop k\in A^c}} c_A
\] 
\end{enumerate}
\end{lemma}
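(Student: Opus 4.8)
=== BEGIN LATEX ===

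\textbf{Approach.} The plan is to prove both identities directly from the defining relation \eqref{equ:Hycom condition}, by choosing the free indices in that relation cleverly and then partitioning the sum over subsets $A$ according to which of $i,j,k$ lie in $A$. The relation \eqref{equ:Hycom condition} says that the quantity $\sum_{A:\, p,q\in A} c_A$ is independent of the pair $\{p,q\}$. So the whole proof amounts to writing each of the four ``one-sided'' sums appearing in the lemma as a linear combination of these pair-sums $P_{pq}:=\sum_{A:\, p,q\in A} c_A$, and then invoking $P_{pq}=P_{p'q'}$.

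\textbf{Step 1: introduce notation for the cell counts.} For the fixed triple $i,j,k$, I would split the index set of all $A$ according to the pattern of membership of $i,j,k$. Writing, for instance, $S_{i\bar j\bar k}=\sum_{A:\,i\in A,\,j\notin A,\,k\notin A}c_A$ and similarly for the other seven membership patterns, every sum occurring in the lemma is a union of two or three of these eight cells. The pair-sums are then $P_{ij}=S_{ijk}+S_{ij\bar k}$, $P_{ik}=S_{ijk}+S_{i\bar j k}$, and $P_{jk}=S_{ijk}+S_{\bar i jk}$, where I abbreviate the membership of $i,j,k$ in that order.

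\textbf{Step 2: derive (1).} The left-hand side of (1) is $\sum_{A:\,i,j\in A,\,k\in A^c}c_A=S_{ij\bar k}$, and the right-hand side is $S_{i\bar j k}$. From Step 1, $S_{ij\bar k}=P_{ij}-S_{ijk}$ and $S_{i\bar j k}=P_{ik}-S_{ijk}$. Since \eqref{equ:Hycom condition} gives $P_{ij}=P_{ik}$ (both pairs contain $i$), subtracting $S_{ijk}$ from each yields $S_{ij\bar k}=S_{i\bar j k}$, which is exactly (1).

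\textbf{Step 3: derive (2).} The left-hand side of (2) is $\sum_{A:\,i\in A,\,j\in A^c}c_A=S_{i\bar j k}+S_{i\bar j\bar k}$, and the right-hand side is $\sum_{A:\,i\in A,\,k\in A^c}c_A=S_{ij\bar k}+S_{i\bar j\bar k}$. After cancelling the common term $S_{i\bar j\bar k}$, the claimed equality reduces precisely to $S_{i\bar j k}=S_{ij\bar k}$, which is part (1) just proven. Thus (2) follows immediately from (1).

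\textbf{Expected obstacle.} There is essentially no analytic difficulty here; the only thing to be careful about is the bookkeeping of the eight membership cells and making sure the translation from the one-sided sums in the statement to pair-sums $P_{pq}$ is done consistently with the indexing convention in \eqref{equ:Hycom condition}. In particular one must check that \eqref{equ:Hycom condition} is being applied to genuinely distinct pairs (which it is, since $i,j,k$ are assumed distinct), so the hypothesis of the relation is met. The main ``work,'' such as it is, is recognizing that both parts are really statements about the three pair-sums $P_{ij},P_{ik},P_{jk}$ sharing the common index $i$, so that all the needed equalities are instances of a single case of \eqref{equ:Hycom condition}.

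=== END LATEX ===
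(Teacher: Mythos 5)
Your proof is correct and is essentially the same argument as the paper's: both parts rest on writing the one-sided sums via inclusion–exclusion as pair-sums $\sum_{A:\,p,q\in A}c_A$ minus the triple sum, invoking \eqref{equ:Hycom condition} for the pairs $\{i,j\}$ and $\{i,k\}$, and then deducing (2) from (1) by splitting over the membership of $k$. Your eight-cell notation is just a more systematic bookkeeping of the identical computation.
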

\begin{proof}
We compute, using \eqref{equ:Hycom condition} for the second equality:
\begin{align*} &\sum_{A\atop {i,j \in A \atop k\in A^c}} c_A = \sum_{A\atop {i,j \in A}} c_A - \sum_{A\atop {i,j,k \in A}} c_A = \sum_{A\atop {i,k \in A}} c_A - \sum_{A\atop {i,j,k \in A}} c_A = \sum_{A\atop {i,k \in A \atop j\in A^c}} c_A. \end{align*}

For the second assertion we proceed similarly, using the first assertion in the second equality:
\[
  \sum_{A\atop {i \in A \atop j\in A^c}} c_A
  =
  \sum_{A\atop {i,k \in A \atop j\in A^c}} c_A
  +
  \sum_{A\atop {i \in A \atop j,k\in A^c}} c_A
  =
  \sum_{A\atop {i,j \in A \atop k\in A^c}} c_A
  +
  \sum_{A\atop {i \in A \atop j,k\in A^c}} c_A
  =
  \sum_{A\atop {i \in A \atop k\in A^c}} c_A.
\]
\end{proof}

In this language the $\psi$-classes can be identified with the following linear functions
\begin{equation}\label{equ:psistar def}
\begin{gathered}
\psi_i : \gr_2\HyCom(\!(n)\!) \to \Q \\
\psi_i(\sum_{A\subset S }
c_A \delta_A^* ) =: \psi_i(\underline c) =
\begin{cases}
 \sum_{A\atop j,k\in A} c_A & \text{if $i=0$} \\
 \sum_{A\atop {i\in A \atop j,k\notin A}} c_A & \text{if $i\neq 0$}
\end{cases}
\end{gathered}
\end{equation}
for $i=1,\dots,n$, with $j,k\neq i$ arbitrary. Note that it follows from \eqref{equ:Hycom condition} and Lemma \ref{lem:hycom technical} that $\psi_i$ is well-defined, i.e., independent of the choice of $j,k$.

Finally we consider the operadic structure. In weights $\leq 2$ this means we only need to describe the $\Com$-module structure on $\gr_2\HyCom$. This has been described explicitly in \cite[Lemma 3.3]{ArbarelloCornalba98}.

The composition is
\begin{equation}\label{equ:Hycom com action}
\begin{gathered}
\circ_s : \gr_2\HyCom(S\sqcup \{s\} )\cong 
\gr_2\HyCom(S\sqcup \{s\}) \otimes \Com(S') \to \gr_2\HyCom(S\sqcup S')
\\
\sum_{A\subset S\sqcup \{s\} }
c_A \delta_A^* 
\mapsto 
\sum_{A\subset S}
c_{A\sqcup \{s\}} \delta_{A\sqcup S'}^* 
+
\sum_{A\subset S}
c_A \delta_A^* 
-
\psi_s(\underline c)
\delta_{S'} 
\\
\circ_s : \gr_2\HyCom(S)\cong 
\Com(S'\sqcup \{s\})
\otimes
\gr_2\HyCom(S)  \to \gr_2\HyCom(S\sqcup S')
\\
\sum_{A\subset S\sqcup \{s\} }
c_A \delta_A^* 
\mapsto 
\sum_{A\subset S}
c_{A} \delta_{A}^* 
-
\psi_0(\underline c)
\delta_{S} 
\end{gathered}.
\end{equation}

\subsection{The Batalin-Vilkovisky operad \texorpdfstring{$\BV$}{BV}}
\label{sec:BV}

The Batalin-Vilkovisky operad is the homology operad $\BV = H_\bullet(\LD_2^{fr})$ of the framed little disks operad.
As an operad, it has the following presentation.
\begin{itemize}
\item It is generated by:\footnote{Note that we are using cohomological degree conventions, and the cohomological degrees are minus the homological ones.}
\begin{itemize}
  \item An operation $\Delta\in \BV(1)$ of arity $1$ and cohomological degree $-1$, the BV operator.
  \item A symmetric degree 0 operation $-\wedge -\in \BV(2)$, the commutative product.
  \item A symmetric degree $-1$ operation $[-, -]\in \BV(2)$, the bracket.
\end{itemize}
\item These operations satisfy a list of relations.
First, $-\wedge -$ is a commutative and associative product and $[-, -]$ is a Lie bracket of degree $-1$.
Additionally, one has:
\begin{align*} \Delta^2 &= 0 \\
\Delta(x\wedge y ) - (\Delta x) \wedge y - x\wedge(\Delta y) - [x,y] &= 0 \\
[x,y\wedge z] - [x,y]\wedge z - [x,z] \wedge y &= 0. \end{align*}
\end{itemize}

The operad $\BV$ is cyclic. The natural $S_{r}$-action
on $\BV(\!(r)\!):=\BV(r-1)$ is determined by specifying the action of the cyclic permutation $\tau_r=(12\cdots r)$ on each of the generators.
\begin{align*} \tau_2 \Delta &= \Delta \\
\tau_3 (x\wedge y) &= x\wedge y \\
\tau_3 [x,y] &= -[x,y] -2x\wedge \Delta y. \end{align*}

We will mostly work with the (linear) dual cooperad $\BV^*=H^\bullet(\LD_2^{fr})$.
We then study the dg dual $D\BV^*$ of the cyclic cooperad $\BV^*$,
\[
  D\BV^*(\!(r)\!) := \Feyn(\BV^*)(\!(0,r)\!),
\]
and its part $D'\BV^*$ of arity $\geq 3$
\[
  D'\BV^*(\!(r)\!)
  =\begin{cases}
    D\BV^*(\!(r)\!) & \text{for $r\geq 3$} \\
    0 & \text{otherwise}
  \end{cases}.
\]
Recall that we defined the weight of a $\BV$ generator to be twice its homological degree. We therefore also obtain a weight grading on $D\BV$.

\begin{thm}[{Drummond-Cole--Vallette \cite[Theorem 2.21 and Proposition 3.9]{DCV}}]\label{thm:DCV}
There is a weight-preserving isomorphism of (non-cyclic) 1-shifted pseudo-operads 
\[
  H(D'\BV^*) \cong \tGrav
\]
\end{thm}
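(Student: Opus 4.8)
The plan is to obtain the statement as a translation of the Koszul-duality computation of Drummond-Cole--Vallette into the conventions of this paper. Since the asserted isomorphism is one of \emph{non-cyclic} $1$-shifted pseudo-operads, the first step is to forget the cyclic structure and regard $D\BV^*(\!(r)\!)=\Feyn(\BV^*)(\!(0,r)\!)=\Free_1(\BV^*)(\!(0,r)\!)$ as the operadic cobar construction of the cooperad $\BV^*$, where the degree $+1$ carried by each edge accounts for the ``$1$-shift'' on the target. In these terms $H(D\BV^*)$ computes the Koszul dual of $\BV$, exactly as $\tGrav=H(D\HyCom^*)$ computes the Koszul dual of $\HyCom$ (the gravity operad, by Getzler's formality $D\HyCom^*\simeq \tGrav$ and $\tGrav(\!(r)\!)\cong H^\bullet_c(\M_{0,r})$). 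So the theorem asserts that these two Koszul duals coincide in arity $\geq 3$, reflecting the fact that the framing circle of $\LD_2^{fr}$ contributes only in the unary and binary parts of the duality.

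Second, I would recall the presentation of $\BV$ as the Gerstenhaber operad $\Ger=\langle \wedge,[-,-]\rangle$ with an adjoined unary operator $\Delta$ subject to $\Delta^2=0$ and the BV relation $[x,y]=\Delta(x\wedge y)-(\Delta x)\wedge y - x\wedge(\Delta y)$. Because of the unary generator $\Delta$ and the quadratic--linear (inhomogeneous) shape of the BV relation, $\BV$ is \emph{not} Koszul in the naive quadratic sense; this is precisely the point that Drummond-Cole--Vallette resolve. I would cite their Theorem 2.21 and Proposition 3.9, which provide the Koszul-type resolution and the computation of the relevant cobar cohomology, identifying it in arity $\geq 3$ with the degree-shifted gravity operad. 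The restriction to arity $\geq 3$ is then automatic on our side: it matches the fact that $\M_{0,r}$, and hence $\tGrav(\!(r)\!)\cong H^\bullet_c(\M_{0,r})$, is only defined for $r\geq 3$, which is exactly the range singled out by $D'\BV^*$, and it excludes the degenerate binary part.

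The substance of the argument is therefore the reconciliation of conventions, and this is where I expect the main difficulty to lie. Three checks are needed: matching the homological degree shifts and signs between the Drummond-Cole--Vallette cobar complex and our $\Feyn(-)(\!(0,-)\!)$ (Remark~\ref{rem:feyn minus diff} shows the relative signs of the differential are immaterial, which helps); verifying that the two pseudo-operad structures, not merely the underlying graded vector spaces, correspond under the identification; and, most delicately, confirming that the weight grading is preserved. On the $\BV$ side the weight of a generator is \emph{twice} its homological degree, so that $\Delta$ and $[-,-]$ both carry weight $2$ while $\wedge$ carries weight $0$, whereas the weight on $\tGrav$ is inherited from the homological-degree weight on $\HyCom$ through $\tGrav=H(D\HyCom^*)$; one must track these normalizations through the cobar construction and check that they agree. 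I would pin down the correct normalization in the lowest nontrivial arity $r=3$, where $\tGrav(\!(3)\!)\cong H^\bullet_c(\M_{0,3})$ is one-dimensional, and then propagate it to all arities using the compatibility of both weight gradings with the operadic composition.
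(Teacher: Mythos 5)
Your proposal is correct and matches the paper's treatment: the paper does not prove this theorem at all, but imports it verbatim from Drummond-Cole--Vallette \cite[Theorem 2.21 and Proposition 3.9]{DCV}, exactly the citation your argument rests on. The convention-matching you outline (degree shift, operad structure, weight grading with $\Delta$ and $[-,-]$ of weight $2$, and the arity $\geq 3$ truncation corresponding to $D'\BV^*$) is precisely what the paper leaves implicit in stating the result in its own normalization.
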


We can strengthen this as follows:
\begin{prop}\label{prop:DpDBV formal}
  The 1-shifted cyclic pseudo-operad $D'\BV^*$ is formal.
\end{prop}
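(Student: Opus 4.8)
The plan is to leverage the non-cyclic cohomology computation of Drummond-Cole--Vallette (Theorem~\ref{thm:DCV}) together with the weight grading on $D'\BV^*$ and the purity of its cohomology, and then to promote the resulting formality quasi-isomorphism to one of cyclic pseudo-operads.

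First I would record the relevant structure on $D'\BV^*$. Since $\BV^*$ carries no internal differential, the differential on $\Feyn(\BV^*)$ reduces to the vertex-splitting term $d_s$, which is dual to the (degree-preserving) operadic composition of $\BV$; hence $d_s$ preserves the weight grading (twice the homological degree of the $\BV$-decorations) while raising the cohomological degree by one. Consequently $D'\BV^*$ splits, in every arity, as a direct sum of complexes indexed by weight, and this splitting is compatible both with the cyclic pseudo-operadic compositions $\circ_{a,b}$ and with the $S_{r+1}$-actions.

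Next I would invoke purity. By Theorem~\ref{thm:DCV} we have $H(D'\BV^*)(\!(r)\!) \cong \tGrav(\!(r)\!) \cong H^\bullet_c(\M_{0,r})$, and $H^\bullet_c(\M_{0,r})$ is pure of Hodge--Tate type: in each arity the weight-$W$ part of the cohomology is concentrated in the single cohomological degree $k = (r-3)+W/2$ (after checking that the combinatorial weight on $D'\BV^*$ matches the Hodge weight under the DCV identification). Purity of this form is exactly what forces formality: the weight-graded pieces each have cohomology concentrated in a single degree, so each is quasi-isomorphic to its cohomology, and because the weight grading is compatible with composition these weight-wise equivalences should assemble into a zig-zag $D'\BV^* \simeq H(D'\BV^*)=\tGrav$ of (non-cyclic) pseudo-operads.

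Finally I would upgrade to the cyclic setting. Since we work over $\Q$ and all the groups $S_{r+1}$ are finite, the formality morphisms can be symmetrized, so that the zig-zag becomes one of cyclic pseudo-operads; alternatively one may run the purity argument directly in the cyclic category, using that $\tGrav \simeq D\HyCom^*$ is Getzler's formal cyclic model. The step I expect to be the main obstacle is precisely this assembly: an honest second grading with pure cohomology makes each weight-complex formal, but producing a single morphism that is simultaneously multiplicative (respecting $\circ_{a,b}$) and $S_{r+1}$-equivariant requires choosing the weight-wise splittings coherently. I would handle this either by equipping $D'\BV^*$ with a genuine mixed Hodge structure, so that semisimplicity of pure Hodge structures supplies functorial, hence multiplicative and equivariant, splittings, or by transporting formality along an explicit cyclic zig-zag to $D\HyCom^*$ and quoting Getzler's result \cite{Getzler0}.
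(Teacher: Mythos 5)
Your setup is correct and matches the principle the paper invokes: the weight grading on $D'\BV^*$ is preserved by the differential, the cyclic compositions and the $S_{r+1}$-actions, and by Theorem \ref{thm:DCV} the weight-$2W$ part of $H(D'\BV^*(\!(n)\!))$ is concentrated in the single degree $n-3+W$. But the step you yourself flag as the main obstacle is the entire content of the proposition, and none of the three devices you offer to close it works. Averaging over the finite groups $S_{r+1}$ does not help: an average of quasi-isomorphisms need not be a quasi-isomorphism (the average of $\mathrm{id}$ and $-\mathrm{id}$ is $0$), and averaging destroys compatibility with $\circ_{a,b}$, so a non-equivariant formality map cannot simply be symmetrized. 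The mixed-Hodge route fails at the chain level: inside a fixed weight $w$, the complex $\gr_w D'\BV^*(\!(n)\!)$ is spread over many cohomological degrees \emph{all of the same weight}, so the differential is a morphism between pure structures of equal weight and semisimplicity or strictness forces no vanishing whatsoever; purity here is a statement about cohomology only, and over $\Q$ mixed Hodge structures do not split functorially in any case. Finally, transporting formality from $D\HyCom^*$ presupposes an explicit zig-zag of \emph{cyclic} quasi-isomorphisms $D'\BV^*\simeq D\HyCom^*$; no such zig-zag is constructed (the paper's only comparison map goes through the Koszul-dual model $\tBV^!$, which is not cyclic), and given Getzler's result its existence is equivalent to the statement being proved, so this route is circular.

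What actually closes the gap is a quantitative fact your sketch never uses: the purity degree $d(n,w)=n-3+w/2$ is exactly additive with respect to the degree-$(+1)$ compositions, $d(n_1,w_1)+d(n_2,w_2)+1=d(n_1+n_2-2,w_1+w_2)$. The paper exploits this as follows: (i) it upgrades the Drummond-Cole--Vallette isomorphism to an isomorphism of cyclic sequences by representation theory (gravity is generated by its top-weight part, which is one-dimensional; a one-dimensional $S_r$-representation restricting trivially to $S_{r-1}$ is trivial); (ii) it applies \emph{cyclic} homotopy transfer, legitimate over $\Q$ because one may choose $S_{r+1}$-equivariant transfer data by semisimplicity of the representation categories, obtaining a weight-compatible homotopy cyclic $1$-shifted structure on the cohomology; (iii) it checks that the source of an $r$-ary transferred operation is concentrated in weight $6r+2\sum_i(k_i-n_i)$ while its target is concentrated in weight $4r+4+2\sum_i(k_i-n_i)$, so all operations with $r\geq 3$ vanish. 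Alternatively, the same arithmetic shows that the canonical truncations $\tau_{\leq d(n,w)}\gr_w D'\BV^*(\!(n)\!)$ assemble into a cyclic sub-pseudo-operad, yielding a functorial, hence automatically equivariant and multiplicative, zig-zag from $D'\BV^*$ to $H(D'\BV^*)$; this truncation device, not Hodge-theoretic semisimplicity, is the honest form of ``purity implies formality'' that your proposal was reaching for.
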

The statement follows from the general principle that purity implies formality. 
\begin{proof}
We first argue that the isomorphism $H(D\BV^*) \cong \tGrav$ of Theorem \ref{thm:DCV} is actually an isomorphism of cyclic sequences.
Indeed, both sides are cyclic pseudo-operads and the isomorphism respects the non-cyclic pseudo-operad structure. Hence it is sufficient to check that the isomorphism
preserves the symmetric group action on the operadic generators. The gravity operad is generated by its top-weight part \cite{Getzler0}, and the top-weight part is one-dimensional in each arity. Now, consider the isomorphism between the top-weight parts 
\[
  \gr_{2r-6} \tGrav(\!(r)\!) \cong\gr_{2r-6}  H(D'\BV^*)(\!(r)\!).
\]
We know that the left-hand side is a one-dimensional trivial representations of $S_r$ concentrated in degree $2r-6$, and the isomorphism is $S_{r-1}$-equivariant.
Hence the right-hand side must be a one-dimensional representation of $S_r$ that restricts to the trivial representation of $S_{r-1}$. But the trivial representation is the only such, and hence the isomorphism of Theorem \ref{thm:DCV} is indeed an isomorphism of cyclic sequences.

Furthermore, the cohomology $H^k(D'\BV^*(\!(n)\!)) \cong \tGrav(\!(n)\!)^k \cong H^\bullet_c(\M_{0,n})$ is concentrated in weight $2k-2n+6$.
Or, equivalently, the weight $2W$-part of the cohomology is concentrated in degree $n-3+W$.
By homotopy transfer (see e.g., \cite{Ward}) we may endow $\tGrav$ with a homotopy (1-shifted) cyclic operad structure that respects the weight grading, such that $\tGrav$ is quasi-isomorphic to $D'\BV^*$.
We claim that the weight and degree constraints forbid all higher compositions.
Concretely, the $r$-ary $\infty$-operations are maps 
\[
  \tGrav(\!(n_1)\!)^{k_1}\otimes \cdots \otimes \tGrav(\!(n_r)\!)^{k_r}
  \to \tGrav(\!(n_1+\dots+n_r-2r+2)\!)^{k_1+\dots+k_r+1}
\]
that (additively) respects the weight gradings.
But the left-hand side is concentrated in weight 
\[
  \sum_i (2k_i - 2n_i+6)= 6r+ 2\sum_i (k_i - n_i)
\]
and the right-hand side is concentrated in weight 
\[
  2\sum_i k_i +2 - 2\sum_i n_i +4r-4  +6
  =
  4r+4+ 2\sum_i (k_i - n_i).
\]
Hence equality can only hold for $r=2$ and all higher $\infty$-operations must vanish, thus showing the formality claim.

\end{proof}

Note that $D\BV^*(\!(2)\!)$ carries the zero differential. Hence the cohomology of $D\BV^*$ satisfies:
\[
  H(D\BV^*(\!(r)\!))
  =
  \begin{cases}
    \bigoplus_k \Q \underbrace{\Dc \circ\cdots \circ \Dc}_{k\times} & \text{if $r=2$} \\
    \tGrav(\!(r)\!) & \text{if $r\geq 3$}\\
    0 & \text{otherwise}
  \end{cases}.
\]
Here $\Dc$ represents a binary operation of degree 1 and weight 2, dual to the BV operator $\Delta$. Note that the composition $\circ$ has degree $+1$ so that $\Dc \circ\cdots \circ \Dc$
has degree $2k-1$ and weight $2k$.

Via homotopy transfer one may also endow $H(D\BV^*)$ with a cyclic 1-shifted $\infty$-operad structure. This structure has been studied in the (non-cyclic) operadic setting in \cite{DCV}, but is unfortunately not very explicitly known.

\begin{lemma}\label{lem:feynp dbv}
  We have that 
  \[
H(\Feyn_{\kk}(D\BV^*)(\!(0,2)\!)) \cong \Q \Dc.
\]
In particular, the cyclic operad $D\BV^*$ with the above weight grading satisfies the conditions of Lemma \ref{lem:reduced Feyn}, and hence the inclusion 
\[
\Feyn_{\kk}'(D\BV^*)(\!(g,r)\!)  \subset \Feyn_{\kk}(D\BV^*)(\!(g,r)\!)
\]
is a quasi-isomorphism as long as $(g,r)\neq (1,0)$.
\end{lemma}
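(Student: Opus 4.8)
The plan is to identify the complex $\Feyn_{\kk}(D\BV^*)(\!(0,2)\!)$ explicitly and then compute its cohomology by a spectral sequence. First I would analyze which decorated graphs contribute. A generator lives on a connected genus-$0$ modular graph with two legs all of whose vertices have genus $0$; hence the underlying graph is a tree. Since a vertex decorated nontrivially by $D\BV^*$ must have valence $\geq 2$, every leaf of the tree must carry a leg, so the tree has at most two leaves and is therefore a path. Placing the two legs at the two ends, every vertex is bivalent, decorated by $A:=D\BV^*(\!(2)\!)$. The tadpole part $d_t$ of the differential vanishes in genus $0$, and since $\Feyn_{\kk}$ uses edges of degree $0$ the edge-orientation line $\Det_0$ is canonically trivial, so no signs arise from it. The remaining differential is the internal differential $d_A$ on each factor together with the edge-contraction differential, which on two adjacent bivalent vertices is the operadic composition $A\otimes A\to A$. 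Fixing the leg labelled $1$ at the left end rigidifies each path, so I obtain a canonical identification of $\Feyn_{\kk}(D\BV^*)(\!(0,2)\!)$ with the reduced bar complex $\bigoplus_{m\geq 1}A^{\otimes m}$ of the dg associative algebra $A$, the product being edge contraction.

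Next I would recall from the explicit description preceding the lemma that $H(A)=H(D\BV^*(\!(2)\!))=\bigoplus_{k\geq1}\Q\,\Dc^{\circ k}$ is the augmentation ideal of the polynomial algebra $\Q[\Dc]$ on the single generator $\Dc$ of degree $1$ and weight $2$ (the product being composition, $\Dc^{\circ k}\circ\Dc^{\circ l}=\Dc^{\circ (k+l)}$). I then filter the bar complex by the number $m$ of vertices. The $E_0$-differential is $d_A$, so $E_1=\bigoplus_{m\geq 1}H(A)^{\otimes m}$ is the reduced bar complex of $\Q[\Dc]$ with its induced (contraction) differential. Because each vertex carries weight $\geq 2$, the filtration is finite in every fixed weight and each weight-graded piece is finite dimensional, so the spectral sequence converges.

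It then remains to compute the bar homology of $\Q[\Dc]$. As $\Q[\Dc]$ is the free (polynomial) algebra on one generator it is Koszul, with $\mathrm{Tor}^{\Q[\Dc]}(\Q,\Q)$ concentrated in bar-lengths $0$ and $1$ and equal to $\Q\oplus\Q\Dc$ (morally the Koszul-dual chain $\Lambda[\Delta]\rightsquigarrow\Q[\Dc]\rightsquigarrow\Lambda[\Dc]$). The bar-length-$0$ summand (the unit) is absent from our complex, which only contains chains with $m\geq 1$ vertices, so $E_2=E_\infty=\Q\Dc$, concentrated in degree $1$ and weight $2$. This yields $H(\Feyn_{\kk}(D\BV^*)(\!(0,2)\!))\cong\Q\Dc$.

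Finally, since this cohomology lives entirely in weight $2$ and the weight grading is preserved by the differential, the weight-$\geq 3$ subspace $I_3$ satisfies $H(I_3)=0$, i.e.\ $I_3$ is acyclic. The remaining hypotheses of Lemma~\ref{lem:reduced Feyn} are immediate: the weight grading on $D\BV^*$ is non-negative, positive on $D\BV^*(\!(0,2)\!)=A$ (which sits in weights $\geq 2$), compatible with the (co)operad structure, and has finite-dimensional graded pieces. Applying Lemma~\ref{lem:reduced Feyn} then shows that the inclusion $\Feyn_{\kk}'(D\BV^*)(\!(g,r)\!)\subset \Feyn_{\kk}(D\BV^*)(\!(g,r)\!)$ is a quasi-isomorphism whenever $(g,r)\neq(1,0)$. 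The main obstacle is the first step: pinning down the identification with the reduced bar complex, including the chain-reversal symmetry and the $S_2$-action from the cyclic structure of $A$ (which acts trivially on $H(A)$ since $\tau_2\Delta=\Delta$), and checking the Koszul signs so that edge contraction really is the bar differential of $A$.
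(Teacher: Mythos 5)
Your proof is correct and follows essentially the same route as the paper: both identify $\Feyn_{\kk}(D\BV^*)(\!(0,2)\!)$ with the reduced bar construction of the free associative algebra on the single generator $\Dc$, invoke the fact that the (reduced) bar homology of a free algebra is its space of generators, and then feed the resulting weight concentration into Lemma~\ref{lem:reduced Feyn}. The only difference is cosmetic: since $D\BV^*(\!(2)\!)$ carries the zero differential, your vertex-count spectral sequence degenerates immediately, and the paper simply skips that step.
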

\begin{proof}
We have to check that the dg subspace $I_3\subset\Feyn_{\kk}(D\BV^*)(\!(0,2)\!)$ spanned by elements of weight $\geq 3$ is acyclic.
It suffices to check that the cohomology of
\[
  A := \Feyn_{\kk}(D\BV^*)(\!(0,2)\!)
\]
is concentrated in weight $\leq 2$.
But we have that 
\[
  \Feyn_{\kk}(D\BV^*)(\!(0,2)\!)
  =
  \bigoplus_p (D\BV^*(\!(2)\!) )^{\otimes p}
\]
is identified (up to degree shift) with the reduced bar construction of a free associative algebra in a single generator. But the cohomology of the reduced bar construction of a free associative algebra is the space of generators.
This implies that 
\[
H(\Feyn_{\kk}(D\BV^*)(\!(0,2)\!)) \cong \Q \Dc
\]
is concentrated in degree 1 and weight 2.
\end{proof}

\subsection{Koszul dual of \texorpdfstring{$\BV$}{BV}}
The (non-cyclic) Koszul dual operad $\BV^!$ of the operad $\BV$ has been computed in \cite[Section 1.3]{GCTV}.
The operad $\BV^!$ has the following presentation:
\begin{itemize}
\item Generators: $\BV^!$ is generated by a unary operation $u$ of degree 2, a symmetric binary operation (product) $\cdot$ of degree 2 and a symmetric binary operation (bracket) $[-,-]$ of degree 1.
\item Relations: $\cdot$ is a commutative associative product of degree 2. $[-,-]$ is a Lie bracket of degree 1. Additionally we have the Leibniz identity
\[
[x,y\cdot z] = [x,y]\cdot z + [x,z]\cdot y.  
\]
Furthermore, we have that 
\begin{align*} (ux)\cdot y& =u(x\cdot y) & [ux,y] = u[x,y]. \end{align*}
  This implies that 
\[
\BV^!(r) \cong \Ger(r)[-2r+2]\otimes \Q[u]
\]
as graded $S_r$-modules, with $\Q[u]$ the polynomials in a formal variable $u$ of degree 2.
\item Differential: $d=u\nabla$ with 
\begin{align*} \nabla (-\cdot-) &= [-,-] & \nabla [-,-] &= 0. \end{align*}
\end{itemize}
We may equip $\BV^!$ with an additional weight grading by declaring $\cdot$ and $u$ to be of weight $2$ and $[-,-]$ to be of weight 0.

We consider the 1-shifted version 
\[
  \tBV^! := \overline{\BV^!}[1],
\]
which is a 1-shifted non-cyclic pseudo-operad, that is, the composition in $\tBV^!$ has degree 1.
Notationally, it will be important to keep track of the degree shift. We denote the shift by using a symbol $\bs$ of degree $-1$ and weight 0. Hence if $X\in \BV^!(r)$ is an element then $\bs X$ denotes the corresponding element of one lower degree in $\tBV^!(r)$.
To fix the signs, we define the differential and composition on $\tBV^!$ by the formulas 
\begin{align*} d(\bs X) &= -\bs dX & \bs X \circ_j \bs Y = \circ_j(\bs X,\bs Y) = (-1)^{|X|+1} \bs (X\circ_j Y). \end{align*}

There is a weak equivalence of 1-shifted non-cyclic pseudo-operads
\[
D\BV^* \to \tBV^!.
\]
It sends all operadic generators of arity $\geq 3$ to zero, sends the product to $\bs [-,-]$ and the bracket to $\bs (-\cdot -)$, and the dual BV operator $\Dc$ to $\bs u$.

For future use we also introduce the truncated version $\tBV^!_{\geq 2}$ defined by removing the (non-cyclic) arity 1 operations,
\[
  \tBVt(r)
  :=
  \begin{cases}
    \tBV^!(r) & \text{for $r\geq 2$} \\
    0 & \text{otherwise}
  \end{cases}.
\]
By restriction we have a quasi-isomorphism of 1-shifted non-cyclic pseudo-operads
\[
  D'\BV^* \to \tBVt.
\]

\begin{example}\label{ex:top cohom tbv}
Explicitly, let us compute the top-weight and top$-2$-weight pieces of $H(\tBV^!(r)\!)\cong \tGrav(r)$ for $r\geq 2$.
The top-weight piece of the cohomology is a copy of $\Com$ concentrated in weight and degree $2r-4$.
The generator is the element 
\begin{equation}\label{equ:BVgen}
\bs \nabla (x_1\cdots x_r) = 
\sum_{i<j} [x_i,x_j]x_1\cdots\hat x_i\cdots \hat x_j \cdots x_r.  
\end{equation}
Here we omit the product $\cdot$ from the notation.

The top$-2$-weight piece of the cohomology $\gr_{2r-6}H(\tBV^!(r)\!)$ is as follows.
Generators are the elements $\bs \nabla \Omega_{ij}$, $1\leq i<j\leq r$, with 
\begin{equation}\label{equ:Omegaij}
\Omega_{ij} := [x_i,x_j] x_1\cdots\hat x_i\cdots \hat x_j \cdots x_r, 
\end{equation}
subject to the relation 
\[
\sum _{i<j}\bs\nabla \Omega_{ij} = 0.
\]
\end{example}

\section{Feynman transform of the gravity operad}
In order to understand the Feynman transform of $D\BV^*$ we first need some statements about the Feynman transform of its stable part $D'\BV^*$.
By Proposition \ref{prop:DpDBV formal} 
this is equivalent to studying the Feynman transform of the gravity operad $\tGrav$, i.e., we have a quasi-isomorphism 
\[
\Feyn_\kk(D'\BV^*) \xrightarrow{\simeq} \Feyn_\kk(\tGrav).  
\]

\begin{lemma}\label{lem:Feyngrav weights}
  The complex $\Feyn_{\kk}(\tGrav)(\!(g,n)\!)$ is concentrated in even weights $0,\dots, 4g-6+2n$. 
  \end{lemma}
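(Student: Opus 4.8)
The plan is to exploit that the weight grading on $\Feyn_\kk(\tGrav)$ is additive over the vertices of a decorated graph, so that it suffices to control the weight carried by a single vertex and then sum over all vertices. First I would recall from the proof of Proposition \ref{prop:DpDBV formal} that $\tGrav$ is a \emph{cyclic} pseudo-operad concentrated in genus $0$, with $\tGrav(\!(m)\!) \cong H_c^\bullet(\M_{0,m})$, that it is concentrated in (cyclic) arities $m \ge 3$, and that on $\tGrav(\!(m)\!)$ the weight in degree $k$ equals $2k - 2m + 6$. Since $\M_{0,m}$ is a smooth affine variety of dimension $m-3$, its compactly supported cohomology is concentrated in degrees $m-3 \le k \le 2m-6$, so the weights on $\tGrav(\!(m)\!)$ are even and lie in the range $0 \le W \le 2m-6$. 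Consequently, in any generator of $\Feyn_\kk(\tGrav)(\!(g,n)\!)$ every vertex $v$ has genus $g_v = 0$, valence $n_v := |\gst(v)| \ge 3$, and carries a decoration of even weight $W_v$ with $0 \le W_v \le 2n_v - 6$.

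Next I would sum these per-vertex bounds. Writing $V$ and $E$ for the sets of vertices and internal edges of the underlying graph, the total weight of a generator is $W = \sum_{v\in V} W_v$, which is even as a sum of even numbers, and satisfies
\begin{equation*}
W \le \sum_{v\in V}(2n_v - 6) = 2\sum_{v\in V} n_v - 6|V|.
\end{equation*}
The handshake identity $\sum_{v} n_v = 2|E| + n$ (each internal edge, including a tadpole, contributes two half-edges, and each of the $n$ legs contributes one) together with the genus formula $g = |E| - |V| + 1$ (valid since all vertices have genus $0$, so the graph genus equals its first Betti number) then gives
\begin{equation*}
W \le 4|E| + 2n - 6|V| = 4(g+|V|-1) + 2n - 6|V| = 4g - 6 + 2n - 2(|V|-1).
\end{equation*}
As any connected graph has $|V| \ge 1$, this yields $W \le 4g - 6 + 2n$, while $W \ge 0$ is immediate from non-negativity of the weight grading. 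Combined with evenness, this is exactly the claim.

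The argument is essentially a bookkeeping exercise, so there is no deep obstacle; the only point requiring genuine care is the input on $\tGrav$ itself, namely that all vertices are genus $0$ with valence at least $3$ and that each vertex weight lies in $[0, 2n_v-6]$. I would take care to justify the vanishing in arities $m \le 2$ and the bound $W_v \le 2n_v - 6$ directly from $\tGrav(\!(m)\!) \cong H_c^\bullet(\M_{0,m})$ and the weight normalisation $W = 2k - 2m + 6$ fixed in the proof of Proposition \ref{prop:DpDBV formal}, since the whole numerical bound hinges on the constant $6 = 2\cdot 3$ appearing there. A secondary subtlety worth double-checking is that tadpoles are counted correctly in the handshake identity — they contribute $2$ to the incident valence and $1$ to both $|E|$ and the loop order — but this is consistent with the two identities used above.
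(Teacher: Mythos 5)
Your proof is correct and is essentially the same as the paper's: both bound the total weight by summing the per-vertex bound $W_v\le 2n_v-6$ coming from $\tGrav(\!(n_v)\!)\cong H^\bullet_c(\M_{0,n_v})$, then combine the handshake identity with the genus formula. Your final inequality $W\le 4g-6+2n-2(|V|-1)$ with $|V|\ge 1$ is the same estimate the paper phrases as $W\le 6g-6+2n-2\#\text{edges}$ with $\#\text{edges}\ge g$, since $E-g=|V|-1$.
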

  \begin{proof}
  Consider a decorated graph $\Gamma\in \Feyn_{\kk}(\tGrav)(\!(g,n)\!)$. Consider a vertex of $\Gamma$ with $h$ incident half-edges.
  The vertex is then decorated by an element of $H_c^\bullet(\M_{0,h})$, and the maximum weight occurring in the latter complex is the real dimension of $\M_{0,h}$, that is $2h-6$.
  Summing the weights of all vertices hence yields the total weight of $\Gamma$
  \[
    4 \#\text{edges} - 6 \#\text{vertices} +2n
    =
    6g -6 -2\#\text{edges} +2n.
  \]
  Since the number of edges of a genus $g$ graph must be at least $g$ the result follows.
\end{proof}

The cohomology in the top weight and top$-2$ weight can be computed explicitly.
\begin{thm}\label{thm:Feyn grav top}
We have that 
\begin{equation}\label{equ:Feyn grap top}
  \gr_{4g-6+2n} H^k(\Feyn_{\kk}(\tGrav)(\!(g,n)\!))
\cong 
\begin{cases}
  \Q &\text{for $k=4g-6+2n$} \\
0 &\text{otherwise}
\end{cases}
\end{equation}
and
\begin{equation}\label{equ:Feyn grap top 2}
  \gr_{4g-8+2n} H^k(\Feyn_{\kk}(\tGrav)(\!(g,n)\!))
\cong 
\begin{cases}
  \HyCom(\!(2g+n)\!)^{2}_{S_2\wr S_g} &\text{for $k=4g-8+2n$} \\
0 &\text{otherwise}
\end{cases}.
\end{equation}
\end{thm}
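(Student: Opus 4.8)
The plan is to exploit that $\tGrav$ is concentrated in genus $0$, so that as a modular operad its self-gluing operations $\eta$ vanish and (being a cohomology) its internal differential is zero. Consequently the Feynman differential on $\Feyn_{\kk}(\tGrav)$ reduces to the single edge-contraction term $d_c$: the tadpole term $d_t$ vanishes since it would invoke $\eta=0$. As $d_c$ preserves the weight exactly, one has $\Feyn_{\kk}(\tGrav)(\!(g,n)\!) = \bigoplus_W \gr_W \Feyn_{\kk}(\tGrav)(\!(g,n)\!)$ as complexes, so I may treat each extreme weight on its own, with no spectral sequence needed. Recall from the proof of Lemma~\ref{lem:Feyngrav weights} that a graph of total weight $W$ with $e$ edges and $v$ (genus-$0$) vertices obeys $W \le 6g-6+2n-2e$, with equality precisely when every vertex carries a top-weight class, and that $e-v=g-1$.

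For the top weight $W=4g-6+2n$, equality forces $e=g$ and hence $v=1$: the only contributing graph is the single vertex with $g$ tadpoles and the $n$ legs, decorated by $\gr_{4g-6+2n}\tGrav(\!(2g+n)\!)$, the one-dimensional trivial $S_{2g+n}$-representation sitting in degree $4g-6+2n$. This graph has no non-loop edge, so $d_c=0$ and the cohomology equals the space itself. Since the edges are even (the orientation is $\Det_0$) and the decoration is the trivial representation, the tadpole-flips and tadpole-permutations generating $S_2\wr S_g$ act by $+1$, so the class survives passage to automorphism coinvariants and is nonzero, giving \eqref{equ:Feyn grap top}.

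For the top$-2$ weight $W=4g-8+2n$ exactly two families survive: (a) the single vertex with $g$ tadpoles, now decorated by $\gr_{4g-8+2n}\tGrav(\!(2g+n)\!)$ and hence placed in degree $4g-7+2n$; and (b) the two-vertex graphs with $g+1$ edges (genus $g$) both of whose vertices carry a top-weight $\Com$-class, placed in degree $4g-8+2n$ (three or more vertices need $e\ge g+2$, forcing $W\le 4g-10+2n$). The differential contracts one connecting edge of a type-(b) graph, inserting the gravity composition of the two adjacent top-weight classes and landing in type (a). Thus $\gr_{4g-8+2n}\Feyn_{\kk}(\tGrav)(\!(g,n)\!)$ is the two-term complex $(\mathrm{b})\xrightarrow{d_c}(\mathrm{a})$, whose cohomology is $\ker d_c$ in degree $4g-8+2n$ and $\coker d_c$ in degree $4g-7+2n$; it remains to show $d_c$ is surjective and to compute its kernel.

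Surjectivity (hence $\coker d_c=0$ and the vanishing of $H^{4g-7+2n}$) follows from the fact that $\tGrav$ is generated by its top-weight part \cite{Getzler0}: a single binary composition of two top-weight classes already has top$-2$ weight, so every element of $\gr_{4g-8+2n}\tGrav(\!(2g+n)\!)$ is such a product, which is exactly what contracting the connecting edge of an appropriate type-(b) graph produces. For the kernel I would identify the relations among these binary products with the defining relations of $\gr_2\HyCom$, using the explicit models of Section~\ref{sec:hycom1} and Example~\ref{ex:top cohom tbv}: writing the top-weight generator as $\bs\nabla(x_1\cdots x_r)$ and the top$-2$-weight classes as the $\bs\nabla\Omega_{ij}$ modulo $\sum_{i<j}\bs\nabla\Omega_{ij}=0$, the gravity composition formulas turn $d_c$ into (the dual of) the $\psi$-class and Keel relations \eqref{equ:Hycom condition}, \eqref{equ:Hycom com action}. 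Conceptually this is the Koszul duality between $\tGrav$ and $\HyCom$: the two-term complex is the top$-2$/weight-$2$ slice of the acyclic Koszul complex, so $\ker d_c$ is the weight-$2$ part of $\HyCom(\!(2g+n)\!)$, and closing up the $g$ tadpole pairs produces the coinvariants $\HyCom(\!(2g+n)\!)^2_{S_2\wr S_g}$ in degree $4g-8+2n$, giving \eqref{equ:Feyn grap top 2}. The main obstacle is precisely making this last identification rigorous: matching the edge-contraction map to the Keel presentation on the nose, and propagating the orientation and sign bookkeeping through the $S_2\wr S_g$-coinvariants so that the kernel is exactly the claimed coinvariant space and nothing collapses under signs.
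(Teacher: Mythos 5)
Your treatment of the top weight \eqref{equ:Feyn grap top} and your identification of the top$-2$ part as a two-term complex (two-vertex graphs in degree $4g-8+2n$ mapping by edge contraction to one-vertex tadpole graphs in degree $4g-7+2n$) coincide with the paper's proof, and your degree bookkeeping is correct. Your argument that $\coker d_c=0$ because $\tGrav$ is generated by its top-weight part is a legitimate substitute for the paper's reasoning, which instead deduces the vanishing from the fact that $\gr_{4g-8+2n}\HyCom^*(\!(2g+n)\!)$ is concentrated in degree equal to its weight.

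However, the identification $\ker d_c\cong \HyCom(\!(2g+n)\!)^{2}_{S_2\wr S_g}$, which you defer as ``the main obstacle'', is not a matter of sign bookkeeping: it is the entire content of \eqref{equ:Feyn grap top 2}, and your proposal does not contain the idea that proves it. The paper supplies it as Claim~A: an isomorphism of \emph{complexes} between $\gr_{4g-8+2n}\Feyn_{\kk}(\tGrav)(\!(g,n)\!)$ and the coinvariants $\gr_{4g-8+2n}\Feyn_{\kk}(\tGrav)(\!(0,2g+n)\!)_{S_2\wr S_g}$; granting this, Getzler's Koszulness result $H(\Feyn_\kk(\tGrav)(\!(0,N)\!))\cong\HyCom^*(\!(N)\!)$, exactness of coinvariants over $\Q$, and Poincar\'e duality $\gr_{2N-8}\HyCom^*(\!(N)\!)\cong\gr_2\HyCom(\!(N)\!)$ finish the proof with no further computation. (Note in passing that the genus-zero Feynman complex is not acyclic, as your phrase ``acyclic Koszul complex'' suggests --- its cohomology is $\HyCom^*$, which is exactly why the kernel is $\gr_2\HyCom$ rather than zero.) The concrete reason your ``closing up the $g$ tadpole pairs'' is not immediate is a multiplicity problem, not a sign problem: a genus-$g$ graph $\delta^*_{a,b,A}$ has $g-a-b+1$ parallel connecting edges, and since the top-weight parts of $\tGrav$ are trivial representations, contracting any one of them yields the \emph{same} one-vertex class; hence $d_c\delta^*_{a,b,A}$ is that class taken with multiplicity equal to the number of parallel edges, whereas the corresponding genus-zero graph has a single connecting edge and its coinvariant differential has a single term. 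The naive gluing map is therefore not a chain map, and one must rescale each $\delta^*_{a,b,A}$ by an $(a,b)$-dependent factor --- precisely what the explicit formula in the paper's Claim~A does; since these factors are nonzero rationals, the rescaled map is still an isomorphism. The same multiplicity observation (rather than a sign argument) is also what is needed to make your surjectivity step airtight, since one must rule out cancellation among the parallel contractions. Your alternative route of matching $d_c$ against the Keel relations via the representatives $T_A$ and $\bs\nabla\Omega_{ij}$ is carried out in Section~\ref{sec:top cohomology feyn grav} of the paper, but only in genus zero; to reach genus $g$ you still need the reduction above, so the gap persists in either route.
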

Note that in particular, in each case the weight $W$ cohomology is  concentrated in degree $W$.
Furthermore, we may understand the 1-dimensional vector space \eqref{equ:Feyn grap top} as 
\[
\Q \cong \HyCom(\!(2g+n)\!)^{0}_{S_2\wr S_g},
\]
so as to display the similar form of \eqref{equ:Feyn grap top} and \eqref{equ:Feyn grap top 2}. 
\begin{proof}
Consider first the top weight $4g-6+2n$.
By the proof of Lemma \ref{lem:Feyngrav weights}
the only graphs that can contribute to the complex $\gr_{4g-6+2n} \Feyn_{\kk}(\tGrav)(\!(g,n)\!)$ have exactly $g$ edges. But a genus $g$ graph with $g$ edges necessarily has the form 
\begin{equation}\label{equ:multi tad}
T_{g,n}:=
\begin{tikzpicture}[yshift=-.5cm] \node[ext] (v) at (0,1) {}; \node at (-.7,0) {$1$}; \node at (.7,0) {$n$}; \node at (0,0) {$\cdots$}; \node at (0,1.3) {$\scriptscriptstyle \cdots$}; \draw (v) edge +(-.7,-.7) edge +(-.35,-.7)edge +(+.35,-.7)edge +(.7,-.7) edge +(0,-.7) edge[in=0,out=30,loop] (v) edge[in=35,out=65,loop] (v) edge[in=180,out=150,loop] (v) edge[in=145,out=115,loop] (v) ; \end{tikzpicture},
\end{equation}
that is, it consists of a single vertex with $g$ tadpoles.
Furthermore, this vertex needs to be decorated by the top-weight part $\gr_{4g+2n-6}\tGrav(\!(2g+n)\!)\cong \Q[-4g-2n+6]$.
Hence the complex $\gr_{4g-6+2n} \Feyn_{\kk}(\tGrav)(\!(g,n)\!)$ is one-dimensional and \eqref{equ:Feyn grap top} follows.

Next consider weight $4g-8+2n$. 
Again, by the proof of Lemma \ref{lem:Feyngrav weights}
the graphs that contribute to $\gr_{4g-8+2n} \Feyn_{\kk}(\tGrav)(\!(g,n)\!)$
have either $g$ or $g+1$ edges.
The graphs with $g$ edges are of the form 
\eqref{equ:multi tad}, decorated by an element 
$x\in \gr_{4g+2n-8}\tGrav(\!(2g+n)\!)$.
Let us denote such graph by $T_{g,n}(x)$.

The graphs with $g+1$ edges are of the form 
\[
\delta^*_{a,b,A}
=
\begin{tikzpicture}[yshift=-.5cm] \node[ext] (v) at (0,1) {}; \node[ext] (w) at (1,1) {}; \node at (0,0) {$\scriptstyle A$}; \node at (1,0) {$\scriptstyle A^c$}; \node at (0,1.3) {$\scriptscriptstyle \cdots$}; \node at (0,1.6) {$\scriptstyle a\times$}; \node at (1,1.6) {$\scriptstyle b\times$}; \node at (1,1.3) {$\scriptscriptstyle \cdots$}; \node at (.5,1) {$\scriptscriptstyle \cdots$}; \draw (v) edge[bend left] (w) edge[bend right] (w) edge +(-.35,-.7)edge +(+.35,-.7) edge +(0,-.7) edge[in=35,out=65,loop] (v) edge[in=180,out=150,loop] (v) edge[in=145,out=115,loop] (v) (w) edge +(-.35,-.7)edge +(+.35,-.7) edge +(0,-.7) edge[in=0,out=30,loop] (v) edge[in=35,out=65,loop] (v) edge[in=145,out=115,loop] (v) ; \end{tikzpicture}.
\]
Here $A\subset \{1,\dots,n\}$ and there are $a$ tadpoles on the left-hand vertex, $b$ tadpoles on the right-hand vertex and $g-a-b+1$ edges connecting both vertices. We require that $a+b\leq g$ and that $|A|+g+a-b\geq 2$ and $|A^c|+g-a+b\geq 2$ to ensure that the vertices are at least trivalent.
By symmetry we furthermore identify 
\[
  \delta^*_{a,b,A}
  =
  \delta^*_{b,a,A^c}.
\] 
All vertices are implicitly decorated by the generator of the (one-dimensional) top-weight part of $\tGrav$.

Hence $\gr_{4g-8+2n} \Feyn_{\kk}(\tGrav)(\!(g,n)\!)$ is a simple two-term complex of the form 
\begin{equation}\label{equ:fg complex}
  \underbrace{\vspan\left(\delta^*_{a,b,A}\right)}
  _{\text{in degree }4g-8+2n}
\xrightarrow{d}
\underbrace{\vspan\left(T_{g,n}(x)\right)}
_{\text{in degree }4g-9+2n}
\cong \gr_{4g-8+2n} \tGrav(\!(2g+n)\!)_{S_2\wr S_g}
.
\end{equation}
On the other hand, we know from Getzler's proof \cite{Getzler0} of the Koszulness of the hypercommutative operad that 
$H(\Feyn_\kk(\tGrav)(\!(0,N)\!))=\HyCom^*(\!(N)\!)$.
In particular this implies that 
$$H(\gr_{4g-8+2n}\Feyn_\kk(\tGrav)(\!(0,2g+n)\!)_{S_2\wr S_g})=\gr_{4g-8+2n}\HyCom^*(\!(2g+n)\!)_{S_2\wr S_g}.$$

{\bf Claim A:} The complex $\gr_{4g-8+2n} \Feyn_{\kk}(\tGrav)(\!(g,n)\!)$ given by \eqref{equ:fg complex} is isomorphic to the complex 
\begin{equation}\label{equ:fg complex 2}
  \gr_{4g-8+2n}\Feyn_\kk(\tGrav)(\!(0,2g+n)\!)_{S_2\wr S_g}.
\end{equation}

From Claim A, Theorem \ref{thm:Feyn grav top} follows using that $\gr_{4g-8+2n}\HyCom^*(\!(2g+n)\!)$ is concentrated in degree $4g-8+2n$ and that $\gr_{4g-8+2n}\HyCom^*(\!(2g+n)\!)\cong \gr_{2}\HyCom(\!(2g+n)\!)$ by Poincar\'e duality.

Let us prove Claim A. 
Setting the genus in $\gr_{4g-8+2n} \Feyn_{\kk}(\tGrav)(\!(g,n)\!)$ to be zero and replacing $n$ by $2g+n$, \eqref{equ:fg complex} shows that \eqref{equ:fg complex 2} is of the form 
\begin{equation}\label{equ:fg complex 3}
\vspan\left(\delta^*_{0,0,A}\right)
_{S_2\wr S_g}
\to
\gr_{4g-8+2n} \tGrav(\!(2g+n)\!)_{S_2\wr S_g}.
\end{equation}
Now the isomorphism from \eqref{equ:fg complex} to \eqref{equ:fg complex 3} is defined on the right-hand term $\gr_{4g-8+2n} \tGrav(\!(2g+n)\!)_{S_2\wr S_g}$ as the identity and on the left-hand term by sending 
\[
  \delta^*_{a,b,A}
  \mapsto
  (g-a-b)
  \delta^*_{0,0,A\cup \{n+1,n+2,\dots,n+2a, n+2a+1, n+2a+3,\dots,n+2g-2b-3\} }. \qedhere
\]
\end{proof}

For later use we extract the following special case of the results of Theorem \ref{thm:Feyn grav top} and Lemma \ref{lem:Feyngrav weights}.
\begin{cor}
  \label{cor:Feyngrav}
Let $g,n$ be such that $2g+n\geq 3$. Then we have that 
\begin{multline}\label{equ:Fgrav hw}
\gr_{6g-6+2n} H^k(\Feyn_{\kk}(\tGrav)(\!(g,n)\!))\cong \gr_{6g-6+2n} H^k(\Feyn_{\kk}(D'\BV^*)(\!(g,n)\!))
\\\cong 
\begin{cases}
  \HyCom(\!(n)\!)^0 =
  \Com(\!(n)\!)\cong  
  \Q &\text{for $g=0$, $k=2n-6$} \\
0 &\text{otherwise}
\end{cases}
\end{multline}
and 
\begin{multline}\label{equ:Fgrav hw2}
\gr_{6g-8+2n} H^k(\Feyn_{\kk}(\tGrav)(\!(g,n)\!))\cong \gr_{6g-8+2n} H^k(\Feyn_{\kk}(D'\BV^*)(\!(g,n)\!))
\\\cong 
\begin{cases}
  \HyCom^*(\!(n)\!)^{2n-8}
  \cong 
  \HyCom(\!(n)\!)^2 &\text{for $g=0$, $k=2n-8$} \\
  \Q\cong \Com(\!(n+2)\!)_{S_2} 
  &\text{for $g=1$, $k=2n-2$} \\
0 &\text{otherwise}
\end{cases}
\end{multline}
\end{cor}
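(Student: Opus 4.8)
The plan is to deduce the corollary directly from Theorem~\ref{thm:Feyn grav top} and Lemma~\ref{lem:Feyngrav weights} by a case analysis in $g$, after first reducing from $D'\BV^*$ to $\tGrav$. By Proposition~\ref{prop:DpDBV formal} the $1$-shifted cyclic pseudo-operad $D'\BV^*$ is formal, so $\Feyn_\kk(D'\BV^*)\simeq \Feyn_\kk(\tGrav)$ and the leftmost isomorphisms in \eqref{equ:Fgrav hw} and \eqref{equ:Fgrav hw2} are immediate. It therefore suffices to compute the $\tGrav$ side, and the remaining content is bookkeeping that compares the weights $6g-6+2n$ and $6g-8+2n$ named in the statement with the maximal weight $4g-6+2n$ of Lemma~\ref{lem:Feyngrav weights}.

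For \eqref{equ:Fgrav hw} I would split according to $g$, using $6g-6+2n-(4g-6+2n)=2g$. For $g\geq 1$ the weight $6g-6+2n$ strictly exceeds the top weight of Lemma~\ref{lem:Feyngrav weights}, so $\gr_{6g-6+2n}\Feyn_\kk(\tGrav)(\!(g,n)\!)=0$ and its cohomology vanishes. For $g=0$ the two weights coincide, $6g-6+2n=4g-6+2n=2n-6$, so \eqref{equ:Feyn grap top} applies verbatim and gives $\Q$ concentrated in degree $2n-6$. I then identify this $\Q$ with $\HyCom(\!(n)\!)^0=\Com(\!(n)\!)$ using that $\gr_0\HyCom=\Com$, completing \eqref{equ:Fgrav hw}.

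For \eqref{equ:Fgrav hw2} I would again split on $g$, now with $6g-8+2n-(4g-6+2n)=2g-2$. For $g\geq 2$ this is positive, so the weight exceeds the maximal weight and the graded piece vanishes. For $g=1$ we have $6g-8+2n=4g-6+2n=2n-2$, the top weight in genus $1$, so \eqref{equ:Feyn grap top} gives $\Q$ in degree $2n-2$, which I rewrite as $\Com(\!(n+2)\!)_{S_2}$ via the identification $\Q\cong \HyCom(\!(2g+n)\!)^0_{S_2\wr S_g}$ noted after Theorem~\ref{thm:Feyn grav top} (specialized to $g=1$, where $S_2\wr S_1=S_2$). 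For $g=0$ we have $6g-8+2n=4g-8+2n=2n-8$, the top$-2$ weight in genus $0$, so \eqref{equ:Feyn grap top 2} applies and yields $\HyCom(\!(n)\!)^2_{S_2\wr S_0}=\HyCom(\!(n)\!)^2$ (the group $S_2\wr S_0$ being trivial), which I identify with $\HyCom^*(\!(n)\!)^{2n-8}$ by Poincar\'e duality on $\MM_{0,n}$.

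This computation involves no genuine obstacle; its only subtle point is correctly sorting each pair $(g,n)$ into one of the two regimes — weight above the maximum, forcing vanishing, versus weight equal to the top or top$-2$ weight, where Theorem~\ref{thm:Feyn grav top} applies. I would double-check the boundary arithmetic, in particular that the stability hypothesis $2g+n\geq 3$ guarantees that the single-vertex graphs $T_{g,n}$ underlying the non-vanishing cases are genuinely stable, and confirm that the degree placements match those asserted in the corollary.
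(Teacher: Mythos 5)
Your proposal is correct and follows essentially the same route as the paper: the paper's proof likewise just specializes Theorem~\ref{thm:Feyn grav top} at $g=0$ (for \eqref{equ:Fgrav hw} and the $g=0$ case of \eqref{equ:Fgrav hw2}) and at $g=1$ (for the $g=1$ case of \eqref{equ:Fgrav hw2}), with the vanishing in all other genera coming from the weight bound of Lemma~\ref{lem:Feyngrav weights} exactly as in your case analysis. Your write-up merely makes explicit the formality reduction and the ``weight exceeds $4g-6+2n$'' bookkeeping that the paper leaves implicit.
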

\begin{proof}
\eqref{equ:Fgrav hw} follows by setting $g=0$ in \eqref{equ:Feyn grap top}. In \eqref{equ:Fgrav hw2}, the case $g=0$, $k=2n-8$ follows by setting $g=0$ in \eqref{equ:Feyn grap top 2} and the case $g=1$, $k=2n-2$ follows by setting $g=1$ in \eqref{equ:Feyn grap top}.
\end{proof}

\begin{remark}\label{rem:feyngrav top 2}
In the special case $n=0$ one can deduce from the proof of Theorem \ref{thm:Feyn grav top} by counting the numbers of graphs involved that 
\[
   \mathrm{dim} \, \gr_{4g-8+2n} H^{4g-8+2n}(\Feyn_{\kk}(\tGrav)(\!(g,0)\!))
  =
  \begin{cases}
   \frac 14 (g+2)^2-3& \text{for $g$ even}
   \\
   \frac 14 (g+1)(g+3) -3& \text{for $g$ odd}
  \end{cases}.
\]
\end{remark}

\subsection{Representatives}\label{sec:top cohomology feyn grav}
We have a quasi-isomorphism of 1-shifted non-cyclic pseudo-operads
\[
\tGrav \to \tBVt.
\]
There is hence a quasi-isomorphism of non-cyclic pseudo-cooperads
\[
  \Feyn_\kk(\tGrav)(\!(0,-)\!) \to \Feyn_\kk(\tBVt)(\!(0,-)\!).  
\]
Here we abuse the notation a bit and apply the Feynman transform to $\tBVt$, even though this is not a cyclic operad. However, in genus zero, and if we consider only the non-cyclic cooperad structure on the result this is still well-defined, as it is equivalent to applying the operadic bar construction.

For later use, we shall need to write down explicit representatives of the top- and top$-2$-weight cohomology classes identified in Corollary \ref{cor:Feyngrav}, in genus zero.

First consider the top-weight $2n-6$ part in genus $g=0$.
The representative of a generator of $$\gr_{2n-6}H^{2n-6}(\Feyn_\kk(\tGrav)(\!(0,n)\!))\cong \Q$$ is a graph with a single vertex, decorated by an element of the one-dimensional space $$\gr_{2n-6}\tGrav(\!(0,n)\!)^{2n-6}\cong \Q.$$

In $\tBVt\simeq \tGrav$ this top-weight generator is represented by \eqref{equ:BVgen}.
Hence the cohomology $$\gr_{2n-4}H^{2n-4}(\Feyn_\kk(\tBVt)(\!(0,n+1)\!))\cong \Q$$ is generated by a graph with a single vertex, decorated by the element $\bs\nabla(x_1\cdots x_{n})$.

Next consider weight $2n-8$ in genus $g=0$.
Here the representatives are certain linear combinations of graphs with two vertices and one edge.
Each vertex is decorated by a top-weight element of $\tGrav$.
To be more explicit, label the inputs of some element of $\gr_{2n-6}\Feyn_\kk(\tBVt)(\!(0,n+1)\!)$ by $0,\dots,n$.
We break cyclic symmetry by considering $0$ the root of our trees.
Then for $A\subset \{1,\dots,n\}$ such that $n-1\geq |A|\geq 2$ we build an element 
\[
  T_A \in \gr_{2n-6}\Feyn_\kk(\tBVt)(\!(0,n+1)\!),
\]
that is a graph with two vertices.
The lower vertex is decorated by $\bs\nabla (\prod_{i\in A}x_i)$, and the upper by $\bs\nabla(x_t\prod_{i\in A^c}x_i)$.
\[
T_A :=\, 
\begin{tikzpicture} \node[ext,label=0:{$\scriptstyle \nabla(x_t\prod_{i\in A^c}x_i)$}] (v1) at (0,.5) {}; \node[ext,label=0:{$\scriptstyle \nabla(\prod_{i\in A}x_i)$}] (v2) at (1,-.5) {}; \node (v0) at (0,1.2) {0}; \draw (v1) edge node[below] {$\scriptstyle t$} (v2) edge (v0) edge +(-.5,-.5) edge +(-.25,-.5) edge +(0,-.5) (v2) edge +(-.5,-.5) edge +(0,-.5) edge +(.5,-.5); \draw[pbrace, thick] (0,-.1) -- (-.5,-.1); \node at (-.25, -.4) {$A^c$}; \draw[pbrace, thick] (1.5,-1.1) -- (.5,-1.1); \node at (1, -1.4) {$A$}; \end{tikzpicture}
\]

We want to consider only those linear combinations $\sum_A c_A T_A$ of such graphs that are closed under the edge contraction differential.
Concretely, $d_c T_A$ is a graph with one vertex, decorated by the element 
\[
\bs\nabla\left(\!(\prod_{i\in A^c}x_i) \nabla(\prod_{i\in A}x_i)  \right)  
=
\bs\nabla\left(\sum_{i,j\in A\atop i<j} \Omega_{ij} \right),  
\]
using the notation \eqref{equ:Omegaij}.
But the kernel of $\nabla$ is one-dimensional, spanned by $\sum_{i<j} \Omega_{ij}$.
Hence we find that 
\[
  d_c \left(\sum_A c_A T_A\right) =0
\]
if and only if for each choice of indices $i\neq j$, $k\neq l$ we have 
\begin{equation}\label{equ:ca eqn}
\sum_{A\atop i,j\in A} c_A =  \sum_{A\atop k,l\in A} c_A.
\end{equation}
Hence our explicit representatives for $\gr_{2n-8}H^{2n-8}(\Feyn_\kk(\tBVt))(\!(0,n+1)\!)$ are those linear combinations $\sum_A c_A T_A$ whose coefficients satisfy the linear equations \eqref{equ:ca eqn}.
The identification with $\HyCom$ is such that the element $T_A$ here corresponds to the element $\delta_A^*$ in Section \ref{sec:hycom1}.

Finally consider the genus 1 part.
Here the representative of weight $2n-2$ and degree $2n-2$ of $\Feyn_\kk(\tGrav)(\!(0,n)\!)$ is a genus 1 graph with one vertex 
\[
\begin{tikzpicture} \node[ext] (v) at (0,0) {}; \draw (v) edge[loop] (v) edge +(-.5,-.5) edge +(0,-.5) edge +(.5,-.5); \end{tikzpicture},
\]
with the vertex being decorated by a (top-weight) element of $\gr_{2n-2}\tGrav(\!(n+2)\!)^{2n-2}\cong \Q$.

\subsection{Operations on \texorpdfstring{$H(\Feyn_{\kk}(\tGrav))$}{H(Feyn(Grav))}}
\label{sec:operations ftgrav}
We define three operations on the cohomology of the Feynman transform $H(\Feyn(\tGrav))$ that are derived using the interpretation of $\tGrav$ as the cohomology of the part $D'\BV^*$ of arity $\geq 3$ of $D\BV^*$, cf. Theorem \ref{thm:DCV}.

More precisely, by formality of $D'\BV^*$ (see Proposition \ref{prop:DpDBV formal}) we have an isomorphism
\[
  H(\Feyn_{\kk}(\tGrav)) \cong H(\Feyn_{\kk}(D'\BV^*)).
\]
We shall define operations on $H(\Feyn_{\kk}(\tGrav))$ by specifying cochain level operations on $\Feyn_{\kk}(D'\BV^*)$.

The first set of operations are unary operations of degree $+2$ and weight $+2$
\[
\Delta_j : \Feyn_{\kk}(D'\BV^*)(\!(g,n)\!) \to \Feyn_{\kk}(D'\BV^*)(\!(g,n)\!)
\]
for $j=1,\dots, n$ and $2g+n\geq 3$.
To define $\Delta_j$ consider a graph $\Gamma\in \Feyn_{\kk}(D'\BV^*)(\!(g,n)\!)$ and the vertex $v$ incident to the leg $j$.
The vertex $v$ carries a decoration $A_v\in D'\BV^*(\!(g_v, \gst(v))\!)$, and we assume that the half-edge at the vertex that connects to leg $j$ is $h$.
Then we define $\Delta_j\Gamma$ to be the same graph $\Gamma$, but with decoration $A_v$ changed to the cyclic 1-shifted-operadic composition 
\[
A_v' = A_v \circ_h \Dc 
\]
with the dual BV operator $\Dc\in D\BV^*(\!(0,2)\!)$.

The second operation we shall need is a binary ``operadic composition'' of weight $+2$ and degree $+2$.
\begin{gather*} \circ_{i,j} : \Feyn_{\kk}(D'\BV^*)(\!(g,n)\!) \otimes \Feyn_{\kk}(D'\BV^*)(\!(g',n')\!) \to \Feyn_{\kk}(D'\BV^*)(\!(g+g',n+n'-2)\!)\\
\Gamma\otimes \Gamma' \mapsto \Gamma \circ_{i,j} \Gamma', \end{gather*}
with $1\leq i\leq n$, $1\leq j\leq n'$.
Concretely, let $\Gamma\in \Feyn(D'\BV^*)(\!(g,n)\!)$, $\Gamma'\in \Feyn_{\kk}(D'\BV^*)(\!(g',n')\!)$ be two graphs. Then we set 
\[
  \Gamma \circ_{i,j} \Gamma' := \mu_{i,j}( \Delta_i \Gamma, \Gamma' ) + \mu_{i,j}(  \Gamma, \Delta_j\Gamma' ),
\]
where $\mu_{i,j}(A,B)$ takes graphs $A$ and $B$ and glues them together with a new edge 
\[
  \mu_{i,j}\left(
\begin{tikzpicture} \node[draw, circle] (v) at (0,0) {$\Gamma$}; \node (i) at (.7,0) {$i$}; \draw (v) edge (i) edge +(-.5,.5) edge +(-.5,-.5); \end{tikzpicture},
\begin{tikzpicture} \node[draw, circle] (v) at (0,0) {$\Gamma'$}; \node (i) at (-.7,0) {$j$}; \draw (v) edge (i) edge +(.5,.5) edge +(.5,-.5); \end{tikzpicture}
  \right)
  \quad = \quad 
  \begin{tikzpicture} \node[draw, circle] (v) at (0,0) {$\Gamma$}; \node[draw, circle] (vv) at (1,0) {$\Gamma'$}; \draw (v) edge (vv) edge +(-.5,.5) edge +(-.5,-.5) (vv) edge +(.5,.5) edge +(.5,-.5); \end{tikzpicture}
\]

We also have a similar operation of weight and degree $+2$
\begin{gather*} \eta_{i,j} : \Feyn_{\kk}(D'\BV^*)(\!(g,n)\!) \to \Feyn_{\kk}(D'\BV^*)(\!(g+1,n-2)\!)\\
 \Gamma \mapsto \eta_{i,j}(\Gamma), \end{gather*}
defined such that 
\[
\eta_{i,j} (\Gamma) =\nu_{i,j}(\Delta_i\Gamma) + \nu_{i,j}(\Delta_j\Gamma) 
\]
with 
\[
  \nu_{i,j}\left(
\begin{tikzpicture} \node[draw, circle] (v) at (0,0) {$\Gamma$}; \node (i) at (.7,.7) {$i$}; \node (j) at (.7,-.7) {$j$}; \draw (v) edge (i) edge (j) edge +(-.5,.5) edge +(-.5,-.5); \end{tikzpicture}
  \right)
  \quad = \quad 
  \begin{tikzpicture} \node[draw, circle] (v) at (0,0) {$\Gamma$}; \draw (v) edge +(-.5,.5) edge +(-.5,-.5) edge[in=45,out=-45, loop ](v) ; \end{tikzpicture}
\]

\begin{lemma}
The operations $\Delta_j$, $\circ_{i,j}$ and $\eta_{i,j}$ are well defined cochain maps and hence induce morphisms on cohomology that we denote by the same letters.
\end{lemma}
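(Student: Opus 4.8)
The plan is to verify that each of the three operations commutes, up to sign, with the total differential $d = d_{D'\BV^*} + d_c + d_t$ on $\Feyn_\kk(D'\BV^*)$, where $d_{D'\BV^*}$ is induced by the internal differential of the decorations, $d_c$ contracts edges, and $d_t$ removes tadpoles. Well-definedness (independence of the auxiliary choices and compatibility with the graph-isomorphism relation $\sim$) is immediate: leg $j$ singles out a unique incident vertex and half-edge for $\Delta_j$, and the gluing maps $\mu_{i,j}$, $\nu_{i,j}$ are manifestly equivariant. The unifying observation is that all three operations are assembled from the cyclic operadic composition of vertex decorations with the single element $\Dc \in D\BV^*(\!(2)\!)$, which is a cocycle: indeed $D\BV^*(\!(2)\!)$ carries the zero differential, so $d\Dc = 0$.

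First I would treat $\Delta_j$. Since the internal differential of the dg cyclic operad $D\BV^*$ is a derivation for the composition $\circ_h$, the Leibniz rule together with $d\Dc=0$ gives $d_{D'\BV^*}(A_v\circ_h \Dc) = (d_{D'\BV^*}A_v)\circ_h \Dc$, so $\Delta_j$ commutes with $d_{D'\BV^*}$. For the graph-level terms $d_c$ and $d_t$, the only nontrivial case is when the contracted edge or removed tadpole is incident to the vertex $v$ carrying leg $j$: there one must reorder ``insert $\Dc$ at the half-edge of leg $j$'' past ``compose the decorations along the contracted edge (resp. self-compose for a tadpole)''. These two compositions act at distinct half-edges of $v$, so they commute by the associativity and equivariance axioms of the cyclic operad $D\BV^*$; away from $v$ the operations act on disjoint data and commute trivially. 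This shows $\Delta_j$ is a cochain map.

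Next I would treat $\circ_{i,j}$ and $\eta_{i,j}$. Both are built from the bare gluing maps $\mu_{i,j}$ (a new ordinary edge) and $\nu_{i,j}$ (a new self-loop) precomposed with $\Delta_i$ or $\Delta_j$, which are already cochain maps by the previous step. Adjoining a new edge commutes with $d_{D'\BV^*}$ and with contraction or tadpole-removal at all \emph{pre-existing} edges, by the modular-operad compatibility of the Feynman differential. The only terms left to control are those in which the differential acts on the \emph{newly created} edge: for $\circ_{i,j}$ this is $d_c$ contracting the new edge $e$, and for $\eta_{i,j}$ it is $d_t$ removing the new self-loop. Contracting (resp. removing) the new edge in the first summand $\mu_{i,j}(\Delta_i\Gamma,\Gamma')$ (resp. $\nu_{i,j}(\Delta_i\Gamma)$) fuses the relevant vertices and composes their decorations through the inserted $\Dc$; the second summand produces the identical decorated graph, now with $\Dc$ approached from the opposite side of the edge.

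The hard part, and the reason for the symmetric definition summing over the two insertions, is that these two coinciding terms cancel; this is where I expect the main obstacle to lie. One must verify that they enter with \emph{opposite} signs, which follows from tracking the orientation line $\Det_0(E\gamma)$ together with the degree $+1$ of $\Dc$, so that the new edge carries opposite orientations in the two summands. Granting this sign bookkeeping, the residual new-edge terms cancel in the sum defining $\circ_{i,j}$ (resp. $\eta_{i,j}$), and one obtains $d(\Gamma\circ_{i,j}\Gamma') = (d\Gamma)\circ_{i,j}\Gamma' \pm \Gamma\circ_{i,j}(d\Gamma')$ and likewise $d\,\eta_{i,j}(\Gamma) = \pm\,\eta_{i,j}(d\Gamma)$. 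Hence all three operations are cochain maps and descend to the stated morphisms on cohomology.
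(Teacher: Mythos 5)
Your overall architecture is the same as the paper's: $\Delta_j$ is compatible with the differential because operadic composition is; for $\circ_{i,j}$ and $\eta_{i,j}$ everything reduces to the terms where the differential hits the newly created edge, and these must cancel between the two summands of the symmetric definition. But your justification of that cancellation --- the crux of the lemma, which you yourself only ``grant'' --- is not correct. You attribute the sign to the line $\Det_0(E\gamma)$, claiming the new edge ``carries opposite orientations in the two summands.'' In $\Feyn_{\kk}(D'\BV^*)=\Free_0(D'\BV^*)$ the determinant line $\Det_0(E\gamma)=\Q^{\otimes E\gamma}$ is concentrated in degree zero, so permuting or reordering edges produces no Koszul signs whatsoever: edges carry no orientation data in this complex, and the degree of the element $\Dc$ by itself cannot create a relative sign between the two terms either (if the compositions were even, associativity would make the two terms \emph{equal}, and they would add rather than cancel). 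The actual mechanism, which is the paper's argument, lives in the decorations: contracting the new edge in $\mu_{i,j}(\Delta_i\Gamma,\Gamma')$ and in $\mu_{i,j}(\Gamma,\Delta_j\Gamma')$ produces a vertex decorated by $(\gamma\circ_h\Dc)\circ_{h'}\gamma'$ and by $\gamma\circ_h(\Dc\circ_{h'}\gamma')$ respectively, and these are \emph{negatives} of each other because the composition of the $1$-shifted operad $D'\BV^*$ has odd degree $+1$, so the associativity relation holds only up to the Koszul sign $-1$ coming from exchanging two odd operations. Without this input your proof fails at exactly the step you flagged.

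There is a second, related slip in your treatment of $\eta_{i,j}$: you identify the problematic term as ``$d_t$ removing the new self-loop.'' But $D'\BV^*$ is a cyclic pseudo-operad regarded as a modular operad with $\eta=0$, so $d_t=0$ on $\Feyn_{\kk}(D'\BV^*)$; tadpoles are never removed by the differential. The genuine obstruction for $\eta_{i,j}$ arises from $d_c$ contracting the new edge in the case where the legs $i$ and $j$ are attached to \emph{distinct} vertices of $\Gamma$ (then the new edge joins two different vertices and is contractible), and it cancels by the same odd-degree argument as for $\circ_{i,j}$; when $i$ and $j$ sit at the same vertex the new edge is a genuine tadpole and no term of the differential touches it, so there is nothing to check.
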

\begin{proof}
This is clear for $\Delta_j$, since the operadic composition is compatible with the differentials.

For $\mu_{i,j}$ one has to check that 
\[
A:= d\mu_{i,j}(\Gamma,\Gamma') - 
\mu_{i,j}(d\Gamma,\Gamma')-(-1)^{|\Gamma|}\mu_{i,j}(\Gamma,d\Gamma')=0.
\] 
The differential can be decomposed as $d=d_c+d_{D\BV^*}$, with $d_{D\BV^*}$ the part induced from the differential on $D\BV^*$, and $d_c$ the edge contraction.
The operation $\mu_{i,j}$ is clearly compatible with the internal part of the differential $d_{D\BV^*}$:
\begin{align*} &d_{D\BV^*}\mu_{i,j}(\Gamma,\Gamma') \\
 &= d_{D\BV^*} (\mu_{i,j}(\Delta_i\Gamma,\Gamma' )+\mu_{i,j}(\Gamma,\Delta_j\Gamma' )) \\&= \mu_{i,j}(d_{D\BV^*}\Delta_i\Gamma,\Gamma' ) +(-1)^{|\Gamma|} \mu_{i,j}(\Delta_i\Gamma,d_{D\BV^*}\Gamma' ) \\& +\mu_{i,j}(d_{D\BV^*}\Gamma,\Delta_j\Gamma' ) +(-1)^{|\Gamma|} \mu_{i,j}(\Gamma,d_{D\BV^*}\Delta_j\Gamma' ) \\&= \mu_{i,j}(\Delta_i d_{D\BV^*}\Gamma,\Gamma' ) +(-1)^{|\Gamma|} \mu_{i,j}(\Delta_i\Gamma,d_{D\BV^*}\Gamma' ) \\& +\mu_{i,j}(d_{D\BV^*}\Gamma,\Delta_j\Gamma' ) +(-1)^{|\Gamma|} \mu_{i,j}(\Gamma,\Delta_jd_{D\BV^*}\Gamma' ) \\&= \mu_{i,j}(d_{D\BV^*}\Gamma.\Gamma')-(-1)^{|\Gamma|}\mu_{i,j}(\Gamma.d_{D\BV^*}\Gamma'). \end{align*}
In the analogous computation with $d_c$ there appears a new term, from contracting the new edge introduced by $\mu_{i,j}$.
Denote by $\gamma\in D\BV^*$ the decoration at the vertex connected to leg $i$ in $\Gamma$ and denote by $\gamma'\in D\BV^*$ the decoration at the vertex connected to leg $j$ in $\Gamma'$.
Then the obstruction $A$ to compatibility with the differential is a graph with one vertex decorated by 
\[
  (\gamma \circ_h \Dc) \circ_{h'} \gamma'
  +
  \gamma \circ_{h} (\Dc \circ_{h'} \gamma')
  =0,
\]
with the terms cancelling since the composition ``$\circ$'' is an operation of odd degree.
This shows that $\circ_{i,j}$ is a well-defined cochain map.

The proof of well-definedness of $\eta_{i,j}$ is analogous to that of $\circ_{i,j}$.
\end{proof}

\subsection{The three operations in high weight}
In this section we shall understand the three operations $\Delta_j$, $\circ_{i,j}$ and $\eta_{i,j}$ of the previous subsection in the case of high weights, that is, weights $6g-6+2n$ and $6g-8+2n$.
Taking into account the computation of those high weight parts in Corollary \ref{cor:Feyngrav}, there is the following list of nontrivial cases to consider:
\begin{align*} &\Delta_j : \gr_{2n-8} H^{2n-8}( \Feyn_{\kk}(\tGrav)(\!(0,n)\!) ) \\&\quad\quad\quad\quad \to \gr_{2n-6} H^{2n-6}( \Feyn_{\kk}(\tGrav)(\!(0,n)\!) ) \\
&\circ_{i,j}: \gr_{2n-6} H^{2n-6}( \Feyn_{\kk}(\tGrav)(\!(0,n)\!) )\otimes \gr_{2n'-6} H^{2n'-6}( \Feyn_{\kk}(\tGrav)(\!(0,n')\!) ) \\&\quad\quad\quad\quad \to \gr_{2n+2n'-10} H^{2n+2n'-10}( \Feyn_{\kk}(\tGrav)(\!(0,n+n'-2)\!) ) \\
&\circ_{i,j}: \gr_{2n-8} H^{2n-8}( \Feyn_{\kk}(\tGrav)(\!(0,n)\!) ) \otimes \gr_{2n'-6} H^{2n'-6}( \Feyn_{\kk}(\tGrav)(\!(0,n')\!) ) \\&\quad\quad\quad\quad \to \gr_{2n+2n'-12} H^{2n+2n'-12}( \Feyn_{\kk}(\tGrav)(\!(0,n+n'-2)\!) ) \\
&\circ_{i,j}: \gr_{2n-2} H^{2n-2}( \Feyn_{\kk}(\tGrav)(\!(1,n)\!) ) \otimes \gr_{2n'-6} H^{2n'-6}( \Feyn_{\kk}(\tGrav)(\!(0,n')\!) ) \\&\quad\quad\quad\quad \to \gr_{2n+2n'-6} H^{2n+2n'-6}( \Feyn_{\kk}(\tGrav)(\!(1,n+n'-2)\!) ) \\
&\eta_{i,j}: \gr_{2n-8} H^{2n-8}( \Feyn_{\kk}(\tGrav)(\!(0,n)\!) ) \\&\quad\quad\quad\quad \to \gr_{2n-6} H^{2n-6}( \Feyn_{\kk}(\tGrav)(\!(1,n-2)\!) ) . \end{align*}
Here we also used the symmetry of $\circ_{i,j}$ to remove one case.

\begin{prop}\label{prop:circ is comp Com}
The operation 
\begin{multline*}
  \circ_{i,j}:
\gr_{2n-6} H^{2n-6}( \Feyn_{\kk}(\tGrav)(\!(0,n)\!) ) \otimes 
\gr_{2n'-6} H^{2n'-6}( \Feyn_{\kk}(\tGrav)(\!(0,n')\!) )
\cong \Com(\!(n)\!) \otimes \Com(\!(n')\!)
\\
\to \gr_{2n+2n'-10} H^{2n+2n'-10}( \Feyn_{\kk}(\tGrav)(\!(0,n+n'-2)\!) ) \cong \Com(\!(n+n'-2)\!)
\end{multline*}
agrees with minus the cyclic operadic composition of the cyclic operad $\Com$, using the identification \eqref{equ:Fgrav hw}.
\end{prop}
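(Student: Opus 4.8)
The plan is to evaluate $\circ_{i,j}$ directly on the explicit cocycle representatives of the top-weight classes. Since $\gr_{2n-6}H^{2n-6}(\Feyn_\kk(\tGrav)(\!(0,n)\!))\cong\Com(\!(n)\!)\cong\Q$ is one-dimensional in every arity, the operation $\circ_{i,j}$ is automatically \emph{some} scalar multiple of the cyclic $\Com$-composition, so the whole content of the proposition is that this scalar equals $-1$. To compute it I would work in $\Feyn_\kk(\tBVt)$ rather than $\Feyn_\kk(D'\BV^*)$, which is legitimate in genus zero by the quasi-isomorphism $D'\BV^*\to\tBVt$ of Proposition \ref{prop:DpDBV formal}. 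By Section \ref{sec:top cohomology feyn grav} the generator corresponding to $1\in\Com(\!(n)\!)$ is the single-vertex graph $\Gamma$ decorated by $\bs\nabla(x_1\cdots x_{n-1})$, and the $\Com$-composition of two generators is represented by the single vertex decorated by $\bs\nabla$ of the product of all surviving variables.

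The first key point is that $\Delta_i$ inserts $\Dc$, which corresponds to the unary operation $\bs u$ under $D\BV^*\to\tBV^!$; hence $\Delta_i\Gamma$ is the single vertex decorated by $\bs u\nabla(x_1\cdots x_{n-1})$. Because the internal differential of $\tBV^!$ is $d=u\nabla$, this decoration is (up to the sign from $d(\bs X)=-\bs dX$) exactly $d_{\tBVt}$ applied to the plain product $\bs(x_1\cdots x_{n-1})$. In other words $\Delta_i\Gamma=\pm d_{\tBVt}P$, where $P$ is the single vertex carrying the product decoration $\bs(x_1\cdots x_{n-1})$ without the $\nabla$. I would record this as: $\Delta_i$ sends the top-weight generator to an internal coboundary.

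Next I would invoke that $\mu_{i,j}$ is a chain map for the internal differential (exactly what was checked in the proof that $\circ_{i,j}$ is well defined) together with $d_{\tBVt}\Gamma'=0$ to obtain
\[
\mu_{i,j}(\Delta_i\Gamma,\Gamma')=\pm d_{\tBVt}\,\mu_{i,j}(P,\Gamma'),
\qquad
\mu_{i,j}(\Gamma,\Delta_j\Gamma')=\pm d_{\tBVt}\,\mu_{i,j}(\Gamma,P').
\]
Writing the full Feynman differential as $d=d_{\tBVt}+d_c$ and discarding the genuinely exact term $d(\cdots)$, the class of $\Gamma\circ_{i,j}\Gamma'$ is therefore represented by $\mp d_c\big(\mu_{i,j}(P,\Gamma')\pm\mu_{i,j}(\Gamma,P')\big)$. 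Each of these two graphs has a single new edge, so $d_c$ merely contracts it and composes the two vertex decorations in $\tBVt$: contracting the first gives the single vertex decorated by $Y\cdot\nabla(Z)$, and the second by $\nabla(Y\cdot x_i)\circ_i Z$, where $Y$ and $Z$ denote the products of the surviving input variables of the two factors.

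The last step is the algebraic identity $Y\cdot\nabla(Z)+\nabla(Y\cdot x_i)\circ_i Z=\nabla(Y\cdot Z)$ in $\tBV^!$, which exhibits the sum of the two contracted decorations as the top-weight generator of the target. This follows from expanding $\nabla(YZ)$ through the explicit formula \eqref{equ:BVgen} and the relations $\nabla(-\cdot-)=[-,-]$, $[x,y\cdot z]=[x,y]\cdot z+[x,z]\cdot y$ and $[ux,y]=u[x,y]$: the bracket-pairs internal to $Z$ reproduce $Y\nabla(Z)$, while the pairs internal to $Y$ together with the mixed pairs reproduce $\nabla(Yx_i)\circ_iZ$ after substituting $x_i\mapsto Z$. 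Feeding this back shows $\Gamma\circ_{i,j}\Gamma'$ is cohomologous to $\mp$ the generator, i.e.\ to $-1$ times the $\Com$-composition. I expect the only real difficulty to be the bookkeeping of signs — the $\bs$-shift sign in $d(\bs X)=-\bs dX$, the odd degree of the operadic composition, the $(-1)^{|c|}$ in $d_c$, and the relative signs in $\circ_{i,j}=\mu_{i,j}(\Delta_i-,-)+\mu_{i,j}(-,\Delta_j-)$ — which must conspire so that the two terms add rather than cancel and the overall scalar comes out to $-1$ rather than $+1$.
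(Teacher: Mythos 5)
Your proposal is correct and follows essentially the same route as the paper's proof: the paper likewise passes to $\tBVt$, represents the top-weight generators by single vertices decorated by $\bs\nabla(x_1\cdots x_{n-1})$, rewrites the $\bs u\nabla$-decorations produced by $\Delta_i$ as internal coboundaries, trades the $d_{\tBVt}$-exact term for $d_c$ applied to the primitives $X$, and concludes with the same Leibniz-rule identity $Y\cdot\nabla(Z)+\nabla(Y x_*)\circ_* Z=\nabla(YZ)$, which appears there in the form $\sum_{i<j\in S'}\Omega_{ij}+\sum_{i<j,\ i\text{ or }j\in S}\Omega_{ij}=\nabla\prod_{i\in S\sqcup S'}x_i$. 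The sign bookkeeping you defer is carried out explicitly in the paper and does conspire to give the overall factor $-1$.
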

\begin{proof}
We consider an arbitrary leg as the root, and define the corresponding non-cyclic sequences
\begin{align}\label{equ:Ydef} Y(S) := \gr_{2r-4} H^{2r-4}( \Feyn_{\kk}(\tBVt)(\!(0,\{0\}\sqcup S)\!) ) \cong \gr_{2r-4} H^{2r-4}( \Feyn_{\kk}(D'\BV^*)(\!(0,\{0\}\sqcup S)\!) ), \end{align}
with $S$ running over finite sets and $r:=|S|$.
Then the above morphisms $\circ_{i,j}$ are equivalent to morphisms 
\[
\hat\circ_*: Y(S\sqcup \{*\}) \otimes Y(S') \to Y(S\sqcup S') .
\]
We know that $Y(S)\cong \Com(S)\cong \Q$, and we want to show that $\circ_j$ corresponds to the composition in the operad $\Com$, or in other words to the usual isomorphism $\Q\otimes\Q\to\Q$.

From Section \ref{sec:top cohomology feyn grav} and Example \ref{ex:top cohom tbv} we see that $Y(S)$ is generated by the 1-vertex graph decorated with the element
\[
 \bs \nabla( \prod_{j\in S} x_j) \in \tBVt(S).
\]
Let us denote the graph in $Y(S)$ with one vertex decorated by $x\in \tBVt(S)$ by 
\[
T(x) =
\begin{tikzpicture} \node[ext,label=0:{$\scriptstyle x$}] (v1) at (0,0) {}; \node (v0) at (0,.7) {0}; \draw (v1) edge (v0) edge +(-.5,-.5) edge +(0,-.5) edge +(.5,-.5); \end{tikzpicture}
\]
and the two-vertex graph with vertices decorated by $x$ and $y$ by 
\begin{equation}\label{equ:TAxydef}
T_A(x,y) = 
\begin{tikzpicture} \node[ext,label=0:{$\scriptstyle x$}] (v1) at (0,.5) {}; \node[ext,label=0:{$\scriptstyle y$}] (v2) at (1,-.5) {}; \node (v0) at (0,1.2) {0}; \draw (v1) edge node[below] {$\scriptstyle *$} (v2) edge (v0) edge +(-.5,-.5) edge +(-.25,-.5) edge +(0,-.5) (v2) edge +(-.5,-.5) edge +(0,-.5) edge +(.5,-.5); \draw[pbrace, thick] (0,-.1) -- (-.5,-.1); \node at (-.25, -.4) {$A^c$}; \draw[pbrace, thick] (1.5,-1.1) -- (.5,-1.1); \node at (1, -1.4) {$A$}; \end{tikzpicture},
\end{equation}
with $A\subset S$.
We then compute 
\begin{align*} &T\left(\bs\nabla( x_* \prod_{i\in S}x_i)\right) \hat\circ_j T\left(\bs\nabla( \prod_{j\in S'} x_j)\right) \\
 =~& T_{S'}\left(\bs u\nabla( x_* \prod_{i\in S}x_i), \bs\nabla( \prod_{j\in S'} x_j)\right) - T_{S'}\left(\bs \nabla( x_* \prod_{i\in S}x_i), \bs u\nabla( \prod_{j\in S'} x_j)\right) \\
 =~& -d_{\tBVt}\underbrace{\left(T_{S'}\left(\bs x_* \prod_{i\in S}x_i, \bs \nabla( \prod_{j\in S'} x_j)\right) - T_{S'}\left(\bs \nabla( x_* \prod_{i\in S}x_i), \bs \prod_{j\in S'} x_j\right)  \right)}_{=:X}. \end{align*} 
Using the notation \eqref{equ:Omegaij} the result is hence cohomologous to 
\begin{align*} d_c X &= -T\left( \sum_{i<j\in S'}\bs\Omega_{ij} + \sum_{i<j \in S\sqcup S' \atop i\text{ or }j\in S} \bs\Omega_{ij} \right) \\&= -T\left( \sum_{i<j \in S\sqcup S'}\bs\Omega_{ij} \right) = -T\left(\bs \nabla \prod_{i\in S\sqcup S'} x_i\right).                  \end{align*}

Hence we have shown that the result is minus the standard generator of $Y(S\sqcup S')\cong \Q$, and we have thus shown the Proposition.

Also note that the notation ``$i<j$'' in the above computation is slightly abusive and shall indicate that we sum over each pair only once.
\end{proof}

\begin{prop}\label{prop:circ is comp HyCom}
  The operation 
  \begin{multline*}
    \circ_{i,j}:
    \gr_{2n-8} H^{2n-8}( \Feyn_{\kk}(\tGrav)(\!(0,n)\!) )
    \otimes \gr_{2n'-6} H^{2n'-6}( \Feyn_{\kk}(\tGrav)(\!(0,n')\!) )
\\
\cong \HyCom(\!(n)\!)^2 \otimes \Com(\!(n')\!)
\cong \HyCom(\!(n)\!)^2 \otimes \HyCom(\!(n')\!)^0
    \\ \to 
    \gr_{2n+2n'-8} H^{2n+2n'-8}( \Feyn_{\kk}(\tGrav)(\!(0,n+n'-2)\!) )
    \cong \HyCom(\!(n)\!)^2
  \end{multline*}
  agrees with minus the cyclic operadic composition in $\HyCom$, using the identifications \eqref{equ:Fgrav hw}, \eqref{equ:Fgrav hw2}.
\end{prop}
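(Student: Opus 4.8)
The plan is to follow the strategy of the proof of Proposition~\ref{prop:circ is comp Com}, passing to explicit cochain representatives in $\Feyn_{\kk}(\tBVt)$ via the quasi-isomorphisms $\Feyn_{\kk}(\tGrav)\simeq\Feyn_{\kk}(D'\BV^*)$ and $\tGrav\to\tBVt$. First I would break the cyclic symmetry by fixing a root leg $0$, so that the cyclic composition $\circ_{i,j}$ reduces to non-cyclic maps between the sequence $Y(S)\cong\Com(S)$ of \eqref{equ:Ydef}, generated by the single-vertex graph $T(\bs\nabla(\prod_{i\in S}x_i))$, and the sequence
\[
  Z(S):=\gr_{2|S|-6}H^{2|S|-6}\bigl(\Feyn_{\kk}(\tBVt)(\!(0,\{0\}\sqcup S)\!)\bigr)\cong\gr_2\HyCom(S),
\]
whose elements, by Section~\ref{sec:top cohomology feyn grav}, are the combinations $\sum_A c_A T_A$ of two-vertex graphs with $(c_A)$ satisfying \eqref{equ:ca eqn}, under the correspondence $T_A\leftrightarrow\delta_A^*$. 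Since breaking the root away from, or onto, the gluing leg produces respectively the first and second lines of \eqref{equ:Hycom com action}, I would treat these two placements separately; both are computations of the same type, and I describe the first ($\Com$ acting into a non-root leg $s$).

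Writing $\Gamma=\sum_A c_A T_A\in Z(S\sqcup\{s\})$ and $\Gamma'=T(\bs\nabla(\prod x))\in Y(S')$, I would expand $\Gamma\circ_{i,j}\Gamma'=\mu_{i,j}(\Delta_i\Gamma,\Gamma')+\mu_{i,j}(\Gamma,\Delta_j\Gamma')$, using that $\Delta$ postcomposes a vertex decoration with $\bs u$ and that $u$ is central in $\tBVt$. The crucial observation, exactly as in Proposition~\ref{prop:circ is comp Com}, is that every $\bs u$-decoration is a local $d_{\tBVt}$-coboundary, since $d_{\tBVt}(\bs w)=-\bs u\nabla w$ and $\nabla^2=0$; hence the whole expression is a coboundary $-d_{\tBVt}X$, where $X=\sum_A c_A X_A$ is obtained by replacing every $\bs u\nabla(w)$-decoration by $\bs w$. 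Because $-d_{\tBVt}X=-dX+d_cX$, the class of $\Gamma\circ_{i,j}\Gamma'$ is represented by $d_cX$, which I would compute by contracting, for each $A$, the two edges of the three-vertex graphs in $X_A$: the internal edge of $T_A$, and the edge glued in by $\mu_{i,j}$. These contractions produce the two ``main'' families of two-vertex graphs $\delta^*_{A\sqcup S'}$ and $\delta^*_A$ occurring in \eqref{equ:Hycom com action}.

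The main obstacle is the correction term $-\psi_s(\underline c)\,\delta_{S'}$. I expect it to arise precisely when contracting the glued edge forces the merged decoration into the shape $\bs\nabla(\mathrm{stuff})$ whose argument is only determined modulo $\ker\nabla$; since $\ker\nabla$ is one-dimensional, spanned by $\sum_{i<j}\Omega_{ij}$ (cf.\ \eqref{equ:Omegaij}), this ambiguity is a single scalar, and tracking it against the defining sums of \eqref{equ:psistar def} identifies the scalar with the $\psi$-class $\psi_s(\underline c)$. Here Lemma~\ref{lem:hycom technical}, together with the relations \eqref{equ:ca eqn} satisfied by the $c_A$, is what guarantees that the competing index choices agree, so that the coefficient is a genuine, well-defined $\psi$-evaluation rather than a choice-dependent sum. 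Finally I would assemble the signs, confirming the global factor $-1$ as in Proposition~\ref{prop:circ is comp Com}, and verify that the two root placements reproduce the two lines of \eqref{equ:Hycom com action} verbatim.
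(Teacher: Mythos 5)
Your proposal follows the paper's proof essentially verbatim: the same passage to the rooted non-cyclic model $\tBVt$, the same representatives $Y(S)$, $Z(S)$ with $T_A\leftrightarrow\delta_A^*$, the same case split matching the two lines of \eqref{equ:Hycom com action}, the same coboundary trick replacing the $u$-decorated expression by $d_cX$, and the same use of Lemma \ref{lem:hycom technical} to identify the correction term. The only discrepancy is bookkeeping at the level of individual terms: in the paper the $\psi$-contribution arises from contracting the original internal $T_A$-edges (combining the $\rt\in A$ and $\rt\notin A$ families into a decoration $\sum_{i<j}\alpha_{ij}\Omega_{ij}$ with constant $\alpha_{ij}$), rather than from the glued edge as you guessed, but the mechanism you describe --- constancy of the coefficients via \eqref{equ:ca eqn} and the identity $\sum_{i<j}\Omega_{ij}=\nabla(\prod_i x_i)$ spanning $\ker\nabla$ --- is exactly the one the paper uses.
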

\begin{proof}
Again, we can show that statement in the non-cyclic setting, using the model $\tBVt$ instead of $D'\BV^*$.
Representatives of 
\[
Z(S) := \gr_{2r-8} H^{2r-8}( \Feyn_{\kk}(\BV^{!}_{\geq 3})(\!(0,\{0\}\sqcup S)\!) )
\cong \gr_{2r-8} H^{2r-8}( \Feyn_{\kk}(\tGrav)(\!(0,\{0\}\sqcup S)\!)) ,
\]
with $S$ a finite set of cardinality $r$,
have been computed in Section \ref{sec:top cohomology feyn grav}. Using the notation therein, they are given by formal linear combinations of trees 
\[
  \sum_{A\subset S\atop 2\leq |A|\leq n-1} c_A T_A
\]
satisfying the condition \ref{equ:Hycom condition}.
Comparing to Section \ref{sec:hycom1} the symbol $T_A$ corresponds to $\delta_A^*$ there.

We need to consider the composition morphisms $\circ_\rt:=\circ_{\rt,0}$
\begin{align}\label{equ:circ ch 1} \circ_\rt \colon Z(S\sqcup \{\rt\})\otimes Y(S') \to Z(S\sqcup S') \\
\label{equ:circ ch 2} \circ_\rt \colon Y(S'\sqcup \{\rt\})\otimes Z(S) \to Z(S\sqcup S'), \end{align}
using the notation \eqref{equ:Ydef}.
We have to check that these compositions adhere to the description \ref{equ:Hycom com action} of the $\Com$-action on $\HyCom$.

We continue to use the notation \eqref{equ:TAxydef} for specifying trees with vertices decorated by $\tBVt$-elements and extend it as follows:
\begin{align*}              T_{A,B}(x,y,z) &:= \begin{tikzpicture} \node[ext,label=0:{$\scriptstyle x$}] (v1) at (0,.5) {}; \node[ext,label=0:{$\scriptstyle y$}] (v2) at (1,-.5) {}; \node[ext,label=0:{$\scriptstyle z$}] (v3) at (2,-1.5) {}; \node (v0) at (0,1.2) {0}; \draw (v1) edge node[below] {$\scriptstyle *$} (v2) edge (v0) edge +(-.5,-.5) edge +(-.25,-.5) edge +(0,-.5) (v2) edge +(-.5,-.5) edge +(0,-.5) edge +(-.25,-.5) edge node[below] {$\scriptstyle \rt$} (v3) (v3) edge +(-.5,-.5) edge +(0,-.5) edge +(.5,-.5); \draw[pbrace, thick] (0,-.1) -- (-.5,-.1); \node at (-.25, -.4) {$(A\sqcup B)^c$}; \draw[pbrace, thick] (1,-1.1) -- (.5,-1.1); \node at (.75, -1.4) {$A$}; \draw[pbrace, thick] (2.5,-2.1) -- (1.5,-2.1); \node at (2, -2.4) {$B$}; \end{tikzpicture} & T_{A,B}'(x,y,z) &:= \begin{tikzpicture} \node[ext,label=0:{$\scriptstyle x$}] (v1) at (0,.5) {}; \node[ext,label=0:{$\scriptstyle y$}] (v2) at (1,-.5) {}; \node[ext,label=0:{$\scriptstyle z$}] (v3) at (2,-.5) {}; \node (v0) at (0,1.2) {0}; \draw (v1) edge node[below] {$\scriptstyle *$} (v2) edge (v0) edge +(-.5,-.5) edge +(-.25,-.5) edge +(0,-.5) edge node[above] {$\scriptstyle \rt$} (v3) (v2) edge +(-.3,-.5) edge +(0,-.5) edge +(.3,-.5) (v3) edge +(-.3,-.5) edge +(0,-.5) edge +(.3,-.5); \draw[pbrace, thick] (0,-.1) -- (-.5,-.1); \node at (-.25, -.4) {$(A\sqcup B)^c$}; \draw[pbrace, thick] (1.3,-1.1) -- (.7,-1.1); \node at (1, -1.4) {$A$}; \draw[pbrace, thick] (2.3,-1.1) -- (1.7,-1.1); \node at (2, -1.4) {$B$}; \end{tikzpicture} \end{align*}

In particular, the elements $T_A$ above are expressed through \eqref{equ:TAxydef} as
\[
  T_A= T_A\left(\bs\nabla(x_*\prod_{i\in A^c}x_i),\bs\nabla(\prod_{i\in A}x_i)\right).
\]
We first consider the composition \eqref{equ:circ ch 1} and compute the image of $\sum_A c_A T_{A}$ under $\circ_{\rt}$.
This is 
\begin{align*} &\sum_{A\atop \rt \in A} c_A T_{A\setminus\{\rt\} , S'} \left(\bs\nabla(x_*\prod_{i\in A^c}x_i), \bs u\nabla(\prod_{i\in A}x_i), \bs \nabla(\prod_{i\in S'}x_i)\right) \\
 -~& \sum_{A\atop \rt \in A} c_A T_{A\setminus\{\rt\} , S'} \left(\bs \nabla(x_*\prod_{i\in A^c}x_i), \bs \nabla(\prod_{i\in A}x_i), \bs u\nabla(\prod_{i\in S'}x_i)\right) \\
 +~& \sum_{A\atop \rt \notin A} c_A T_{A, S'}' \left(\bs u\nabla(x_*\prod_{i\in A^c}x_i), \bs \nabla(\prod_{i\in A}x_i), \bs \nabla(\prod_{i\in S'}x_i)\right) \\
 -~& \sum_{A\atop \rt \notin A} c_A T_{A, S'}' \left(\bs\nabla(x_*\prod_{i\in A^c}x_i), \bs\nabla(\prod_{i\in A}x_i), \bs u\nabla(\prod_{i\in S'}x_i)\right) \\= -~&d_{\tBVt} \left( \sum_{A\atop \rt \in A} c_A T_{A\setminus\{\rt\} , S'} \left(\bs\nabla(x_*\prod_{i\in A^c}x_i), \bs\prod_{i\in A}x_i, \bs\nabla(\prod_{i\in S'}x_i)\right) \right. \\
 -~& \sum_{A\atop \rt \in A} c_A T_{A\setminus\{\rt\} , S'} (\bs\nabla(x_*\prod_{i\in A^c}x_i), \bs\nabla(\prod_{i\in A}x_i), \bs\prod_{i\in S'}x_i) \\
 +~& \sum_{A\atop \rt \notin A} c_A T_{A, S'}' \left(\bs x_*\prod_{i\in A^c}x_i, \bs\nabla(\prod_{i\in A}x_i), \bs\nabla(\prod_{i\in S'}x_i)\right) \\
 -~& \left. \sum_{A\atop \rt \notin A} c_A T_{A, S'}' \left(\bs\nabla(x_*\prod_{i\in A^c}x_i), \bs\nabla(\prod_{i\in A}x_i), \bs\prod_{i\in S'}x_i\right) \right) \\=: -~&d_{\tBVt} (X), \end{align*}
where we abbreviated the last term in brackets by $X$.
The result is cohomologous to $d_cX$.
The edge contraction $d_c$ can act on each tree occurring in $X$, and contract one of two edges.
Some of the terms yield a trivial contribution due to the closedness $d_c\sum c_A T_A=0$. 
Concretely, contracting an edge between two vertices with degree 1 decorations yields a trivial contribution for that reason.
Next, the terms contracting an edge towards the $S'$-vertex can be handled as in the proof of Proposition \ref{prop:circ is comp Com} above. 
We obtain
\begin{align*} d_cX = & \sum_{A\atop \rt \in A} c_A T_{ S'} \left(\sum_{i<j\in S\atop i\text{ or }j\in A^c} \bs\Omega_{ij}, \bs\nabla(\prod_{i\in S'}x_i)\right) - \sum_{A\atop \rt \in A} c_A T_{A\sqcup S'\setminus\{\rt\} } \left(\bs\nabla(x_*\prod_{i\in A^c}x_i), \bs\nabla(\prod_{i\in A\sqcup S'\setminus\{\rt\}}x_i)\right) \\-& \sum_{A\atop \rt \notin A} c_A T_{S'} \left(\sum_{i<j\in A} \bs\Omega_{ij}, \bs\nabla(\prod_{i\in S'}x_i)\right) - \sum_{A\atop \rt \notin A} c_A T_{A} \left(\bs\nabla(x_*\prod_{i\in A^c \sqcup S'\setminus \{\rt\} } x_i), \bs\nabla(\prod_{i\in A}x_i)\right) \end{align*}
The first and third term together have the form
\begin{gather*} \sum_{A\atop \rt \in A} c_A T_{ S'} \left(\sum_{i<j\in S\atop i\text{ or }j\in A^c} \bs\Omega_{ij}, \bs\nabla(\prod_{i\in S'}x_i)\right) - \sum_{A\atop \rt \notin A} c_A T_{S'} \left(\sum_{i<j\in A} \bs\Omega_{ij}, \bs\nabla(\prod_{i\in S'}x_i)\right) \\= T_{S'}\left(\sum_{i<j\in S} \alpha_{ij}\bs \Omega_{ij}, \bs \nabla(\prod_{i\in S'}x_i)\right) \end{gather*}
with 
\begin{equation}\label{equ:alpha ij def}
\begin{aligned}
\alpha_{ij}
&=
\sum_{A\atop {\rt \in A \atop i\text{ or }j\in A^c}} c_A 
-
\sum_{A\atop {\rt \notin A \atop i,j\in A}} c_A 
\\&=
\sum_{A\atop {\rt \in A \atop i\in A^c}} c_A 
+
\sum_{A\atop {\rt,i \in A \atop j\in A^c}} c_A 
-
\sum_{A\atop {\rt \notin A \atop i,j\in A}} c_A.
\end{aligned}
\end{equation}
By symmetry in $i,j$ we may assume that $i\neq \rt$, and $i\neq j$ holds anyway.
Then by the first assertion of Lemma \ref{lem:hycom technical} the last two summands above cancel.
Hence we have that 
\[
  \alpha_{ij}= \sum_{A\atop {\rt \in A \atop i\in A^c}} c_A
  = \psi_{\rt}(\underline c),
\]
using the definition of the dual $\psi$-classes \eqref{equ:psistar def} and Lemma \ref{lem:hycom technical}.
Furthermore, comparing with \eqref{equ:Hycom com action}, $\circ_{i,j}$ is indeed minus the $\Com$-action.

Finally, we consider the other composition \eqref{equ:circ ch 2}, that can be handled similarly.
We obtain the linear combination of decorated trees
\begin{gather*} \sum_A c_A \left(T_{A^c,A}\left(\bs u\nabla(x_*\prod_{i\in S}x_i), \bs\nabla(x_\rt\prod_{i\in A^c}x_i),\bs\nabla(\prod_{i\in A}x_i) \right) \right. \\
\left. - T_{A^c,A}\left(\bs\nabla(x_*\prod_{i\in S}x_i), \bs u\nabla(x_\rt\prod_{i\in A^c}x_i),\bs \nabla(\prod_{i\in A}x_i) \right) \right) \\= -d_{\BV'_{\geq 3}} \left( \sum_A c_A \left(T_{A^c,A}\left(\bs x_*\prod_{i\in S}x_i, \bs\nabla(x_\rt\prod_{i\in A^c}x_i),\bs\nabla(\prod_{i\in A}x_i) \right) \right. \right. \\
 - \left. \left. T_{A^c,A}\left(\bs \nabla(x_*\prod_{i\in S}x_i),\bs x_\rt\prod_{i\in A^c}x_i,\bs \nabla(\prod_{i\in A}x_i) \right) \right) \right) \\=: -d_{\BV'_{\geq 3}} X'. \end{gather*}
This is cohomologous to $d_cX'$. Again the edge contraction on the edge $\rt$ of the first summand does not yield a contribution due to the closedness $d_c\sum_Ac_AT_A=0$. The other terms give:
\begin{align*} d_c X' &= \sum_A c_A\left( -T_{A}\left(\sum_{i<j \in A^c\sqcup\{*\}} \bs\Omega_{ij},\bs\nabla(\prod_{i\in A}x_i) \right) - T_{A}\left(\sum_{i<j\in S\sqcup A^c \sqcup\{*\}\atop \text{$i$ or $j$}\in S} \bs\Omega_{ij}, \bs\nabla(\prod_{i\in A}x_i) \right) \right. \\&\quad\quad\quad\left. +T_{A\sqcup A^c}\left(\bs\nabla(x_*\prod_{i\in S}x_i), \bs(\prod_{i\in A^c}x_i)\nabla(\prod_{i\in A}x_i) \right) \right) \\&= -\sum_A c_A T_{A}\left(\bs\nabla( x_*\prod_{i\in S\sqcup A^c}x_i), \bs\nabla(\prod_{i\in A}x_i) \right) + \sum_{j<k} T_{A\sqcup A^c}\left(\bs\nabla(x_*\prod_{i\in S}x_i), \bs\Omega_{ij} \sum_{A\atop j,k\in A}c_A\right) . \end{align*}
We note that $\sum_{A\atop j,k\in A}c_A=\psi_0(\underline c)$ is independent of the choice of $j,k$, see Lemma \ref{lem:hycom technical} and the definition of $\psi_0$ in \eqref{equ:psistar def}.
Comparing to \eqref{equ:Hycom com action} we get the desired formula.
\end{proof}

\begin{prop}\label{prop:circ is comp ComCom}
  The operation 
  \begin{multline*}
    \circ_{i,j}:
    \gr_{2n-2} H^{2n-2}( \Feyn_{\kk}(\tGrav)(\!(1,n)\!) )
    \otimes
    \gr_{2n'-6} H^{2n'-6}( \Feyn_{\kk}(\tGrav)(\!(0,n')\!) )
\\
\cong \Com(\!(n+2)\!)_{S_2} \otimes \Com(\!(n')\!)
   \\ \to 
    \gr_{2n+2n'-6} H^{2n+2n'-6}( \Feyn_{\kk}(\tGrav)(\!(1,n+n'-2)\!) )
    \cong \Com(\!(n+n')\!)_{S_2}
  \end{multline*}
  agrees with minus the cyclic operadic composition in $\Com$, using the identification \eqref{equ:Fgrav hw}.
\end{prop}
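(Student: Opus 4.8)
The plan is to follow the strategy of the proof of Proposition~\ref{prop:circ is comp Com}, reducing the claim to an explicit computation on cochain representatives in the model $\tBVt\simeq\tGrav$. First I would fix representatives of the two classes. For the genus~$0$ factor I use, as in Proposition~\ref{prop:circ is comp Com}, the one-vertex tree $T\big(\bs\nabla(\prod_{j\in S'}x_j)\big)$ generating $Y(S')\cong\Com(\!(n')\!)$. For the genus~$1$ factor I use the representative from Section~\ref{sec:top cohomology feyn grav}: the single-vertex graph with one self-loop whose vertex carries the top-weight generator of $\tGrav(\!(n+2)\!)$. In the $\tBVt$-model this decoration is $\bs\nabla\big(x_* x_{*'}\prod_{i=1}^n x_i\big)$, where $*,*'$ are the two half-edges of the loop; the $S_2$ in $\Com(\!(n+2)\!)_{S_2}$ is precisely the swap of $*$ and $*'$, under which this generator is invariant.

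Next I would compute the composition from its definition $\Gamma\circ_{i,j}\Gamma'=\mu_{i,j}(\Delta_i\Gamma,\Gamma')+\mu_{i,j}(\Gamma,\Delta_j\Gamma')$. As in Proposition~\ref{prop:circ is comp Com}, each application of $\Delta$ multiplies the relevant vertex decoration by $\bs u$ (cf.\ Example~\ref{ex:top cohom tbv}), and the two resulting graphs combine into an exact term $-d_{\tBVt}(X)$, where $X$ is obtained by stripping one $\nabla$ from the decorations. The composed class is therefore cohomologous to $d_c X$, so it remains to evaluate the edge-contraction differential.

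The one genuinely new point, compared with the genus~$0$ case, concerns the self-loop. Since $\tGrav$ is concentrated in genus~$0$, its self-gluing operation $\eta$ vanishes; hence the tadpole-removing differential $d_t$ acts as zero on $\Feyn_\kk(\tGrav)$ and the self-loop is never contracted. It is therefore a mere spectator, and the only contribution of $d_c$ comes from the newly created edge between the two vertices. This reduces the computation verbatim to that of Proposition~\ref{prop:circ is comp Com}, now with the two loop half-edges $*,*'$ among the inputs of the combined vertex: contracting the new edge yields $-T$ of a single genus-$1$ vertex decorated by $\bs\nabla\big(x_* x_{*'}\prod_{i\in S\sqcup S'}x_i\big)$, i.e.\ minus the standard top-weight generator. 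Under the identifications \eqref{equ:Fgrav hw} and \eqref{equ:Fgrav hw2} this is exactly minus the cyclic operadic composition in $\Com$, as claimed. The main thing to check carefully is that $d_c$ produces no stray terms near the loop and that the bookkeeping respects the $S_2$-symmetrization of $*,*'$; but since contracting the loop is identically zero, this is immediate and the genus-$1$ statement follows from the already-established genus-$0$ one.
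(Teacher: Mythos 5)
Your proposal is correct and follows essentially the same route as the paper: the paper identifies $\gr_{2n-2} H^{2n-2}(\Feyn_{\kk}(\tGrav)(\!(1,n)\!))$ with $\gr_{2n-2} H^{2n-2}(\Feyn_{\kk}(\tGrav)(\!(0,n+2)\!))_{S_2}$ (one vertex plus an inert tadpole) and then invokes Proposition~\ref{prop:circ is comp Com}, which is exactly your reduction, with your explicit remark that $\eta=0$ forces $d_t=0$ supplying the justification the paper leaves implicit. The only cosmetic slips are index bookkeeping in the non-cyclic rooted model (the decoration has $n+1$ inputs, not $n+2$) and calling the merged vertex a ``genus-$1$ vertex,'' neither of which affects the argument.
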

\begin{proof}
The cohomology $\gr_{2n-2} H^{2n-2}( \Feyn_{\kk}(\tGrav)(\!(1,n)\!) )$ is spanned by graphs 
with one vertex decorated by the top-weight part of $\tGrav$, which is isomorphic to $\Com$, and a tadpole. 
Hence we have that 
$$
\gr_{2n-2} H^{2n-2}( \Feyn_{\kk}(\tGrav)(\!(1,n)\!) )
\cong 
\gr_{2n-2} H^{2n-2}( \Feyn_{\kk}(\tGrav)(\!(0,n+2)\!) )_{S_2}.
$$
We can then just use the (genus 0) result of Proposition \ref{prop:circ is comp Com} to conclude that the composition is indeed minus the composition of $\Com$.
\end{proof}

\begin{prop}\label{prop:Delta is psi}
  The operation 
\begin{gather*} \Delta_j: \gr_{2n-8} H^{2n-8}( \Feyn_{\kk}(\tGrav)(\!(0,n)\!) ) \cong \HyCom(\!(n)\!)^{-2} \\
 \qquad \qquad \qquad \qquad \to \gr_{2n-6} H^{2n-6}( \Feyn_{\kk}(\tGrav)(\!(0,n)\!) ) \cong \Com(\!(n)\!)\cong \Q \intertext{ is dual to the map } \Q \cong \Com(\!(n)\!)^* \cong H^0(\MM_{0,n}) \xrightarrow{\psi_j\wedge -} \gr_2 H^2(\MM_{0,n}) \end{gather*}
  of multiplication with the $\psi$-class at the $j$-th marking.
\end{prop}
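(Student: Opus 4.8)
The plan is to prove the statement by an explicit cochain-level computation in the model $\tBVt$, which is a degenerate special case of the computation already carried out in the proof of Proposition~\ref{prop:circ is comp HyCom}. By formality of $D'\BV^*$ (Proposition~\ref{prop:DpDBV formal}) and the quasi-isomorphism $\tGrav\to\tBVt$ we may compute $\Delta_j$ on the explicit representatives of Section~\ref{sec:top cohomology feyn grav}. Breaking the cyclic symmetry by singling out one leg $0$ as a root, the source class in $\gr_{2n-8}H^{2n-8}(\Feyn_\kk(\tGrav)(\!(0,n)\!))\cong\HyCom(\!(n)\!)^2$ is represented by a linear combination $\sum_A c_A T_A$ of the two-vertex trees $T_A$ whose coefficients satisfy the Keel relations \eqref{equ:ca eqn}, under the identification $T_A\leftrightarrow\delta_A^*$ of Section~\ref{sec:hycom1}, while the generator of the target $\gr_{2n-6}H^{2n-6}(\cdots)\cong\Com(\!(n)\!)\cong\Q$ is the single-vertex graph $T(\bs\nabla(\prod_i x_i))$. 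Under $D'\BV^*\to\tBVt$ the operator $\Dc$ corresponds to $\bs u$, so $\Delta_j$ replaces the decoration at the vertex incident to leg $j$ by its composition with $\bs u$, i.e.\ multiplies that decoration by the central element $u$.

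Next I would carry out the computation. Applying $\Delta_j$ to $\sum_A c_A T_A$ turns the relevant decoration $\bs\nabla(\cdots)$ into $\bs u\nabla(\cdots)$, and using the identity $\bs u\nabla(-)=-d_{\tBVt}(\bs(-))$ (the same device exploited in Propositions~\ref{prop:circ is comp Com} and~\ref{prop:circ is comp HyCom}, since $d_{\tBVt}=u\nabla$) one rewrites $\Delta_j(\sum_A c_A T_A)$ as $-d_{\tBVt}(X)$ for an explicit $X$, so that it is cohomologous to $d_cX$. The edge contraction $d_c$ collapses the two-vertex trees; the terms contracting an edge between two degree-one decorations vanish by the closedness relation $d_c\sum_A c_A T_A=0$, and the surviving contribution is a multiple of the single-vertex generator $T(\bs\nabla(\prod_i x_i))$. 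Collecting coefficients and simplifying via Lemma~\ref{lem:hycom technical} — exactly as the coefficient $\alpha_{ij}$ was reduced to $\psi_\rt(\underline c)$ in the proof of Proposition~\ref{prop:circ is comp HyCom} — yields $\Delta_j(\sum_A c_A T_A)=\pm\,\psi_j(\underline c)\,T(\bs\nabla(\prod_i x_i))$, with $\psi_j(\underline c)$ precisely the linear functional \eqref{equ:psistar def}. Whether the inserted $\Dc$ sits at the root leg $0$ or at a non-root leg corresponds to the two branches of \eqref{equ:psistar def}; the case $j=0$ is in any event recovered from the case $j\neq 0$ by cyclic symmetry.

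It then remains to match this with the dual of $\psi$-multiplication. Under the identification of Corollary~\ref{cor:Feyngrav} the source $\gr_{2n-8}H^{2n-8}(\Feyn_\kk(\tGrav)(\!(0,n)\!))$ is identified with $\HyCom(\!(n)\!)^2\cong\gr_2 H^2(\MM_{0,n})^*$, on which the class $\sum_A c_A\delta_A^*$ pairs with $H^2(\MM_{0,n})$ by the evaluation pairing; by the very definition \eqref{equ:psistar def} of the $\psi$-classes, its pairing against $\psi_j\in\gr_2 H^2(\MM_{0,n})$ equals $\psi_j(\underline c)$. Since the target generator $T(\bs\nabla(\prod_i x_i))$ of $\gr_{2n-6}H^{2n-6}(\cdots)\cong\Com(\!(n)\!)=(H^0(\MM_{0,n}))^*$ is the element dual to $1\in H^0(\MM_{0,n})$, the computed map $\alpha\mapsto\langle\psi_j,\alpha\rangle\,T(\bs\nabla(\prod_i x_i))$ is exactly the transpose of $\psi_j\wedge-\colon H^0(\MM_{0,n})\to\gr_2 H^2(\MM_{0,n})$, which is the assertion.

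The main obstacle I anticipate is not the combinatorics, which is a strict specialization of the computation already completed for Proposition~\ref{prop:circ is comp HyCom}, but the bookkeeping of the duality: one must verify that the pairing induced by the identifications of Corollary~\ref{cor:Feyngrav} is genuinely the evaluation pairing between $H_\bullet(\MM_{0,n})$ and $H^\bullet(\MM_{0,n})$, so that the functional $\psi_j(\underline c)$ really computes the pairing with the geometric $\psi$-class rather than merely carrying the same name. Once this compatibility is secured — which is essentially the content of the identification $T_A\leftrightarrow\delta_A^*$ together with the $\psi$-class formula \eqref{equ:psistar def} — and the overall sign is fixed, the statement follows.
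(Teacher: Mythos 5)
Your proposal is correct and follows essentially the same route as the paper's proof: compute $\Delta_j$ on the explicit representatives $\sum_A c_A T_A$ in the $\tBVt$ model, rewrite $\bs u\nabla(-)$ as a $d_{\tBVt}$-exact term so that the class is cohomologous to an edge-contraction $d_cX$, and reduce the resulting coefficient to $\psi_j(\underline c)$ via Lemma~\ref{lem:hycom technical}, matching \eqref{equ:psistar def}. The only (harmless) deviation is that you treat one case and recover $j=0$ by equivariance of the identifications, whereas the paper carries out the root case $j=0$ and the non-root case $j>0$ as two separate explicit computations.
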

\begin{proof}
As before, we begin with a representative 
\[
y := \sum_A c_A T_A  
\]
of a cohomology class in $ \gr_{2n-6} H^{2n-6}( \Feyn_{\kk}(\tBVt)(\!(0,n+1)\!) \cong \gr_{2n-6} H^{2n-6}( \Feyn_{\kk}(\tGrav)(\!(0,n+1)\!) )$.
In particular, the coefficients $c_A$ satisfy \eqref{equ:Hycom condition}.

First assume that $j=0$. 
Then 
\[
  \Delta_0 y = -\sum_A c_A T_A\left(\bs u\nabla(x_*\prod_{i\notin A}x_i), \bs\nabla(\prod_{i\in A}x_i)\right)
  =
  d_{\tBVt}
  \sum_A c_A T_A\left(\bs x_*\prod_{i\notin A}x_i, \bs \nabla(\prod_{i\in A}x_i)\right).
\]
This is cohomologous to 
\[
-d_c  \sum_A c_A T_A\left(\bs x_*\prod_{i\notin A}x_i, \bs \nabla(\prod_{i\in A}x_i)\right)
=
\sum_A c_A T\left(\sum_{i<j\in A}\bs \Omega_{ij}\right)
=
\sum_{i<j}\left(\sum_{A\atop i,j\in A} c_A \right) T(\bs\Omega_{ij})
\]
But the term in parentheses is independent of $i,j$ by \eqref{equ:Hycom condition}, and equal to $\psi_0(y)$.
Hence $\Delta_0 y$ is cohomologous to 
\[
  \psi_0(x) T\left(\bs\nabla(\prod_{i<j\in S} x_i)\right)
\]
and the result is shown in this case.

Next suppose that $j>0$. 
Then 
\begin{align*} \Delta_j y &= \sum_{A\atop j\in A} c_A T_A\left(\bs\nabla(x_*\prod_{i\notin A}x_i), \bs u\nabla(\prod_{i\in A}x_i)\right) + \sum_{A\atop j\notin A} c_A T_A\left(\bs u\nabla(x_*\prod_{i\notin A}x_i), \bs \nabla(\prod_{i\in A}x_i)\right) \\&= -d_{\tBVt} \underbrace{ \left( \sum_{A\atop j\in A} c_A T_A\left(\bs \nabla(x_*\prod_{i\notin A}x_i), \bs \prod_{i\in A}x_i\right) + \sum_{A\atop j\notin A} c_A T_A\left(\bs x_*\prod_{i\notin A}x_i, \bs \nabla(\prod_{i\in A}x_i)\right) \right) }_{=:y'} . \end{align*}
Hence $\Delta_j y$ is cohomologous to $d_c y'$. We compute further:
\begin{align*} d_c y' &= + \sum_{A\atop j\in A} c_A T\left(\sum_{i<k\atop i\text{ or }k\in A^c}\bs\Omega_{ik}\right) - \sum_{A\atop j\notin A} c_A T\left(\sum_{i<k\in A}\bs\Omega_{ik}\right) \\&= \sum_{i<k} \alpha_{ik} T(\bs\Omega_{ik}) \end{align*}
with 
$$
\alpha_{ik}= \sum_{A\atop {j\in A \atop i\text{ or }k\in A^c }} c_A
-
\sum_{A\atop {j\in A^c \atop i, k\in A }} c_A.
$$
Analogously to the derivation below \eqref{equ:alpha ij def}
we see that $\alpha_{ik}=\psi_j(y)$, independently of $i,k$, and we are done.

\end{proof}

\begin{remark}
We do not consider the explicit form of $\eta_{i,j}$ here, because, as we shall see, the map will not be important for our cohomology computations.
\end{remark}

\section{Proof of Theorems \ref{thm:main}, \ref{thm:dbv_hycom} and \ref{thm:dbv_hycom2}}

\subsection{Combinatorial Description of \texorpdfstring{$\Feyn_{\kk}'(D\BV^*)(\!(g,r)\!)$}{Feyn'(DBV*)((g,r))} and \texorpdfstring{$\AFeyn_{\kk}'(D\BV^*)(\!(g,r)\!)$}{AFeyn'(DBV*)}}
\label{sec:feinp combinatorial}
We shall describe the complex $\Feyn_{\kk}'(D\BV^*)(\!(g,r)\!)$ combinatorially.
Elements can be seen as linear combinations of decorated connected graphs of loop order $g$ with $r$ legs.
The vertices $v$ of valence $r_v\geq 3$ are decorated by elements of $D\BV^*(\!(r_v)\!)$. Vertices of valence two are implicitly decorated by the dual BV operator $\Dc$.
No bivalent vertex can have a neighbour that is also a bivalent vertex.
Hence, instead of drawing bivalent vertices, we may equivalently consider $\geq $trivalent graphs some of whose edges and legs are marked.
A marked edge represents a pair of edge with a $\Dc$-decorated bivalent vertex in the middle, as indicated in the following picture.
\[
\begin{tikzpicture} \node[ext] (v1) at (-.7,-.7) {}; \node[ext] (v2) at (+.7,-.7) {}; \node[ext] (v3) at (-.7,+.7) {}; \node[ext] (v4) at (+.7,+.7) {}; \node[ext] (b1) at (0,+.7) {$\scriptscriptstyle \Dc$}; \node[ext] (b2) at (-1.2,-1.2) {$\scriptscriptstyle \Dc$}; \node (n1) at (-1.8,-1.8) {$1$}; \node (n2) at (-1.2,+1.2) {$2$}; \node (n3) at (+1.2,+1.2) {$3$}; \draw (v1) edge (v2) edge (v3) (b1) edge (v3) edge (v4) (b2) edge (v1) edge (n1) (v4) edge (v2) edge (n3) (v2) edge (v3) (v3) edge (n2); \end{tikzpicture}
\quad
=
\quad
\begin{tikzpicture} \node[ext] (v1) at (-.7,-.7) {}; \node[ext] (v2) at (+.7,-.7) {}; \node[ext] (v3) at (-.7,+.7) {}; \node[ext] (v4) at (+.7,+.7) {}; \node (n1) at (-1.5,-1.5) {$1$}; \node (n2) at (-1.2,+1.2) {$2$}; \node (n3) at (+1.2,+1.2) {$3$}; \draw (v1) edge (v2) edge (v3) edge[very thick] (n1) (v4) edge (v2) edge (n3) edge[very thick] (v3) (v2) edge (v3) (v3) edge (n2); \end{tikzpicture}
\]

The only difference between $\Feyn_{\kk}'(D\BV^*)(\!(g,r)\!)$ and $\AFeyn_{\kk}'(D\BV^*)(\!(g,r)\!)$ is that the graphs generating $\AFeyn_{\kk}'(D\BV^*)(\!(g,r)\!)$ cannot have marked legs, only marked internal edges.

The marked edges carry degree $+1$ and weight $2$, which are the degree and weight of $\Dc$.
The normal edges carry degree and weight 0.

We also note that the decorations by $D\BV^*(\!(r_v)\!)$ on vertices $v$ can be understood as given by a linear combination of trees. 
Hence, we may even consider $\Feyn_{\kk}'(D\BV^*)(\!(g,r)\!)$ as generated by graphs with three different types of edges.
In this picture, the tree-edges carry degree $+1$ and weight $0$.
The decorations by $\BV^*$ on the vertices of the trees carry the cohomological degree of $\BV^*$, and contribute twice that degree to the weight.
But for simplicity, we shall restrict to the two types of edges above in this section, and keep the tree-edges implicit in the decoration of the vertices.

\subsection{Spectral sequence for \texorpdfstring{$\Feyn_{\kk}'(D\BV^*)$}{Feyn'(DBV*)}}

The theorems are shown by studying a natural spectral sequence converging to $\Feyn_{\kk}'(D\BV^*)$, and similarly on $\AFeyn_{\kk}'(D\BV^*)$. We only state explicitly the former case, with the understanding that one can proceed analogously for $\AFeyn_{\kk}'(D\BV^*)$.

Let us define 
\[
  \mF_p \Feyn_{\kk}'(D\BV^*)(\!(g,r)\!) \subset \Feyn_{\kk}'(D\BV^*)(\!(g,r)\!)
\]
to be the subspace spanned by graphs that have at most $p$ bivalent vertices.
This subspace %
is closed under the differential, since the differential (---contracting edges---) cannot increase the number of bivalent vertices.
We may hence consider the spectral sequence $E^\bullet$ associated to this filtration.
By finite dimensionality this spectral sequence converges to the cohomology 
\[
  E^\bullet \Rightarrow H\left( \Feyn_{\kk}'(D\BV^*)(\!(g,r)\!) \right).
\]
We shall consider the various pages of this spectral sequence.
To this end, note that the differential on $\Feyn_{\kk}'(D\BV^*)(\!(g,r)\!)$ has the form 
\[
d = d_{D\BV^*} + d_{0} + d_1
\]
where $d_{D\BV^*}$ is the differential induced from $D\BV^*$, $d_0$ contracts an edge between two vertices of valence $\geq 3$ and $d_1$ contracts an edge between a bivalent and a higher valent vertex.

Similarly, we endow $\AFeyn_{\kk}'(D\BV^*)(\!(g,r)\!)$ with the identically defined filtration, and by the same reasoning we obtain a spectral sequence
\[
  E_A^\bullet \Rightarrow H\left( \AFeyn_{\kk}'(D\BV^*)(\!(g,r)\!) \right).
\]

\subsection{\texorpdfstring{$E^0$}{E0}-page and convergence}
The associated graded complex ($E^0$-page) can be identified with $\Feyn_{\kk}'(D\BV^*)(\!(g,r)\!)$, but equipped with the differential $d_{D\BV^*} + d_{0}$, that is, the part of the original differential that leaves invariant the number of bivalent vertices in graphs.
Recall from Section \ref{sec:feinp combinatorial} that we may understand $\Feyn_{\kk}'(D\BV^*)(\!(g,r)\!)$ as a vector space generated by decorated graphs with two types of edges -- normal and marked.
But any graph with edges of two colors can be considered a nested graph, with the marked edges the edges of the outer graph, and the normal edges being part of graphs decorating every vertex of the outer graph.
\[
\begin{tikzpicture} \node[ext] (v1) at (-.7,-.7) {}; \node[ext] (v2) at (+.7,-.7) {}; \node[ext] (v3) at (-.7,+.7) {}; \node[ext] (v4) at (+.7,+.7) {}; \node (n1) at (-1.5,-1.5) {$1$}; \node (n2) at (-1.2,+1.2) {$2$}; \node (n3) at (+1.2,+1.2) {$3$}; \draw (v1) edge (v2) edge (v3) edge[very thick] (n1) (v4) edge[very thick] (v2) edge (n3) edge[very thick] (v3) (v2) edge (v3) (v3) edge (n2); \end{tikzpicture}
  \quad
  \Leftrightarrow
  \quad
  \begin{tikzpicture} \node[ext] (v1) at (-.7,-.7) {}; \node[ext] (v2) at (-0.4,-.7) {}; \node[ext] (v3) at (-.7,-0.4) {}; \node[ext] (v4) at (+.7,+.7) {}; \node (n1) at (-1.5,-1.5) {$1$}; \node (n2) at (-1.2,+1.2) {$2$}; \node (n3) at (+1.2,+1.2) {$3$}; \draw (v1) edge (v2) edge (v3) edge[very thick] (n1) (v4) edge[very thick] (v2) edge (n3) edge[very thick] (v3) (v2) edge (v3) (v3) edge (n2); \draw (-.6,-.6) circle (4mm) (.7,.7) circle (2mm); \end{tikzpicture}
\]
Hence we also have the identification of modular sequences
\[
  \Feyn_{\kk}'(D\BV^*)
  = \MFree_1(\Feyn_\kk(D'\BV^*)),
\]
using the construction $\MFree_1$ of \eqref{equ:mfree1 def}.
The differential on the $E^0$ page $d_{D\BV^*} + d_{0}$ only acts on the inner object $\Feyn_{\kk}(D'\BV^*)$, and agrees with the differential there.
Hence by the K\"unneth formula we have 
\[
E^1 := H(\Feyn_{\kk}'(D\BV^*)(\!(g,r)\!), d_{D\BV^*} + d_{0})
\cong \MFree_1(H(\Feyn_{\kk}(D'\BV^*))).
\]
\begin{prop}\label{prop:E2 convergence}
Suppose that there is a $p\geq 0$ such that for all $g,n$ and weights $W\geq 6g-6+2n-2p$ the weight $W$ cohomology of $\Feyn_{\kk}(\tGrav)(\!(g,n)\!)$ is concentrated in degree $W$. 
Then the weight $6g-6+2n-2p$-parts of the spectral sequences above converge to the cohomology on the $E^2$-page, i.e.,
\begin{align*} \gr_{6g-6+2n-2p} H(\Feyn_{\kk}'(D\BV^*)(\!(g,n)\!)) &\cong H( \gr_{6g-6+2n-2p} \MFree_1(H(\Feyn_{\kk}(D'\BV^*))), d_1) \\
\gr_{6g-6+2n-2p} H(\AFeyn_{\kk}'(D\BV^*)(\!(g,n)\!)) &\cong H( \gr_{6g-6+2n-2p} \Free_1(H(\Feyn_{\kk}(D'\BV^*))), d_1) . \end{align*}
\end{prop}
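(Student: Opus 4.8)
The plan is to put an auxiliary non-negative grading on the $E^1$-page and to show that, in the fixed weight $W_0:=6g-6+2n-2p$, the spectral sequence is concentrated on a single diagonal, leaving no room for differentials $d_r$ with $r\geq 2$. Throughout I view a generator of $E^1=\MFree_1(H(\Feyn_\kk(D'\BV^*)))$ as an outer graph with $q$ bivalent vertices (the marked edges and, in the non-amputated case, the marked legs) whose remaining vertices $v$ carry classes in $H(\Feyn_\kk(\tGrav)(\!(g_v,n_v)\!))$ of cohomological degree $d_v$ and weight $W_v$, using the formality isomorphism of Proposition \ref{prop:DpDBV formal}. Since each bivalent vertex carries $\Dc$, of degree $1$ and weight $2$, the total degree and weight of such a generator are $\deg=q+\sum_v d_v$ and $W_0=2q+\sum_v W_v$, and the filtration index is $q$.

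First I would record a degree bound on the decorations. A graph with $E$ internal edges representing a class of $\Feyn_\kk(\tGrav)(\!(g_v,n_v)\!)$ has weight $W_v\leq 6g_v-6+2n_v-2E$ by (the proof of) Lemma \ref{lem:Feyngrav weights}, and, since a decorating class of $\tGrav$ of weight $\omega$ sits in degree $\tfrac{\omega}{2}+(\text{arity})-3$ by Proposition \ref{prop:DpDBV formal}, cohomological degree $\tfrac{W_v}{2}-E+n_v+3g_v-3$; eliminating $E$ yields $d_v\geq W_v$ for every class. Hence on the weight-$W_0$ part of $E^1$,
\[
  \deg=q+\sum_v d_v\geq q+\sum_v W_v=W_0-q,
\]
so $s:=\deg-(W_0-q)\geq 0$ is a well-defined non-negative grading. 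A direct check shows $d_1$ preserves $s$, whereas $d_r$ for $r\geq 2$ lowers $s$ by $r-1\geq 1$. Since $s\geq 0$ and $s=0$ forces $q=W_0-\deg$, it suffices to prove that the $d_1$-cohomology $E^2$ is concentrated in $s=0$: then every $d_r$ with $r\geq 2$ has vanishing target, the limit filtration has a single step in each degree, and $\gr_{W_0}H\cong E^2=H(\gr_{W_0}\MFree_1(H(\Feyn_\kk(D'\BV^*))),d_1)$, as claimed.

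For the amputated transform the concentration is automatic. There are no marked legs, so, writing $\delta_v:=(4g_v-6+2n_v)-W_v\geq 0$ for the defect of $v$ from its top weight and $G:=\sum_v g_v$, the identities $\sum_v n_v=2\#\{\text{marked edges}\}+n$ and $g=G+b_1(\text{outer graph})$ convert $W_0=2q+\sum_v W_v$ into
\[
  \sum_v(\delta_v+2g_v)=2p .
\]
As the summands are non-negative, each vertex obeys $\delta_v+2g_v\leq 2p$, i.e. $W_v\geq 6g_v-6+2n_v-2p$, so the hypothesis forces $d_v=W_v$ and hence $s=0$ on all of the weight-$W_0$ part of $E^1$. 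Thus $E^2$ lives entirely on the diagonal and the $\AFeyn_\kk'$-statement follows.

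The main obstacle is the full transform, where marked legs are allowed and the same bookkeeping gives $\sum_v(\delta_v+2g_v)=2p+2\#\{\text{marked legs}\}$ instead. A vertex can then fall outside the range covered by the hypothesis, and the weight-$W_0$ part of $E^1$ genuinely acquires classes with $s>0$, supported on the low-weight (non-pure) vertices created by the marked legs. What remains is to show that this $s>0$ part of $(E^1,d_1)$ is acyclic, so that $E^2$ is again concentrated in $s=0$. The route I would take is to single out the marked-leg component of $d_1$, which by Section \ref{sec:operations ftgrav} is assembled from the operations $\Delta_j$, and to organise $\MFree_1$ over the subsets of marked legs so that $\Feyn_\kk'$ becomes an iterated extension of $\AFeyn_\kk'$-type complexes glued along these $\Delta_j$; the $s>0$ classes should then cancel against their images under the un-marking maps. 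Establishing this acyclicity is the technical heart of the argument.
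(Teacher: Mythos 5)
Your core mechanism is the same as the paper's: the paper also observes that in the relevant weight every vertex decoration is forced to have cohomological degree equal to its weight, so the whole $E^1$-page sits on the single diagonal $\deg = W_0 - q$, and a differential of bidegree (filtration $-q$, degree $+1$) must then vanish unless $q=1$; your grading $s=\deg-(W_0-q)$ is just a convenient repackaging of this. Your bound $d_v\geq W_v$ is correct, and your treatment of the amputated transform, via $\sum_v(\delta_v+2g_v)=2p$, is complete and agrees with the paper's argument.

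The genuine gap is in the first isomorphism (the full transform $\MFree_1$), which you explicitly leave open: you reduce it to acyclicity of the $s>0$ part of $(E^1,d_1)$, call this "the technical heart", and prove nothing about it. That acyclicity is not established by anything you sketch, and it is unclear it even holds. However, the trouble comes from taking the weight $6g-6+2n-2p$ in the first displayed formula at face value: this is a typo in the statement. For the marked complex the weight must be measured from \emph{its} top weight $6g-6+4n$, i.e.\ the intended claim — the one actually proved in the paper, whose proof computes $W_\Gamma = 6g-6+4n-2\bigl(n-n'+\sum_v p_v\bigr)$ with $n'$ the number of marked legs, and the only one invoked in Corollary \ref{cor:E2 convergence}, which is stated at weights $6g-6+4n$ and $6g-8+4n$ — is
\[
\gr_{6g-6+4n-2p} H\bigl(\Feyn_{\kk}'(D\BV^*)(\!(g,n)\!)\bigr) \cong H\bigl( \gr_{6g-6+4n-2p} \MFree_1(H(\Feyn_{\kk}(D'\BV^*))), d_1\bigr).
\]
With this corrected weight your own bookkeeping closes the proof: the same computation that gave you $\sum_v(\delta_v+2g_v)=2p+2\#\{\text{marked legs}\}$ at the literal weight instead gives
\[
\sum_v\bigl(\delta_v+2g_v\bigr)=2p-2\#\{\text{unmarked legs}\}\;\leq\; 2p,
\]
so every vertex satisfies $W_v\geq 6g_v-6+2n_v-2p$, the hypothesis pins $d_v=W_v$ at each vertex, and the weight-$W_0$ part of $E^1$ is concentrated in $s=0$ on the nose, exactly as in your amputated case. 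In other words, unmarked legs enter the weight deficit on the same footing as the defects $\delta_v+2g_v$, and once this is accounted for, no auxiliary acyclicity statement is needed: the "technical heart" you postponed dissolves, and both halves of the proposition follow from the one-line diagonal argument.
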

By Corollary \ref{cor:Feyngrav} we hence immediately obtain the following.
\begin{cor}\label{cor:E2 convergence}
The spectral sequences converges on the $E^2$ page in top-  and top$-2$-weight, that is, 
\begin{align*} \gr_{6g-6+4n} H(\Feyn_{\kk}'(D\BV^*)(\!(g,n)\!)) \cong H( \gr_{6g-6+4n} \MFree_1(H(\Feyn_{\kk}(D'\BV^*))), d_1) \\
 \gr_{6g-8+4n} H(\Feyn_{\kk}'(D\BV^*)(\!(g,n)\!)) \cong H( \gr_{6g-8+4n} \MFree_1(H(\Feyn_{\kk}(D'\BV^*))), d_1) \\
 \gr_{6g-6+4n} H(\AFeyn_{\kk}'(D\BV^*)(\!(g,n)\!)) \cong H( \gr_{6g-6+4n} \Free_1(H(\Feyn_{\kk}(D'\BV^*))), d_1) \\
 \gr_{6g-8+4n} H(\AFeyn_{\kk}'(D\BV^*)(\!(g,n)\!)) \cong H( \gr_{6g-8+4n} \Free_1(H(\Feyn_{\kk}(D'\BV^*))), d_1). \end{align*}
\end{cor}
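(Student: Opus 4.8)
The plan is to read off all four isomorphisms as instances of Proposition~\ref{prop:E2 convergence}: applying it with $p=0$ will give the top-weight statements and applying it with $p=1$ the top$-2$-weight statements. Since that proposition already does the work of establishing the $E^2$-degeneration of the spectral sequences and of identifying the limit with the $d_1$-cohomology of $\MFree_1(H(\Feyn_{\kk}(D'\BV^*)))$, respectively $\Free_1(H(\Feyn_{\kk}(D'\BV^*)))$, all that remains is to verify its hypothesis for these two values of $p$. That hypothesis is a degree-concentration statement about $\Feyn_{\kk}(\tGrav)$ in high weight, and it is exactly what Corollary~\ref{cor:Feyngrav} provides once combined with the weight bound of Lemma~\ref{lem:Feyngrav weights}.

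First I would dispatch the case $p=0$. The hypothesis asks that, for every $(g,n)$ and every weight $W\ge 6g-6+2n$, the space $\gr_W H^\bullet(\Feyn_{\kk}(\tGrav)(\!(g,n)\!))$ be concentrated in cohomological degree $W$. By Lemma~\ref{lem:Feyngrav weights} this cohomology already vanishes for $W>4g-6+2n$, so the constraint $W\ge 6g-6+2n$ can be met only when $g=0$ and $W=2n-6$. In that one surviving case \eqref{equ:Feyn grap top} shows the cohomology is one-dimensional and sits in degree $2n-6=W$. The hypothesis therefore holds (vacuously for $g\ge 1$, and by \eqref{equ:Feyn grap top} for $g=0$), and Proposition~\ref{prop:E2 convergence} delivers the top-weight isomorphisms.

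For $p=1$ I would argue in the same way. Now the relevant weights are $W\ge 6g-8+2n$, and Lemma~\ref{lem:Feyngrav weights} forces $g\le 1$. If $g=1$ only the top weight $W=2n-2$ is admissible, and it is degree-concentrated by \eqref{equ:Feyn grap top}; if $g=0$ the admissible (even) weights are $W\in\{2n-8,2n-6\}$, covered respectively by \eqref{equ:Feyn grap top 2} and \eqref{equ:Feyn grap top}. Hence the hypothesis again holds for all $(g,n)$, and a second application of the proposition yields the top$-2$-weight isomorphisms, completing the argument.

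I do not expect a genuine obstacle here: the real content is the degree concentration of the two top weight-graded pieces of $\Feyn_{\kk}(\tGrav)$, which is Theorem~\ref{thm:Feyn grav top} and is assumed. The one place that calls for care is the weight bookkeeping, since the pieces $\Feyn_{\kk}(\tGrav)(\!(g_v,n_v)\!)$ that actually decorate the vertices of the outer graphs in $\MFree_1$ and $\Free_1$ must be checked to lie in precisely the weight ranges handled above. Here one should keep track of the weight $2$ carried by each marked edge and each marked leg, the marked legs being present only in the non-amputated $\MFree_1$-version; this is what makes the top weight of the full transform exceed that of the amputated one, so that $p=0,1$ index the correct graded pieces in each case.
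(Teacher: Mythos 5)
Your proof is correct and takes essentially the same route as the paper: the paper's proof is exactly the one-line observation that Corollary~\ref{cor:Feyngrav} (i.e.\ Theorem~\ref{thm:Feyn grav top} together with the weight bound of Lemma~\ref{lem:Feyngrav weights}) verifies the hypothesis of Proposition~\ref{prop:E2 convergence} for $p=0$ and $p=1$, which you spell out case by case ($g=0$ for top weight; $g=0,1$ for top$-2$ weight). Your closing remark on the weight bookkeeping ($6g-6+4n-2p$ for $\MFree_1$ versus $6g-6+2n-2p$ for $\Free_1$, due to the weight $2$ carried by marked legs) correctly flags the only point where the proposition's stated normalization must be reconciled with the graded pieces named in the corollary, exactly as in the formula \eqref{equ:WGamma} from the proposition's proof.
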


\begin{proof}[Proof of Proposition \ref{prop:E2 convergence}]
We only consider the case of the Feynman transform -- for the amputated Feynman transform the argument is analogous.

  Consider a graph $\Gamma\in \MFree_1(H(\Feyn_{\kk}(D'\BV^*)))(\!(g,n)\!)$.
Suppose it has $N$ vertices, with each vertex $v$ decorated by an element of some 
$\gr_{6g_v-6+2n_v-2p_v}H(\Feyn_{\kk}(D'\BV^*)(\!(g_v,n_v)\!))$.
Suppose that $\Gamma$ has $e$ internal edges and there are markings on $n'$ of the $n$ external legs.
Then the genus of $\Gamma$ is 
\[
  g =   e+1 + \sum_v (g_v-1)
\]
and the weight of $\Gamma$ is 
\begin{equation}\label{equ:WGamma}
  W_\Gamma := 2e + 2n' + \sum_v (6g_v-6+2n_v-2p_v)
  =
  6g-6+4n - 2\left(n-n' + \sum_v p_v\right).
\end{equation}
Here we also use that $\sum_v n_v= n+2e$, the number of half-edges.
That means that to make a weight $6g-6+4n -2p$ graph, all the numbers $p_v$ at the vertices must be $\leq p$.
Hence we can use the assumption that the cohomology of $H(\Feyn(D'\BV^*))$ in those weights is concentrated in degree equal to the weight. 
Concretely, suppose that $\Gamma$ has weight $W_\Gamma \geq 6g-6+4n -2p$.
Then it has degree 
\[
  e + n' + \sum_v (6g_v-6+2n_v-2p_v)
  = W_\Gamma - e - n'.
\]
The differential on the $E^q$-page of the spectral sequence reduces the number (of bivalent vertices) $e + n'$ by exactly $q$.
But the same differential leaves invariant the weight, and increases the degree by +1. Hence by the above formulas for weight and degree this differential needs to vanish unless $q=1$.
\end{proof}

\subsection{Proof of parts (1) and (2) of Theorem \ref{thm:main}}
The proof of part (2) of Theorem  \ref{thm:main} trivially follows from the fact that $\gr_0\BV^*=\Com^*$.

Part (1) for the non-amputated case $\Feyn(D\BV^*)$ follows immediately from the formula \eqref{equ:WGamma} of the weight $W_\Gamma$ of a graph $\Gamma\in \MFree_1(H(\Feyn_{\kk}(D'\BV^*)))(\!(g,n)\!)$, from which it is clear that $W_\Gamma\leq 6g-6+4n$.

The analogous formula in the amputated case reads 
\[
  W_\Gamma = 6g-6+2n - \sum_v p_v,
\]
since no leg can be marked. From this it is clear that the nontrivial weights in $H(\AFeyn(D\BV^*))$ must be $\leq 6g-6+2n$.

\subsection{\texorpdfstring{$E^1$}{E1} page in top-weight and proof of part (3) of Theorem \ref{thm:main} }
\label{sec:thm main 3 proof}
By Corollary \ref{cor:E2 convergence} we have that the top-weight part $\gr_{6g-6+4n} H(\Feyn_{\kk}'(D\BV^*)(\!(g,n)\!))$ agrees with the cohomology of the $E^1$-page $\gr_{6g-6+2n-2p} \MFree_1(H(\Feyn_{\kk}(D'\BV^*)))$ with differential $d_1$. 
By the proof of Proposition \ref{prop:E2 convergence} above we know that graphs contributing to the top-weight piece of $\MFree_1(H(\Feyn_{\kk}(D'\BV^*)(\!(g,n)\!)))$ must have all external legs marked and each vertex $v$ must be decorated by an element of the top-weight cohomology $\gr_{6g_v-6+2n_v}H^{k_v}(\Feyn_{\kk}(D'\BV^*)(\!(g_v,n_v)\!))$.
But by \eqref{equ:Fgrav hw} 
this cohomology vanishes unless each $g_v=0$ and $k_v=2n_v-6$, and then it can be identified with $\Com(\!(n_v)\!)\cong \Q$.

Hence, as a graded vector space we may identify 
\begin{equation}\label{equ:top wt proof aux}
  \gr_{6g-6+4n} \MFree_1(H(\Feyn_{\kk}(D'\BV^*)(\!(g,n)\!)))
  \cong \Feyn(\Com)(\!(g,n)\!) [-6g+6-3n] \otimes \sgn_n
\end{equation}
with the commutative graph complex, up to a degree shift and an extra sign factor if one also wants to preserve the $S_n$ action. 
To see that the degree shift is indeed by $6g-6+3n$
consider a graph $\Gamma$ of genus $g$ with $n$ external legs and $e$ (non-leg-)edges.
The cohomological degree of this graph in $\Feyn(\Com)(\!(g,n)\!)$ is $-e$.
The degree of the same graph $\Gamma$ interpreted as an element of $\MFree_1(H(\Feyn_{\kk}(D'\BV^*)(\!(g,n)\!))$ is 
\[
\sum_{v\in V\Gamma}(2n_v-6) + e + n
=4e+2n - 6|V\Gamma| +e+n
=
6g-6+3n-e,
\]
so that the degree shift is indeed $6g-6+3n$.

Next, we study the differential $d_1$ on the $E_1$ page.
The differential acts by removing one bivalent vertex, contracting an edge incident to that bivalent vertex.
There are three cases to consider:
\begin{enumerate}
\item[(i)] The bivalent vertex sits between two different vertices.
\[
\begin{tikzpicture} \node[ext] (u) at (0,0) {$\scriptscriptstyle \Dc$}; \node[ext] (v1) at (-.7,0) {}; \node[ext] (v2) at (0.7,0) {}; \draw (u) edge (v1) edge (v2) (v1) edge +(-.5,-.5) edge +(-.5,.5) edge +(-.5,0) (v2) edge +(.5,-.5) edge +(.5,.5) edge +(.5,0); \end{tikzpicture}
\]
\item[(ii)] The bivalent vertex connects twice to the same vertex.
 \[
  \begin{tikzpicture} \node[ext] (u) at (0,0) {$\scriptscriptstyle \Dc$}; \node[ext] (v1) at (-.7,0) {}; \draw (u) edge[bend left] (v1) edge[bend right] (v1) (v1) edge +(-.5,-.5) edge +(-.5,.5) edge +(-.5,0) ; \end{tikzpicture}
  \]
\item[(iii)] The bivalent vertex is at an external leg.
\[
\begin{tikzpicture} \node[ext] (u) at (0,0) {$\scriptscriptstyle \Dc$}; \node (v1) at (-.8,0) {$j$}; \node[ext] (v2) at (0.7,0) {}; \draw (u) edge (v1) edge (v2) (v2) edge +(.5,-.5) edge +(.5,.5) edge +(.5,0); \end{tikzpicture}
\]
\end{enumerate}

We shall accordingly split our differential $d_1$ into three terms 
\[
d_1 = d_1^{(i)} + d_1^{(ii)} + d_1^{(iii)}
\]
that we will discuss one-by-one.
First, in the top-weight case we have $d_1^{(ii)}=0$, since it produces a vertex of positive genus, and there are no such in top-weight.
Similarly, $d_1^{(iii)}$ produces a vertex $v$ with decoration of weight $6g_v-6+2n_v+2$, and again there are no possible decorations of that weight, the maximum being $6g_v-6+2n_v$.

Hence the only remaining piece is $d_1 = d_1^{(i)}$.
Comparing the definitions, this piece agrees with the operation $\circ_{i,j}$ of Section \ref{sec:operations ftgrav} applied to the two vertices $v,w$ neighboring our bivalent vertex.
But by Proposition \ref{prop:circ is comp Com} this operation agrees with minus the operadic composition in $\Com$.
Hence the identification \eqref{equ:top wt proof aux} is indeed an identification of dg vector spaces, if one considers the Feynman transform with minus the usual differential -- but that we can do, see Remark \ref{rem:feyn minus diff}.
Putting all pieces together we have shown that 
\[
  \gr_{6g-6+4n} H( \Feyn_{\kk}(D'\BV^*)(\!(g,n)\!) )
  \cong H(\Feyn(\Com) (\!(g,n)\!) ) [-6g+6-3n] \otimes \sgn_n
\]
as desired.

The analogous amputated case is shown in the same manner. One just does not have the markings on external legs, and hence misses the corresponding degree shift of $n$ and weight shift of $2n$, and the multiplication by the sign representation of $S_n$. This then finishes the proof of Theorem \ref{thm:main}.
\hfill\qed

\subsection{Top\texorpdfstring{$-2$}{-2} weight and proof of Theorem \ref{thm:dbv_hycom2}}
\label{sec:thm dbv_hycom2 proof}
We may proceed similarly for the case of top$-2$-weight.
That is, consider a graph $\Gamma\in \gr_{6g-8+4n} \MFree_1(H(\Feyn_{\kk}(D'\BV^*)(\!(g,n)\!)))$.
Again by the analysis of the proof of Proposition \ref{prop:E2 convergence} there are three cases to be considered:
\begin{enumerate}
\item[A.] The graph $\Gamma$ has a single vertex $v$ decorated by the top$-2$-weight cohomology 
$$\gr_{6g_v-8+2n_v}H(\Feyn_{\kk}(D'\BV^*)(\!(g_v,n_v)\!)).$$ All other vertices $v'$ are decorated by the top-weight cohomology.
That is, arguing as in the previous subsection, all the other vertices are of genus zero and trivially decorated by $\Q$.
All legs of $\Gamma$ are marked.
By \eqref{equ:Fgrav hw2} we also know that the special vertex must have genus either zero or one, and we may accordingly distinguish two sub-cases:
\begin{enumerate}
  \item[A0.] Here the special vertex $v$ has genus zero, i.e., a decoration in $$\gr_{2n_v-8}H^{2n_v-8}(\Feyn_{\kk}(D'\BV^*)(\!(0,n_v)\!))\cong \HyCom(\!(n_v)\!)^2.$$
  \item[A1.] The special vertex $v$ has genus one, i.e., a decoration in $\gr_{2n_v-2}H(\Feyn_{\kk}(D'\BV^*)(\!(1,n_v)\!))\cong \Com(\!(n_v+2)\!)_{S_2}$.
\end{enumerate}
\item[B.] All vertices of $\Gamma$ have top weight, and are hence of genus zero and trivially ($\Q$-)decorated.
All legs of $\Gamma$ except for one, say the $j$-th, are marked.
\end{enumerate}

Accordingly, we split 
\[
  \gr_{6g-8+4n} \MFree_1(H(\Feyn_{\kk}(D'\BV^*)(\!(g,n)\!))) := V_{A0}(\!(g,n)\!)\oplus V_{A1}(\!(g,n)\!) \oplus V_{B}(\!(g,n)\!)
\]
into graded subspaces, each spanned by graphs of the respective type.
As in the previous subsection we split the differential as $d_1=d_1^{(i)}+d_1^{(ii)}+d_1^{(iii)}$.
These pieces of the differential map between the subspaces as follows:
\[
\begin{tikzcd}
  V_{A0}(\!(g,n)\!) \ar[loop above]{}{d_1^{(i)}} 
  \ar{r}{d_1^{(ii)}} \ar{d}{d_1^{(iii)}}
  &
  V_{A1}(\!(g,n)\!) \ar[loop above]{}{d_1^{(i)}} 
  \\
  V_{B}(\!(g,n)\!) \ar[loop below]{}{d_1^{(i)}}
\end{tikzcd}
\]

Again we may use Propositions \ref{prop:circ is comp Com}-\ref{prop:circ is comp ComCom} to obtain a very explicit combinatorial description of the different pieces of the differential:
In particular, elements of $V_{A1}(\!(g,n)\!)$ can be seen as undecorated graphs (as in $\Feyn(\Com)$), but with one special vertex. All vertices are required to have valence $\geq 3$, except the special vertex which may also have valence $1$ or $2$.
The differential $d_1^{(i)}$ acts by contracting an edge between an arbitrary pair of distinct vertices.

\begin{align*} d_1^{(i)}: \quad \begin{tikzpicture} \node[int] (v) at (0,0.4) {}; \node[int] (w) at (0,-0.4) {}; \draw (v) edge (w) edge +(.5,.5) edge +(0,.5) edge +(-.5,.5) (w) edge +(.5,-.5) edge +(0,-.5) edge +(-.5,-.5); \end{tikzpicture} \mapsto \begin{tikzpicture} \node[int] (v) at (0,0) {}; \draw (v) edge (w) edge +(.5,.5) edge +(0,.5) edge +(-.5,.5) edge +(.5,-.5) edge +(0,-.5) edge +(-.5,-.5); \end{tikzpicture} \quad, \quad \begin{tikzpicture} \node[ext] (v) at (0,0.4) {$*$}; \node[int] (w) at (0,-0.4) {}; \draw (v) edge (w) edge +(.5,.5) edge +(0,.5) edge +(-.5,.5) (w) edge +(.5,-.5) edge +(0,-.5) edge +(-.5,-.5); \end{tikzpicture} \mapsto \begin{tikzpicture} \node[ext] (v) at (0,0) {$*$}; \draw (v) edge (w) edge +(.5,.5) edge +(0,.5) edge +(-.5,.5) edge +(.5,-.5) edge +(0,-.5) edge +(-.5,-.5); \end{tikzpicture} \end{align*}

\begin{lemma}
The complex $(V_{A1}(\!(g,n)\!),d_1^{(i)})$ is acyclic except for the case of $(g,n)=(1,1)$, when it has one-dimensional cohomology spanned by the graph 
\begin{equation}\label{equ:VA1 gen}
\begin{tikzpicture} \node[ext] (v) at (0,0) {$*$}; \node (e) at (0,-.8) {$1$}; \draw (v) edge (e); \end{tikzpicture}
\end{equation}
\end{lemma}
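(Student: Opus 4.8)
The plan is to exhibit an explicit contracting homotopy for $d_1^{(i)}$ that retracts $(V_{A1}(\!(g,n)\!),d_1^{(i)})$ onto the span of those graphs whose special vertex is univalent and attached to an external leg. Recall from the combinatorial description that a generator of $V_{A1}(\!(g,n)\!)$ is a connected genus-$g$ graph with $n$ legs carrying a single special vertex $*$ (encoding the genus-$1$, top-weight decoration, i.e.\ an implicit tadpole), with all non-special vertices at least trivalent and $*$ allowed to be univalent or bivalent, and that $d_1^{(i)}$ contracts an internal edge joining two distinct vertices (including edges incident to $*$).

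First I would define the homotopy $H$. On a graph $\Gamma$ whose special vertex $*$ has valence $\geq 2$, let $H(\Gamma)$ be the graph obtained by turning $*$ into an ordinary vertex $v_0$ and attaching to it, through a new edge, a new univalent special vertex $*'$ to which the implicit tadpole is transferred; on a graph with univalent $*$, set $H(\Gamma)=0$. Since $v_0$ then has valence $\geq 3$ and $*'$ is the unique special vertex, $H(\Gamma)$ again lies in $V_{A1}(\!(g,n)\!)$, and attaching a leaf leaves the genus and arity unchanged, so $H$ preserves $(g,n)$.

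Next I would verify the homotopy relation $d_1^{(i)}H + H d_1^{(i)} = \mathrm{id} - P$ by expanding both composites as sums of edge contractions. In $d_1^{(i)}H(\Gamma)$ the contraction of the newly created edge returns $\Gamma$, while the contractions of the remaining edges match, term by term, the images under $H$ of the contractions of $\Gamma$; using Propositions~\ref{prop:circ is comp Com} and~\ref{prop:circ is comp ComCom} to identify $d_1^{(i)}$ with minus the commutative operadic composition, these matching terms cancel, leaving $\pm\Gamma$. The same bookkeeping applies when $*$ is univalent and joined to another vertex by an edge $e_*$: there $H(\Gamma)=0$, but $H$ applied to the contraction of $e_*$ returns $\Gamma$, so again $(d_1^{(i)}H + H d_1^{(i)})(\Gamma)=\pm\Gamma$. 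The relation fails only when $*$ is univalent and its single half-edge is an external leg, in which case $*$ carries no contractible edge and both composites vanish. Thus $P := \mathrm{id} - (d_1^{(i)}H + H d_1^{(i)})$, being $\mathrm{id}$ minus a chain map, is itself a chain map, and it is the projection onto the subcomplex $W(\!(g,n)\!)$ spanned by graphs with a univalent special vertex sitting on a leg; hence $H$ exhibits $W(\!(g,n)\!)\hookrightarrow V_{A1}(\!(g,n)\!)$ as a deformation retract.

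Finally I would identify $W(\!(g,n)\!)$. A univalent special vertex attached to a leg has no internal edge, and its only further incidence is the implicit tadpole, so it is disconnected from every other vertex; connectedness of the graph then forces it to consist of this vertex alone, which occurs only for $(g,n)=(1,1)$ and yields exactly the generator~\eqref{equ:VA1 gen}, while $W(\!(g,n)\!)=0$ for all other $(g,n)$. As the differential vanishes on $W(\!(g,n)\!)$, the retraction gives $H^\bullet(V_{A1}(\!(g,n)\!),d_1^{(i)})\cong H^\bullet(W(\!(g,n)\!))$, which is one-dimensional for $(g,n)=(1,1)$ and trivial otherwise. The step needing the most care will be the sign bookkeeping: matching orientations (the edge orderings in $\Det_1$) across the $d_1^{(i)}H$ and $Hd_1^{(i)}$ expansions so that the paired contraction terms cancel and the surviving term is exactly $+\Gamma$, which can be arranged by fixing the placement of the new edge in the orientation and, if necessary, rescaling $H$ by an overall sign.
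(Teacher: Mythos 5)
Your proposal is correct and takes essentially the same approach as the paper: there, $V_{A1}(\!(g,n)\!)$ is decomposed into the subspaces $V_{A1}'$ and $V_{A1}''$ spanned by graphs whose special vertex has valence $1$ respectively $\geq 2$, and the connecting piece $f$ of $d_1^{(i)}$ --- contraction of the unique edge at a univalent special vertex --- is observed to be a bijection on generators except on the graph \eqref{equ:VA1 gen}, with the conclusion drawn from \cite[Lemma 2.1]{PayneWillwacher}. Your homotopy $H$ is precisely the inverse of this $f$ extended by zero, and with your convention of placing the new edge first in the orientation the relation $d_1^{(i)}H + Hd_1^{(i)} = \mathrm{id} - P$ holds with uniform sign $+1$ (no rescaling of $H$ is needed), so the two proofs are the same combinatorial argument in different packaging.
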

\begin{proof}
This statement has appeared in the literature on graph complexes in many closely analogous forms. We reproduce here the standard argument, originally due to Lambrechts-Volic \cite{LambrechtsVolic}, for the reader's convenience.

First, consider the decomposition
\[
  V_{A1}(\!(g,n)\!) =
  \begin{tikzcd}[column sep = 0em]
   V_{A1}' \ar[bend left]{rr}{f} \ar[loop above]{} & \oplus  & V_{A1}''\ar[loop above]{}
  \end{tikzcd}
\]
with $V_{A1}'\subset V_{A1}(\!(g,n)\!)$ the subspace spanned by graphs for which the special vertex has valence $1$, and $V_{A1}''\subset V_{A1}$ the subspace spanned by graphs for which the special vertex has valence $\geq 2$. The arrows indicate the different pieces of the differential $d_1^{(i)}$ between the subspaces.
In particular the piece $f$ of the differential contracts the unique edge at the special vertex 
\[
f : 
\begin{tikzpicture} \node[ext] (v) at (0,.4) {$*$}; \node[int] (e) at (0,-.4) {}; \draw (v) edge (e) (e) edge +(-.5,-.5) edge +(0,-.5) edge +(.5,-.5); \end{tikzpicture}
\mapsto 
\begin{tikzpicture} \node[ext] (v) at (0,0) {$*$}; \draw (v) edge +(-.5,-.5) edge +(0,-.5) edge +(.5,-.5); \end{tikzpicture}
\]
The map $f$ is surjective for all $(g,n)$, and injective for $(g,n)\neq (1,1)$. For $(g,n)=(1,1)$ the kernel is spanned by the graph \eqref{equ:VA1 gen}. From these observations and simple homological algebra considerations (see \cite[Lemma 2.1]{PayneWillwacher}) the lemma follows. 
\end{proof}

\begin{cor}\label{cor:pre dbvhycom 2}
For $(g,n)\neq (1,1)$ the projection 
\[
  V_{A0}(\!(g,n)\!)\oplus V_{A1}(\!(g,n)\!) \oplus V_{B}(\!(g,n)\!) \to V_{A0}(\!(g,n)\!) \oplus V_{B}(\!(g,n)\!)
\]
is a quasi-isomorphism of dg vector spaces and hence 
\[
  \gr_{6g-8+4n} H(\Feyn'(D\BV^*)(\!(g,n)\!)) \cong  H(V_{A0}(\!(g,n)\!) \oplus V_{B}(\!(g,n)\!), d_1^{(i)}+d_1^{(iii)}).
\]
\end{cor}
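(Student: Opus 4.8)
The plan is to read off the triangular structure of $d_1$ from the diagram preceding the statement and to feed it into Lemma~\ref{lem:simple tri}, using the acyclicity of $(V_{A1}(\!(g,n)\!),d_1^{(i)})$ just established. By Corollary~\ref{cor:E2 convergence} the top$-2$-weight cohomology $\gr_{6g-8+4n}H(\Feyn_\kk'(D\BV^*)(\!(g,n)\!))$ is the cohomology of the $E^1$-page $\gr_{6g-8+4n}\MFree_1(H(\Feyn_\kk(D'\BV^*)))(\!(g,n)\!)$ with respect to $d_1$, and this complex is, as a graded vector space, $V_{A0}(\!(g,n)\!)\oplus V_{A1}(\!(g,n)\!)\oplus V_B(\!(g,n)\!)$.

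First I would verify that $V_{A1}(\!(g,n)\!)$ is a subcomplex: among the three pieces $d_1=d_1^{(i)}+d_1^{(ii)}+d_1^{(iii)}$, only $d_1^{(i)}$ has $V_{A1}(\!(g,n)\!)$ in its source and it preserves $V_{A1}(\!(g,n)\!)$. Grouping $V_1:=V_{A0}(\!(g,n)\!)\oplus V_B(\!(g,n)\!)$ and $V_2:=V_{A1}(\!(g,n)\!)$, the diagram shows that $d_1$ is lower triangular,
\[
d_1=\begin{pmatrix} d_1^{(i)}+d_1^{(iii)} & 0 \\ d_1^{(ii)} & d_1^{(i)} \end{pmatrix},
\]
because $d_1^{(ii)}\colon V_{A0}(\!(g,n)\!)\to V_{A1}(\!(g,n)\!)$ is the only component sending $V_1$ into $V_2$, and nothing maps out of $V_2$. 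In particular $(V_1,d_1^{(i)}+d_1^{(iii)})$ is exactly the quotient complex appearing on the right-hand side of the claim.

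Then I would apply Lemma~\ref{lem:simple tri} with $f=d_1^{(ii)}$, which gives $H^k(V)\cong\ker[f]^k\oplus\coker[f]^{k-1}$ for the induced map $[f]\colon H^k(V_1)\to H^{k+1}(V_2)$. Since the preceding lemma gives $H(V_2)=H(V_{A1}(\!(g,n)\!),d_1^{(i)})=0$ for $(g,n)\neq(1,1)$, the map $[f]$ vanishes, whence $\ker[f]^k=H^k(V_1)$ and $\coker[f]^{k-1}=0$; equivalently, the quotient map $V\to V/V_2=V_1$ obtained by projecting away $V_{A1}(\!(g,n)\!)$ is a quasi-isomorphism, which is the assertion. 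Because the substantive work has already been carried out---classifying the graphs into the types $A0$, $A1$, $B$, computing the components of $d_1$ via Propositions~\ref{prop:circ is comp Com}--\ref{prop:circ is comp ComCom}, and proving the acyclicity of $V_{A1}$---I expect no genuine obstacle; the only point demanding care is to confirm that the differential is really triangular, i.e.\ that $d_1^{(ii)}$ and $d_1^{(iii)}$ produce no component landing back in $V_{A0}(\!(g,n)\!)$ and that $V_B(\!(g,n)\!)$ and $V_{A1}(\!(g,n)\!)$ exchange nothing, which is precisely what the displayed diagram records, together with the fact that the quotient differential is $d_1^{(i)}+d_1^{(iii)}$ rather than the full $d_1$.
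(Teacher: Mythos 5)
Your proposal is correct and follows essentially the same route as the paper: the paper treats the corollary as an immediate consequence of the acyclicity of the subcomplex $(V_{A1}(\!(g,n)\!),d_1^{(i)})$ together with the triangular structure of $d_1$ recorded in the displayed diagram, which is exactly what you verify. Your explicit appeal to Lemma~\ref{lem:simple tri} with $V_2=V_{A1}(\!(g,n)\!)$ (so that $[f]=[d_1^{(ii)}]=0$ forces $H(V)\cong H(V_1)$ via the projection) is just a spelled-out version of the long-exact-sequence argument the paper leaves implicit, and your identification of the quotient differential as $d_1^{(i)}+d_1^{(iii)}$ is the correct and necessary bookkeeping.
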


We next consider the individual summands of the complex in the corollary.
\begin{lemma} \label{lem:VAVB iso}
  We have the isomorphisms of complexes 
\begin{align} \label{equ:VAVB iso 1} (V_{A0}(\!(g,n)\!),d_1^{(i)}) &\cong \gr_2 \Feyn(\HyCom)(\!(g,n)\!)[-6g+6-3n] \\
 \label{equ:VAVB iso 2} (V_{B}(\!(g,n)\!),d_1^{(i)}) &\cong \oplus_{j=1}^n \Feyn(\Com)(\!(g,n)\!)[-6g+7-3n]. \end{align}
\end{lemma}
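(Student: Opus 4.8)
The plan is to prove each isomorphism by first matching the underlying graded vector spaces and then verifying that the differentials agree; the substantive inputs are the identification of the high-weight cohomology of $\Feyn_{\kk}(D'\BV^*)$ in Corollary \ref{cor:Feyngrav} together with the description of the composition operations in Propositions \ref{prop:circ is comp Com} and \ref{prop:circ is comp HyCom}. Both isomorphisms are purely bookkeeping once these results are available.

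For \eqref{equ:VAVB iso 1}, recall that a generator of $V_{A0}(\!(g,n)\!)$ is a graph with all legs marked, having one special genus-zero vertex $v$ decorated by $\gr_{2n_v-8}H^{2n_v-8}(\Feyn_{\kk}(D'\BV^*)(\!(0,n_v)\!))\cong \gr_2\HyCom(\!(n_v)\!)$ and every other vertex decorated by its top-weight class, which by \eqref{equ:Fgrav hw} is $\Com(\!(n_{v'})\!)\cong \gr_0\HyCom(\!(n_{v'})\!)$. As $\gr_1\HyCom=0$, these are exactly the generators of $\gr_2\Feyn(\HyCom)(\!(g,n)\!)$, namely graphs carrying a single weight-$2$ decoration at one vertex and weight-$0$ decorations elsewhere. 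Counting degrees as in Section \ref{sec:thm main 3 proof}, a graph with $e$ edges has degree $6g-8+3n-e$ in $V_{A0}$ and degree $-2-e$ in $\gr_2\Feyn(\HyCom)$, which agree after the shift by $6g-6+3n$. To match differentials, note that $d_1^{(i)}$ contracts a marked edge between two distinct vertices, i.e.\ it applies the operation $\circ_{i,j}$ of Section \ref{sec:operations ftgrav} to the adjacent vertices; by Proposition \ref{prop:circ is comp Com} this is minus the $\Com$-composition when both are top-weight, and by Proposition \ref{prop:circ is comp HyCom} it is minus the $\HyCom$-composition when one is the special weight-$2$ vertex. In either case the total weight is preserved, so the result remains in weight $2$, and this is precisely (minus) the edge-contraction term of the Feynman differential on $\gr_2\Feyn(\HyCom)$; the tadpole term is absent on both sides, since $\HyCom$ has $\eta=0$ as a modular operad and $d_1^{(i)}$ contracts no self-loops. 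The sign is harmless by Remark \ref{rem:feyn minus diff}.

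For \eqref{equ:VAVB iso 2} the argument is the same, except that every vertex is now top-weight and hence trivially $\Com$-decorated, while all legs except one, the $j$-th, are marked. Since $d_1^{(i)}$ only contracts internal edges and never alters the marking of a leg, the value of $j$ is preserved, which gives the decomposition $V_B(\!(g,n)\!)=\bigoplus_{j=1}^n V_B^{(j)}(\!(g,n)\!)$ into subcomplexes. Each $V_B^{(j)}$ is identified with $\Feyn(\Com)(\!(g,n)\!)$, the differential being minus the $\Com$-composition by Proposition \ref{prop:circ is comp Com}; here a graph with $e$ edges has degree $6g-7+3n-e$, accounting for the shift by $6g-7+3n$.

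I expect the one delicate point to be the sign and equivariance bookkeeping: exactly as in the top-weight computation of Section \ref{sec:thm main 3 proof}, permuting the marked legs permutes the associated $\Det_1$-factors and so produces a sign, so the identifications above are $S_n$-equivariant only after the appropriate $\sgn$-twist, which must be tracked if one wishes to retain the symmetric group action. Once this is accounted for, the combinatorial matching of generators and the identification of $d_1^{(i)}$ with the Feynman differential follow directly from Corollary \ref{cor:Feyngrav} and Propositions \ref{prop:circ is comp Com}--\ref{prop:circ is comp HyCom}.
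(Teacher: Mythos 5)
Your proposal is correct and follows essentially the same route as the paper's proof: identify the generators of $V_{A0}$ (resp.\ $V_B$) with those of $\gr_2\Feyn(\HyCom)(\!(g,n)\!)$ (resp.\ of the $n$ copies of $\Feyn(\Com)(\!(g,n)\!)$ indexed by the unmarked leg), verify the degree shifts $6g-6+3n$ and $6g-7+3n$ by the same edge count, and match $d_1^{(i)}$ with the edge-contraction differential via Propositions \ref{prop:circ is comp Com} and \ref{prop:circ is comp HyCom}, exactly as in Section \ref{sec:thm main 3 proof}. Your closing caveat about the $\sgn_n$-twist concerns only $S_n$-equivariance, which the lemma (an isomorphism of complexes) does not claim, so it is a harmless side remark rather than a gap.
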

\begin{proof}
Case $V_{A0}$: The graded vector space $V_{A0}(\!(g,n)\!)$ is generated by graphs with one special vertex $v$ that is decorated by an element of $\gr_{2n_v-8}H^{2n_v-8}(\Feyn_{\kk}(D'\BV^*)(\!(0,n_v)\!))\cong \HyCom^2(\!(n_v)\!)$, while all other (at least trivalent) vertices $w$ are decorated by elements of $\gr_{2n_w-6}H^{2n_v-6}(\Feyn_{\kk}(D'\BV^*)(\!(0,n_w)\!))\cong \Q$.
The edges and legs are all marked, and hence carry degree $+1$ and weight $2$.

On the other hand, $\gr_2 \Feyn(\HyCom)(\!(g,n)\!)$ is generated by graphs with one special vertex decorated by $\HyCom^2(\!(n_v)\!)$, and all other vertices decorated by $\Q$. 
The edges carry degree $+1$.

Hence we can identify $V_{A0}(\!(g,n)\!)$ and $\gr_2 \Feyn(\HyCom)(\!(g,n)\!)$ as graded vector spaces, up to a degree shift.
To compute the degree shift, consider a graph $\Gamma\in \gr_2 \Feyn(\HyCom)(\!(g,n)\!)$ with $e$ (non-leg-)edges. Its degree in $\gr_2 \Feyn(\HyCom)(\!(g,n)\!)$ is $-e-2$.
On the other hand, the same graph considered as a generator of 
$V_{A0}(\!(g,n)\!)$ has degree 
\[
  2n_v-8+
\sum_{w\in V\Gamma\atop w\neq v} (2n_w-6)
+ 
e+ n 
=
6g-6 +3n -e -2,
\]
so that the degree shift is indeed $6g-6+3n$.

It remains to check that the differential on both complexes is the same. But this is a direct consequence of Proposition \ref{prop:circ is comp HyCom}, arguing analogously to the proof of part (3) of Theorem \ref{thm:main} in Section \ref{sec:thm main 3 proof}. 

\smallskip

Case $V_{B}$: Here we proceed similarly. 
Generators of $V_{B}(\!(g,n)\!)$ are graphs with all vertices $w$ decorated by an element of $\gr_{2n_v-8}H^{2n_v-6}(\Feyn_{\kk}(D'\BV^*)(\!(0,n_w)\!))\cong \Q$.
All edges and legs are marked, except for one leg.
On the other hand, each summand of $\oplus_{j=1}^n \Feyn(\Com)(\!(g,n)\!)$ is generated by graphs all of whose vertices are decorated by $\Com(\!(n_w)\!)\cong \Q$. 
We think of the $j$-th summand of the direct sum as generated by graphs with the $j$-th leg distinguished.
Hence we have an identification of graded vector spaces 
\[
  V_{B} \cong \oplus_{j=1}^n \Feyn(\Com)(\!(g,n)\!)[-6g+7-3n],
\]
with the $j$-th summand on the right-hand side corresponding to the sub-vector space of $V_{B}$ spanned by graphs whose leg $j$ is not marked. (Remember that the legs are numbered $1,\dots, n$.)  
To check the degree shift above, we consider again a generating graph $\Gamma\in \Feyn(\Com)(\!(g,n)\!)$ with $e$ edges. It has degree $-e$ in $\Feyn(\Com)(\!(g,n)\!)$.
Considered as an element of $V_B(\!(g,n)\!)$ it has degree
\[
\sum_{w\in V\Gamma} (2n_v-6) + e+n-1
=
6g-6+3n -e-1,
\]
so that the degree shift is $6g-7+3n$.
Again, we need to check that the differentials on both sides of \eqref{equ:VAVB iso 2} agree. But this follows from Proposition \ref{prop:circ is comp ComCom}, again as in Section \ref{sec:thm main 3 proof}.
\end{proof}

By Lemma \ref{lem:simple tri} the previous lemma implies that
\[
  \gr_2 H^k(\Feyn'(D\BV^*(\!(g,n)\!)))
  \cong \ker [d_1^{(iii)}]^k \oplus \coker [d_1^{(iii)}]^{k-1}
\]
with 
\[
  [d_1^{(iii)}]^k
  :
  H^k(V_{A0}) \to H^{k+1}(V_B)
\]
the cohomology map induced by $d_1^{(iii)}$.
We investigate further the map $[d_1^{(iii)}]^k$.
To this end let $\Gamma\in \Feyn(\HyCom)(\!(g,n)\!)^k$ be a generator and fix some $j\in\{1,\dots,n\}$.
Then we define 
\[
\psi_j^* \Gamma \in \Feyn(\HyCom)(\!(g,n)\!)^{k+2}
\]
as follows.
\begin{itemize}
\item If the vertex $v$ of $\Gamma$ adjacent to leg $j$ is not the special vertex, we set $\psi_j^* \Gamma :=0$.
\item Suppose that the vertex $v$ of $\Gamma$ adjacent to leg $j$ is the special vertex, and decorated by $x\in \HyCom(\!(n_v)\!)^{-2}$. The we define $\psi_j^* \Gamma$ as the same graph $\Gamma$, except that the decoration $x$ at $v$ is replaced by $\psi_j(x)$, see \eqref{equ:psistar def} above. This is the dual operation of multiplying with the $\psi$-class at marking $j$, hence we use the notation ``$\psi_j^*\Gamma$''.
\end{itemize}

Finally, we assemble the maps $\psi_j^*$ for $j=1,\dots,n$ into one morphism
\begin{gather*} \Psi^k_{\wedge} \colon \gr_2H^k(\Feyn(\HyCom)(\!(g,n)\!)) \to \bigoplus_{j=1}^n H^{k+2}(\Feyn(\Com)(\!(g,n)\!)) \\
\Psi_\wedge^k(\Gamma) = (\psi_1^* \Gamma,\dots, \psi_n^*\Gamma) . \end{gather*}

We then have:
\begin{lemma}\label{lem:d1iii is Psiwedge}
The following diagram commutes:
\[
\begin{tikzcd}
  H^{6g-6+3n-k}(V_{A0}(\!(g,n)\!)) \ar{r}{[d_1^{(iii)}]}
  \ar{d}{\cong}
  &
  H^{6g-5+3n-k}(V_{B}(\!(g,n)\!)) \ar{d}{\cong}
  \\
  \gr_2 H^{-k}(\Feyn(\HyCom)(\!(g,n)\!)) \ar{r}{\Psi_\wedge^{-k}}
  &
  \bigoplus_{j=1}^n
  H^{-k+2}(\Feyn(\Com)(\!(g,n)\!)).
\end{tikzcd}
\]
\end{lemma}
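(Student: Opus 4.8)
The plan is to prove the diagram commutes by showing that, already at the cochain level and under the identifications of Lemma~\ref{lem:VAVB iso}, the component $d_1^{(iii)}$ of the $E^1$-differential \emph{is} the map $\Psi_\wedge$; commutativity on cohomology then follows at once. First I would unwind the effect of $d_1^{(iii)}$ on a generator $\Gamma\in V_{A0}(\!(g,n)\!)$. As recalled before Corollary~\ref{cor:pre dbvhycom 2}, such a $\Gamma$ is a graph with a single special vertex $v_0$ decorated by $\gr_{2n_{v_0}-8}H(\Feyn_\kk(D'\BV^*)(\!(0,n_{v_0})\!))\cong \HyCom(\!(n_{v_0})\!)^2$, all remaining vertices top-weight ($\Com$-)decorated, and all $n$ legs marked. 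By definition the piece $d_1^{(iii)}$ contracts, for each leg $j$, the bivalent $\Dc$-vertex sitting at leg $j$ into its adjacent vertex, which at the level of that vertex's decoration is exactly the operation $\Delta_j$ of Section~\ref{sec:operations ftgrav} (composition with $\Dc$), while simultaneously unmarking leg $j$. Thus $d_1^{(iii)}\Gamma=\sum_{j=1}^n\Gamma_j$, where $\Gamma_j$ is obtained by applying $\Delta_j$ to the decoration of the vertex meeting leg $j$ and declaring leg $j$ unmarked.

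Next I would split according to whether leg $j$ meets the special vertex $v_0$. If it does not, the adjacent vertex $w$ is top-weight of genus $0$, and $\Delta_j$ would produce a decoration of weight $2n_w-6+2$, exceeding the top weight $2n_w-6$ of $\gr\tGrav(\!(n_w)\!)$ in genus zero; that cohomology vanishes, so $\Gamma_j=0$. This matches the first clause in the definition of $\psi_j^*$. If leg $j$ meets $v_0$, then Proposition~\ref{prop:Delta is psi} identifies $\Delta_j$ on the decoration $x\in \HyCom(\!(n_{v_0})\!)^2$ with $\psi_j(x)$ times the generator of the one-dimensional top-weight space $\Com(\!(n_{v_0})\!)$, i.e.\ $\Delta_j$ replaces $x$ by $\psi_j(x)$ in the sense of \eqref{equ:psistar def}. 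This is precisely the second clause in the definition of $\psi_j^*$. Together the two cases show that the decoration-level effect of $\Gamma\mapsto\Gamma_j$ agrees with $\psi_j^*$.

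It then remains to organize the bookkeeping. Under \eqref{equ:VAVB iso 1} the decoration of $v_0$ corresponds to the $\gr_2\Feyn(\HyCom)$-decoration of the special vertex, so $\Gamma$ is identified with the corresponding generator of $\gr_2\Feyn(\HyCom)(\!(g,n)\!)$; and under \eqref{equ:VAVB iso 2} the subspace of $V_B(\!(g,n)\!)$ spanned by graphs whose leg $j$ is unmarked is identified with the $j$-th summand of $\bigoplus_{j=1}^n\Feyn(\Com)(\!(g,n)\!)$. Hence the ``unmark leg $j$'' step is exactly the passage into the $j$-th summand, and the degree-shift discrepancy of $1$ between \eqref{equ:VAVB iso 1} and \eqref{equ:VAVB iso 2} accounts for the degree lost in unmarking, so that the residual decoration-level map is the degree $+2$ operation $\psi_j^*$. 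Consequently $d_1^{(iii)}=\sum_j\Gamma_j$ becomes $\Gamma\mapsto(\psi_1^*\Gamma,\dots,\psi_n^*\Gamma)=\Psi_\wedge\Gamma$. Any relative global sign coming from the contraction differential is absorbed by passing to minus the differential, as in Section~\ref{sec:thm main 3 proof} and justified by Remark~\ref{rem:feyn minus diff}. Since $d_1^{(iii)}$ is a cochain map for the two $d_1^{(i)}$-differentials (a consequence of $d_1^2=0$, already used in invoking Lemma~\ref{lem:simple tri}), this cochain-level identity descends to $[d_1^{(iii)}]=\Psi_\wedge$ on cohomology, giving the commuting square.

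I expect the main obstacle to be organizational rather than substantive: the conceptual heart---that the restriction of $d_1^{(iii)}$ to the special vertex is multiplication by the $\psi$-class---is supplied wholesale by Proposition~\ref{prop:Delta is psi}, and the weight-vanishing away from the special vertex is immediate, so no genuinely new computation is required. The care lies entirely in keeping the degree shifts of \eqref{equ:VAVB iso 1}--\eqref{equ:VAVB iso 2}, the sign conventions, and the indexing of the summands of $\bigoplus_j\Feyn(\Com)$ consistent with the definition of $\psi_j^*$, exactly paralleling the treatment of part~(3) of Theorem~\ref{thm:main} in Section~\ref{sec:thm main 3 proof}.
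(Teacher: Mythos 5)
Your proposal is correct and takes essentially the same route as the paper, whose entire proof is the one-line observation that the lemma ``follows directly from Proposition~\ref{prop:Delta is psi}''. Your write-up simply makes explicit the steps the paper leaves implicit: the vanishing of $d_1^{(iii)}$ at non-special (top-weight) vertices by the weight bound, the identification at the special vertex via Proposition~\ref{prop:Delta is psi}, and the degree/sign bookkeeping already set up in Lemma~\ref{lem:VAVB iso} and Section~\ref{sec:thm main 3 proof}.
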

\begin{proof}
This follows directly from Proposition \ref{prop:Delta is psi}.
\end{proof}

\begin{proof}[Proof of Theorem \ref{thm:dbv_hycom2}]

  We just have to assemble the results above.
  By Corollary \ref{cor:E2 convergence} we have that 
  \begin{gather*} \gr_{6g-8+4n} H^{6g-6+3n-k}(\Feyn_{\kk}'(D\BV^*)(\!(g,n)\!)) \\
 \cong H^{6g-6+3n-k}\left(V_{A0}(\!(g,n)\!)\oplus V_{A1}(\!(g,n)\!)\oplus V_{B}(\!(g,n)\!), d_1^{(i)}+d_1^{(ii)}+d_1^{(iii)} \right). \end{gather*}
  By Corollary \ref{cor:pre dbvhycom 2} this is isomorphic to 
  \[
    H^{6g-6+3n-k}\left(V_{A0}(\!(g,n)\!)\oplus V_{B}(\!(g,n)\!), d_1^{(i)}+d_1^{(iii)} \right).
  \]
  By Lemma \ref{lem:simple tri} this is isomorphic to 
  \[
  \ker [d_1^{(iii)}]^{6g-6+3n-k} \oplus \coker [d_1^{(iii)}]^{6g-7+3n-k}.
  \]
  By Lemma \ref{lem:d1iii is Psiwedge} this is in turn isomorphic to 
  \[
  \ker \Psi_\wedge^{-k} \oplus \coker \Psi_\wedge^{-k-1}.\qedhere
  \]
\end{proof}

\subsection{Proof of Theorem \ref{thm:dbv_hycom}}
The amputated analogue of the previous subsection is again easier: Since there are no markings on the external legs, one just omits the summand $V_B$ from the above discussion, as well as the map $d_1^{(iii)}$.
The analogous versions of Lemma \ref{lem:VAVB iso} and Corollary \ref{cor:pre dbvhycom 2} then state that for $(g,n)\neq (1,1)$ we have that 
\[
  \gr_2 H^{6g-6+2n-k}(\AFeyn'(D\BV^*)(\!(g,n)\!))
  \cong 
  \gr_2H^{-k}(\Feyn(\HyCom)(\!(g,n)\!)).
\]
Just mind that in comparison to \eqref{equ:VAVB iso 1} we here have a degree shift of $n$ less, and a weight shift of $2n$ less, due to the legs not being marked in the amputated case.
\hfill\qed

\subsection{Proof of Corollary \ref{cor:main hbdy}}

Corollary \ref{cor:main hbdy} follows immediately by combining Theorem \ref{thm:giansiracusa} with Theorem \ref{thm:main} and Theorem \ref{thm:dbv_hycom}.

\section{Resolutions of \texorpdfstring{$\HyCom^*$}{HyCom*} in weight \texorpdfstring{$\leq 2$}{<=2}}

To show Theorem \ref{thm:hycom bv} we will need to use a resolution of the weight $\leq 2$-part of $\HyCom^*$ that is more suitable for combinatorial arguments.
We also provide a similar presentation of $\BV^*$.

\subsection{\texorpdfstring{$\HyCom^*$}{HyCom*}}
Recall that the weight grading on $\HyCom^*$ is the same as grading by the cohomological degree.
We consider the weight-truncated cyclic cooperad 
\[
    \HyCom^*_{\leq 2}
    :=
    \bigoplus_{W\leq 2} \gr_W\HyCom^*
    =
    \gr_0\HyCom^*
    \oplus 
    \gr_2\HyCom^*  
    \subset \HyCom^*
\]
consisting of elements of weight $\leq 2$.

We need a combinatorial model $\Hy^*$ for $\HyCom^*_{\leq2}$ from \cite{PayneWillwacher} that we briefly recall.
\begin{itemize}
\item The weight 0 part is just the commutative cyclic cooperad 
\[
    \gr_0\Hy^* =\gr_0 \HyCom^* =\Com^*.
\]
\item A basis of the weight 2 part $\gr_2\Hy^*(\!(r)\!)$ is given by elements 
$E_{ij}=E_{ji}$ ($1\leq i\neq j\leq r$) of degree $1$, elements $\psi_i$ ($1\leq i\leq r$) of degree 2 and elements $\delta_A=\delta_{A^c}$ of degree 2 with $A$ ranging over subsets of $\{1,\dots,r\}$ such that $|A|,|A^c|\geq 2$. 
\end{itemize}

The differential is such that 
\[
dE_{ij} = 
\psi_i+\psi_j 
-
\sum_{A\subset \{1,\dots,r\} \atop
i\in A,j\in A^c} \delta_A.    
\]

Describing the cyclic cooperad structure on $\Hy^*$ amounts to describing the $\Com^c$-coaction on the weight 2 part.
For $S$ a finite set and $B\subset S$ a subset we seek to define the coaction
\[
\Delta_B: \gr_2\Hy^*(\!(S)\!) \to \gr_2\Hy^*(\!(B\sqcup \{*\}))\otimes \Com^*(\!(B^c \sqcup \{*'\}))
\cong \gr_2\Hy^*(\!(B\sqcup \{*\})).
\]
This will be defined as follows on the basis elements:
\begin{align} \label{equ:Delta Eij} \Delta_B E_{ij} &= \begin{cases} E_{ij} & \text{if $i,j\in B$} \\
 E_{i*} & \text{if $i\in B$, $j\in B^c$} \\
 E_{j*} & \text{if $j\in B$, $i\in B^c$} \\
 0 & \text{otherwise} \end{cases} \\
\label{equ:Delta psi i} \Delta_B \psi_i &= \begin{cases} \psi_{i} & \text{if $i\in B$} \\
 0 & \text{otherwise} \end{cases} \\
\label{equ:Delta delta A} \Delta_B \delta_A &= \begin{cases} \delta_A & \text{if $A\subsetneq B$} \\
 \delta_{A^c} & \text{if $B\subsetneq A$} \\
 \psi_* & \text{if $B=A$ or $B=A^c$} \\
 0 & \text{otherwise} \end{cases}. \end{align}

\subsection{\texorpdfstring{$\BV^*$}{BV*}}\label{sec:BV gr2 basis}
Recall that on $\BV^*$ we define the weight grading as twice the grading by cohomological degree.
We consider the weight-truncated version
\[
    \BV^*_{\leq 2}
    :=
    \bigoplus_{W\leq 2} \gr_W\BV^*
    =
    \gr_0\BV^*
    \oplus 
    \gr_2\BV^*  
    \subset \BV^*.
\]
Concretely, we have that $\gr_0\BV^*=\Com^*$.
We want to describe explicitly the part $\gr_2\BV^*$.
Dualizing the discussion of Section \ref{sec:BV} we see that an explicit basis of $\gr_2\BV^*(r)$ is given by the elements $\omega_{ij}=\omega_{ji}$ for $1\leq i,j\leq r$.
In this basis the cyclic structure takes the following form.
A permutation $\sigma\in S_{r}$ acts on the basis element $\omega_{ij}$ (from the left) as 
\[
\sigma \omega_{ij} = \omega_{\sigma(i)\sigma(j)}.
\]
The action of $S_{r}$ on $\gr_2\BV^*(r)$ extends to an action of $S_{r+1}$. Let $\tau=(01)\in S_{r+1}$ be the transposition of symbols $0$ and $1$. Then we have that for $i\leq j$
\[
\tau \omega_{ij} = 
\begin{cases}  
\omega_{ij} - \omega_{1j}-\omega_{1i} + \omega_{11} & \text{for $i\geq 2$} \\
-\omega_{1j}+\omega_{11} & \text{for $i=1$, $j\geq 2$} \\
\omega_{11} & \text{for $i=j=1$}
\end{cases}.
\] 

Unfortunately, the above basis is not very canonical if one considers $\BV^*$ as a cyclic operad.
Hence we define here the alternative basis elements
of $\gr_2\BV^*(r)\cong \gr_2\BV^*(\!(r+1)\!)$:
\begin{align*} E_{ij} &:= 2\omega_{ij} -\omega_{ii} - \omega_{jj} =E_{ji}\\
 E_{0i} &:= -\omega_{ii}, \end{align*}
where we use indices $0,\dots, r$ to label the inputs in the cyclic setting.

\begin{lemma}
The above elements $E_{ij}$ with $0\leq i<j\leq r$ form a basis of 
\[
    \gr_2\BV^*(\!(r+1)\!) \cong \gr_2\BV^*(r).
\]
The $S_{r+1}\cong \Bij(\{0,\dots,r\})$-action on $\gr_2\BV^*(\!(r+1)\!)$ operates on the basis elements as
\begin{equation}\label{equ:sigma Eij} 
  \sigma \cdot E_{ij} = E_{\sigma(i)\sigma(j)},
\end{equation}
where we identify $E_{ij}=E_{ji}$, and where $\sigma\in S_{r+1}$.
The $\Com^*$-comodule structure on the basis elements obeys the formula \eqref{equ:Delta Eij} for all $B\subset S$, in particular also $B$ of cardinality $|B|=2$.
\end{lemma}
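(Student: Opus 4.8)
The plan is to treat the three assertions in turn, the first two being short linear algebra and the third carrying the real content.

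For the basis claim I would simply invert the change of basis. Solving $E_{0i}=-\omega_{ii}$ and $E_{ij}=2\omega_{ij}-\omega_{ii}-\omega_{jj}$ gives $\omega_{ii}=-E_{0i}$ and $\omega_{ij}=\tfrac12\left(E_{ij}-E_{0i}-E_{0j}\right)$, so the transition matrix between the $\binom{r+1}{2}$ elements $\{\omega_{ij}\}_{1\le i\le j\le r}$ and $\{E_{ij}\}_{0\le i<j\le r}$ is invertible, and the $E_{ij}$ form a basis. For the $S_{r+1}$-action \eqref{equ:sigma Eij}, the subgroup $S_r$ fixing $0$ acts by $\sigma\omega_{ij}=\omega_{\sigma(i)\sigma(j)}$, so \eqref{equ:sigma Eij} is immediate for $\sigma\in S_r$. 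As $S_{r+1}$ is generated by $S_r$ together with $\tau=(01)$, it then remains to verify $\tau E_{ij}=E_{\tau(i)\tau(j)}$, which I would check case by case (namely $\{i,j\}\subset\{2,\dots,r\}$, $i=1$ with $j\ge 2$, $i=0$ with $j\ge 2$, and $\{i,j\}=\{0,1\}$) by substituting the definitions of $E_{ij}$ and $E_{0i}$ into the given action of $\tau$ on the $\omega_{ij}$; each case is a short cancellation.

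The heart is the coaction \eqref{equ:Delta Eij}. Since the cyclic cooperad structure on $\BV^*$ is dual to the cyclic operad structure on $\BV$, the map $\Delta_B$ is dual to the composition $\gr_2\BV(\!(B\sqcup\{*\})\!)\otimes\Com(\!(B^c\sqcup\{*'\})\!)\to\gr_2\BV(\!(S)\!)$ that plugs the commutative product on $B^c$ into the slot $*$. I would first work in the monomial basis of $\gr_2\BV$, namely $m_i$ (the BV operator applied to input $i$) and $b_{ij}$ (the bracket of inputs $i,j$, times the product of the rest), whose dual basis matches the $\omega_{ij}$ (with $\omega_{ii}$ dual to $m_i$, the identification being pinned down by the common $S_{r+1}$-action). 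Substituting a product into the slot $*$ and expanding via the relations $\Delta(xy)=(\Delta x)y+x(\Delta y)+[x,y]$ and $[x,yz]=[x,y]z+[x,z]y$ computes the composition explicitly; dualizing then yields the uniform formula
\[
\Delta_B\,\omega_{pq}=
\begin{cases}
\omega_{pq} & p,q\in B,\\
\omega_{p*} & p\in B,\ q\in B^c,\\
\omega_{**} & p,q\in B^c,
\end{cases}
\]
valid also when $p=q$.

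Finally I would translate this back to the $E$-basis and extend across the cyclic direction. Applying the $\omega$-formula summand by summand to $E_{pq}=2\omega_{pq}-\omega_{pp}-\omega_{qq}$ and to $E_{0i}=-\omega_{ii}$ reproduces exactly the three cases of \eqref{equ:Delta Eij} whenever $0\in B$. The remaining case $0\in B^c$ follows from the $S_{r+1}$-equivariance of $\Delta_B$ together with \eqref{equ:sigma Eij}: choosing $\sigma$ carrying $0$ into $B$ reduces it to the case already treated, while the right-hand side of \eqref{equ:Delta Eij} is manifestly equivariant. The main obstacle is precisely the explicit evaluation of the operadic composition in the monomial basis and its dualization—getting the signs and the expansions of $\Delta$ and the bracket on a product right—together with the bookkeeping that keeps the index $0$ and the case $|B|=2$ (where the target sits in arity two, so that the subtle cyclic action of $\tau$ from part (2) is genuinely used) consistent with the uniform formula above.
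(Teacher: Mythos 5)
Your proposal is correct and follows essentially the same route as the paper: the basis claim by inverting the change of coordinates, the $S_{r+1}$-action by checking the transposition $\tau=(01)$ case by case, and the coaction formula by reducing via equivariance to the case $0\in B$ and then verifying \eqref{equ:Delta Eij} case by case on the $\omega$-basis. The only difference is that you spell out the derivation of the rule $\Delta_B\omega_{pq}$ from duality with the $\BV$-operad composition (using the BV relations), a step the paper's proof leaves implicit when it computes $\Delta_B$ on the elements $2\omega_{ij}-\omega_{ii}-\omega_{jj}$.
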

\begin{proof}
    It is clear that the elements $E_{ij}$ form a basis.

    We next check the formula \eqref{equ:sigma Eij} for the $S_{r+1}$-action.
    If $\sigma\in S_r\subset S_{r+1}$, only permuting numbers $\geq 1$, then the formula is obvious.
    Hence we just need to check that the morphism is compatible with the permutation $\tau=(01)$.
    First suppose that $i,j\geq 2$. 
    Then we compute
\begin{align*} \tau E_{ij} &= \tau (2\omega_{ij} -\omega_{ii} - \omega_{jj}) \\
&= 2\omega_{ij}-2\omega_{i1}-2\omega_{1j} + 2\omega_{11} -\omega_{ii} +2\omega_{i1} - \omega_{11} - \omega_{jj}+2\omega_{j1} - \omega_{11} \\
&=2\omega_{ij} -\omega_{ii} - \omega_{jj} =E_{ij} = E_{\tau(i)\tau(j)} \\
\tau E_{0i} &= -\tau \omega_{ii} = -\omega_{ii} + 2\omega_{1i} - \omega_{11} \\&= E_{1i} = E_{\tau(0)\tau(i)}          \end{align*}
as desired.
Similarly, for $j\geq 2$ we obtain:
\begin{align*} \tau E_{1j} &= \tau (2\omega_{1j} -\omega_{11} - \omega_{jj}) \\
 &= -2\omega_{1j}+2\omega_{11} -\omega_{11} - \omega_{jj}+2\omega_{j1} - \omega_{11} \\
 &=- \omega_{jj} = E_{0j}=E_{\tau(1)\tau(j)}\\
 \tau E_{01} &= -\tau \omega_{11} = -\omega_{11} = E_{01}=E_{\tau(0)\tau(1)} \end{align*}
Hence \eqref{equ:sigma Eij} is established.

Finally, we have to check formula \eqref{equ:Delta Eij} describing the $\Com^*$-coaction 
\[
   \Delta_B: \BV^*(\!(S)\!) \to \BV^*(\!(B\sqcup \{*\})).
\]
We take the indexing sets to be $S=\{0,\dots,r\}$ and 
$B=\{0,\dots,s\}\subset S$ with $1\leq s\leq r-2$, without loss of generality.
Then we distinguish several cases:
\begin{itemize}
    \item For $1\leq i,j \leq s$:
\begin{align*} \Delta_B E_{ij} = \Delta_B (2\omega_{ij} -\omega_{ii} - \omega_{jj}) &= 2\omega_{ij} -\omega_{ii} - \omega_{jj} =E_{ij} \end{align*}
\item For $1\leq i\in S$:
\begin{align*} \Delta_B E_{0i} = -\Delta_B (\omega_{ii}) &= -\omega_{ii}=E_{0i} \end{align*}
\item For $1\leq i\in B$, $j\in B^c$:
\begin{align*} \Delta_B E_{ij} = \Delta_B (2\omega_{ij} -\omega_{ii} - \omega_{jj}) &= 2\omega_{i*} -\omega_{ii} - \omega_{**} = E_{i*} \end{align*}
\item For $j\in B^c$:
\begin{align*} \Delta_B E_{0j} = -\Delta_B (\omega_{jj}) &= -\omega_{**}=E_{0*}. \end{align*}
\item For $i, j \in B^c$:
\begin{align*} \Delta_B E_{ij}= \Delta_B (2\omega_{ij} -\omega_{ii} - \omega_{jj}) &= 2\omega_{**} -\omega_{**} - \omega_{**} = 0. \end{align*}
\end{itemize}
This shows the lemma.
\end{proof}

\begin{remark}\label{rem:not a map}
Note that one might think that the lemma says that there is a map of cyclic cooperads
\[
\Hy^* \to \BV^*_{\leq 2} 
\]
obtained by setting to zero the generators $\psi_i$ and $\delta_A$ of $\Hy^*$. This is however not true, because the cocomposition of $E_{ij}$ in $\Hy^*$ does not produce arity two elements, while the cocomposition in $\BV^*_{\leq 2}$ does.
This defect can be repaired by extending the above morphism to an $\infty$-morphism, adding one higher homotopy. We shall not do this here since we want to avoid $\infty$-cooperadic constructions. However, the interested reader can read off the $\infty$-morphism from the map $\Phi$ between the corresponding Feynman transforms introduced in Section \ref{sec:Phi map}.  
\end{remark}

\section{\texorpdfstring{$\HyCom$}{HyCom} and \texorpdfstring{$\BV$}{BV} graph complexes and proof of Theorem \ref{thm:hycom bv}}

\subsection{Combinatorial description of graph complexes}
We begin by describing the weight 2 part of the graph complex $\Feyn(\HyCom^*)$.
Since the above variant $\Hy^*$ is quasi-isomorphic to $\HyCom^*$ in weights $\leq 2$ we have a quasi-isomorphism 
\[
\gr_2\Feyn(\Hy^*) \to \gr_2\Feyn(\HyCom^*).
\]
The graph complex $\gr_2\Feyn(\Hy^*)$ consists of linear combinations of graphs with one special vertex decorated by $\Hy^*$, while all other vertices are undecorated.
We indicate the special vertex by a double circle.
The special vertex can carry three different decorations. 
First, it can be decorated by a $\psi$-class, that we shall depict by an arrow
\begin{align*} \begin{tikzpicture}[scale=1] \node[ext,accepting, label=90:{$\scriptstyle \psi_i$}] (v) at (0,0){}; \node at (.45,.15) {$\scriptstyle i$}; \draw (v) edge +(0:.5) edge +(60:.5) edge +(-60:.5) edge +(120:.5) edge +(180:.5) edge +(-120:.5); \end{tikzpicture} &=: \begin{tikzpicture}[scale=1] \node[ext,accepting] (v) at (0,0){}; \draw (v) edge[->-] +(0:.5) edge +(60:.5) edge +(-60:.5) edge +(120:.5) edge +(180:.5) edge +(-120:.5); \end{tikzpicture} \end{align*}

  Second, there can be a decoration by $E_{ij}$, that we depict by two arrows.
  \begin{align}\label{equ_Eij pic} \begin{tikzpicture} \node[ext, accepting] (v) at (0,0){}; \node (vi) at (130:.7){}; \node (vj) at (50:.7){}; \node (vk) at (90:.5){$E_{ij}$}; \draw (v) edge (vi) edge (vj) (v) edge +(-30:.5) edge +(-90:.5) edge +(-150:.5); \end{tikzpicture} \ \  & =: \ \  \begin{tikzpicture} \node[ext, accepting] (v) at (0,0){}; \node (vi) at (130:.7){$\scriptstyle i$}; \node (vj) at (50:.7){$\scriptstyle j$}; \draw (v) edge[->-] (vi) edge[->-] (vj) (v) edge +(-30:.5) edge +(-90:.5) edge +(-150:.5) ; \end{tikzpicture} \end{align}

Note that the special vertex must have valence at least three.

  Finally, the special vertex may be decorated by $\delta_{A}$.
  Pictorially, we replace the special vertex with two vertices connected by a marked edge.
  \begin{align}\label{equ:delta two vert} \begin{tikzpicture}[scale=1] \node[ext,accepting, label=90:{$\scriptstyle \delta_{A}$}] (v) at (0,0){}; \draw (v) edge +(0:.5) edge +(60:.5) edge +(-60:.5) edge +(120:.5) edge +(180:.5) edge +(-120:.5); \end{tikzpicture} &=: \begin{tikzpicture}[scale=1] \node[ext] (v) at (0,0){}; \node[ext] (w) at (-.7,0){}; \draw (v) edge[crossed] (w) edge +(0:.5) edge +(60:.5) edge +(-60:.5) (w) edge +(120:.5) edge +(180:.5) edge +(-120:.5); \draw[pbrace] (.6,.5) -- (.6,-.5); \node at (.9,0) {$A$}; \end{tikzpicture} \end{align}
Here the subset $A$ of half-edges is connected to one vertex, and the complement to the other.

The differential consists of two parts,
\[
d = d_s + d_{\Hy^*},  
\]
where $d_s$ acts by splitting vertices, using the cooperadic cocomposition, and $d_{\Hy^*}$ is induced from the differential on $\Hy^*$.
Concretely, since the differential on $\Hy^*$ is only nontrivial on the $E_{ij}$-generator, we have the pictorial description
\begin{equation}\label{equ:dhyc pic}
  d_{\Hy^*}:
\begin{tikzpicture} \node[ext, accepting] (v) at (0,0){}; \node (vi) at (130:.7){}; \node (vj) at (50:.7){}; \draw (v) edge[->-] (vi) edge[->-] (vj) (v) edge +(-30:.5) edge +(-70:.5) edge +(-110:.5) edge +(-150:.5) ; \end{tikzpicture}
\mapsto 
\begin{tikzpicture} \node[ext, accepting] (v) at (0,0){}; \node (vi) at (130:.7){}; \node (vj) at (50:.7){}; \draw (v) edge (vi) edge[->-] (vj) (v) edge +(-30:.5) edge +(-70:.5) edge +(-110:.5) edge +(-150:.5) ; \end{tikzpicture}
+
\begin{tikzpicture} \node[ext, accepting] (v) at (0,0){}; \node (vi) at (130:.7){}; \node (vj) at (50:.7){}; \draw (v) edge[->-] (vi) edge (vj) (v) edge +(-30:.5) edge +(-70:.5) edge +(-110:.5) edge +(-150:.5) ; \end{tikzpicture}
-
\sum\, 
\begin{tikzpicture} \node[ext] (v) at (0,0){}; \node[ext] (w) at (0.7,0){}; \draw (v) edge[crossed] (w) edge +(130:.7) (w) edge +(50:.7) (v) edge +(-110:.5) edge +(-150:.5) (w) edge +(-30:.5) edge +(-70:.5) ; \end{tikzpicture}
.
\end{equation}
Furthermore, from the description \eqref{equ:Delta Eij}-\eqref{equ:Delta delta A} of the cooperadic cocomposition we can see that the operation $d_s$ acts as follows:
\begin{align}\label{equ:ds1} \begin{tikzpicture}[baseline=-.65ex] \node[int] (v) at (0,0) {}; \draw (v) edge +(-.3,-.3) edge +(-.3,0) edge +(-.3,.3) edge +(.3,-.3) edge +(.3,0) edge +(.3,.3); \end{tikzpicture} &\mapsto \pm \sum \begin{tikzpicture}[baseline=-.65ex] \node[int] (v) at (0,0) {}; \node[int] (w) at (0.5,0) {}; \draw (v) edge (w) (v) edge +(-.3,-.3) edge +(-.3,0) edge +(-.3,.3) (w) edge +(.3,-.3) edge +(.3,0) edge +(.3,.3); \end{tikzpicture} \\ \label{equ:dsdelta} \begin{tikzpicture}[scale=1] \node[ext] (v) at (0,0){}; \node[ext] (w) at (-.7,0){}; \draw (v) edge[crossed] (w) edge +(0:.5) edge +(60:.5) edge +(-60:.5) (w) edge +(120:.5) edge +(180:.5) edge +(-120:.5); \end{tikzpicture} &\mapsto \sum \begin{tikzpicture}[scale=1] \node[int] (v0) at (0.7,0){}; \node[ext] (v) at (0,0){}; \node[ext] (w) at (-.7,0){}; \draw (v) edge[crossed] (w) edge (v0) edge +(-60:.5) (v0) edge +(0:.5) edge +(60:.5) (w) edge +(120:.5) edge +(180:.5) edge +(-120:.5); \end{tikzpicture} +\sum \begin{tikzpicture}[scale=1] \node[int] (w0) at (-1.4,0){}; \node[ext] (v) at (0,0){}; \node[ext] (w) at (-.7,0){}; \draw (v) edge[crossed] (w) edge +(-60:.5) edge +(0:.5) edge +(60:.5) (w) edge (w0) edge +(120:.5) (w0) edge +(180:.5) edge +(-120:.5); \end{tikzpicture} - \begin{tikzpicture}[scale=1] \node[int] (v) at (0,0){}; \node[ext, accepting] (w) at (-.7,0){}; \draw (v) edge +(0:.5) edge +(60:.5) edge +(-60:.5) (w) edge[->-] (v) edge +(120:.5) edge +(180:.5) edge +(-120:.5); \end{tikzpicture} - \begin{tikzpicture}[scale=1] \node[ext, accepting] (v) at (0,0){}; \node[int] (w) at (-.7,0){}; \draw (v) edge[->-] (w) edge +(0:.5) edge +(60:.5) edge +(-60:.5) (w) edge +(120:.5) edge +(180:.5) edge +(-120:.5); \end{tikzpicture} \\
 \label{equ:dsEij} \begin{tikzpicture} \node[ext, accepting] (v) at (0,0){}; \node (vi) at (130:.7){}; \node (vj) at (50:.7){}; \draw (v) edge[->-] (vi) edge[->-] (vj) (v) edge +(-30:.5) edge +(-70:.5) edge +(-110:.5) edge +(-150:.5) ; \end{tikzpicture} &\mapsto -\sum\, \begin{tikzpicture} \node[ext, accepting] (v) at (0,0){}; \node[int] (w) at (130:.7) {}; \node (vi) at (130:1.4){}; \node (vj) at (50:.7){}; \draw (v) edge[->-] (w) edge[->-] (vj) (w) edge (vi) edge +(-110:.5) edge +(-150:.5) (v) edge +(-30:.5) edge +(-70:.5) ; \end{tikzpicture} - \sum\, \begin{tikzpicture} \node[ext, accepting] (v) at (0,0){}; \node[int] (w) at (50:.7) {}; \node (vi) at (130:.7){}; \node (vj) at (50:1.4){}; \draw (v) edge[->-] (w) edge[->-] (vi) (w) edge (vj) edge +(-30:.5) edge +(-70:.5) (v) edge +(-110:.5) edge +(-150:.5) ; \end{tikzpicture} - \sum\, \begin{tikzpicture} \node[ext, accepting] (v) at (0,0){}; \node[int] (w) at (0,-.7) {}; \node (vi) at (130:.7){}; \node (vj) at (50:.7){}; \draw (v) edge (w) edge[->-] (vi) edge[->-] (vj) (w) edge +(-110:.5) edge +(-70:.5) (v) edge +(-150:.5) edge +(-30:.5) ; \end{tikzpicture} \end{align}

Finally, we turn to 
\[
    \gr_2\Feyn(\BV^*).
\]
The combinatorial description of this graph complex is similar to that of $\gr_2\Feyn(\Hy^*)$
above. The collection of dg vector spaces $\gr_2\Feyn(\BV^*)$ consists of linear combinations of graphs with one special vertex.
This special is decorated by $E_{ij}$, using the basis $E_{ij}$ of $\gr_2\BV^*$ introduced in Section \ref{sec:BV gr2 basis}.

We use the graphical encoding \eqref{equ_Eij pic} to indicate such a decoration on the special vertex.
One important fact to note is, however, that in $\gr_2\Feyn(\BV^*)$ the special vertex may have valence 2, while in $\gr_2\Feyn(\Hy^*)$ it needs to have valence $\geq 3$.
In particular, in the analog version of \eqref{equ:dsEij} for $\gr_2\Feyn(\BV^*)$ there are also terms for which the special vertex is bivalent.

\subsection{A morphism}\label{sec:Phi map}

We next describe a morphism of dg modular sequences
\[
\Phi: 
\gr_2\Feyn(\Hy^*) 
\to
\gr_2\Feyn(\BV^*).
\]
This morphism is defined as follows:
\begin{itemize}
\item If $\Gamma\in \gr_2\Feyn(\Hy^*)(\!(g,n)\!)$ is a graph whose special vertex is decorated by $\delta_A$, then we set $\Phi(\Gamma)=0$.
\item If $\Gamma\in \gr_2\Feyn(\Hy^*)(\!(g,n)\!)$ is a graph whose special vertex is decorated by $E_{ij}$, then we set $\Phi(\Gamma)=\Gamma$, where we identify the basis element $E_{ij}$ of $\gr_2\Hy^*$ with the corresponding basis element $E_{ij}$ of $\gr_2\BV^*$, see \ref{sec:BV gr2 basis}.
\item Finally, let $\Gamma\in \gr_2\Feyn(\Hy^*)(\!(g,n)\!)$ be a graph whose special vertex is decorated by $\psi_{i}$, with $i$ corresponding to some half-edge at the special vertex.
Then we set $\Phi(\Gamma)=\Gamma'$, with $\Gamma'$ obtained from $\Gamma$ by the replacement
\begin{equation}\label{equ: phi on psi}
      \begin{tikzpicture} \node[ext, accepting] (v) at (0,0){}; \node (w1) at (1,0) {}; \draw (v) edge[->-] (w1) (v) edge +(-.5,.5) edge +(-.5,0) edge +(-.5,-.5) ; \end{tikzpicture}
      \, 
      \mapsto
      \, 
      \begin{tikzpicture} \node[ext, accepting] (v) at (0,0){}; \node[int] (w1) at (-.7,0) {}; \node (w2) at (.7,0) {}; \draw (v) edge[->-] (w1) edge[->-] (w2) (w1) edge +(-.5,.5) edge +(-.5,0) edge +(-.5,-.5) ; \end{tikzpicture}
    \end{equation}
That is, we replace the special vertex in $\Gamma$ by an ordinary vertex and add a new bivalent special vertex to the graph on the edge of the half-edge $i$. As usual, we put the new edge first in the ordering.

\end{itemize}

\begin{lemma}
The map $\Phi:\gr_2\Feyn(\Hy^*) \to \gr_2\Feyn(\BV^*)$ 
defined above is a morphism of dg modular sequences.
\end{lemma}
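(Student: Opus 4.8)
The plan is to verify directly that $\Phi$ commutes with the differentials, treating the three types of special-vertex decoration separately. The essential structural point is that the target carries no internal differential: since $\BV^*$ is the linear dual of the homology operad $\BV$, we have $d_{\BV^*}=0$, so the differential on $\gr_2\Feyn(\BV^*)$ is purely the vertex-splitting part $d_s$. On the source we have $d=d_s+d_{\Hy^*}$, where $d_{\Hy^*}$ acts nontrivially only on the $E_{ij}$-decoration, by $d_{\Hy^*}E_{ij}=\psi_i+\psi_j-\sum_A\delta_A$, see \eqref{equ:dhyc pic}. Hence the claim $\Phi\,d=d\,\Phi$ reduces to the single identity $\Phi\,(d_s+d_{\Hy^*})=d_s\,\Phi$. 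The content of the lemma is that $\Phi$ trades the algebraic relation encoded by $d_{\Hy^*}$ for the extra geometric operation available on the $\BV$-side, namely splitting off a \emph{bivalent} $E$-vertex; this is precisely the Feynman-transform incarnation of the $\infty$-morphism alluded to in Remark \ref{rem:not a map}.

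The key comparison is that the cocomposition of $\gr_2\BV^*$ agrees with that of $\gr_2\Hy^*$ on $E_{ij}$ except that $\Delta_B E_{ij}$ is now permitted to land in arity two — exactly the defect noted in Remark \ref{rem:not a map}. Consequently, splitting an $E_{ij}$-decorated vertex with $d_s$ on the $\BV$-side produces all the terms of \eqref{equ:dsEij}, together with additional terms in which a bivalent $E$-vertex is split off along the half-edge $i$ or $j$. First I would treat the $E_{ij}$-decorated case: the terms of \eqref{equ:dsEij} and the ordinary-vertex splittings \eqref{equ:ds1} are matched identically on both sides because $\Phi(E_{ij})=E_{ij}$, while the extra bivalent terms of $d_s\,\Phi(\Gamma)$ coincide precisely with $\Phi$ applied to the $\psi_i$- and $\psi_j$-summands of $d_{\Hy^*}\Gamma$, by the very definition \eqref{equ: phi on psi} of $\Phi$ on a $\psi$-class. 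The remaining $-\sum_A\delta_A$ summand of $d_{\Hy^*}\Gamma$ is killed by $\Phi$ and has no counterpart on the $\BV$-side, consistently with the absence of $\delta$-type decorations there.

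For a $\psi_i$-decorated special vertex, $d_{\Hy^*}\Gamma=0$, so I must show $\Phi\,d_s\Gamma=d_s\,\Phi(\Gamma)$. Writing $\Phi(\Gamma)$ as the graph with a bivalent $E$-vertex inserted on the edge at $i$, the splittings of the untouched ordinary vertices correspond bijectively on both sides; the splittings of the $\psi_i$-vertex (only those keeping $i$ on the special side contribute, by \eqref{equ:Delta psi i}) correspond after $\Phi$ to splittings of the now-ordinary former special vertex adjacent to the inserted bivalent vertex; and the only potentially new contributions, from splitting the inserted bivalent $E$-vertex itself, must be shown to vanish or cancel by valence and orientation constraints. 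For a $\delta_A$-decorated vertex we have $\Phi(\Gamma)=0$, and since again $d_{\Hy^*}\Gamma=0$, I must check $\Phi\,d_s\Gamma=0$: the $\delta$-preserving terms of \eqref{equ:dsdelta} are annihilated by $\Phi$, while the two remaining $\psi$-terms of \eqref{equ:dsdelta} map under $\Phi$ to the same graph — a bivalent $E$-vertex inserted on the former marked edge — but with opposite orientation, and therefore cancel.

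The hardest part will be the sign and orientation bookkeeping carried by the edge-determinant factor $\Det_1$. The degree-$1$ marked edges, the ``new edge first'' ordering convention of \eqref{equ: phi on psi}, and the sign in $d_{\Hy^*}E_{ij}=\psi_i+\psi_j-\sum_A\delta_A$ must all be tracked consistently. The two decisive sign checks are: (i) in the $\delta_A$-case, that inserting the bivalent $E$-vertex from the two ends of the marked edge yields the two new edges in opposite orders, hence opposite signs, producing the required cancellation; and (ii) in the $E_{ij}$-case, that the sign of the extra bivalent splitting terms of $d_s$ matches that of $\Phi$ applied to the $+\psi_i$ and $+\psi_j$ terms. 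Once these orientation conventions are pinned down, the remaining verifications are routine term-by-term comparisons of the combinatorial rules \eqref{equ:ds1}, \eqref{equ:dsdelta}, \eqref{equ:dsEij} and \eqref{equ: phi on psi}.
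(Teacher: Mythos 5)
Your proposal is correct and follows essentially the same route as the paper's proof: reduce to the special vertex, split into the three decoration cases, match the extra bivalent-splitting terms on the $\BV^*$-side with $\Phi$ applied to the $\psi_i+\psi_j$ part of $d_{\Hy^*}E_{ij}$, and cancel the two $\psi$-terms arising from a $\delta_A$-vertex by the opposite ordering of the two edges at the inserted bivalent vertex. The one step you defer — that splitting the inserted bivalent $E$-vertex contributes nothing in the $\psi_i$-case — resolves exactly by the valence constraint you anticipate, since such a splitting would create a weight-zero vertex of valence $\leq 2$, which vanishes in the coaugmentation coideal.
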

\begin{proof}
  It is clear that $\Phi$ intertwines the symmetric group actions. 
  We shall check that it also intertwines the differentials.
  Since the differential on $\BV^*$ is zero we have to check that for any graph $\Gamma\in \gr_2\Feyn(\Hy^*)(\!(g,n)\!)$
  \[
  \Phi(d_s\Gamma + d_{\Hy^*}\Gamma ) =  d_s\Phi(\Gamma).
  \]
The splitting differential $d_s$ can act either by splitting the special vertex, or a non-special vertex.
Since $\Phi(\Gamma)$ is the same graph as $\Gamma$ away from the special vertex, it is sufficient to consider the pieces of the differential that acts on the special vertex.
  We naturally have to consider three different cases, according to the decoration of the special vertex of $\Gamma$.
  \begin{itemize}
    \item Suppose the special vertex of $\Gamma$ is decorated by $\delta_A$. Then $\Phi(\Gamma)=0$ and $d_{\Hy^*}\Gamma=0$, so we have to check that $\Phi(d_s\Gamma)=0$.
    Among the terms $d_s\Gamma$ there are those with with their special vertex decorated by some $\delta_{A'}$, and those for which the special vertex is decorated by some $\psi_i$, see \eqref{equ:dsdelta}.
    The former terms are trivially sent to zero, so we only need to consider the latter. Pictorially, 
    \[
      d_s\Gamma = 
    \begin{tikzpicture} \node[ext, accepting] (v) at (0,0){}; \node[int] (w1) at (1,0) {}; \draw (v) edge[->-] (w1) (v) edge +(-.5,.5) edge +(-.5,0) edge +(-.5,-.5) (w1) edge +(.5,.5) edge +(.5,0) edge +(.5,-.5) ; \end{tikzpicture}
        +
        \begin{tikzpicture}[xscale=-1] \node[ext, accepting] (v) at (0,0){}; \node[int] (w1) at (1,0) {}; \draw (v) edge[->-] (w1) (v) edge +(-.5,.5) edge +(-.5,0) edge +(-.5,-.5) (w1) edge +(.5,.5) edge +(.5,0) edge +(.5,-.5) ; \end{tikzpicture}
        +(\cdots),
    \] 
    where we only draw a neighborhood of the special vertex, not the whole graph.
    Applying $\Phi$ we produce the terms 
    \[
      \begin{tikzpicture} \node[ext, accepting] (v) at (0,0){}; \node[int] (w1) at (-.7,0) {}; \node[int] (w2) at (.7,0) {}; \draw (v) edge[->-] node[above]{$\scriptstyle 1$} (w1) edge[->-] (w2) (w1) edge +(-.5,.5) edge +(-.5,0) edge +(-.5,-.5) (w2) edge +(.5,.5) edge +(.5,0) edge +(.5,-.5) ; \end{tikzpicture}
          +
          \begin{tikzpicture} \node[ext, accepting] (v) at (0,0){}; \node[int] (w1) at (-.7,0) {}; \node[int] (w2) at (.7,0) {}; \draw (v) edge[->-] (w1) edge[->-] node[above]{$\scriptstyle 1$} (w2) (w1) edge +(-.5,.5) edge +(-.5,0) edge +(-.5,-.5) (w2) edge +(.5,.5) edge +(.5,0) edge +(.5,-.5) ; \end{tikzpicture}=0,
    \]
    with the ``1'' over the edge indicating that this edge comes first in the order of edges.
    Both terms cancel due to sign, given that the order of the two edges adjacent to the bivalent vertex is opposite.
    \item Suppose that the special vertex of $\Gamma$ is decorated by $E_{ij}$. Then $\Phi(\Gamma)=\Gamma$ and the difference
    \[
      d_s\Phi(\Gamma)-\Phi(d_s\Gamma)
    \]
    consists only of those terms for which the special vertex is bivalent, namely:
    \[
      \begin{tikzpicture} \node[int] (v) at (0,0){}; \node[ext, accepting] (w) at (-.5,0.5){}; \draw (v) edge +(-.5,-.5) edge +(0,-.5) edge +(.5,-.5) edge +(.5,.5) (w) edge[->-] (v) edge[->-] +(-.5,.5); \end{tikzpicture}
      +
      \begin{tikzpicture} \node[int] (v) at (0,0){}; \node[ext, accepting] (w) at (.5,0.5){}; \draw (v) edge +(-.5,-.5) edge +(0,-.5) edge +(.5,-.5) edge +(-.5,.5) (w) edge[->-] (v) edge[->-] +(.5,.5); \end{tikzpicture}
    \]
    (See also Remark \ref{rem:not a map}.)
    But these terms are equal to $\Phi(d_{\Hy^*}\Gamma)$, see \eqref{equ:dhyc pic} for a picture of $d_{\Hy^*}\Gamma$ and then \eqref{equ: phi on psi} for the action of $\Phi$.
    \item Finally, suppose the special vertex is decorated by $\psi_i$.
    Then $d_{\Hy^*}\Gamma=0$ and $\Phi(\Gamma)$ is obtained from $\Gamma$ by making the special vertex non-special, but adding a bivalent vertex.
    Comparing the formulas for splitting the $\psi_i$-decorated vertex, the terms in $\Phi(d_s\Gamma)$ match those in $d_s\Phi(\Gamma)$ from splitting the new ordinary vertex. \qedhere
  \end{itemize}
\end{proof}

\subsection{Proof of Theorem \ref{thm:hycom bv}}
Given the map $\Phi$ of the previous subsection, Theorem \ref{thm:hycom bv} follows from the following result.
\begin{prop}
    The map $\Phi: \gr_2\Feyn(\Hy^*)(\!(g,n)\!)\to \gr_2\Feyn(\BV^*)(\!(g,n)\!)$ is a quasi-isomorphism for all $(g,n)\neq (1,0),(0,2)$.
\end{prop}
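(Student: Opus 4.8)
The plan is to exploit that $\Phi$ is \emph{surjective} and to reduce the claim to the acyclicity of its kernel. Indeed, $\Phi$ is the identity on graphs whose special vertex is $E_{ij}$-decorated (and necessarily of valence $\geq 3$), and these map bijectively onto the $\geq 3$-valent $E$-decorated generators of $\gr_2\Feyn(\BV^*)$; moreover every generator of $\gr_2\Feyn(\BV^*)$ whose special vertex is \emph{bivalent} (a marked edge, i.e.\ a copy of $\Dc$) is the image under \eqref{equ: phi on psi} of a $\psi$-decorated graph. Hence $\Phi$ is onto, and the short exact sequence of complexes
\[
0 \to K \to \gr_2\Feyn(\Hy^*)(\!(g,n)\!) \xrightarrow{\Phi} \gr_2\Feyn(\BV^*)(\!(g,n)\!) \to 0, \qquad K := \ker\Phi,
\]
together with the associated long exact sequence shows that $\Phi$ is a quasi-isomorphism precisely when $H(K)=0$.

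Next I would describe $K$ explicitly. Since $\Phi$ is injective on the $E$-decorated part, $K$ contains no $E$-graphs; it is spanned by (i) all graphs whose special vertex is $\delta_A$-decorated, which are killed by $\Phi$, and (ii) the combinations of $\psi$-decorated graphs annihilated by \eqref{equ: phi on psi} (the two $\psi$-graphs obtained by placing the $\psi$ on the two endpoints of one internal edge map to the same marked edge). The differential respects this splitting: on a $\psi$-graph $d=d_s$ produces only $\psi$-graphs, whereas on a $\delta_A$-graph $d_s$ produces both $\delta$-terms and, via the last two terms of \eqref{equ:dsdelta}, $\psi$-terms. Thus $K$ carries a lower-triangular differential with the $\psi$-combinations forming a subcomplex and the $\delta$-graphs the quotient, and Lemma \ref{lem:simple tri} reduces $H(K)$ to the kernel and cokernel of the induced connecting map
\[
[f]\colon H(\delta\text{-part}) \to H(\psi\text{-part}),
\]
where $f$ is the marked-edge-collapsing piece of $d_s$.

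It then remains to compute these two cohomologies and the map between them. Both pieces are variants of the commutative graph complex $\Feyn(\Com)$ carrying one extra marked datum: the $\delta$-part is generated by graphs with a single distinguished edge together with the splitting of its incident half-edges into $A$ and $A^c$ (as in \eqref{equ:delta two vert}), while the $\psi$-part is generated by graphs with a single distinguished half-edge. I would analyse each by a further filtration (by the number of ordinary vertices), whose associated-graded differential is the ordinary edge-contraction of $\Feyn(\Com)$ away from the marked datum, and then identify $[f]$ with the natural operation that contracts the marked edge and sums over the two resulting half-edge directions. The crux is to show that $[f]$ is an \emph{isomorphism}, so that $\ker[f]=\coker[f]=0$ and hence $H(K)=0$.

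The hard part will be exactly this comparison and the control of $[f]$: one must track the Keel-type cocomposition relations \eqref{equ:Delta delta A} governing how $\delta_A$ and $\psi_i$ interact under vertex splitting, and verify that the ``excess'' classes in the $\delta$-part are cancelled precisely against those in the $\psi$-part with no residual cohomology. This is also where the stated exceptions arise: for $(g,n)=(0,2)$ the source side $\Hy^*$ is unstable (bivalent special vertices are forbidden there, while allowed in $\gr_2\Feyn(\BV^*)$), and for $(g,n)=(1,0)$ the single-loop graph yields an anomalous class; in both cases $[f]$ fails to be an isomorphism and $H(K)\neq 0$, matching the excluded pairs.
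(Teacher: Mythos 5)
Your reduction to $H(\ker\Phi)=0$, your identification of the kernel basis, and your lower-triangular splitting of the kernel into the symmetric-$\psi$ subcomplex and the $\delta$-quotient are all correct and parallel the paper's proof (which encodes the same structure as a filtration by the number of ``effective vertices'' and a spectral sequence rather than Lemma \ref{lem:simple tri}). The genuine gap is the step you defer as ``the hard part'': you propose to establish that $[f]$ is an isomorphism by \emph{computing} the cohomology of the $\delta$-part and of the $\psi$-part via further filtrations and then comparing. Nothing of the sort is needed, and as sketched it would be far harder than the proposition itself, since these marked variants of $\Feyn(\Com)$ have no known closed-form cohomology. The observation you are missing — which is the entire content of the paper's proof — is that $f$ is an isomorphism already at the chain level: the $\delta$-part and the kernel's $\psi$-part have bases indexed by the \emph{same} data, namely a graph together with one distinguished edge whose endpoints are at least trivalent (for a $\delta_A$-generator this is the crossed edge of the two-vertex picture \eqref{equ:delta two vert}; for a kernel $\psi$-element it is the edge carrying the symmetric combination), and by the last two terms of \eqref{equ:dsdelta} the map $f$ sends each $\delta_A$-generator to minus the symmetric $\psi$-combination on the \emph{same} underlying graph. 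Hence $[f]$ is an isomorphism, and $H(\ker\Phi)=0$ follows with no computation. Note that your third paragraph mis-describes the kernel's $\psi$-part as ``graphs with a single distinguished half-edge,'' contradicting your own correct description in the second paragraph (the kernel elements are symmetric combinations along a distinguished \emph{edge}), and $f$ does not ``contract the marked edge'' — it converts the crossed edge into an ordinary edge with a $\psi$ at either end. These mis-statements are precisely what hide the bijection from you.

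A secondary error is your explanation of the excluded cases. For $(g,n)=(0,2)$ and $(1,0)$ the problem is not that $H(\ker\Phi)\neq 0$; it is that $\Phi$ fails to be \emph{surjective}, so your short exact sequence does not exist in the first place. Indeed, $\gr_2\Feyn(\BV^*)(\!(0,2)\!)$ contains the graph consisting of a single bivalent $E$-decorated vertex joining the two legs, and $\gr_2\Feyn(\BV^*)(\!(1,0)\!)$ contains the single bivalent $E$-decorated vertex whose two half-edges form a tadpole; neither is of the form \eqref{equ: phi on psi}, because there is no vertex of valence $\geq 3$ available to carry the $\psi$-decoration. Your surjectivity argument in the first paragraph tacitly assumes such a vertex exists, and that assumption is exactly what fails in these two cases. (For $(0,2)$ one can also note that $\gr_2\Feyn(\Hy^*)(\!(0,2)\!)=0$ outright, since genus-zero graphs with two legs and all vertices at least trivalent do not exist.)
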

\begin{proof}

    Let $(g,n)\neq (1,0),(0,2)$.
    It is easy to see that $\Phi$ is a surjective map.
    Hence we have to check that the kernel of $\Phi$ is acyclic, $H(\ker \Phi)=0$.
    We first describe the kernel explicitly.
    A basis of the kernel is given by (i) graphs $\Gamma$ whose special vertex is decorated by $\delta_A$ and (ii) linear combinations of two graphs, with $\psi$-decorations along one edge in a symmetric combination:
    \[
      \begin{tikzpicture} \node[ext, accepting] (v) at (0,0){}; \node[int] (w1) at (1,0) {}; \draw (v) edge[->-] (w1) (v) edge +(-.5,.5) edge +(-.5,0) edge +(-.5,-.5) (w1) edge +(.5,.5) edge +(.5,0) edge +(.5,-.5) ; \end{tikzpicture}
      +
      \begin{tikzpicture}[xscale=-1] \node[ext, accepting] (v) at (0,0){}; \node[int] (w1) at (1,0) {}; \draw (v) edge[->-] (w1) (v) edge +(-.5,.5) edge +(-.5,0) edge +(-.5,-.5) (w1) edge +(.5,.5) edge +(.5,0) edge +(.5,-.5) ; \end{tikzpicture}  
    \]
    The basis elements (ii) may be considered as given by graphs with a marked edge instead of a vertex, the edge corresponding to the symmetric combination of $\psi$-decorations as above.

    Next, we want to check that $H(\ker \Phi)=0$.
    To this end, let $\Gamma\in \Feyn(\Hy^*)(\!(g,n)\!)$ be a graph with $k$ vertices. We say that the number of effective vertices of $\Gamma$ is $k+1$ if the special vertex is decorated by $\delta_A$ and $k$ otherwise.
    We filter $\ker \Phi$ by the number of effective vertices and consider the associated spectral sequence.
    On the first page we see only the last two terms of the differential \eqref{equ:dsdelta} replacing the $\delta_A$-decorated special vertex by a pair of vertices with a symmetric $\psi$-decoration.
    \[    
      \begin{tikzpicture}[scale=1] \node[ext] (v) at (0,0){}; \node[ext] (w) at (-.7,0){}; \draw (v) edge[crossed] (w) edge +(0:.5) edge +(60:.5) edge +(-60:.5) (w) edge +(120:.5) edge +(180:.5) edge +(-120:.5); \end{tikzpicture} 
      \mapsto 
      \begin{tikzpicture} \node[ext, accepting] (v) at (0,0){}; \node[int] (w1) at (1,0) {}; \draw (v) edge[->-] (w1) (v) edge +(-.5,.5) edge +(-.5,0) edge +(-.5,-.5) (w1) edge +(.5,.5) edge +(.5,0) edge +(.5,-.5) ; \end{tikzpicture}
      +
      \begin{tikzpicture}[xscale=-1] \node[ext, accepting] (v) at (0,0){}; \node[int] (w1) at (1,0) {}; \draw (v) edge[->-] (w1) (v) edge +(-.5,.5) edge +(-.5,0) edge +(-.5,-.5) (w1) edge +(.5,.5) edge +(.5,0) edge +(.5,-.5) ; \end{tikzpicture}
    \]
    This map clearly is a bijection between the basis elements (i) and (ii) in the kernel. Hence our spectral sequence abuts to 0 and we have shown that $H(\ker \Phi)=0$ as desired.
\end{proof}

\subsection{Blown-up picture and proof of Proposition \ref{prop:FeynBV_symmetric_product}}
\label{sec:blownup}

There is a different combinatorial way in which we may depict generators of $\gr_2\Feyn(\BV^*)(\!(g,n)\!)$, which we call the ``blown-up picture'', following \cite{PayneWillwacher11}.
Concretely, we may remove the special vertex from the graph, and make the incident half-edges into external legs. The decoration $E_{ij}$ is remembered by marking those legs corresponding to $i$ and $j$ with a symbol $\omega$, and the other legs by $\epsilon$.
For example:

\[
    \begin{tikzpicture}[scale=1] \node[ext,accepting] (v1) at (0,0){}; \node[int] (v2) at (180:1){}; \node[int] (v3) at (60:1){}; \node[int] (v4) at (-60:1){}; \draw (v1) edge[->-](v2) edge[->-] (v3) edge (v4) edge[loop right] (v2) (v2) edge[bend left] (v3) edge[bend right] (v4) -- +(180:1.3) (v3) edge (v4); \node (w) at (180:2.5) {$1$}; \end{tikzpicture}
\quad \quad \mapsto \quad \quad 
\begin{tikzpicture}[scale=1, node distance=.8] \node[int] (v2) at (180:1){}; \node[int] (v3) at (60:1){}; \node[int] (v4) at (-60:1){}; \node[below=of v2] (e1) {$\omega$}; \node[right=of v3] (e2) {$\omega$}; \node[right=of v4] (e3) {$\epsilon$}; \node (e4) at (2.5,0) {$\epsilon$}; \node[right=of e4] (e5) {$\epsilon$}; \draw (v2) edge (e1) edge[bend left] (v3) edge[bend right] (v4) -- +(180:1.3) (v3) edge (e2) edge (v4) (v4) edge (e3) (e4) edge (e5); \node (w) at (180:2.5) {$1$}; \end{tikzpicture}\, .
\]

The same graph complexes have appeared in the algebraic topology literature, and compute direct summands of the cohomology of embedding spaces of copies of $\R^m$ and one sphere into $\R^N$, see \cite{FTW2}.

In the special case $n=0$ it is known that the above complexes can be simplified further, see \cite{TWspherical}.
To this end, let 
\[
Y_g \subset \gr_2\Feyn(\BV^*)(\!(g,0)\!)    
\]
be the subcomplex spanned by graphs that have
(i) no $\epsilon$-decorated legs in the blown-up picture and (ii) no blown-up connected component that is a tadpole graph
\[
    D=\begin{tikzpicture}[baseline=-.65ex,every loop/.style={}] \node (v) at (0,-.3) {$\omega$}; \node [int] (w) at (0,.3) {}; \draw (v) edge (w) (w) edge[loop] (w); \end{tikzpicture}.
\]
In the standard picture (i) means that the special vertex has valence 2.
Then we have:

\begin{prop}\label{prop:pre feyn bv symm}
For $g\geq 2$ the inclusion $Y_g \to \gr_2\Feyn(\BV^*)(\!(g,0)\!)$ is a quasi-isomorphism.
\end{prop}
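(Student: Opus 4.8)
The plan is to deduce the statement from the acyclicity of the quotient complex $Q_g := \gr_2\Feyn(\BV^*)(\!(g,0)\!)/Y_g$. Since $Y_g$ is a subcomplex, the short exact sequence $0\to Y_g\to \gr_2\Feyn(\BV^*)(\!(g,0)\!)\to Q_g\to 0$ and its long exact cohomology sequence reduce the claim to $H(Q_g)=0$. In the blown-up picture, $Q_g$ is spanned by exactly the generators excluded from $Y_g$: those whose special vertex has valence $\geq 3$ (equivalently, that carry at least one $\epsilon$-leg), together with those containing a blown-up component equal to the tadpole graph $D$. Recall that $d_{\BV^*}=0$ on $\gr_2\Feyn(\BV^*)$, so the differential is the splitting differential $d_s$ alone, which either splits an ordinary vertex (the commutative graph-complex differential, acting also on attached $\epsilon$-legs) or splits the special $E_{ij}$-vertex according to the cocomposition \eqref{equ:Delta Eij}.

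I would first eliminate the $\epsilon$-legs. The relevant sub-operation of $d_s$ is the branch $i,j\in B$ of \eqref{equ:Delta Eij} (the $\BV^*$-analogue of \eqref{equ:dsEij}), which collects a subset of $\geq 2$ $\epsilon$-legs onto a new ordinary vertex, thereby strictly decreasing the number of $\epsilon$-legs while the two $\omega$-markings stay on the special vertex. I would build an explicit contracting homotopy $h$ performing the inverse ``un-peeling'', contracting a canonically chosen $\epsilon$-configuration and removing one $\epsilon$-leg. On the subcomplex of generators carrying at least one $\epsilon$-leg, the local complex governing these peelings is the (acyclic) augmented complex of subsets of the $\epsilon$-leg set, so that $dh+hd=\mathrm{id}$ up to strictly filtration-lowering terms; a standard perturbation (or associated spectral-sequence) argument then upgrades this to acyclicity of the whole $\epsilon$-carrying part of $Q_g$. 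After this reduction we may assume the special vertex is bivalent, i.e.\ an $\omega$-marked edge.

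It then remains to kill the graphs containing an $\omega$-lollipop, i.e.\ a blown-up $D$-component, in which one $\omega$-edge ends in a vertex $w$ carrying a single loop and otherwise attached only to that $\omega$-edge. Here I would invoke Lemma~\ref{lem:simple tri} for the two-term complex spanned by such lollipop graphs and by the graphs obtained by contracting the $\omega$-edge of the lollipop (absorbing the loop together with the marking), the connecting map $f$ being this contraction. For $g\geq 2$ the complement of the lollipop still has positive genus, so $f$ is an isomorphism and no exceptional one-dimensional class can obstruct it; hence this complex is acyclic and the $D$-part of $Q_g$ vanishes. Combining the two reductions yields $H(Q_g)=0$ and the proposition.

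The step I expect to be the main obstacle is coordinating the two reductions and controlling signs: both homotopies act near the special vertex, so they must be ordered through a nested filtration (or two successive spectral sequences), and one has to check that the $\epsilon$-leg homotopy never recreates lollipops and vice versa. The hypothesis $g\geq 2$ enters precisely at the boundary of the lollipop step, guaranteeing positive genus of the complement and ruling out the low-genus exceptional classes, in parallel with the exclusion of $(g,n)=(1,0)$ elsewhere; verifying this boundary behaviour carefully is the delicate point.
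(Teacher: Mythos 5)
Your proposed proof has a genuine structural gap: the two pieces into which you split $Q_g := \gr_2\Feyn(\BV^*)(\!(g,0)\!)/Y_g$ are \emph{not} individually acyclic, so the two-step reduction cannot work. Since the splitting differential only ever decreases the number of $\epsilon$-legs, the span of the no-$\epsilon$-leg, $D$-containing graphs is a \emph{subcomplex} of $Q_g$, and on it the differential never touches the $D$-component or the bivalent special vertex at all (neither can be split). Hence this piece is isomorphic to $\Q\langle D\rangle$ tensored with the full one-$\omega$-legged graph complex of genus $g-1$, whose cohomology is nonzero. Concretely, for $g=2$ the graph $D\sqcup D$ is a cocycle in this subcomplex that is not a coboundary there; it only dies in $Q_2$ because the $\epsilon$-carrying graph $D\sqcup(\text{free }\omega\epsilon\text{-edge})$ (a type 2 tadpole at the special vertex, attached to a lollipop) maps onto it under the splitting differential. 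In other words, the $D$-classes and certain $\epsilon$-classes cancel \emph{against each other} through the connecting homomorphism of your short exact sequence — a cancellation that is invisible if you claim each piece vanishes separately. For the same reason your Step 1 cannot succeed either: the $\epsilon$-carrying quotient has nonzero cohomology, precisely matching the $D$-classes, so no contracting homotopy of the kind you sketch can exist.

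A second, more basic error underlies your Step 2: you invoke a differential piece ``contracting the $\omega$-edge of the lollipop,'' but $\gr_2\Feyn(\BV^*)$ is the Feynman transform of a \emph{cooperad}, so its differential is vertex \emph{splitting} only (there is no internal differential on $\BV^*$ and no edge contraction). The map $f$ you feed into Lemma~\ref{lem:simple tri} is not a component of $d$, and the role you assign to the hypothesis $g\geq 2$ (``positive genus of the lollipop complement makes the contraction an isomorphism'') does not correspond to anything in the complex. The paper's actual proof takes a different route that packages exactly the cancellation you are missing: it filters by the number of blown-up connected components, identifies the first-page complex as a sum of products of \emph{connected} hairy graph complexes, and invokes the nontrivial external computation \cite[Theorem 3.1]{TWspherical}, whose content is that the cohomology of the connected complex equals that of the no-$\epsilon$ subcomplex \emph{minus the tadpole graph $D$} — i.e.\ the $\epsilon$-classes kill the class of $D$, which is exactly the phenomenon your decomposition severs. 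Any repair of your direct approach would have to reprove this cancellation, so as written the proposal is not a proof.
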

\begin{proof}
We work in the blown-up picture, and compute the cohomology of $\gr_2\Feyn(\BV^*)(\!(g,0)\!)$.
We filter $\gr_2\Feyn(\BV^*)(\!(g,0)\!)$ by the number of (blown-up) connected components and consider the associated spectral sequence.
The differential on the first page leaves the number of blown-up components the same, so that the resulting complex is a sum of products of a connected graph complex.
The cohomology of the connected complex has been computed in \cite[Theorem 3.1, subcase ``$m,n$ even'']{TWspherical}, and shown to be equal to that of the subcomplex without $\epsilon$-legs, minus the tadpole graph $D$.
Hence, the cohomology of the $E^1$-page is the same as the cohomology of $Y_g$. 
This means that the $E^2$-pages of the spectral sequences associated to $\gr_2\Feyn(\BV^*)(\!(g,0)\!)$ and of its subcomplex $Y_g$ agree. All higher differentials in the spectral sequence vanish since there is no piece of the differential that can reduce the number of blown-up components of a graph without $\epsilon$-legs, nor is there a piece of the differential creating an $\epsilon$ leg.
Hence our spectral sequence abuts at the $E^2$-page.
Furthermore, since $\gr_2\Feyn(\BV^*)(\!(g,0)\!)$ is finite dimensional, the spectral sequences converge to the cohomology.
Hence we conclude that the inclusion  $Y_g \to \gr_2\Feyn(\BV^*)(\!(g,0)\!)$ is a quasi-isomorphism.
\end{proof}

\begin{proof}[Proof of Proposition \ref{prop:FeynBV_symmetric_product}]

By Proposition \ref{prop:pre feyn bv symm} we may consider the quasi-isomorphic subcomplex $Y_g$.
Note that every graph generating $Y_g$
has either one blown-up component with two legs, or two blown-up components with one leg each.
\begin{align*} \begin{tikzpicture} \node[ext] (v) at (0,0) {$*$}; \node (o1) at (-.7,-.7) {$\omega$}; \node (o2) at (.7,-.7) {$\omega$}; \draw (v) edge (o1) edge (o2); \end{tikzpicture} \quad\quad\text{or} \quad \quad \begin{tikzpicture} \node[ext] (v) at (0,0) {$*$}; \node[ext] (v2) at (1,0) {$*$}; \node (o1) at (0,-.7) {$\omega$}; \node (o2) at (1,-.7) {$\omega$}; \draw (v) edge (o1) (v2) edge (o2); \end{tikzpicture} \end{align*}
Accordingly, the complex $Y_g$ splits into a direct sum of subcomplexes
\begin{align*} Y_g &= Y_g^1 \oplus Y_g^2   . \end{align*}
Furthermore, $Y_g^2$ is isomorphic to the genus $g$ piece of the symmetric product 
\[
    \Sym^2(\oplus_{h\geq 2}\Feyn(\Com^*)(\!(h,1)\!)^{no-tp}[-1])[-1],
\]
where we restrict the sum to $h\geq 2$ to account for the component $D$ not being present in generators of $Y_g^2$.

Similarly, $Y_g^2$ is isomorphic to the part of $\Feyn(\Com^*)(\!(g,2)\!)[-1]$ that is antisymmetric under the exchange of the two hairs. Hence Proposition \ref{prop:FeynBV_symmetric_product} follows.
\end{proof}

\subsection{Proofs of Corollaries \ref{cor:Hbdy Com comparison} and \ref{cor:hbdy growth}}
Corollary \ref{cor:Hbdy Com comparison} follows by restricting the third statement of Corollary \ref{cor:main hbdy} to $n=0$ and applying Theorem \ref{thm:hycom bv} and Proposition \ref{prop:FeynBV_symmetric_product}.

Next consider Corollary \ref{cor:hbdy growth}.
This is deduced from statement (2) of Corollary \ref{cor:main hbdy} and from Corollary \ref{cor:Hbdy Com comparison}, using nontrivial classes in the commutative graph cohomology
$H(\Feyn(\Com^*)(\!(g,1)\!))$. 
The argument is identical to that leading to \cite[Corollary 1.3]{PayneWillwacher}, and we recall here the main steps.
Firstly, as in loc.\ cit.\ we know that the dimensions of
\begin{align*} H^{2g}(\Feyn(\Com^*)(\!(g,0)\!)) \text{ and } H^{2g+3}(\Feyn(\Com^*)(\!(g,1)\!)) \end{align*}
are at least exponentially growing as $g\to \infty$.
Hence so is the dimension of $H^{4g-k}(\HMod_{g,0}^0)$ for $k=6,9$ by statement (2) of Corollary \ref{cor:main hbdy}.

Secondly, we also know that the dimensions of
\begin{equation}
\label{eq_feyncom_g_1}
 H^{2g}(\Feyn(\Com^*)(\!(g,1)\!)) \text{ and }  
 H^{2g+3}(\Feyn(\Com^*)(\!(g,1)\!))
\end{equation}
are at least exponentially growing as $g\to \infty$ and that there is a nontrivial cohomology class in 
\begin{equation}
\label{eq_feyncom_10_1}
	H^{27}(\Feyn(\Com^*)(\!(10,1)\!)).
\end{equation}
As in Proposition \ref{prop:FeynBV_symmetric_product}, let $\SW_{g}^k$ be the part of total genus $g$ and degree $k$ of the symmetric product 
\[
\SW = \Sym^2\left( \bigoplus_{g\geq 2} H^\bullet\left(\Feyn(\Com^*)(\!(g,1)\!)\right)[-1] \right).
\]
Then by Corollary \ref{cor:Hbdy Com comparison}, $(\SW_g^{k-1})^*$ is a summand in $ \gr_{6g-8} H^{6g-6-k}(\HMod_{g,0}^0)$.
Now combining the non-trivial classes from \eqref{eq_feyncom_g_1} and \eqref{eq_feyncom_10_1} as factors in $\SW$ yields at least exponential growth of $H^{4g-k}(\HMod_{g,0}^0)$ for $k=9,12,15,16,19$.\footnote{Let $\feyncom = \Feyn(\Com^*)$.
These numbers are obtained by considering the following summands of $\SW_g^{k-1}$:
\begin{gather*} H^{2g'}\left(\feyncom(\!(g',1)\!)\right)[-1] \oplus H^{2g'}\left(\feyncom(\!(g',1)\!)\right)[-1] \subset \SW_{2g'}^{4g'+2}, \quad H^{2g'}\left(\feyncom(\!(g',1)\!)\right)[-1] \oplus H^{2g'+3}\left(\feyncom(\!(g',1)\!)\right)[-1] \subset \SW_{2g'}^{4g'+5}, \\
H^{2g'+3}\left(\feyncom(\!(g',1)\!)\right)[-1] \oplus H^{2g'+3}\left(\feyncom(\!(g',1)\!)\right)[-1] \subset \SW_{2g'}^{4g'+8}, \quad H^{2g'}\left(\feyncom(\!(g',1)\!)\right)[-1] \oplus H^{27}\left(\feyncom(\!(10,1)\!)\right)[-1] \subset \SW_{g'+10}^{2g'+29}, \\
H^{2g'+3}\left(\feyncom(\!(g',1)\!)\right)[-1] \oplus H^{27}\left(\feyncom(\!(10,1)\!)\right)[-1] \subset \SW_{g'+10}^{2g'+32}. \end{gather*}}

\section{Comparison of \texorpdfstring{$H^\bullet(\MM)$}{H*(M)} and \texorpdfstring{$\HyCom$}{HyCom} graph complexes and proof of Theorem \ref{prop:HyCom MM comparison}}

\subsection{Graph complex computing \texorpdfstring{$\gr_2 H_c^\bullet(\M_{g,n})$}{gr2(Hc*(Mgn))} -- recollection from \cite{PayneWillwacher}}\label{sec:Xgn def}

Sam Payne and the last-named author \cite{PayneWillwacher} have identified a graph complex $X_{g,n}$ that computes the weight 2 part of the compactly supported cohomology of the moduli spaces of curves $\gr_2 H_c^\bullet(\M_{g,n})$.
The complex has a very efficient description in terms of the Feynman transform of $\BV^*$ as follows.

Recall that a tadpole in a graph is an edge connecting a vertex to itself.
Graphs contributing to the Feynman transform may have tadpoles.
Specifically, a graph $\Gamma\in \gr_2\Feyn(\BV^*)(\!(g,n)\!)$ may have tadpoles of 4 different types:
\begin{itemize}
\item Tadpoles at internal vertices.
\[
  \text{Type 0:}
\begin{tikzpicture} \node[int] (v) at (0,0) {}; \draw (v) edge[loop] (v) edge +(-.5,-.5) edge +(0,-.5) edge +(.5,-.5); \end{tikzpicture}
\]
\item Tadpoles at the special vertex, which can have either of three different types:
\begin{align*} \text{Type 1:}& \begin{tikzpicture} \node[ext, accepting] (v) at (0,0) {}; \draw (v) edge[loop] (v) edge +(-.5,-.5) edge[->-] +(0,-.5) edge[->-] +(.5,-.5); \end{tikzpicture} & \text{Type 2:}& \begin{tikzpicture} \node[ext, accepting] (v) at (0,0) {}; \draw (v) edge[loop, ->-] (v) edge +(-.5,-.5) edge +(0,-.5) edge[->-] +(.5,-.5); \end{tikzpicture} & \text{Type 3:}& \begin{tikzpicture} \node[ext, accepting] (v) at (0,0) {}; \draw (v) edge[loop, -><-] (v) edge +(-.5,-.5) edge +(0,-.5) edge +(.5,-.5); \end{tikzpicture} \end{align*}
In other words, if the decoration at the special vertex is $E_{ij}$ then tadpoles of type 1 (resp. type 2, type 3) are such that none (resp. one, two) of the half-edges $i$ and $j$ are part of the tadpole edge.
\end{itemize}

Let $I_{g,n}\subset \gr_2\Feyn(\BV^*)(\!(g,n)\!)$ be the subspace spanned by graphs that have at least one tadpole of type 0, 2 or 3.

\begin{lemma}
The subspace $I_{g,n}\subset \gr_2\Feyn(\BV^*)(\!(g,n)\!)$ is closed under the differential, that is, 
\[
d I_{g,n} \subset I_{g,n}.  
\]
\end{lemma}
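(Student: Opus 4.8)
The plan is to verify directly that the only nonzero part of the differential, the vertex-splitting term $d_s$, maps $I_{g,n}$ to itself. On $\gr_2\Feyn(\BV^*)$ the differential is exactly $d=d_s$: the internal differential of $\BV^*=H^\bullet(\LD_2^{fr})$ vanishes, and the tadpole-creating term $d_\ell$ vanishes because $\BV^*$ is a genus-zero cyclic cooperad, so the operation $\eta^*$ used by $d_\ell$ is zero. The essential structural point is therefore that $d_s$ \emph{never creates} a tadpole: it splits one vertex $u$ into two vertices $v_1,v_2$ joined by a new edge $e'$, redistributing the half-edges of $u$, and it preserves the loop order. An existing tadpole $t$ at $u$ is thus either relocated (both its half-edges land on the same $v_i$) or separated (its half-edges land on different $v_i$, so that $t$ becomes one of the two edges of a double edge between $v_1$ and $v_2$, the other being $e'$).

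I would first isolate the following vanishing principle. A decorated graph containing a double edge is zero whenever transposing its two parallel edges is an automorphism of the decorated graph, since this transposition acts on the edge orientation $\Det_1(E\gamma)$ by the sign $-1$, forcing the graph to equal its own negative. At an ordinary ($\Com^*$-)vertex the swap of any two half-edges preserves the decoration; at the special vertex decorated by $E_{pq}$ the swap of two half-edges preserves the decoration precisely when both half-edges lie in $\{p,q\}$ or both lie outside $\{p,q\}$, because $E_{pq}=E_{qp}$ and $E_{pq}$ is invariant under permutations fixing $p$ and $q$ by \eqref{equ:sigma Eij}.

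It suffices to prove $d_s\Gamma\in I_{g,n}$ for a generator $\Gamma$, i.e.\ a decorated graph carrying a distinguished tadpole $t$ of type $0$, $2$, or $3$ at a vertex $u$. Terms of $d_s\Gamma$ splitting a vertex other than $u$ leave $t$ untouched, and terms splitting $u$ but keeping the two half-edges of $t$ together merely relocate $t$; in both cases one checks the type stays in $\{0,2,3\}$, so these terms lie in $I_{g,n}$. The only terms to control are the separated ones, which I would analyze using the cocomposition formula \eqref{equ:Delta Eij}. For types $0$ and $3$ every separated term vanishes by the principle above: for type $0$ the double edge joins two ordinary vertices, while for type $3$ the two special-vertex half-edges of the double edge are $i$ and the new half-edge $*$, both marked in the resulting decoration $E_{i*}$. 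For a type-$2$ tadpole $t=\{i,b\}$ ($i$ marked, $b$ unmarked, $j$ the second marked index), the separated terms split into those that again vanish by the principle (both special-vertex half-edges marked, giving decoration $E_{i*}$) and two remaining families: one with decoration $E_{ij}$ ($i,j$ staying, $b$ leaving) and one with decoration $E_{j*}$ ($i$ leaving, $b,j$ staying). For each choice of the remaining half-edges these two families produce the \emph{same} decorated graph, and I would argue they appear with opposite signs and cancel in pairs.

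This exhausts $d_s\Gamma$, proving $d I_{g,n}\subseteq I_{g,n}$. The main obstacle is the final cancellation: one must check that the two surviving type-$2$ families genuinely give isomorphic decorated graphs and carry opposite signs. Concretely, the isomorphism identifying them transposes the two parallel edges (contributing an orientation sign), and this must be matched against the $\pm1$ cocomposition coefficients of \eqref{equ:Delta Eij}. This is exactly the orientation-sign bookkeeping that has to be carried out carefully, but it is routine once the pairing of terms is set up; the conceptual content is entirely captured by the double-edge vanishing principle together with this pairing.
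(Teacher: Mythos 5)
Your proposal is correct and follows the same route as the paper's own proof, which consists of the single assertion that the differential cannot remove a tadpole of type 0, 2 or 3 and ``at worst'' turns a type 2 tadpole into a type 0 one; your argument simply supplies the case analysis that this assertion leaves implicit. In particular, your double-edge vanishing principle (disposing of the separated terms for types 0 and 3, and of the $E_{i*}$-decorated type-2 terms) and the pairwise cancellation of the two surviving type-2 families are exactly the content hidden in the paper's ``one checks'', and the sign verification you defer does work out: the isomorphism matching the two families transposes the two parallel edges, contributing $-1$ on $\Det_1(E\gamma)$, while both relevant cocomposition coefficients in \eqref{equ:Delta Eij} equal $+1$, so the paired terms cancel as claimed.
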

\begin{proof}
One checks that the differential cannot remove any of the tadpoles of type 0,2,3. At worst, it can transform a tadpole of type 2 into one of type 0.
\end{proof}

We then define, for $g,n$ such that $2g+n\geq 3$, the quotient complex
\[
X_{g,n} :=   \gr_2\Feyn(\BV^*)(\!(g,n)\!) / I_{g,n}.
\]

We recall the main result from \cite{PayneWillwacher}.
\begin{thm}
For all $(g,n)\neq (1,1)$ such that $2g+n\geq 3$ we have that 
\[
  \gr_2 H_c^\bullet(\M_{g,n}) \cong X_{g,n}.
\]
\end{thm}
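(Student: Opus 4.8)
The statement is the main theorem of \cite{PayneWillwacher}; the plan is to recall the structure of its proof and to indicate where the essential work lies, referring to that paper for the complete verification. Throughout we exclude the exceptional point $(g,n)=(1,1)$, where $H^2(\MM_{1,1})$ has an anomalous structure, owing to the generic automorphism and to the relation between the $\psi$- and boundary classes on $\MM_{1,1}$.

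The starting point is the identification, valid for each fixed weight $W$,
\[
\gr_W H_c^\bullet(\M_{g,n}) \cong \gr_W H^\bullet\bigl(\Feyn(H^\bullet(\MM))(\!(g,n)\!)\bigr),
\]
which we take as known (it is a standard consequence of the Deligne weight spectral sequence for the normal-crossings boundary $\partial\MM_{g,n}$, established in weight $0$ in \cite{CGP1,CGP2} and used here in weight $2$). It therefore suffices to exhibit a quasi-isomorphism between $X_{g,n}$ and $\gr_2\Feyn(H^\bullet(\MM))(\!(g,n)\!)$. Next I would unravel the weight-$2$ complex: since every $H^\bullet(\MM_{h,m})$ is pure with $H^1=0$, a generator of $\gr_2\Feyn(H^\bullet(\MM))(\!(g,n)\!)$ is a graph carrying exactly one \emph{special} vertex, decorated by a class in $H^2(\MM_{h,m})$, while every remaining vertex is decorated by $H^0=\Q$ (but may still carry positive genus). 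By Arbarello--Cornalba \cite{ArbarelloCornalba98} the space $H^2(\MM_{h,m})$ is spanned by the $\psi$-classes at the $m$ legs, the class $\kappa_1$, and the boundary classes $\delta_{\mathrm{irr}}$ and $\delta_{h',S}$.

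The plan is then to match this generating set, together with its behaviour under the modular-operadic gluing and self-gluing operations, with the $\BV^*$-description of Section~\ref{sec:BV gr2 basis}: the basis $E_{ij}$ of $\gr_2\BV^*$ records the $\psi$/$\delta$-data at the special vertex, and the four tadpole types (Type $0$--$3$) record the effect of the self-gluing operation $\eta^*$, i.e.\ of forming a genus-raising loop at the special vertex. The core of the argument, and the step I expect to be the main obstacle, is the reduction of $\gr_2\Feyn(H^\bullet(\MM))(\!(g,n)\!)$ to the quotient $X_{g,n}=\gr_2\Feyn(\BV^*)(\!(g,n)\!)/I_{g,n}$. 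This requires two things. First, one must show, by a filtration and spectral-sequence argument analogous to the weight-$0$ case of \cite{CGP1}, that the positive-genus $H^0$-decorated vertices and the contributions of the higher boundary classes form acyclic pieces, so that the cohomology is already computed by graphs built from genus-$0$ $H^0$-vertices together with a single special $\gr_2\BV^*$-vertex. Second, one must verify that passing to the quotient by the tadpole ideal $I_{g,n}$ (graphs with a tadpole of Type $0$, $2$ or $3$; recall that $I_{g,n}$ is closed under the differential, as checked above) precisely implements, on the graph-complex side, the relations among the $H^2$-classes coming from pullback along the gluing maps $\MM_{h,m}\to\MM_{g,n}$, leaving the Type-$1$ tadpoles to account for the surviving $\kappa_1$/$\psi$-contributions.

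The careful bookkeeping of the pure Hodge structures, of the signs in the self-gluing differential, and of the comparison of $\eta^*$ with the admissible tadpole types is exactly what is carried out in detail in \cite{PayneWillwacher}; assembling these reductions yields the claimed isomorphism $\gr_2 H_c^\bullet(\M_{g,n})\cong X_{g,n}$ for all $(g,n)\neq(1,1)$ with $2g+n\geq 3$.
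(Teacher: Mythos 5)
Your proposal is correct and matches the paper's treatment: the paper states this theorem purely as a recollection of the main result of \cite{PayneWillwacher}, gives no proof of its own, and your write-up likewise defers all the substance to that reference. Your outline of the intermediate steps (the weight spectral sequence identification with $\gr_2\Feyn(H^\bullet(\MM))$, purity and the Arbarello--Cornalba description of $H^2(\MM_{h,m})$, and the reduction to the tadpole quotient $X_{g,n}$) is a fair sketch of how the argument goes in \cite{PayneWillwacher}, so nothing needs to be added here.
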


\subsection{Proof of Proposition \ref{prop:HyCom MM comparison}}\label{sec:HyCom MM proof}
To show Proposition \ref{prop:HyCom MM comparison} we first introduce an auxiliary graph complex $Z_{g,n}$ using the tadpole terminology of the previous section.
Concretely, let 
\[
  Z_{g,n} \subset \gr_2\Feyn(\BV^*)(\!(g,n)\!)
\]
be the graded subspace spanned by graphs that do not have any tadpole, of any of the types 0-3 above.
The differential (vertex splitting) cannot create a tadpole if there was none before, so that $Z_{g,n}$ is a dg subspace.

\begin{lemma}\label{lem:Zgn qiso}
The inclusion $Z_{g,n} \to \gr_2\Feyn(\BV^*)(\!(g,n)\!)$ is a quasi-isomorphism for all $(g,n)$ except $(g,n)=(1,0), (1,1)$.
\end{lemma}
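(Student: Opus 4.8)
The plan is to deduce the quasi-isomorphism from the acyclicity of the quotient complex $Q_{g,n} := \gr_2\Feyn(\BV^*)(\!(g,n)\!)/Z_{g,n}$, which is spanned by those graphs carrying at least one tadpole (of any of the types $0$--$3$); since $Z_{g,n}$ is a subcomplex, the inclusion is a quasi-isomorphism precisely when $Q_{g,n}$ is acyclic. First I would record that in weight $2$ the differential on $\gr_2\Feyn(\BV^*)$ is nothing but the vertex-splitting operator $d_s$ of \eqref{equ:ds1}--\eqref{equ:dsEij}: the internal differential of $\BV^*$ vanishes, and because every vertex decoration has genus $0$, the tadpole-creating part $d_\ell$ of the Feynman differential is zero as well. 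So the whole argument concerns the single operator $d_s$.

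Next I would analyze how $d_s$ interacts with tadpoles. Splitting a vertex that carries a loop either keeps the loop as a tadpole on one of the two new vertices — possibly relocating it to a neighbour and changing its type, for instance type $2$ to type $0$ as in the proof that $I_{g,n}$ is closed — or else separates the two half-edges of the loop onto the two new vertices, turning it into one of a pair of parallel edges. The latter terms vanish by the odd edge orientation in $\Det_1$ whenever the two parallel edges are interchanged by a graph automorphism. Consequently $d_s$ essentially preserves the number $t$ of tadpoles, and I would filter $Q_{g,n}$ by $t$ so that on the associated graded only the tadpole-preserving part of the splitting survives. On each fixed-$t$ piece the splitting differential admits the standard contracting homotopy given by the inverse move, namely the signed sum over contractions of non-loop edges (merging their endpoints); for this $h$ one computes that $d_s h + h d_s$ is multiplication by a positive edge count, forcing acyclicity on all graphs that possess a contractible non-loop edge. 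Lemma \ref{lem:simple tri}, or a direct comparison of spectral sequences, then packages the associated-graded acyclicity into the statement for $Q_{g,n}$.

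Finally I would pin down the exceptional graphs on which the homotopy is unavailable. These are exactly the rigid minimal configurations in which the single loop sits at one special, bivalent vertex with no contractible non-loop edge and no room to relocate: the genus-one graph consisting of a single special vertex carrying a tadpole, either with no leg or with one leg. These survive and account precisely for the excluded pairs $(g,n)=(1,0)$ and $(1,1)$, whereas for all other $(g,n)$ every tadpole can be moved and $h$ is defined on all of $Q_{g,n}$. The main difficulty I anticipate is sign bookkeeping: checking that the loop-opening terms cancel with the correct signs coming from the ordering of edges in $\Det_1$ and from the $E_{ij}$-cocomposition \eqref{equ:Delta Eij}, and verifying that the contract-versus-split homotopy is genuinely well defined and independent of auxiliary choices, so that the pairing of tadpole graphs is a true acyclic matching rather than merely a bijection of generators.
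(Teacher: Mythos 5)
Your overall framing agrees with the paper's: both reduce the lemma to acyclicity of the quotient $Q_{g,n}=\gr_2\Feyn(\BV^*)(\!(g,n)\!)/Z_{g,n}$ spanned by graphs with at least one tadpole, both note that the differential is pure vertex splitting, both want a split/contract matching, and you correctly identify the two obstruction graphs responsible for excluding $(g,n)=(1,0),(1,1)$. However, the central step of your argument has a genuine gap. You claim that $h=$ (signed sum of contractions of all non-loop edges) satisfies $d_s h + h d_s = $ a positive multiple of the identity, "forcing acyclicity on all graphs that possess a contractible non-loop edge." No such identity holds: the cross terms (split at one vertex, contract a \emph{different} edge, versus contract an edge, split a \emph{different} vertex) do not cancel in general, and if such an identity were true it would make essentially any graph complex built from splitting differentials acyclic --- contradicting, e.g., the nontrivial cohomology of $\Feyn(\Com)$. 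Relatedly, your filtration by the number of tadpoles is too coarse: on its associated graded the \emph{entire} tadpole-preserving part of $d_s$ survives, which is far more than any perfect matching can handle.

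The paper's proof repairs exactly this. It filters $Q_{g,n}$ not by tadpole count but by the number of vertices that are \emph{not} "thin tadpole vertices" (non-special, valence $3$, carrying a tadpole). On the associated graded, only the tiny piece $d_s'$ of the differential that splits off a thin tadpole vertex survives, and for \emph{that} piece contracting the unique non-tadpole edge at a thin tadpole vertex is an honest contracting homotopy. Crucially, this homotopy disposes only of tadpoles of types $0$, $1$, $2$: a type-$3$ tadpole at the special vertex maps to zero under $d_s'$ and cannot be split off, so after the first page one is left with \emph{all} graphs carrying a type-$3$ tadpole --- not just your two exceptional configurations. Killing these requires a second, separate argument (the decomposition $E_1 = E_1' \oplus E_1''$ by whether the special vertex has valence $3$ or $\geq 4$, with the connecting map a bijection via contracting the unique non-tadpole edge at the special vertex, which is where $(g,n)\neq(1,0),(1,1)$ enters). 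Your proposal contains no substitute for this second stage, so even granting a corrected first stage, the graphs with a type-$3$ tadpole and special vertex of high valence, or with extra ordinary vertices attached, are not accounted for.
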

\begin{proof}
  The proof is similar to that of \cite[Lemma 5]{AWZ}.
We have to check that the quotient $$Q_{g,n}:=\gr_2\Feyn(\BV^*)(\!(g,n)\!)/Z_{g,n} $$
is acyclic.
Note that a basis of $Q_{g,n}$ is given by graphs with at least one tadpole, of any type.
For such a graph $\Gamma$ let us say that a vertex is a thin tadpole vertex if it is non-special, carries a tadpole, and has valence $3$.
\[
\begin{tikzpicture} \node[int] (v) at (0,0) {}; \draw (v) edge[loop] (v) edge +(0,-.5); \end{tikzpicture}
\]
Let $k(\Gamma)$ be the number of other vertices in $\Gamma$.
We filter $Q_{g,n}$ by the numbers $k(\Gamma)$ and consider the associated spectral sequence.
The first page is identified with $(Q_{g,n},d_s')$, where $d_s'$ are those terms of the vertex splitting differential that create a thin tadpole vertex.
Concretely, $d_s'$ acts non-trivially only on vertices carrying a tadpole:
\begin{align*} \begin{tikzpicture} \node[int] (v) at (0,0) {}; \draw (v) edge[loop] (v) edge +(0,-.5) edge +(-.5,-.5) edge +(.5,-.5); \end{tikzpicture} &\mapsto \begin{tikzpicture} \node[int] (v) at (0,.3) {}; \node[int] (w) at (0,-.3) {}; \draw (v) edge[loop] (v) edge (w) (w) edge +(0,-.5) edge +(-.5,-.5) edge +(.5,-.5); \end{tikzpicture} & \begin{tikzpicture} \node[ext, accepting] (v) at (0,0) {}; \draw (v) edge[loop] (v) edge[->-] +(0,-.5) edge[->-] +(-.5,-.5) edge +(.5,-.5); \end{tikzpicture} &\mapsto \begin{tikzpicture} \node[int] (v) at (0,.3) {}; \node[ext, accepting] (w) at (0,-.3) {}; \draw (v) edge[loop] (v) edge (w) (w) edge[->-] +(0,-.5) edge[->-] +(-.5,-.5) edge +(.5,-.5); \end{tikzpicture} \\
 \begin{tikzpicture} \node[ext, accepting] (v) at (0,0) {}; \draw (v) edge[loop,->-] (v) edge +(0,-.5) edge[->-] +(-.5,-.5) edge +(.5,-.5); \end{tikzpicture} &\mapsto \begin{tikzpicture} \node[int] (v) at (0,.3) {}; \node[ext, accepting] (w) at (0,-.3) {}; \draw (v) edge[loop] (v) (w) edge[->-] (v) (w) edge +(0,-.5) edge[->-] +(-.5,-.5) edge +(.5,-.5); \end{tikzpicture} & \begin{tikzpicture} \node[ext, accepting] (v) at (0,0) {}; \draw (v) edge[loop,-><-] (v) edge +(0,-.5) edge +(-.5,-.5) edge +(.5,-.5); \end{tikzpicture} &\mapsto 0 \end{align*}
There is an obvious homotopy for this operation (by contracting the unique non-tadpole edge at a thin tadpole vertex). Hence one can show that $E_1:=H(Q_{g,n},d_s')$ is spanned by graphs that do not have tadpoles of types 0,1,2, using the tadpole terminology of Section \ref{sec:Xgn def}.
In other words, $E_1$ is spanned by graphs that have a type 3 tadpole, but no other tadpole.
The differential on this page of the spectral sequence is given by splitting vertices.

We decompose $E_1$ further into a direct sum 
\[
  E_1 = \begin{tikzcd}[column sep = 0em]
    E_1'\ar[loop above]{}
    & \oplus & E_1'' \ar[bend right, swap]{ll}{f} \ar[loop above]{}
  \end{tikzcd}
\]
with $E_1'$ spanned by graphs for which the special vertex has valence 3, and $E_1''$ spanned by graphs for which the special vertex has valence $\geq 4$. Note that the special vertex cannot have valence 2 since we required $(g,n)\neq (1,0)$.
The piece of the differential $f$ acts as follows:
\[
  \begin{tikzpicture} \node[ext, accepting] (v) at (0,0) {}; \draw (v) edge[loop,-><-] (v) edge +(0,-.5) edge +(-.5,-.5) edge +(.5,-.5); \end{tikzpicture}
  \mapsto 
  \begin{tikzpicture} \node[ext, accepting] (v) at (0,.3) {}; \node[int] (w) at (0,-.3) {}; \draw (v) edge[loop,-><-] (v) edge (w) (w) edge +(0,-.5) edge +(-.5,-.5) edge +(.5,-.5); \end{tikzpicture}
\]
This morphism is clearly a bijection on basis elements, with the inverse given by contracting the unique non-tadpole edge at the special vertex. Here we are using that $(g,n)\neq (1,1)$, which implies that there must be a vertex at the other end of that edge.
But if $f$ is a bijection, then by \cite[Lemma 2.1]{PayneWillwacher} the complex $E_1$ is acyclic, and we conclude that $H(Q_{g,n})=0$ as desired.
\end{proof}

Note that we have morphisms of dg vector spaces
\[
  Z_{g,n} \xrightarrow{\sim} \gr_2\Feyn(\BV^*)(\!(g,n)\!)
  \to X_{g,n}.
\]
Next, we shall split $X_{g,n}$ into two graded subspaces,
\begin{equation}\label{equ:Xgn decomp}
  X_{g,n} = 
  \begin{tikzcd}[column sep = 0em]
    X_{g,n}'\ar[loop above]{}{d_0}
    & \oplus & X_{g,n}'' \ar[bend right, swap]{ll}{d_1} \ar[loop above]{}{d_0}
  \end{tikzcd},
\end{equation}
with $X_{g,n}'$ spanned by graphs that have a no tadpole, and $X_{g,n}''$ spanned by graphs that have a tadpole.
The arrows indicate the pieces $d_0,d_1$ of the differential.
Note that 
\begin{equation}\label{equ:X Z iso}
  X_{g,n}'\cong Z_{g,n}  
\end{equation}
is naturally identified with the image of $Z_{g,n}$ in $X_{g,n}$. Furthermore, the graphs generating $X_{g,n}''$ have exactly one tadpole, and this tadpole is of type 1.
Hence there is an isomorphism of graded vector spaces
\begin{equation}\label{equ:T iso}
  T : Z_{g-1,n}\cong X_{g-1,n}'\to X_{g,n}''
\end{equation}
defined on a graph by adding one tadpole of type 1.

We next consider the decomposition \eqref{equ:Xgn decomp} as a two-step filtration on $X_{g,n}$ and study the associated spectral sequence.
The $E^1$-page of this spectral sequence has the form 
\[
\begin{tikzcd}[column sep = 0em]
  H(X_{g,n}',d_0)
  & \oplus & H(X_{g,n}'',d_0) \ar[bend right, swap]{ll}{d_1}.
\end{tikzcd}.
\]
Since we are considering a two-step filtration, all differentials on higher pages must vanish, and the spectral sequence converges to cohomology on the $E^2$ page.
That is, we have that 
\[
  H(X_{g,n}) \cong \coker d_1 \oplus \ker d_1.
\]
To arrive at the first statement of Proposition \ref{prop:HyCom MM comparison} we just use the identifications \eqref{equ:X Z iso} and \eqref{equ:T iso} and Lemma \ref{lem:Zgn qiso}.
These show that the $E^1$-page of our spectral sequence is isomorphic to the cone of the morphism
\[
  \nabla_{g,n} : 
  H(\gr_2\Feyn(\BV^*)(\!(g-1,n)\!))
  \to 
  H(\gr_2\Feyn(\BV^*)(\!(g,n)\!))[2]
\]
obtained by concatenating $T$ and $d_1$.

The final statement of Proposition \ref{prop:HyCom MM comparison} asserts that the morphism $\nabla_{g,n}$ is zero for $n=0$.
But this is the content of \cite[Theorem 1.2]{PayneWillwacher}.
\hfill\qed

\subsection{Proof of Corollary \ref{cor:Hbdy Mg comparison new}}
To show Corollary \ref{cor:Hbdy Mg comparison new} first note that 
\[
\gr_2H^k(\Feyn(H^\bullet(\bar \M))(\!(g,0)\!))
\cong 
\gr_2 H^k_c(\M_{g})
\cong 
\gr_{6g-8} H^{6g-6-k}(\M_g)^* ,
\]
see for example the introduction of \cite{PayneWillwacher}.
The last statement of Proposition \ref{prop:HyCom MM comparison} then states that 
\[
  \gr_{6g-8} H^{6g-6-k}(\M_g)
  =
  \gr_2 H^{-k}(\Feyn(\HyCom)(\!(g,0)\!)) 
\oplus 
\gr_2 H^{1-k}(\Feyn(\HyCom)(\!(g-1,0)\!)).
\]
From part (3) of Corollary \ref{cor:main hbdy} we then conclude that 
\[
  \gr_{6g-8} H^{6g-6-k}(\M_g)
  \cong
  \gr_{6g-8} H^{6g-6-k}(\HMod_{g,0}^0) 
  \oplus 
  \gr_{6g-14} H^{6g-11-k}(\HMod_{g-1,0}^0)
\]
as desired.
\section{Euler characteristics}
\label{sec:euler}
We finish this article by considering the weight-graded Euler characteristics of
$\Feyn_\kk(D\BV^*)$, $\Feyn_\kk(\tGrav)$, $\Feyn(\BV)$ and $\Feyn(\HyCom)$.

For a permutation $\sigma \in S_n$, we define a
symmetric polynomial $p^\sigma \in \Lambda_n = \Q[x_1,\ldots,x_n]^{S_n}$ given by
$p^\sigma = p_{\lambda_1} \cdots p_{\lambda_\ell}$, 
where $p_k = \sum_{i=1}^n x_i^k$
is the $k$-th power sum symmetric polynomial and $(\lambda_1,\ldots,\lambda_\ell)$ is the \emph{cycle type} of $\sigma$.
To an $S_n$-module $V$ on which $\sigma\in S_n$ acts as $\rho(\sigma)\in \mathrm{End}(V)$ we may associate the \emph{Frobenius characteristic} defined by
\begin{align*} \ch(V) = \frac{1}{n!} \sum_{\sigma \in S_n} \mathrm{tr}(\rho(\sigma)) p^\sigma. \end{align*}
If $V$ is a differential graded $S_n$-module, recall that $V^k$ denotes the degree $k$ part of $V$.
The \emph{equivariant Euler characteristic} of $V$ is the following symmetric polynomial in $\Lambda_n$
$$\chi^{S_n}(V) = 
\sum_{k} (-1)^k \ch( V^k ),
$$
which is a homotopy invariant of $V$.
In case there is an additional weight grading defined on $V$ we set 
\[
  \chi_t^{S_n}(V)
  :=
  \sum_{W \geq 0} t^W
  \chi^{S_n}( \gr_W V ) \in \Lambda_n[\![t]\!].
\]

Let $\Lambda$ be the \emph{ring of symmetric functions} over the rationals. We may define it as the power series ring $\Lambda = \Q[\![p_1,p_2,\ldots]\!]$ generated by symbols $p_1,p_2,\ldots$ of degree $1,2,\ldots$ respectively, where $p_k$ stands in for the power symmetric functions of degree $k$. 
The \emph{Euler-Frobenius-Poincar\'e characteristic} of a dg symmetric sequence $\cP$ with a weight grading is the following power series in $\Lambda[\![t]\!]$
\begin{align*} \Ch_t(\cP) =  \sum_{n\geq 0} \chi^{S_n}_t( \gr_W \cP(\!(n)\!) ). \end{align*}

The
\emph{Necklace polynomial} is defined as
 $M_n(t) = \frac{1}{n} \sum_{d|n} \mu(n/d) t^d$, where
$\mu$ is the number-theoretical M\"obius function.
\begin{prop}\label{prop:Chts}
The (cyclic) Euler-Frobenius-Poincar\'e characteristics of $\tGrav$, $D\BV^*$, $\HyCom$ and $\BV$ are 
\begin{align}         \label{equ:ChtGrav} \Ch_t( \tGrav ) &= -\frac{1}{t^2(1-t^4)} \left( (1+p_1) \prod_{\ell \geq 1} (1+p_\ell)^{M_\ell(t^2)} -1 - (1 + t^2) p_1 \right) + \frac{1}{1 - t^2} \frac{p_1^2}{2} -\frac{1}{1 + t^2} \frac{p_2}{2} \\
\label{equ:ChtDBV}         \Ch_t( D\BV^* ) &= \Ch_t( \tGrav ) - \frac{t^2}{1-t^2} \frac{p_1^2}{2} - \frac{t^2}{1+t^2} \frac{p_2}{2} \\
\label{equ:ChtHyCom} \Ch_t(\HyCom) &= \Ch_t(\Feyn_{\kk}(\tGrav)(\!(0,-)\!)) \\
\label{equ:ChtBV} \Ch_t(\BV) &=  \frac 1{1-t^4} (1+t^2(1-t^2)p_1) \left( \prod_{\ell\geq 1}(1+t^{2\ell}(1-t^{2\ell})p_\ell)^{M_\ell(t^{-2})} -1 \right)-p_1. \end{align}
\end{prop}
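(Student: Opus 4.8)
The plan is to evaluate all four characteristics on cohomology, exploiting two elementary stability properties of $\Ch_t$. First, since the differentials preserve the weight grading, each $\gr_W$ is a complex and $\chi^{S_n}_t$ depends only on cohomology; thus we may replace every complex by its cohomology $S_n$-modules. Second, with the non-negated weight convention of the paper, $\Ch_t$ is invariant under linear duality of dg cyclic sequences: the degree reversal is absorbed by the symmetry $(-1)^{-k}=(-1)^k$ in $\sum_k(-1)^k\ch(\cdot)$, the weight is unchanged by convention, and $\ch$ of a dual $S_n$-representation equals $\ch$ of the representation since the characters of $S_n$ are rational. Granting these, \eqref{equ:ChtHyCom} is immediate: by Getzler's proof of Koszulness \cite{Getzler0} (recalled above) one has $H(\Feyn_\kk(\tGrav)(\!(0,N)\!))\cong\HyCom^*(\!(N)\!)$, whence $\Ch_t(\Feyn_\kk(\tGrav)(\!(0,-)\!))=\Ch_t(\HyCom^*)=\Ch_t(\HyCom)$.

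The core computation is \eqref{equ:ChtGrav}, where the input is the $S_n$-equivariant, weight-graded structure of $\tGrav(\!(n)\!)\cong H_c^\bullet(\M_{0,n})$. First I would record the $S_n$-equivariant Poincar\'e--Serre polynomial of the open space $\M_{0,n}$: after rigidifying three marked points it is the complement of a hyperplane arrangement, so its cohomology is an Orlik--Solomon algebra whose $S_n$-character is controlled by the cyclic-word combinatorics of the arrangement, and Getzler's computation assembles the associated generating function into the product $\prod_{\ell\geq1}(1+p_\ell)^{M_\ell(\cdot)}$, the necklace exponents $M_\ell$ encoding aperiodic cyclic orderings. I would then pass to $H_c^\bullet$ by Poincar\'e duality on the $2(n-3)$-dimensional complex manifold $\M_{0,n}$; this reverses degrees, the $S_n$-action being holomorphic and hence orientation-preserving contributes the trivial orientation character, and tracking the linear relation between weight and degree produces the substitution and the prefactor $\tfrac{1}{t^2(1-t^4)}$. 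The terms $-1-(1+t^2)p_1$ together with $\tfrac{1}{1-t^2}\tfrac{p_1^2}{2}-\tfrac{1}{1+t^2}\tfrac{p_2}{2}$ are exactly the corrections removing the unstable arities $n\leq2$ and fixing the low-arity pieces $n=2,3$ where the stable product formula does not directly apply.

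Formula \eqref{equ:ChtDBV} then follows by a local comparison in arity $2$. By Theorem \ref{thm:DCV} we have $H(D\BV^*(\!(r)\!))\cong\tGrav(\!(r)\!)$ for $r\geq3$, while the computation behind Lemma \ref{lem:feynp dbv} gives $H(D\BV^*(\!(2)\!))=\bigoplus_{k\geq1}\Q\,\Dc^{\circ k}$ with $\Dc^{\circ k}$ of degree $2k-1$ and weight $2k$. Hence $\Ch_t(D\BV^*)-\Ch_t(\tGrav)$ is concentrated in arity $2$, and I would identify the residual $S_2$-representation: reversing the chain of $k$ bivalent $\Dc$-vertices (each of odd degree, with the induced sign on the edge-ordering line $\Det_1$) shows that $\Q\,\Dc^{\circ k}$ is the trivial $S_2$-representation when $k$ is odd and the sign representation when $k$ is even. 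Summing $-\sum_{k\geq1}t^{2k}\ch(\cdots)$ and rewriting $\tfrac{p_1^2\pm p_2}{2}$ as the trivial and sign characters reproduces precisely the two correction terms in \eqref{equ:ChtDBV}.

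Finally, for \eqref{equ:ChtBV} I would use the splitting $\BV(\!(n)\!)\cong\Ger(\!(n)\!)\otimes H_\bullet(S^1)^{\otimes n}$ coming from $\LD^{fr}=\LD\rtimes SO(2)$, with $S_n$ permuting the circle factors. An arity-wise tensor product corresponds on Frobenius characteristics to the internal (Kronecker) product with $\sum_n\ch(H_\bullet(S^1)^{\otimes n})$, which acts as the power-sum substitution $p_\ell\mapsto c_\ell\,p_\ell$, where $c_\ell$ is the weighted supertrace of the circle homology on an $\ell$-cycle; tracking the degree-$1$, weight-$2$ rotation class $\Delta$ together with the weights carried by the Gerstenhaber generators yields the factor $t^{2\ell}(1-t^{2\ell})$ appearing inside the product of \eqref{equ:ChtBV}. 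Applying this substitution to the cyclic Gerstenhaber generating function, which by the same cyclic-word combinatorics has the same shape but with the dual necklace exponents $M_\ell(t^{-2})$ reflecting Poincar\'e duality between $\LD$ and its compactly-supported counterpart, gives \eqref{equ:ChtBV}, the outer factor $(1+t^2(1-t^2)p_1)$ being the cyclic-arity-one correction. The main obstacle throughout is \eqref{equ:ChtGrav}: extracting the exact closed form, and in particular the unstable corrections and the prefactor $\tfrac{1}{t^2(1-t^4)}$, from the equivariant cohomology of $\M_{0,n}$ via the Poincar\'e-duality bookkeeping requires careful plethystic manipulation following the procedure of \cite{GK}; once \eqref{equ:ChtGrav} is in hand, the identities \eqref{equ:ChtHyCom}--\eqref{equ:ChtBV} are comparatively mechanical.
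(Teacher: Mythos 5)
Your proposal is correct in outline and follows essentially the same route as the paper's proof: \eqref{equ:ChtGrav} and the genus-zero input for \eqref{equ:ChtBV} come from Getzler's Theorem~5.7, \eqref{equ:ChtHyCom} is Getzler's Koszulness statement $D\tGrav\simeq\HyCom^*$ combined with duality-invariance of $\Ch_t$, \eqref{equ:ChtDBV} is the arity-two correction with exactly the sign analysis the paper performs (your ``trivial for $k$ odd, sign for $k$ even'' matches the paper's $\tau\cdot \Dc^{\circ k}=-(-1)^k\Dc^{\circ k}$, and the overall minus from the odd degree $2k-1$ gives the two correction terms), and \eqref{equ:ChtBV} is the substitution $p_\ell\mapsto(1-t^{2\ell})p_\ell$ applied to Getzler's genus-zero character, your Kronecker-product formulation being equivalent to the paper's plethysm with $(1-t^2)p_1$.

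One caveat on the $\BV$ step: the splitting $\LD^{fr}=\LD\rtimes SO(2)$ is equivariant only for the non-cyclic symmetric group action, and ``$\Ger(\!(n)\!)$'' does not literally exist, since the Gerstenhaber operad carries no cyclic structure. To obtain the full cyclic $S_n$-character the paper instead uses the $S_n$-equivariant principal $(S^1)^n$-bundle $\BV(\!(n)\!)\to\M_{0,n}$ of \eqref{equ:pre ChtBV bundle}, together with the observation that Euler characteristics are blind to any non-triviality of this bundle; correspondingly, your ``cyclic Gerstenhaber generating function'' with exponents $M_\ell(t^{-2})$ must be read as Getzler's character \eqref{equ:Chtm} of $H_\bullet(\M_{0,-})$. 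With that replacement your computation is exactly the paper's; as literally written, the equivariance claim in that step would not stand.
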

\begin{proof}
The formula for $\Ch_t( \tGrav )$ is due to Getzler \cite[Theorem~5.7]{Getzler0}, and has just been adapted to our grading conventions.

For the second formula note that we have 
\[
  \Ch_t(D\BV^*)
  = 
  \Ch_t(D'\BV^*)
  +
  \chi_t^{S_2}(D\BV^*(\!(2)\!))
  \stackrel{\text{Thm.~\ref{thm:DCV}}}{=}
  \Ch_t( \tGrav )
  +
  \chi_t^{S_2}(D\BV^*(\!(2)\!)).
\]
The weight $W=2k$-part of $D\BV^*(\!(2)\!)$ is one-dimensional and spanned by $x_k:=\underbrace{\Dc\circ \cdots \circ\Dc}_{k\times}$ in degree $2k-1$. Let $\tau\in S_2$ be the transposition. It acts trivially on $\Dc$, but reverses the order of the $\Dc$'s and the $k-1$ odd compositions so that
\[
\tau \cdot x_k =(-1)^{k(k-1)/2}(-1)^{(k-1)(k-2)/2} x_k
=
-(-1)^{k}x_k.
\]
Hence we get 
\[
  \chi_t^{S_2}(D\BV^*(\!(2)\!))
  =
  \frac12
  \sum_{k\geq 1}
  t^{2k} (-p_1^2 + (-1)^{k} p_2)
  =
  - \frac{t^2}{1-t^2} \frac{p_1^2}{2}
 - \frac{t^2}{1+t^2} \frac{p_2}{2}.
\]

Equation \eqref{equ:ChtHyCom} follows from Getzler's result that $D\tGrav \simeq \HyCom^*$.

Next consider the formula for $\Ch_t(\BV)$.
For $n\geq 3$ note that we have the principal $(S^1)^n$-bundle
\begin{equation}\label{equ:pre ChtBV bundle}
\BV(\!(n)\!) \to \M_{0,n} = \BV(\!(n)\!)/\left((S^1)^n\right).  
\end{equation}
It is also $S_n$-equivariant.
Hence we have that 
\[
\chi_t^{S_n}(\BV(\!(n)\!)) 
=
\chi_t^{S_n}(H_\bullet(\M_{0,n}))
\circ \chi_t^{S_1}(H_\bullet(S^1)),
\]
where on the right-hand side we use the plethysm product on symmetric functions \cite[Section 7.2]{GK}, and we equip all objects with the weight grading by twice the homological degree.
Obviously, 
\[
\chi_t^{S_1}(H_\bullet(S^1))
=
(1-t^2)p_1.
\]
Following Getzler, define the cyclic sequence 
\[
  \gm(\!(n)\!)
  =
  \begin{cases}
    H_\bullet(\M_{0,n}) & \text{for $n\geq 3$} \\
    0 & \text{otherwise}
  \end{cases},
\]
equipped with the weight grading by twice the homological degree.
From the above it then follows that  
\begin{equation}\label{equ:ChtBVpre}
  \Ch_t(\BV)
  = 
  \Ch_t(\gm) \circ ((1-t^2)p_1 )
  +\chi_t^{S_2}(\BV(\!(2)\!))
  =
  \Ch_t(\gm) \circ ((1-t^2)p_1 )
  +
  \frac12 (1-t^2)(p_1^2+p_2).
\end{equation}
Getzler \cite[Theorem~5.7]{Getzler0} showed that 
\begin{equation}\label{equ:Chtm}
  \Ch_t(\gm)=
  \frac 1{1-t^4} (1+t^2p_1)
  \left( \prod_{\ell\geq 1}(1+t^{2\ell}p_\ell)^{M_\ell(t^{-2})} 
  -1-
  (1+t^{2})p_1 
 \right)
 -
 \frac{1}{1-t^2} \frac{p_1^2}{2}
 - 
 \frac{1}{1+t^2} \frac{p_2}{2}.
\end{equation}
Inserting this into \eqref{equ:ChtBVpre}
we obtain \eqref{equ:ChtBV}. %
\end{proof}

Explicitly, we have the following leading terms, expressed through Schur polynomials:
\begin{align*}   \Ch_t(\tGrav) &= s_{3} \\& -s_{2,2} + s_{4} t^{2} +s_{3,1,1} -s_{3,2} t^{2}+ s_{5} t^{4} \\& -s_{2,2,1,1} - s_{3,3} - s_{4,1,1} +( s_{3,2,1} + s_{4,1,1} ) t^{2} -s_{4,2} t^{4} + s_{6} t^{6} \\& +s_{3,1,1,1,1} + s_{3,2,1,1} + s_{3,2,2} + s_{4,2,1} + s_{5,2} \\&\quad\quad +( -s_{2,2,2,1} - s_{3,2,1,1} - s_{3,3,1} - s_{4,1,1,1} - s_{4,2,1} - s_{4,3} - s_{5,1,1} ) t^{2} \\&\quad\quad +( s_{3,3,1} + s_{4,2,1} + s_{5,1,1} ) t^{4} -s_{5,2} t^{6} + s_{7} t^{8} +O(x^8)         \\
 \Ch_t(\BV) &= s_{2} -s_{2} t^{2} \\&+s_{3} +( -s_{2,1} - s_{3} ) t^{2}+( s_{1,1,1} + s_{2,1} ) t^{4} -s_{1,1,1} t^{6} \\&+ s_{4} +( -s_{2,2} - s_{3,1} - s_{4} ) t^{2}+( 2s_{2,1,1} + s_{2,2} + 2s_{3,1} ) t^{4} +( -s_{1,1,1,1} - 3s_{2,1,1} - 2s_{3,1} ) t^{6} \\&\quad\quad+( s_{1,1,1,1} + s_{2,1,1} + s_{2,2} + s_{3,1} ) t^{8} +( -s_{2,2} ) t^{10} \\&+ s_{5} +( -s_{3,2} - s_{4,1} - s_{5} ) t^{2} +( s_{2,2,1} + 3s_{3,1,1} + 2s_{3,2} + 2s_{4,1} ) t^{4} \\&\quad\quad+( -3s_{2,1,1,1} - 3s_{2,2,1} - 6s_{3,1,1} - 3s_{3,2} - 3s_{4,1} ) t^{6} \\&\quad\quad+( 2s_{1,1,1,1,1} + 5s_{2,1,1,1} + 5s_{2,2,1} + 5s_{3,1,1} + 5s_{3,2} + 3s_{4,1} + s_{5} ) t^{8} \\&\quad\quad+( -2s_{1,1,1,1,1} - 3s_{2,1,1,1} - 5s_{2,2,1} - 3s_{3,1,1} - 4s_{3,2} - 2s_{4,1} - s_{5} ) t^{10} \\&\quad\quad+( s_{2,1,1,1} + 2s_{2,2,1} + 2s_{3,1,1} + s_{3,2} + s_{4,1} ) t^{12}+( -s_{3,1,1} ) t^{14} \\&+ s_{6} +( -s_{4,2} - s_{5,1} - s_{6} ) t^{2}+( 2s_{3,2,1} + s_{3,3} + 3s_{4,1,1} + 2s_{4,2} + 2s_{5,1} ) t^{4} \\&\quad\quad+( -3s_{2,2,1,1} - s_{2,2,2} - 4s_{3,1,1,1} - 7s_{3,2,1} - 4s_{3,3} - 8s_{4,1,1} - 4s_{4,2} - 3s_{5,1} ) t^{6} \\&\quad\quad+( 5s_{2,1,1,1,1} + 8s_{2,2,1,1} + 5s_{2,2,2} + 11s_{3,1,1,1} + 16s_{3,2,1} \\
&\quad\quad\quad+ 5s_{3,3} + 10s_{4,1,1} + 10s_{4,2} + 5s_{5,1} + s_{6} ) t^{8} \\&\quad\quad+( -2s_{1,1,1,1,1,1} - 10s_{2,1,1,1,1} - 12s_{2,2,1,1} - 10s_{2,2,2} - 15s_{3,1,1,1} - 23s_{3,2,1} \\&\quad\quad\quad - 5s_{3,3} - 11s_{4,1,1} - 14s_{4,2} - 6s_{5,1} - 2s_{6} ) t^{10} \\&\quad\quad+( 3s_{1,1,1,1,1,1} + 7s_{2,1,1,1,1} + 14s_{2,2,1,1} + 8s_{2,2,2} + 13s_{3,1,1,1} + 20s_{3,2,1} + 7s_{3,3} \\&\quad\quad\quad + 12s_{4,1,1} + 10s_{4,2} + 5s_{5,1} + s_{6} ) t^{12} \\&\quad\quad+( -s_{1,1,1,1,1,1} - 3s_{2,1,1,1,1} - 9s_{2,2,1,1} - 2s_{2,2,2} - 7s_{3,1,1,1} \\&\quad\quad\quad - 12s_{3,2,1} - 5s_{3,3} - 8s_{4,1,1} - 4s_{4,2} - 3s_{5,1} ) t^{14} \\&\quad\quad+( s_{2,1,1,1,1} + 2s_{2,2,1,1} + s_{2,2,2} + 3s_{3,1,1,1} + 4s_{3,2,1} + s_{3,3} \\&\quad\quad\quad + 2s_{4,1,1} + 2s_{4,2} + s_{5,1} ) t^{16} \\&\quad\quad +( -s_{2,2,2} - s_{3,1,1,1} - s_{4,2} ) t^{18} +O(x^7)       \\
\Ch_t(\HyCom) &= s_{3} \\& + s_{4} + s_{4} t^{2} \\& + s_{5} + ( s_{4,1} + s_{5} ) t^{2} + s_{5} t^{4} \\& + s_{6} + ( s_{4,2} + s_{5,1} + 2 s_{6} ) t^{2} + ( s_{4,2} + s_{5,1} + 2 s_{6} ) t^{4} + s_{6} t^{6} \\& + s_{7} + ( s_{4,3} + s_{5,2} + 2 s_{6,1} + 2 s_{7} ) t^{2} + ( s_{4,2,1} + 2 s_{4,3} + 3 s_{5,2} + 3 s_{6,1} + 4 s_{7} ) t^{4} \\ &\quad\quad + ( s_{4,3} + s_{5,2} + 2 s_{6,1} + 2 s_{7} ) t^{6} + s_{7} t^{8} \\& + s_{8} + ( s_{4,4} + s_{5,3} + 2 s_{6,2} + 2 s_{7,1} + 3 s_{8} ) t^{2} \\ &\quad\quad + ( s_{4,2,2} + 2 s_{4,3,1} + 3 s_{4,4} + 2 s_{5,2,1} + 5 s_{5,3} + s_{6,1,1} + 7 s_{6,2} + 6 s_{7,1} + 6 s_{8} ) t^{4} \\ &\quad\quad + ( s_{4,2,2} + 2 s_{4,3,1} + 3 s_{4,4} + 2 s_{5,2,1} + 5 s_{5,3} + s_{6,1,1} + 7 s_{6,2} + 6 s_{7,1} + 6 s_{8} ) t^{6} \\ &\quad\quad + ( s_{4,4} + s_{5,3} + 2 s_{6,2} + 2 s_{7,1} + 3 s_{8} ) t^{8} + s_{8} t^{10} +O(x^9)    \end{align*}

There are no known closed formulas for the Euler characteristics of the Feynman transforms of the above cyclic operads.
However, in low weights closed formulas can sometimes be obtained.
For example, since $\gr_0\Feyn(\BV)\cong \Feyn(\Com)$ we know the Euler characteristic of $\gr_0\Feyn(\BV)$ by the results of \cite{CFGP,MBeuler}.
Furthermore, the computation of \cite{PayneWillwacherEuler} can be slightly adapted to yield the following result for the weight 2 part:
\begin{prop}
The equivariant Euler characteristic of $\gr_2\Feyn(\BV)$ satisfies
\begin{equation}\label{equ:Euler FeynBV}
\begin{aligned}
 &\sum_{g,n} u^{g+n}\chi^{S_n}(\gr_2\Feyn(\BV)(\!(g,n)\!))
 \\&= -\frac12 u Z_1
  \left[
    \left(-\frac 1 {Z_1} +\sum_{\ell\geq 1} \frac{\mu(\ell)}{\ell}
    \left(\log(\ell u^\ell Z_\ell) 
    + 
    \psi_0(- Z_\ell) \right) \right)^2
  + 
  \sum_{\ell\geq 1} \frac{\mu(\ell)}{\ell}
  \left(\log(2\ell u^{2\ell} Z_{2\ell}) 
  + 
  \psi_0(- Z_{2\ell}) \right) 
  \right.
\\&\quad\quad\quad
  \left.
     -\frac 1 {(Z_1)^2} + \sum_{\ell\geq 1} \frac{\mu(\ell)^2}{\ell^2}
    \psi_1(-Z_\ell)
  \right]
\end{aligned}
\end{equation}
with 
\[
  Z_\ell := \frac 1 \ell \sum_{d\mid\ell}\mu(\ell/d) (u^{-d} + p_d))
\]
and 
\begin{equation}
  \label{equ:psi def}
  \psi_0(z) := -\sum_{j=1}^\infty \frac{B_j}{j} \frac 1 {(-z)^j} \mbox{ \ \ \ and \ \ \ }
  \psi_1(z) := -\sum_{j=0}^\infty B_j \frac 1 {(-z)^{j+1}},
\end{equation}
where $B_j$ are the Bernoulli numbers.
\end{prop}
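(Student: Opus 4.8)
The plan is to compute the generating series by the Getzler--Kapranov procedure for Euler characteristics of Feynman transforms \cite{GK}, following closely the parallel computation of the weight $2$ Euler characteristic of $\gr_2 H_c^\bullet(\M_{g,n})$ carried out in \cite{PayneWillwacherEuler}. Indeed, by the results of Section \ref{sec:Xgn def} the complex computing the latter is the quotient $X_{g,n}=\gr_2\Feyn(\BV^*)(\!(g,n)\!)/I_{g,n}$, so the present statement differs only in that we must (i) pass to $\gr_2\Feyn(\BV)$ rather than $\gr_2\Feyn(\BV^*)$, which by the homotopy invariance of the equivariant Euler characteristic and the duality conventions of Section \ref{sec:feyn} affects only overall degree and sign bookkeeping, and (ii) reinstate the tadpole contributions of types $0$, $2$ and $3$ that were quotiented out to form $X_{g,n}$. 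Thus the strategy is to take the master formula from \cite{PayneWillwacherEuler} and recompute it without discarding the tadpole graphs.

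First I would fix the combinatorial model. By the discussion of Section \ref{sec:blownup}, a generator of $\gr_2\Feyn(\BV)(\!(g,n)\!)$ is a connected graph with a single special vertex of weight $2$, carrying the decoration $E_{ij}$, together with arbitrarily many ordinary vertices decorated by $\gr_0\BV=\Com$. In the blown-up picture the special vertex is removed and replaced by two $\omega$-marked half-edges attached into the remaining $\Com$-decorated ``background'' graph. The equivariant Euler characteristic is then computed by the standard plethystic sum over graphs, weighted by $1/|\mathrm{Aut}\,\Gamma|$; since exactly one special vertex occurs, the series is linear in the special-vertex contribution, and the remaining sum over backgrounds is governed by the commutative graph complex, whose cycle-index data produces the necklace quantities $Z_\ell=\frac1\ell\sum_{d\mid\ell}\mu(\ell/d)(u^{-d}+p_d)$ via M\"obius inversion over the cyclic automorphisms, exactly as in \cite{MBeuler, PayneWillwacherEuler}.

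Next I would carry out the two resummations that produce the transcendental pieces. Summing over the valence of each background vertex, and over the loop order contributed by self-gluings, yields the digamma-type functions $\psi_0$ and $\psi_1$ of \eqref{equ:psi def}, whose Bernoulli-number expansions encode these asymptotic series; this is the same mechanism as in the moduli-space computation. The two $\omega$-arms of the special vertex attach independently into the background, which accounts for the squared bracket $(\cdots)^2$ together with the prefactor $\tfrac12 u Z_1$ coming from the symmetry $E_{ij}=E_{ji}$; the ``diagonal'' configurations, in which the two arms are exchanged by this symmetry or glued to one another, are responsible for the terms involving $Z_{2\ell}$ and $\psi_0(-Z_{2\ell})$, $\psi_1(-Z_\ell)$. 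The tadpole terms of types $0$, $2$, $3$, absent from $X_{g,n}$ but present here, enter precisely these diagonal and self-gluing contributions, so reinstating them is what converts the formula of \cite{PayneWillwacherEuler} into \eqref{equ:Euler FeynBV}.

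The main obstacle I expect is bookkeeping rather than conceptual: one must track with care the symmetry factors and signs attached to the special vertex and its two marked arms --- in particular the $S_2$-action exchanging $i$ and $j$ and its interaction with the $p_2$ and $Z_{2\ell}$ terms --- and must verify that the tadpole contributions reinstated in the third step assemble exactly into the stated combination of $\psi_0(-Z_{2\ell})$ and $\psi_1(-Z_\ell)$ rather than some other distribution of Bernoulli terms. Matching the degree shifts and signs between $\Feyn(\BV)$ and the $\Feyn(\BV^*)$-based complex of \cite{PayneWillwacherEuler}, so that the overall sign $-\tfrac12 u Z_1$ and the relative signs inside the bracket come out correctly, is the delicate point requiring the most attention.
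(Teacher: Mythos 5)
Your overall strategy is the same as the paper's: both arguments pass to the blown-up picture of Section \ref{sec:blownup}, observe that the resulting hairy graph complex is ``virtually the same'' as the one whose Euler characteristic is computed in \cite{PayneWillwacherEuler} (there denoted $\widetilde{fX}^{\mathit{conn},2}$), and then correct for the differences. However, two of the concrete adjustments you propose are misidentified, and executing the plan as written would produce a wrong formula.

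First, the passage from $\gr_2\Feyn(\BV^*)$ to $\gr_2\Feyn(\BV)$ is not the source of any sign discrepancy: over $\Q$ every $S_n$-representation is self-dual and $(-1)^{-k}=(-1)^k$, so with the paper's convention that weights are not negated under dualization, the equivariant Euler characteristic is literally unchanged. The discrepancy that actually matters, and which the paper isolates, is that in \cite{PayneWillwacherEuler} the $n$ numbered hairs are treated as \emph{edges of degree $1$}; this twists the $S_n$-module by $\sgn_n$ with a degree shift by $n$, and on characters it forces the substitution $p_k\to -p_k$. If you ``take the master formula from \cite{PayneWillwacherEuler}'' and only reinstate tadpoles, every $p_d$ inside the $Z_\ell$'s comes out with the wrong sign. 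Second, your structural reading of \eqref{equ:Euler FeynBV} is off: the terms involving $Z_{2\ell}$, $\psi_0(-Z_{2\ell})$ and $\psi_1(-Z_\ell)$ are \emph{not} the reinstated tadpole contributions. They are the ``$p_2$-type'' part of the cycle index for the $S_2$-symmetry exchanging the two $\omega$-hairs, together with the configurations where both $\omega$-hairs land on the same background vertex, and they are already present in the tadpole-free formula of loc.\ cit. Reinstating tadpoles at ordinary vertices instead modifies the generating function of \emph{every} background vertex, hence enters all terms uniformly; the paper implements this by replacing $\BV_0$ by $\mathsf{Pois}_0$ in \cite[Proposition 3.2]{PayneWillwacherEuler}, whose effect is to replace each term $(1-u^\ell)u^\ell p_\ell$ there by $u^\ell p_\ell$. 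In short, the correct execution of your reduction consists of exactly these two substitutions in the formula of loc.\ cit.\ --- $p_k\to -p_k$ and $(1-u^\ell)u^\ell p_\ell\to u^\ell p_\ell$ --- neither of which coincides with the adjustments you describe.
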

\begin{proof}[Proof sketch]
First note that in the ``blown-up'' viewpoint, see Section \ref{sec:blownup}, the complex $\gr_2\Feyn(\BV)(\!(g,n)\!)$ is a complex of graphs of genus $g$ with external hairs, exactly two of which are labelled by $\omega$, $n$ are numbered and possibly some are labelled by $\epsilon$.
The Euler characteristic of virtually the same graded vector space of graphs has been computed in \cite[Section 4.3]{PayneWillwacherEuler}, there denoted $\widetilde{fX}^{\mathit{conn},2}$.
The only differences are that (i) in loc.~cit.~numbered hairs are considered edges and carry degree 1 and (ii) internal vertices in loc.~cit.~ may not carry tadpoles.
Difference (i) amounts to multiplying the $S_n$-representation $\Feyn(\!(g,n)\!)$ by the sign representation in degree $n$.
This change is accounted for on the characters by replacing $p_k\to -p_k$.
Difference (ii) can be accounted for by replacing $\BV_0$ by $\mathsf{Pois}_0$ in Proposition 3.2 of loc. cit.
By section 2.4 of loc. cit. the effect is to replace the terms $(1-u^\ell)u^\ell p_\ell$ in the formula of loc.~ cit.~by $u^\ell p_\ell$.
Thus we obtain our \eqref{equ:Euler FeynBV}.
\end{proof}

One may expand the above generating function on the computer.
The result for low $(g,n)$ is displayed in Table \ref{tbl:BV equiv}.
Furthermore, setting $p_k=0$ for all $k$ we obtain 
\begin{multline*}
  \sum_{g} u^{g}\chi(\gr_2\Feyn(\BV)(\!(g,0)\!))
  =
  u-u^8+3 u^9-u^{10}+3 u^{11}-4 u^{12}+4 u^{13}-12 u^{14}-2 u^{15}-12 u^{16}
\\
+16 u^{17}
+33 u^{18}
  +127 u^{19}+115 u^{20}-267
   u^{21}-463 u^{22}+1718 u^{23}-2586 u^{24}-45721 u^{25}+4560 u^{26}
\\
+742210 u^{27}+65910 u^{28}
   -13604215 u^{29}-10317
   u^{30}+O\left(u^{31}\right)
\end{multline*}

By Theorem \ref{thm:hycom bv} and an easy computation in $(g,n)=(1,1)$
we then find that 
\[
  \chi^{S_n}(\gr_2\Feyn(\HyCom)(\!(g,n)\!))
  =\begin{cases}
  \chi^{S_n}(\gr_2\Feyn(\BV)(\!(g,n)\!)) & \text{for $(g,n)\neq (1,1)$} \\
  0 & \text{for $(g,n)= (1,1)$}
\end{cases}.
\]

Finally, by Theorem \ref{thm:dbv_hycom} and a small computation in $(g,n)=(1,0),(1,1),(0,2)$ we find that 
\[
  \chi^{S_n} \left(\gr_{6g-8+2n} \AFeyn_\kk(D\BV^*)(\!(g,n)\!) \right)  
  =\begin{cases}
    \chi^{S_n}(\gr_2\Feyn(\BV)(\!(g,n)\!)) & \text{for $(g,n)\neq (1,1),(1,0),(0,2)$} \\
    p_1 & \text{for $(g,n)= (1,1)$} 
  \end{cases}.
\]

If sufficiently many terms of the Euler--Frobenius--Poincar\'e characteristic, $\Ch_t(\cC)$, of a cyclic (co)-operad are known, then we can use the procedure laid out in \cite[Section~8]{GK} to compute the Euler characteristic of $\Feyn(\cC)$ or $\Feyn_\kk(\cC)$. We used an adapted version of the program published with \cite{BVer} to compute the Euler characteristics of discussed Feynman transforms in arity $0$ up to a couple of orders in genus and in weight.

The weight-graded Euler characteristics of $\Feyn_\kk(D\BV^*) (\!(g,0)\!)$,
which by Theorem~\ref{thm:giansiracusa} agree with the weight-graded Euler characteristics of the handlebody group, $\HMod_{g,0}^0$,
are listed in Table~\ref{tab:DBV}. The top-weight contribution,
which by Theorem~\ref{thm:main} agrees with the Euler characteristic of 
$\Feyn(\Com)$,
is colored in blue. By Theorem~\ref{thm:dbv_hycom} the top$-2$-weight contribution is given by $\chi(\gr_2 \Feyn(\HyCom)(\!(g,0)\!))= \chi(\gr_2 \Feyn(\BV)(\!(g,0)\!))$, which itself can be computed explicitly as discussed above. The $0$-weight Euler characteristic agrees, by Theorem~\ref{thm:main}, with the Euler characteristic of $\Out(F_g)$ (see \cite{BVer} for a listing of $\chi(\Out(F_g))$).

Similarly, the weight-graded Euler characteristics of $\Feyn_\kk(\tGrav)$
are listed in Table~\ref{tab:Grav}. 
The $0$-weight Euler characteristic agrees with the Euler characteristic of $\Out(F_g)$ as $\gr_0 \tGrav \cong D \Com^*$ and $H(\Feyn_\kk(D \Com^*)(\!(g,0)\!)) = H(\Out(F_g))$.
The Euler characteristic is also known in the top and the 
top$-2$-weight by Theorem~\ref{thm:Feyn grav top}.
See Remark \ref{rem:feyngrav top 2} for a formula for the entries in weight top$-2$.

Tables~\ref{tab:BV} and \ref{tab:HyCom} list the 
weight-graded
Euler characteristics of $\Feyn(\BV)$ and $\Feyn(\HyCom)$ respectively. 
In both cases, the weight $0$ part is equal to the Euler characteristic of 
$\Feyn(\Com)$, because $\gr_0(\overline{\BV}) = \gr_0(\HyCom) = \Com$.
The argument for Lemma~\ref{lem:Feyngrav weights} can also be 
applied to $\HyCom$. It follows that $\Feyn(\HyCom)(\!(g,n)\!)$ is concentrated in even weights $0,\dots, 4g-6+2n$. 
This naive top-weight bound is indicated in blue in Table~\ref{tab:HyCom}.
However, in the $g=0$ case, the contributions to the top weight 
vanish due to the symmetries of tadpole graphs.
The top$-2$ weight Euler characteristic also appears to vanish. 

The auxiliary files to the arXiv version of this article contain 
larger versions of Tables~\ref{tab:DBV}, \ref{tab:Grav}, \ref{tab:BV} and \ref{tab:HyCom} with the filenames \texttt{tab\_DBV.csv}, \texttt{tab\_Grav.csv}, \texttt{tab\_BV.csv} and \texttt{tab\_HyCom.csv} respectively. 
These files can be opened with standard spreadsheet software.

In fact, we note that strictly speaking one of the Tables \ref{tab:DBV} and \ref{tab:BV} is redundant, due to the following general fact.

\begin{prop}\label{prop:BVDBV Euler}
For all $(g,n)\neq (0,1)$ and all $W$ we have that 
\[
  \chi^{S_n}\left(\gr_{W} \AFeyn(D\BV^*)(\!(g,n)\!)
  \right)
  =
  \chi^{S_n}\left(\gr_{6g-6+2n-W} \Feyn(\BV)(\!(g,n)\!)
  \right).
\]
\end{prop}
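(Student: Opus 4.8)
The plan is to establish the identity at the level of generating functions, using the Getzler--Kapranov procedure \cite[Section 8]{GK} that produces the weight-graded equivariant Euler characteristics of a Feynman transform from the Euler--Frobenius--Poincar\'e characteristic of the underlying cyclic (co)operad. The conceptual content of the statement is that it is the combinatorial shadow of Poincar\'e duality on the moduli spaces $\M_{0,r}$ decorating the vertices. Since $\chi_t^{S_n}$ is a homotopy invariant, and since the Euler characteristic of a Feynman transform depends only on the Euler characteristics of the vertex decorations, I would first replace $D\BV^*$ by its cohomology: by Theorem \ref{thm:DCV} and Lemma \ref{lem:feynp dbv} this is $\tGrav(\!(r)\!)\cong H_c^\bullet(\M_{0,r})$ in arities $r\geq 3$ and the one-dimensional space $\Q\Dc$ (weight $2$, degree $1$) in arity $2$. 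Thus both sides become explicit weighted graph sums, the $\AFeyn(D\BV^*)$ side having $\tGrav$-decorated vertices with optional $\Dc$-decorations on the internal edges (the amputation forbidding a $\Dc$ on a leg), and the $\Feyn(\BV)$ side having vertices decorated by $\BV(\!(r)\!)=H_\bullet(\M_{0,r})\otimes H_\bullet(S^1)^{\otimes r}$ via the bundle \eqref{equ:pre ChtBV bundle}, with edges of degree $-1$.

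The key input is genus-zero Poincar\'e duality. Because $\M_{0,r}$ is a smooth complex (hence $S_r$-equivariantly oriented) variety of even real dimension $2r-6$, and all rational $S_r$-representations are self-dual, duality yields
\[
  \chi_t^{S_r}\bigl(\tGrav(\!(r)\!)\bigr)=t^{2r-6}\,\chi_{1/t}^{S_r}\bigl(\gm(\!(r)\!)\bigr),
\]
with no auxiliary sign, where $\gm(\!(r)\!)=H_\bullet(\M_{0,r})$ is the base of the framing bundle underlying $\BV$. Under this duality the framing circles of $\BV$ (each contributing $\chi_t(S^1)=1-t^2$) are exchanged with the $\Dc$-decorations, the class $\Dc$ being dual to the fundamental class of $S^1$, and the amputation condition is exchanged with the external framing circles at the $n$ legs. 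This is the relation between $\Ch_t(D\BV^*)$ and $\Ch_t(\BV)$, as given in \eqref{equ:ChtDBV} and \eqref{equ:ChtBV}, that I would feed into the transform.

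I would then propagate the vertex-level relation through the graph sum. Using $\#V\Gamma=e(\Gamma)-g+1$ for a connected genus-$g$ graph and $\sum_v r_v=n+2e$, the product of the per-vertex factors $t^{2r_v-6}$ contributes $t^{6g-6+2n-2e}$, and the aim is to show that, together with the edge-degree signs and the circle/$\Dc$ contributions, everything assembles into the single global reflection
\[
  \chi_t^{S_n}\bigl(\AFeyn(D\BV^*)(\!(g,n)\!)\bigr)=t^{6g-6+2n}\,\chi_{1/t}^{S_n}\bigl(\Feyn(\BV)(\!(g,n)\!)\bigr),
\]
noting $6g-6+2n=\dim_\R\M_{g,n}$. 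Extracting the coefficient of $t^W$ gives the assertion, and the excluded pair $(g,n)=(0,1)$ is exactly the unstable case where $\M_{0,1}$ does not exist.

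The hard part is the last step, and I want to flag explicitly that a naive term-by-term matching of the two graph sums \emph{fails}: the two sides carry genuinely different vertex decorations ($\tGrav$ versus $\BV$) and distribute circles differently (per edge on the $D\BV^*$ side, per half-edge on the $\BV$ side), and a direct comparison misses by a factor of the shape $(t^2-1)^{n+e}/t^{2(n+e)}$. The substantive work is therefore to verify that the Getzler--Kapranov transform intertwines the normalised involution $t\mapsto 1/t$ with the passage from $\Feyn(\BV)$ to $\AFeyn(D\BV^*)$; concretely one must reorganise the $\tGrav$-decorated amputated sum into the $\BV$-decorated sum, for instance by a M\"obius/inclusion--exclusion argument absorbing the $\Dc$-chains and the amputation into the framing circles. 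The delicate bookkeeping of the edge-degree signs, the reduction of Lemma \ref{lem:feynp dbv} that caps each edge's $\Dc$-weight, and the interaction of amputation with the external framings is where all the care is required; alternatively, the whole identity can be checked symbolically from the closed forms \eqref{equ:ChtDBV} and \eqref{equ:ChtBV} once this covariance of the transform under $t\mapsto 1/t$ is in hand.
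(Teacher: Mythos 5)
Your proposal assembles the right ingredients---reduction to cohomology of the decorating (co)operads, the observation that the framing bundle $\BV(\!(r)\!)\to\M_{0,r}$ is invisible to Euler characteristics, and the vertex-wise Poincar\'e duality $\chi_t^{S_r}(\tGrav(\!(r)\!))=t^{2r-6}\chi_{1/t}^{S_r}(\gm(\!(r)\!))$---and you correctly diagnose that a naive term-by-term matching of the two graph sums fails, because circles sit on half-edges on the $\BV$ side while $\Dc$-decorations sit on edges (and are forbidden on legs) on the $D\BV^*$ side. But the step you flag as ``the substantive work'' is precisely the proof, and your proposal leaves it undone: the suggested ``M\"obius/inclusion--exclusion argument'' is never executed, and the fallback of checking the identity ``symbolically'' from \eqref{equ:ChtDBV} and \eqref{equ:ChtBV} is not available, because there are no closed formulas for the Euler characteristics of the Feynman transforms themselves---the Getzler--Kapranov procedure is a graph-sum algorithm, and proving that it intertwines $t\mapsto 1/t$ with the vertex-level duality is exactly the missing graph-level argument. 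As written, you have a correct statement of the strategy together with a correct identification of the obstruction, but not a proof.

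For comparison, the paper fills this gap with a concrete device. It introduces an auxiliary cyclic pseudo-operad $\gm_S$ with $\gm_S(\!(r)\!)=\gm(\!(r)\!)\otimes H_\bullet(S^1)^{\otimes r}$ for $r\geq 3$ and $\gm_S(\!(2)\!)=\Q\Delta$, equipped with the \emph{almost trivial} composition (only the binary operations act, via the $\bar H_\bullet(S^1)$-action on $H_\bullet(S^1)$). This has the same Euler--Frobenius--Poincar\'e characteristic as $\overline{\BV}$, so $\Feyn(\gm_S)$ and $\Feyn(\BV)$ have equal weight-graded equivariant Euler characteristics; but unlike $\Feyn(\BV)$, the cohomology of $\Feyn(\gm_S)$ is computable: along each edge of the $\geq 3$-valent skeleton, the two endpoint circle factors together with all chains of bivalent $\Delta$-vertices form a two-sided bar construction $B_2$ with $H(B_2)=\Q\oplus\Q\Delta_s$, while along each leg they form a one-sided bar construction $B_1$ with $H(B_1)=\Q$. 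This bar-construction collapse is exactly the mechanism your sketch lacks: it converts ``one circle per half-edge'' into ``two types of edges'' (matching the marked/unmarked edges of $\AFeyn_\kk'(H(D\BV^*))$, where Lemma \ref{lem:feynp dbv} caps each edge at a single $\Dc$) and forces the legs to be bare (matching the amputation). After the collapse, an explicit $S_n$-equivariant bijection of graph bases---Poincar\'e duality $H_{k_v}(\M_{0,n_v})\cong H_c^{2n_v-6-k_v}(\M_{0,n_v})$ at each vertex, swapping the two edge types---together with the degree and weight bookkeeping yields exactly the reflection $W\mapsto 6g-6+2n-W$ you wanted. To complete your write-up you must supply this contraction (or an equivalent one) absorbing the $\Dc$-chains, the framings, and the amputation simultaneously.
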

\begin{proof}
Let $\gm(\!(n)\!)=H_\bullet(\M_{0,n})$ be as in the proof of Proposition \ref{prop:Chts}. Define the auxiliary cyclic pseudo-operad $\gm_S$ such that
\[
  \gm_S(\!(n)\!)=
  \begin{cases}
    H_\bullet(\M_{0,n}\times (S^1)^n)
    =
    \gm(\!(n)\!) \otimes H_\bullet(S^1)^{\otimes n} & \text{for $n\geq 3$}\\
    \bar H_\bullet(S_1) = \Q\Delta & \text{for $n=2$} \\
    0 &\text{otherwise}
  \end{cases}.
\] 
We equip $\gm_S$ with the almost trivial cyclic pseudo-operad structure, defined such that the compositions with the binary operations are defined through the natural action of $\bar H_\bullet(S_1)$ on $H_\bullet(S^1)$, and all other compositions vanish.
We equip $\gm_S$ with the weight grading by twice the homological degree.
Recall that $\overline{\BV}$ 
is the augmentation ideal of the cyclic operad $\BV$.
From the proof of Proposition \ref{prop:Chts} we see that
\[
  \Ch_t(\gm_S)=\Ch_t(\overline{\BV}).
  \]
Indeed, the Euler characteristic ``cannot see'' the nontriviality of the bundle \eqref{equ:pre ChtBV bundle}.
Hence we also have that 
\[
  \chi^{S_n}\left(\gr_{6g-6-W} \Feyn(\BV)(\!(g,n)\!)
  \right)
  =
  \chi^{S_n}\left(\gr_{6g-6+2n-W} \Feyn(\gm_S)(\!(g,n)\!)
  \right).
\]
We are going to show that the right-hand side is equal to $ \chi^{S_n}\left(\gr_{W} \AFeyn(D\BV^*)(\!(g,n)\!) \right)$.
To this end we compute $H(\Feyn(\gm_S))$ explicitly.
The graphs spanning the complex $\Feyn(\gm_S)$ can have vertices of valences $\geq 2$.
Any such graph we may think of as a $\geq 3$-valent graph, to which strings of bivalent vertices have been added along its edges.
\[
\begin{tikzpicture} \node[ext] (v1) at (-.7,-.7) {}; \node[ext] (v2) at (+1.4,-.7) {}; \node[ext] (v3) at (-.7,+.7) {}; \node[ext] (v4) at (+1.4,+.7) {}; \node[ext] (b1) at (0,+.7) {$\scriptscriptstyle \Delta$}; \node[ext] (b11) at (0.7,+.7) {$\scriptscriptstyle \Delta$}; \node[ext] (b2) at (-1.2,-1.2) {$\scriptscriptstyle \Delta$}; \node (n1) at (-1.8,-1.8) {$1$}; \node (n2) at (-1.2,+1.2) {$2$}; \node (n3) at (+1.9,+1.2) {$3$}; \draw (v1) edge (v2) edge (v3) (b1) edge (v3) edge (b11) (b11) edge (v4) (b2) edge (v1) edge (n1) (v4) edge (v2) edge (n3) (v2) edge (v3) (v3) edge (n2); \end{tikzpicture}
\]
Here we can assume w.l.o.g.~that all the bivalent vertices are decorated by $\Delta$, whereas each $\geq 3$-valent vertex $v$ is decorated by some element of
\[
\gm_S(\!(n_v)\!)=\gm(\!(n_v)\!) \otimes 
H_\bullet(S^1)^{\otimes n_v}.
\]
But for each internal edge in the $\geq 3$-valent graph, the two factors of $H_\bullet(S^1)$ from the decorations on the endpoints of the edge and all possible additions of bivalent vertices on the edge yield, up to degree shift, one copy of the two-sided bar construction 
\[
  B_2:=
  \bigoplus_{k\geq 0}
  H_\bullet(S^1)
  \otimes (\bar H_\bullet(S_1)[1])^{\otimes k}
  \otimes H_\bullet(S_1).
  \]
Similarly, the contributions of the two-valent vertices on the legs can be combined into a one-sided bar construction 
\[
  B_1:=
  \bigoplus_{k\geq 0}
  H_\bullet(S^1)
  \otimes (\bar H_\bullet(S_1)[1])^{\otimes k}.
  \]

  It is well-known that 
  \begin{align*} H(B_2) &= \Q\oplus \Q \Delta_s & &\text{ and } & H(B_1) &= \Q, \end{align*}
  with 
  \[
    \Delta_s = 
    1\otimes \Delta +\Delta \otimes 1.
  \]

  From these observations
  \[
  H(\Feyn(\gm_S)) 
  \]
  can be seen as generated by $\gm$-decorated graphs with two types of internal edges: One corresponds to the normal edges used in the Feynman transform, of degree $-1$ and weight 0.
  The other type of edge is of degree $-2$ and weight $+2$, and represents the decoration with $\Delta_s$.

On the other hand, we have seen in Lemma \ref{lem:feynp dbv} that for $(g,n)\neq (1,0)$ 
\[
H(\AFeyn_\kk(D\BV^*)(\!(g,n)\!))
= H(\AFeyn_\kk'(D\BV^*)(\!(g,n)\!)).
\]
Furthermore, we have that 
\[
  \chi^{S_n}\left(\gr_W\AFeyn_\kk'(D\BV^*)(\!(g,n)\!) \right)
=
\chi^{S_n}\left(\gr_W\AFeyn_\kk'(H(D\BV^*))(\!(g,n)\!) \right).
\]

Hence it suffices to check that there is an $S_n$-equivariant isomorphism of vector spaces 
\begin{equation}\label{equ:vs iso Euler}\phi\colon : \gr_{W}  H^{k}(\Feyn(\gm_S) (\!(g,n)\!))
  \xrightarrow{\cong}
\gr_{6g-6+2n-W}\AFeyn_\kk'(H(D\BV^*))(\!(g,n)\!)^{6g-6+2n+k}.
\end{equation}
In fact, the vector spaces on both sides are generated by the same types of graphs, namely genus $g$ graphs with $n$ legs and two types of edges. The vertices are either decorated by $\gm$ or $\tGrav$.
Let $\Gamma$ be such an $\gm$-decorated graph on the left-hand side of \eqref{equ:vs iso Euler}.
Then we define $\phi$ by sending $\Gamma$ to the graph $\phi(\Gamma)=\tilde \Gamma$ on the right-hand side of \eqref{equ:vs iso Euler}, built from $\Gamma$ by the following replacements. 
\begin{itemize}
  \item
  Every decoration by $\gm(\!(n_v)\!)^{-k_v}\cong H_{k_v}(\M_{0,n_v})$ on some vertex $v$ of $\Gamma$ is replaced by a decoration in $\tGrav(\!(n_v)\!)^{2n_v-6-k_v}\cong H_c^{2n_v-6-k_v}(\M_{0,n_v})$ on the corresponding vertex of $\tilde \Gamma$, using the identification 
  \[
    H_{k_v}(\M_{0,n_v}) \cong H_c^{2n_v-6-k_v}(\M_{0,n_v}).
  \]
\item Every ``normal'' edge in $\Gamma$ of degree $-1$ and weight 0 is replaced by a marked ($\Dc$-decorated) edge of degree $+1$ and weight $2$ in $\tilde \Gamma$.
\item Every $\Delta_s$-edge in $\Gamma$ of degree $-2$ and weight $0$ is replaced by a normal edge, of degree and weight 0 in $\tilde \Gamma$.
\end{itemize} 

It is clear that the map above defines a bijection on basis elements. We hence just need to verify the degree and weight counts.
Suppose that $\Gamma$ has $V$ vertices, $e_1$ normal edges and $e_2$ marked ($\Delta_s$-)edges. 
Then it has cohomological degree and weight 
\begin{align*} k &= -e_0 -2e_1 -\sum_v k_v \text{ and} \\
 W &= 2e_1 + 2\sum_v k_v. \end{align*}
On the other hand, the image $\tilde \Gamma$ of $\Gamma$ has degree and weight 
\begin{align*} \tilde k &= e_0 + \sum_v (2n_v - 6- k_v) = e_0 + 2n+ 4(e_0+e_1) -6V -\sum_v k_v = k + 2n +6g-6 \text{ and} \\
 \tilde W &= 2e_0 + \sum_v (2n_v-6-2k_v) =6g-6 +2n -W, \end{align*}
as desired.
\end{proof}

One can ask to what extent the identification of Proposition \ref{prop:BVDBV Euler} holds not only on the level of Euler characteristics, but also on the level of the cohomology, and more specifically whether generally
\[
    \gr_{W} H^{-k}(\Feyn(\BV)(\!(g,n)\!))
    \stackrel{?}\simeq
    \gr_{6g-6+2n-W} H^{6g-6+2n-k}(\AFeyn(D\BV^*)(\!(g,n)\!) ) 
    .
\]
Our main results imply that this identification of cohomology holds at least for $W=0,2$. The general case we have to leave to future work.

\begin{table}
\small{
\begin{tabular}{c|r r r r r r r r r r r r r r r r r r r}
$g$ \textbackslash $W$& 0&2&4&6&8&10&12&14&16&18&20&22&24&26&28&30&32&34&36 \\
\hline
$2$& $1$&$-1$&$0$&$0$ \cellcolor{col2!30}&0&0&0&0&0&0&0&0&0&0&0&0&0&0&0 \\
$3$& $1$&$0$&$-1$&$-1$&$0$&$0$&$1$ \cellcolor{col2!30}&0&0&0&0&0&0&0&0&0&0&0&0 \\
$4$& $2$&$-1$&$0$&$-3$&$1$&$1$&$1$&$-1$&$0$&$0$ \cellcolor{col2!30}&0&0&0&0&0&0&0&0&0 \\
$5$& $1$&$0$&$1$&$1$&$-3$&$-2$&$-1$&$0$&$1$&$1$&$0$&$0$&$1$ \cellcolor{col2!30}&0&0&0&0&0&0 \\
$6$& $2$&$0$&$2$&$-5$&$-3$&$1$&$9$&$2$&$-12$&$-1$&$3$&$3$&$1$&$-1$&$0$&$-1$ \cellcolor{col2!30}&0&0&0 \\
$7$& $1$&$0$&$6$&$8$&$-20$&$-9$&$11$&$-10$&$-19$&$43$&$32$&$-32$&$-25$&$2$&$7$&$3$&$1$&$0$&$1$ \cellcolor{col2!30} \\
\end{tabular}
}
\caption{Weight-graded Euler characteristics
$\chi(\gr_W\Feyn_\kk(D\BV^*) (\!(g,0)\!) )$.
\label{tab:DBV}
}
\end{table}

\begin{table}
\small{
\begin{tabular}{c|r r r r r r r r r r r r r r}
$g$ \textbackslash $W$& 0&2&4&6&8&10&12&14&16&18&20&22&24&26 \\
\hline
$2$& $1$&$1$ \cellcolor{col2!30}&0&0&0&0&0&0&0&0&0&0&0&0 \\
$3$& $1$&$2$&$3$&$1$ \cellcolor{col2!30}&0&0&0&0&0&0&0&0&0&0 \\
$4$& $2$&$3$&$8$&$11$&$6$&$1$ \cellcolor{col2!30}&0&0&0&0&0&0&0&0 \\
$5$& $1$&$5$&$16$&$32$&$48$&$31$&$9$&$1$ \cellcolor{col2!30}&0&0&0&0&0&0 \\
$6$& $2$&$3$&$28$&$80$&$167$&$245$&$197$&$72$&$13$&$1$ \cellcolor{col2!30}&0&0&0&0 \\
$7$& $1$&$6$&$31$&$165$&$475$&$952$&$1457$&$1297$&$616$&$142$&$17$&$1$ \cellcolor{col2!30}&0&0 \\
$8$& $1$&$-3$&$60$&$246$&$1079$&$3068$&$6205$&$9497$&$9438$&$5323$&$1608$&$250$&$22$&$1$ \cellcolor{col2!30} \\
\end{tabular}
}
\caption{Weight-graded Euler characteristics
$\chi(\gr_W \Feyn_\kk(\tGrav) (\!(g,0)\!) )$.
\label{tab:Grav}
}
\end{table}

\begin{table}
\small{
\begin{tabular}{c|r r r r r r r r r r r r r r r r r r r}
$g$ \textbackslash $W$& 0&2&4&6&8&10&12&14&16&18&20&22&24&26&28&30&32&34&36 \\
\hline
$2$& $0$&$0$&$-1$&$1$ \cellcolor{col2!30}&0&0&0&0&0&0&0&0&0&0&0&0&0&0&0 \\
$3$& $1$&$0$&$0$&$-1$&$-1$&$0$&$1$ \cellcolor{col2!30}&0&0&0&0&0&0&0&0&0&0&0&0 \\
$4$& $0$&$0$&$-1$&$1$&$1$&$1$&$-3$&$0$&$-1$&$2$ \cellcolor{col2!30}&0&0&0&0&0&0&0&0&0 \\
$5$& $1$&$0$&$0$&$1$&$1$&$0$&$-1$&$-2$&$-3$&$1$&$1$&$0$&$1$ \cellcolor{col2!30}&0&0&0&0&0&0 \\
$6$& $-1$&$0$&$-1$&$1$&$3$&$3$&$-1$&$-12$&$2$&$9$&$1$&$-3$&$-5$&$2$&$0$&$2$ \cellcolor{col2!30}&0&0&0 \\
$7$& $1$&$0$&$1$&$3$&$7$&$2$&$-25$&$-32$&$32$&$43$&$-19$&$-10$&$11$&$-9$&$-20$&$8$&$6$&$0$&$1$ \cellcolor{col2!30} \\
\end{tabular}
}
\caption{\label{tbl:euler 3} Weight-graded Euler characteristics
$\chi(\gr_W(\Feyn(\BV)) (\!(g,0)\!) )$.
\label{tab:BV}
}
\end{table}

\begin{table}
\small{
\begin{tabular}{c|r r r r r r r r r r r r r r r r}
$g$ \textbackslash $W$& 0&2&4&6&8&10&12&14&16&18&20&22&24&26&28&30 \\
\hline
$2$& $0$&$0$ \cellcolor{col2!30}&0&0&0&0&0&0&0&0&0&0&0&0&0&0 \\
$3$& $1$&$0$&$0$&$0$ \cellcolor{col2!30}&0&0&0&0&0&0&0&0&0&0&0&0 \\
$4$& $0$&$0$&$-1$&$1$&$0$&$0$ \cellcolor{col2!30}&0&0&0&0&0&0&0&0&0&0 \\
$5$& $1$&$0$&$0$&$1$&$-1$&$-2$&$0$&$0$ \cellcolor{col2!30}&0&0&0&0&0&0&0&0 \\
$6$& $-1$&$0$&$-1$&$0$&$0$&$7$&$16$&$1$&$0$&$0$ \cellcolor{col2!30}&0&0&0&0&0&0 \\
$7$& $1$&$0$&$1$&$2$&$6$&$4$&$-59$&$-153$&$-29$&$0$&$0$&$0$ \cellcolor{col2!30}&0&0&0&0 \\
$8$& $0$&$-1$&$1$&$4$&$7$&$-24$&$-58$&$513$&$1945$&$659$&$19$&$0$&?&? \cellcolor{col2!30}&0&0 \\
$9$& $0$&$3$&$3$&$11$&$-4$&$-50$&$115$&$437$&$-5365$&$-28408$&?&?&?&?&?&? \cellcolor{col2!30} \\
\end{tabular}
}
\caption{\label{tbl:euler 4} Weight-graded Euler characteristics
$\chi(\gr_W(\Feyn(\HyCom)) (\!(g,0)\!) )$.
\label{tab:HyCom}
}
\end{table}

\begin{table}
  \small{
  \begin{tabular}{T|T|T|T|T|U|M}
    $g$\textbackslash $n$ & 0 & 1 & 2 & 3 & 4 & 5\\ 
\hline
0 & $ 0 $ & $ 0 $ & $ -s_{2} $ & $ 0 $ & $ s_{4} $ & $ -s_{3,2} $ \\  
\hline
 1 & $ s_{} $ & $ s_{1} $ & $ -s_{2} $ & $ 0 $ & $ -s_{2,1,1} + s_{4} $ & $ s_{3,1,1} + s_{4,1} $ \\  
\hline
 2 & $ 0 $ & $ 0 $ & $ -s_{2} $ & $ -s_{2,1} - s_{3} $ & $ -s_{2,1,1} + 2s_{4} $ & $ -s_{2,1,1,1} - s_{2,2,1} + 3s_{4,1} + 2s_{5} $ \\  
\hline
 3 & $ 0 $ & $ s_{1} $ & $ s_{1,1} $ & $ -s_{2,1} - 3s_{3} $ & $ -s_{2,1,1} + 2s_{2,2} - s_{3,1} $ & $ -4s_{2,1,1,1} - 2s_{2,2,1} - 6s_{3,1,1} + 2s_{3,2} + 3s_{4,1} + 4s_{5} $ \\  
\hline
 4 & $ 0 $ & $ s_{1} $ & $ s_{1,1} $ & $ 2s_{1,1,1} - s_{2,1} - 3s_{3} $ & $ 3s_{1,1,1,1} - s_{2,2} - 5s_{3,1} - 3s_{4} $ & $ 5s_{1,1,1,1,1} - 2s_{2,1,1,1} - 12s_{3,1,1} + 3s_{3,2} + 2s_{4,1} + 7s_{5} $ \\  
\hline
 5 & $ 0 $ & $ 2s_{1} $ & $ 2s_{1,1} + 3s_{2} $ & $ 2s_{1,1,1} + 4s_{2,1} - s_{3} $ & $ 2s_{1,1,1,1} + 5s_{2,1,1} - s_{2,2} - 9s_{3,1} - 12s_{4} $ & $ 7s_{1,1,1,1,1} + 8s_{2,1,1,1} + 11s_{2,2,1} - 12s_{3,1,1} + 3s_{3,2} - 12s_{4,1} - s_{5} $ \\  
\hline
 6 & $ 0 $ & $ -s_{1} $ & $ -2s_{1,1} + 2s_{2} $ & $ 4s_{2,1} + 7s_{3} $ & $ 6s_{1,1,1,1} + 13s_{2,1,1} - 7s_{2,2} + s_{3,1} - 9s_{4} $ & $ 13s_{1,1,1,1,1} + 29s_{2,1,1,1} + 7s_{2,2,1} + 3s_{3,1,1} - 23s_{3,2} - 34s_{4,1} - 20s_{5} $ \\  
\hline
 7 & $ 0 $ & $ 0 $ & $ 2s_{2} $ & $ -7s_{1,1,1} + 8s_{2,1} + 10s_{3} $ & $ -9s_{1,1,1,1} + 20s_{2,1,1} + 8s_{2,2} + 29s_{3,1} $ & $ -12s_{1,1,1,1,1} + 30s_{2,1,1,1} + 38s_{3,1,1} - 45s_{3,2} - 59s_{4,1} - 57s_{5} $ \\  
\hline
 8 & $ -s_{} $ & $ -4s_{1} $ & $ -8s_{1,1} - 7s_{2} $ & $ -13s_{1,1,1} - 10s_{2,1} + 14s_{3} $ & $ -17s_{1,1,1,1} - 8s_{2,1,1} - 2s_{2,2} + 53s_{3,1} + 42s_{4} $ & $ -31s_{1,1,1,1,1} + 30s_{2,1,1,1} - 19s_{2,2,1} + 129s_{3,1,1} - 23s_{3,2} + 28s_{4,1} - 37s_{5} $ \\  
\hline
 9 & $ 3s_{} $ & $ 5s_{1} $ & $ -6s_{2} $ & $ -15s_{1,1,1} - 9s_{2,1} - 3s_{3} $ & $ -61s_{1,1,1,1} - 72s_{2,1,1} + 15s_{2,2} + 36s_{3,1} + 47s_{4} $ & $ -102s_{1,1,1,1,1} - 50s_{2,1,1,1} + 36s_{2,2,1} + 239s_{3,1,1} + 170s_{3,2} + 221s_{4,1} + 32s_{5} $ \\  
\hline
 10 & $ -s_{} $ & $ -4s_{1} $ & $ -6s_{1,1} - 16s_{2} $ & $ -3s_{1,1,1} - 55s_{2,1} - 32s_{3} $ & $ -23s_{1,1,1,1} - 140s_{2,1,1} - 20s_{2,2} - 43s_{3,1} + 75s_{4} $ & $ -99s_{1,1,1,1,1} - 277s_{2,1,1,1} - 119s_{2,2,1} + 5s_{3,1,1} + 220s_{3,2} + 417s_{4,1} + 241s_{5} $ \\  
\hline
 11 & $ 3s_{} $ & $ 13s_{1} $ & $ 30s_{1,1} + 11s_{2} $ & $ 29s_{1,1,1} - 28s_{2,1} - 83s_{3} $ & $ 36s_{1,1,1,1} - 56s_{2,1,1} + 68s_{2,2} - 131s_{3,1} - 39s_{4} $ & $ -148s_{1,1,1,1,1} - 863s_{2,1,1,1} - 479s_{2,2,1} - 1001s_{3,1,1} - 44s_{3,2} + 7s_{4,1} + 287s_{5} $ \\  
\hline
 12 & $ -4s_{} $ & $ -2s_{1} $ & $ 19s_{1,1} + 10s_{2} $ & $ 103s_{1,1,1} - 14s_{2,1} - 78s_{3} $ & $ 258s_{1,1,1,1} + 13s_{2,1,1} - 98s_{2,2} - 458s_{3,1} - 206s_{4} $ & $ 404s_{1,1,1,1,1} - 411s_{2,1,1,1} - 253s_{2,2,1} - 1614s_{3,1,1} - 232s_{3,2} - 375s_{4,1} + 432s_{5} $ \\  
\end{tabular}
  }
  \caption{\label{tbl:BV equiv}
  Equivariant Euler characteristic of the weight 2 part
  $\chi^{S_n}(\gr_2(\Feyn(\BV)) (\!(g,n)\!) )$.
  }
\end{table}

\appendix

\section{Giansiracusa's result for handlebodies with marked points}
\label{sec:giansiracusa}

We note that Theorem \ref{thm:giansiracusa} is essentially shown by Giansiracusa \cite{Giansiracusa}, but he only considers the case of handlebodies with marked disks, i.e., $\HMod_{g,0}^m$.
We sketch here how to extract the case of handlebodies with marked points from Giansiracusa's result.

Generally, let $\Hbdy$ be the handlebody modular operad, so that 
$$
\Hbdy(\!(g,m)\!) \simeq B\HMod_{g,0}^m,
$$
see \cite[Section 4.3]{Giansiracusa}.

First, we recall the following main result of loc.\ cit.
\begin{thm}[Theorem A and Proposition 4.3.2 of \cite{Giansiracusa}]\label{thm:giansiracusa 2}
The handlebody modular operad is weakly equivalent to the homotopy modular closure $\LMod_!\Hbdy_0$ of its genus zero part $\Hbdy_0$ for all $(g,n)\neq (1,0)$.
Furthermore, the cyclic operad $\Hbdy_0$ is homotopy equivalent to the framed little 2-disks operad.
\end{thm}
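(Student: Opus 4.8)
Since Theorem~\ref{thm:giansiracusa 2} is quoted from \cite{Giansiracusa}, I only outline the geometric strategy underlying it. The plan is to treat the two assertions separately: the identification of the genus-zero part $\Hbdy_0$ with the framed little $2$-disks operad, and the statement that the full handlebody modular operad is the homotopy modular closure of $\Hbdy_0$.

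For the \emph{framed-disk identification} I would work at the level of classifying spaces. A genus-zero handlebody with $k+1$ boundary disks is a $3$-ball carrying $k+1$ marked disks on its boundary sphere, and fixing one of these disks leaves diffeomorphisms that braid the remaining $k$ disks inside the boundary sphere minus a disk ($\cong \R^2$) and rotate each of them. This identifies $\HMod_{0,0}^{k+1}$ with the framed braid group on $k$ strands, which is $\pi_1(\LD_2^{fr}(k))$. Since $\LD_2^{fr}(k)\simeq \mathrm{Conf}_k(\R^2)\times (S^1)^k$ is aspherical, one gets an arity-wise weak equivalence $B\HMod_{0,0}^{k+1}\simeq \LD_2^{fr}(k)$. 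The remaining work is to promote this into a zig-zag of cyclic operads and to check that gluing handlebodies along boundary disks corresponds to operadic insertion of framed disks; this is comparatively soft, since both structures are governed by the same embedding-and-rotation data.

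For the \emph{modular-closure statement} the idea is that every handlebody is obtained from genus-zero pieces (balls) by gluing along essential disks. Concretely I would resolve $B\HMod_{g,0}^m$ by the nerve of the poset of isotopy classes of disk systems cutting the handlebody into genus-zero pieces. The central geometric input is the high connectivity -- indeed contractibility in the appropriate formulation -- of the associated complex of disk systems, in the spirit of Hatcher's results on spaces of incompressible surfaces in Haken $3$-manifolds together with Hatcher--Wahl-type connectivity estimates. This connectivity turns the nerve into a semisimplicial resolution whose simplices are indexed by genus-zero gluing patterns, that is, by modular graphs all of whose vertices have genus zero (so that the total genus equals the loop number of the graph). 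Term by term this realization is the bar construction computing the derived modular envelope $\LMod_!\Hbdy_0$, and comparing the two yields $\Hbdy\simeq \LMod_!\Hbdy_0$. The excluded case $(g,n)=(1,0)$ is the solid torus, exactly where the disk complex degenerates (the meridian disk being essentially unique) and genus-zero generation fails.

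The \textbf{main obstacle} is the modular-closure statement: establishing that the disk-system resolution agrees term by term with the bar construction defining $\LMod_!$, and in particular verifying the uniform connectivity of the disk complex for all $(g,m)$ away from $(1,0)$. This is the genuinely three-dimensional-topological part of the argument, requiring Hatcher-type transversality and surgery estimates, whereas the framed-disk identification reduces to recognizing an aspherical model once the framed-braid-group description is in place.
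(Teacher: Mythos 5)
This statement is not proved in the paper at all: it is imported verbatim from Giansiracusa (Theorem A and Proposition 4.3.2 of \cite{Giansiracusa}), and the paper only uses it as a black box in Appendix \ref{sec:giansiracusa}. So the only meaningful comparison is with Giansiracusa's own argument, and there your outline does match its overall architecture: the genus-zero identification is made at the level of classifying spaces via framed (pure) braid groups and asphericity of $\LD_2^{fr}(k)\simeq \mathrm{Conf}_k(\R^2)\times (S^1)^k$, and Theorem A is proved by comparing a bar/homotopy-colimit model of $\LMod_!\Hbdy_0$, indexed by modular graphs with genus-zero vertices, against a resolution of the handlebody spaces by systems of cutting disks, with Hatcher-type contractibility of the disk-system complex as the key geometric input.

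Two points in your sketch are off, however. First, the genus-zero part is not ``comparatively soft'': the step where you pass from diffeomorphisms of the $3$-ball rel marked boundary disks to braiding and rotating disks in $S^2$ silently uses that the interior of the ball contributes nothing, i.e.\ Cerf's theorem $\pi_0\mathrm{Diff}(D^3\,\mathrm{rel}\,\partial)=0$ and, for the space-level statement $\Hbdy_0\simeq \LD_2^{fr}$, Hatcher's proof of the Smale conjecture ($\mathrm{Diff}(D^3\,\mathrm{rel}\,\partial)\simeq *$) and his theorems that components of $\mathrm{Diff}$ of handlebodies with markings are contractible. This is precisely the three-dimensional input of that half of Giansiracusa's paper, not an afterthought. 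Second, your diagnosis of the excluded case $(g,n)=(1,0)$ is wrong on both counts: genus-zero generation does not fail for the solid torus (it \emph{is} a self-glued ball, i.e.\ the image of $\eta$ applied to a genus-zero piece), and its disk complex does not ``degenerate'' in a harmful way --- the meridian disk being unique up to isotopy makes that complex a point, which is contractible. The genuine obstruction is that the solid torus is the one handlebody (in the stable range) whose diffeomorphism group is not homotopy discrete: $\mathrm{Diff}(S^1\times D^2)\simeq O(2)\times O(2)$ contains essential circles, so the gluing construction naturally produces a model of $B\mathrm{Diff}$, which at $(1,0)$ differs from $B\pi_0\mathrm{Diff}=B\HMod_{1,0}^0$; once a marked disk or higher genus is present the circle symmetries die and the two agree. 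Any write-up of the closure statement has to isolate this phenomenon, not a failure of connectivity of the disk complex.
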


Next, the following relation between the handlebody groups seems to be well known, though we have not found a citeable reference in the literature.
\begin{lemma}\label{lem:hbdy S1 quotient}
  We have that 
  \[
  B\HMod_{g,n}^m \simeq 
  \Hbdy(\!(g,n+m)\!) /\!/ ( \Hbdy(\!(0,2)\!)^n ),
  \]
  where $\Hbdy(\!(0,2)\!)\simeq S^1$ acts by composition at $n$ of the $n+m$ marked disks. 
\end{lemma}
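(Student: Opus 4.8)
The plan is to realize both sides of the claimed equivalence as total spaces of fibrations over $B(S^1)^n = B\Hbdy(\!(0,2)\!)^n$ with the common fibre $\Hbdy(\!(g,n+m)\!)\simeq B\HMod_{g,0}^{n+m}$, and then to match the two monodromy actions. Note first that by Theorem \ref{thm:giansiracusa 2} the cyclic operad $\Hbdy_0$ is equivalent to the framed little $2$-disks operad, whose arity-one (cyclic arity-two) space is the framing circle; this is what justifies $\Hbdy(\!(0,2)\!)\simeq S^1$ as used in the statement.

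First I would produce, at the level of topological groups, a fibration sequence
\[
\mathrm{Diff}(V_{g,0}^{n+m}) \to \mathrm{Diff}(V_{g,n}^m) \xrightarrow{d} (S^1)^n,
\]
where $d$ sends a diffeomorphism $\phi$ fixing the $n$ marked points $p_1,\dots,p_n$ to the tuple $(d\phi_{p_i})_i$ of its derivatives restricted to the boundary tangent planes $T_{p_i}\partial V_{g,n}^m$, each lying in $GL^+(T_{p_i}\partial V)\simeq S^1$. The map $d$ is a fibration by the parametrized isotopy extension theorem, it is surjective because any boundary rotation near a marked point is realized by a compactly supported diffeomorphism, and its fibre is the subgroup of diffeomorphisms whose $1$-jet at each $p_i$ is trivial, which is homotopy equivalent to the subgroup fixing a small disk pointwise at each of the $n$ points, i.e.\ to $\mathrm{Diff}(V_{g,0}^{n+m})$.

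Second I would deloop. Since the components of handlebody diffeomorphism groups are contractible, we have $B\mathrm{Diff}(V_{g,n}^m)\simeq B\HMod_{g,n}^m$ and likewise $B\mathrm{Diff}(V_{g,0}^{n+m})\simeq B\HMod_{g,0}^{n+m}\simeq \Hbdy(\!(g,n+m)\!)$. Applying the classifying space functor to the above fibration sequence of groups yields a fibration sequence of spaces
\[
\Hbdy(\!(g,n+m)\!) \to B\HMod_{g,n}^m \to B(S^1)^n .
\]
On the other hand, for any action of $(S^1)^n$ on a space $X$ the Borel construction $X/\!/(S^1)^n$ is by definition the total space of a fibration over $B(S^1)^n$ with fibre $X$. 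Thus both $B\HMod_{g,n}^m$ and $\Hbdy(\!(g,n+m)\!)/\!/(S^1)^n$ are total spaces of fibrations over $B(S^1)^n$ with fibre $\Hbdy(\!(g,n+m)\!)$, and it remains only to identify the two monodromy actions of $(S^1)^n$ on this fibre.

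The main obstacle, and the geometric heart of the argument, is precisely this identification. The monodromy of the delooped group extension is the outer action of $(S^1)^n$ on $\mathrm{Diff}(V_{g,0}^{n+m})$ by conjugation by a lift: a rotation $\theta$ at $p_i$ lifts to a diffeomorphism $\rho_\theta$ supported in a collar of the $i$-th marked disk, and conjugation by $\rho_\theta$ rotates the $i$-th boundary parametrization. I would show that this coincides with the modular-operadic composition $-\circ_i\Hbdy(\!(0,2)\!)$: under the equivalence $\Hbdy_0\simeq$ framed little $2$-disks, $\Hbdy(\!(0,2)\!)$ is the arity-one framing circle, and gluing it onto the $i$-th marked disk and absorbing the resulting collar is exactly the reparametrization by $\rho_\theta$. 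Hence the two actions agree up to homotopy, the two fibrations over $B(S^1)^n$ are equivalent, and the lemma follows. I expect verifying this comparison of the collar-rotation action with the operadic composition --- carefully, at the level of the structure maps of $\Hbdy$ defined by gluing handlebodies along boundary disks, rather than merely on homotopy groups --- to be the delicate point of the proof.
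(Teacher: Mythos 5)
Your approach is sound but genuinely different from the paper's. The paper never touches topological diffeomorphism groups: it cites Hensel's short exact sequence of discrete groups $1\to \Z^n \to \HMod_{g,0}^{n+m}\to \HMod_{g,n}^m\to 1$ (kernel generated by the Dehn twists about the $n$ marked disks), identifies each $\Z$-factor with $\HMod_{0,0}^2$ by checking that operadic gluing carries the meridional twist to the Dehn twist, and then applies the Borel construction for group extensions (citing Geoghegan) to the resulting fibration $B\Z^n\to B\HMod_{g,0}^{n+m}\to B\HMod_{g,n}^m$. You instead build a fibration of topological groups $\mathrm{Diff}(V_{g,0}^{n+m})\to\mathrm{Diff}(V_{g,n}^m)\xrightarrow{d}(S^1)^n$ via the $1$-jet evaluation and then deloop. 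Note that the long exact sequence of your fibration, combined with contractibility of components, re-proves exactly the Hensel sequence the paper quotes; so your route is more self-contained on the group-theoretic side, at the cost of more differential topology (the fibration property of the jet evaluation, the framed-point versus marked-disk comparison, and contractibility of components of $\mathrm{Diff}(V_{g,n}^m)$, which you assert but which itself requires an evaluation-fibration argument reducing it to the marked-disk case underlying Giansiracusa's identification).

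The one step that does not work as literally stated is the last one. $B(S^1)^n$ is simply connected, so there is no monodromy to compare; moreover, ``conjugation by a lift'' does not define an action of $(S^1)^n$ on the fibre, because the extension admits no continuous multiplicative family of lifts --- your collar rotations $\rho_\theta$ give a splitting over $\R^n$, not over $(S^1)^n$, and the discrepancy at $\theta=2\pi$ is exactly the Dehn twist. The invariant that actually classifies fibrations over $B(S^1)^n$ with aspherical fibre $K(\pi,1)$, $\pi=\HMod_{g,0}^{n+m}$, is the homotopy class of the classifying map to $B\mathrm{hAut}(K(\pi,1))$; since the base is simply connected and $\mathrm{hAut}(K(\pi,1))$ has $\pi_0=\Out(\pi)$, $\pi_1=Z(\pi)$ and no higher homotopy, this is detected by the connecting homomorphism $\pi_2(B(S^1)^n)=\Z^n\to Z(\pi)\subset\pi$. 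For your fibration this sends the loop of rotations at $p_i$ to the Dehn twist $T_i$ (lift the loop through $d$; the endpoint of the lift is $T_i$); for the Borel fibration of the operadic action it is induced on $\pi_1$ by the orbit map $\Hbdy(\!(0,2)\!)\to\Hbdy(\!(g,n+m)\!)$, i.e.\ by the gluing homomorphism $\HMod_{0,0}^2\to\HMod_{g,0}^{n+m}$, which also sends the generator to $T_i$ --- this is precisely the identification carried out in the middle paragraph of the paper's proof. So once your matching of ``monodromies'' is rephrased as this comparison of $k$-invariants, it reduces to the same elementary Dehn-twist computation the paper performs, and your argument closes.
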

\begin{proof}
There is a short exact sequence of groups 
\begin{equation}
  \label{eq_SES_groups}
  1\to \mathbb Z^n \to \HMod_{g,0}^{n+m} \to \HMod_{g,n}^m \to 1,   
\end{equation}
  with the $\mathbb Z^n$ corresponding to Dehn twists around $n$ of the marked disks
(\cite[Corollary 5.9 and Corollary 6.2]{Hensel2020}, see also \cite[Proof of Corollary 3.7]{HainautPetersen}).

Each such Dehn twist $\phi$ can be obtained as follows:
Take $\mathcal{H}_0^2$ a genus zero handlebody with two marked disks. Its handlebody group $\HMod_{0,0}^2$ is infinite cyclic, generated by a meridional Dehn twist $\phi'$ (with the definition as in \cite[Example 5.1]{Hensel2020}). 
Now attach one of the two marked disks of $\mathcal{H}_0^2$ to the corresponding marked disk of the original genus $g$ handlebody with $n+m$ marked disks. The result is again a genus $g$ handlebody with $n+m$ marked disks and the image of $\phi'$ in its mapping class group is exactly the Dehn twist $\phi$. This gives an identification of $\HMod_{0,0}^2$ with the corresponding copy of $\Z$ in the kernel of \eqref{eq_SES_groups}.

Using the Borel Construction, the short exact sequence of groups in \eqref{eq_SES_groups} gives rise to a map of classifying spaces
  \[
  B\HMod_{g,0}^{n+m} \to B\HMod_{g,n}^m,   
  \]
(a stack in the sense of \cite{Geo:Topologicalmethodsgroup})
all of whose fibres are given by a model for $B(\HMod_{0,0}^2)^n\cong B\Z^n$ (see \cite[Theorem 7.1.10]{Geo:Topologicalmethodsgroup}).
This implies the claim.
\end{proof}

\begin{proof}[Sketch of proof of Theorem \ref{thm:giansiracusa}]
Let $g$, $n$, $m$ be such that $2g+n+m\geq 3$.
By Lemma \ref{lem:hbdy S1 quotient} we have that 
\[
H_\bullet(B\HMod_{g,n}^m) 
\cong  H_\bullet( \HBdy(\!(g,n+m)\!) /\!/ ( \HBdy(\!(0,2)\!)^n ) ).
\]
Since $(g,n+m)\neq (1,0)$ we can apply Theorem \ref{thm:giansiracusa 2} to obtain 
\[
  H_\bullet( \HBdy(\!(g,n+m)\!) /\!/ ( \HBdy(\!(0,2)\!)^n ) )
  \cong 
  H_\bullet( \LMod_!\Hbdy_0(\!(g,n+m)\!) /\!/ ( \LMod_!\Hbdy_0(\!(0,2)\!)^n ) )  .
\]
Arguing similarly to \cite[Section 7.1]{Giansiracusa} it follows from formality of the framed little disks operad that 
\begin{align*} &H_\bullet( \LMod_!\Hbdy_0(\!(g,n+m)\!) /\!/ ( \LMod_!\Hbdy_0(\!(0,2)\!)^n ) ) \\&\cong H_\bullet( \LMod_!H_\bullet(\Hbdy_0)(\!(g,n+m)\!) /\!/ ( \LMod_!H_\bullet(\Hbdy_0)(\!(0,2)\!)^{\otimes n} ) ) \\&= H_\bullet( \LMod_!\BV(\!(g,n+m)\!) /\!/ ( \LMod_!\BV(\!(0,2)\!)^n ) ). \end{align*}
However, the homotopy modular closure is computed by the Feynman transform of the dg dual, see \cite[Corollary 1.3, 1.4]{Ward19}, and hence 
\begin{align*} &H_\bullet\left( \LMod_!\BV(\!(g,n+m)\!) /\!/ ( \LMod_!\BV(\!(0,2)\!)^{\otimes n} ) \right) \\
 \cong ~& H_\bullet\left( \Feyn(D\BV)(\!(g,n+m)\!) /\!/ ( \Feyn(D\BV)(\!(0,2)\!)^{\otimes n} ) \right). \end{align*}
Finally, $\Feyn(D\BV)$ is a quasi-free modular operad, and in particular a quasi-free $\Feyn(D\BV)(\!(0,2)\!)^{\otimes n} )$-module, using that $2g+n+m\geq 3$.
 Hence the homotopy quotient can be replaced by an ordinary quotient.
\begin{align*} &H_\bullet\left( \Feyn(D\BV)(\!(g,n+m)\!) /\!/ ( \Feyn(D\BV)(\!(0,2)\!)^{\otimes n} ) \right) \\
 \cong ~&H_\bullet\left( \Feyn(D\BV)(\!(g,n+m)\!) / ( \Feyn(D\BV)(\!(0,2)\!)^{\otimes n} ) \right). \end{align*}
But in the special case $m=0$ this is just the definition of the amputated Feynman transform of $D\BV$.
\end{proof}

\end{document}